\DeclareMathOperator{\dive}{div} 
\numberwithin{equation}{section}
\newcolumntype{C}{>{$\displaystyle} c <{$}}
\def\env@dmatrix{\hskip -\arraycolsep
	\let\@ifnextchar\new@ifnextchar
	\def\arraystretch{2}%
	\array{*{\c@MaxMatrixCols}{>{\displaystyle}c}}%
}
\begin{document}

	\renewcommand{\thefootnote}{\fnsymbol{footnote}}
	
	\title{Higher Regularity of Weak Limits of Willmore Immersions I}
	\author{Alexis Michelat\footnote{Department of Mathematics, ETH Zentrum, CH-8093 Zürich, Switzerland.}\; and Tristan \selectlanguage{french}Rivière$^{*}$\selectlanguage{english}\setcounter{footnote}{0}}
	\date{\today}
	
	\maketitle
	
	\vspace{-0.5em}
	
	\begin{abstract}
	    We obtain in arbitrary codimension a removability result on the order of singularity of weak limits and bubbles of Willmore immersions measured by the second residue. This permits to reduce significantly the number of possible bubbling scenarii. As a consequence, out of the twelve families of non-planar minimal surfaces in $\mathbb{R}^3$ of total curvature greater than $-12\pi$, only three of them may occur as conformal images of bubbles of Willmore immersions. 
	\end{abstract}

	\tableofcontents
	\vspace{0cm}
	\begin{center}
		{Mathematical subject classification : \\
			35J35, 	35J48, 35R01, 49Q10, 53A05, 53A10, 53A30, 53C42.}
	\end{center}
	
	\theoremstyle{plain}
	\newtheorem*{theorem*}{Theorem}
	\newtheorem{theorem}{Theorem}[section]
	\newenvironment{theorembis}[1]
	{\renewcommand{\thetheorem}{\ref{#1}$'$}%
		\addtocounter{theorem}{-1}%
		\begin{theorem}}
		{\end{theorem}}
	\renewcommand*{\thetheorem}{\Alph{theorem}}
	\newtheorem{lemme}[theorem]{Lemma}
	\newtheorem*{lemme*}{Lemma}
	\newtheorem{propdef}[theorem]{Definition-Proposition}
	\newtheorem*{propdef*}{Definition-Proposition}
	\newtheorem{prop}[theorem]{Proposition}
	\newtheorem{cor}[theorem]{Corollary}
	\theoremstyle{definition}
	\newtheorem*{definition}{Definition}
	\newtheorem{defi}[theorem]{Definition}
	\newtheorem{rem}[theorem]{Remark}
	\newtheorem*{rem*}{Remark}
	\newtheorem{rems}[theorem]{Remarks}
	\newtheorem{exemple}[theorem]{Example}
	\newtheorem{defi2}{Definition}
	\newtheorem{propdef2}[defi2]{Proposition-Definition}
	\newtheorem{remintro}[defi2]{Remark}
	\newtheorem{remsintro}[defi2]{Remarks}
	\renewcommand\hat[1]{%
		\savestack{\tmpbox}{\stretchto{%
				\scaleto{%
					\scalerel*[\widthof{\ensuremath{#1}}]{\kern-.6pt\bigwedge\kern-.6pt}%
					{\rule[-\textheight/2]{1ex}{\textheight}}
				}{\textheight}%
			}{0.5ex}}%
		\stackon[1pt]{#1}{\tmpbox}
	}
	\parskip 1ex
	\newcommand{\totimes}{\ensuremath{\,\dot{\otimes}\,}}
	\newcommand{\vc}[3]{\overset{#2}{\underset{#3}{#1}}}
	\newcommand{\conv}[1]{\ensuremath{\underset{#1}{\longrightarrow}}}
	\newcommand{\A}{\ensuremath{\vec{A}}}
	\newcommand{\B}{\ensuremath{\vec{B}}}
	\newcommand{\C}{\ensuremath{\mathbb{C}}}
	\newcommand{\D}{\ensuremath{\nabla}}
	\newcommand{\E}{\ensuremath{\vec{E}}}
	\newcommand{\I}{\ensuremath{\mathbb{I}}}
	\newcommand{\Q}{\ensuremath{\vec{Q}}}
	\newcommand{\loc}{\ensuremath{\mathrm{loc}}}
	\newcommand{\z}{\ensuremath{\bar{z}}}
	\newcommand{\hh}{\ensuremath{\mathscr{H}}}
	\newcommand{\h}{\ensuremath{\vec{h}}}
	\newcommand{\vol}{\ensuremath{\mathrm{vol}}}
	\newcommand{\hs}[3]{\ensuremath{\left\Vert #1\right\Vert_{\mathrm{H}^{#2}(#3)}}}
	\newcommand{\R}{\ensuremath{\mathbb{R}}}
	\renewcommand{\P}{\ensuremath{\mathbb{P}}}
	\newcommand{\N}{\ensuremath{\mathbb{N}}}
	\newcommand{\Z}{\ensuremath{\mathbb{Z}}}
	\newcommand{\p}[1]{\ensuremath{\partial_{#1}}}
	\newcommand{\Res}{\ensuremath{\mathrm{Res}}}
	\newcommand{\lp}[2]{\ensuremath{\mathrm{L}^{#1}(#2)}}
	\renewcommand{\wp}[3]{\ensuremath{\left\Vert #1\right\Vert_{\mathrm{W}^{#2}(#3)}}}
	\newcommand{\wpn}[3]{\ensuremath{\Vert #1\Vert_{\mathrm{W}^{#2}(#3)}}}
	\newcommand{\np}[3]{\ensuremath{\left\Vert #1\right\Vert_{\mathrm{L}^{#2}(#3)}}}
	\newcommand{\hardy}[2]{\ensuremath{\left\Vert #1\right\Vert_{\mathscr{H}^{1}(#2)}}}
	\newcommand{\lnp}[3]{\ensuremath{\left| #1\right|_{\mathrm{L}^{#2}(#3)}}}
	\newcommand{\npn}[3]{\ensuremath{\Vert #1\Vert_{\mathrm{L}^{#2}(#3)}}}
	\newcommand{\nc}[3]{\ensuremath{\left\Vert #1\right\Vert_{C^{#2}(#3)}}}
	\renewcommand{\Re}{\ensuremath{\mathrm{Re}\,}}
	\renewcommand{\Im}{\ensuremath{\mathrm{Im}\,}}
	\newcommand{\dist}{\ensuremath{\mathrm{dist}}}
	\newcommand{\diam}{\ensuremath{\mathrm{diam}\,}}
	\newcommand{\leb}{\ensuremath{\mathscr{L}}}
	\newcommand{\supp}{\ensuremath{\mathrm{supp}\,}}
	\renewcommand{\phi}{\ensuremath{\vec{\Phi}}}
	\renewcommand{\H}{\ensuremath{\vec{H}}}
	\renewcommand{\L}{\ensuremath{\vec{L}}}
	\renewcommand{\lg}{\ensuremath{\mathscr{L}_g}}
	\renewcommand{\ker}{\ensuremath{\mathrm{Ker}}}
	\renewcommand{\epsilon}{\ensuremath{\varepsilon}}
	\renewcommand{\bar}{\ensuremath{\overline}}
	\newcommand{\s}[2]{\ensuremath{\langle #1,#2\rangle}}
	\newcommand{\pwedge}[2]{\ensuremath{\,#1\wedge#2\,}}
	\newcommand{\bs}[2]{\ensuremath{\left\langle #1,#2\right\rangle}}
	\newcommand{\scal}[2]{\ensuremath{\langle #1,#2\rangle}}
	\newcommand{\sg}[2]{\ensuremath{\left\langle #1,#2\right\rangle_{\mkern-3mu g}}}
	\newcommand{\n}{\ensuremath{\vec{n}}}
	\newcommand{\ens}[1]{\ensuremath{\left\{ #1\right\}}}
	\newcommand{\lie}[2]{\ensuremath{\left[#1,#2\right]}}
	\newcommand{\g}{\ensuremath{g}}
	\newcommand{\e}{\ensuremath{\vec{e}}}
	\newcommand{\f}{\ensuremath{\vec{f}}}
	\newcommand{\ig}{\ensuremath{|\vec{\mathbb{I}}_{\phi}|}}
	\newcommand{\ik}{\ensuremath{\left|\mathbb{I}_{\phi_k}\right|}}
	\newcommand{\w}{\ensuremath{\vec{w}}}
	\renewcommand{\tilde}{\ensuremath{\widetilde}}
	\newcommand{\vg}{\ensuremath{\mathrm{vol}_g}}
	\newcommand{\im}{\ensuremath{\mathrm{W}^{2,2}_{\iota}(\Sigma,N^n)}}
	\newcommand{\imm}{\ensuremath{\mathrm{W}^{2,2}_{\iota}(\Sigma,\R^3)}}
	\newcommand{\timm}[1]{\ensuremath{\mathrm{W}^{2,2}_{#1}(\Sigma,T\R^3)}}
	\newcommand{\tim}[1]{\ensuremath{\mathrm{W}^{2,2}_{#1}(\Sigma,TN^n)}}
	\renewcommand{\d}[1]{\ensuremath{\partial_{x_{#1}}}}
	\newcommand{\dg}{\ensuremath{\mathrm{div}_{g}}}
	\renewcommand{\Res}{\ensuremath{\mathrm{Res}}}
	\newcommand{\un}[2]{\ensuremath{\bigcup\limits_{#1}^{#2}}}
	\newcommand{\res}{\mathbin{\vrule height 1.6ex depth 0pt width
			0.13ex\vrule height 0.13ex depth 0pt width 1.3ex}}
	\newcommand{\ala}[5]{\ensuremath{e^{-6\lambda}\left(e^{2\lambda_{#1}}\alpha_{#2}^{#3}-\mu\alpha_{#2}^{#1}\right)\left\langle \nabla_{\vec{e}_{#4}}\vec{w},\vec{\mathbb{I}}_{#5}\right\rangle}}
	\setlength\boxtopsep{1pt}
	\setlength\boxbottomsep{1pt}
	\newcommand\norm[1]{%
		\setbox1\hbox{$#1$}%
		\setbox2\hbox{\addvbuffer{\usebox1}}%
		\stretchrel{\lvert}{\usebox2}\stretchrel*{\lvert}{\usebox2}%
	}
	\allowdisplaybreaks
	\newcommand*\mcup{\mathbin{\mathpalette\mcapinn\relax}}
	\newcommand*\mcapinn[2]{\vcenter{\hbox{$\mathsurround=0pt
				\ifx\displaystyle#1\textstyle\else#1\fi\bigcup$}}}
	\def\Xint#1{\mathchoice
		{\XXint\displaystyle\textstyle{#1}}%
		{\XXint\textstyle\scriptstyle{#1}}%
		{\XXint\scriptstyle\scriptscriptstyle{#1}}%
		{\XXint\scriptscriptstyle\scriptscriptstyle{#1}}%
		\!\int}
	\def\XXint#1#2#3{{\setbox0=\hbox{$#1{#2#3}{\int}$ }
			\vcenter{\hbox{$#2#3$ }}\kern-.58\wd0}}
	\def\ddashint{\Xint=}
	\newcommand{\dashint}[1]{\ensuremath{{\Xint-}_{\mkern-10mu #1}}}
	\newcommand\ccancel[1]{\renewcommand\CancelColor{\color{red}}\cancel{#1}}
	\newcommand\colorcancel[2]{\renewcommand\CancelColor{\color{#2}}\cancel{#1}}
	
	\section{Introduction}
	
	The integral of mean-curvature squared (also known as the Willmore energy) is a conformal invariant of submanifolds which originates from elasticity theory (in the work of Germain and Poisson at the dawn of the 19th century \cite{germain}, \cite{poisson}, \cite{dahan}) and appears in several contexts ranging from minimal surfaces theory, general relativity (the Hawking quasi-local mass) to mathematical biology. If $\Sigma$ is a closed Riemann surface, $\phi:\Sigma\rightarrow \R^n$ is an immersion, $g=\phi^{\ast}g_{\R^n}$ is the induced metric, and $\H_g$ is its mean curvature, we have
	\begin{align*}
	W(\phi)=\int_{\Sigma}|\H_g|^2d\vg.
	\end{align*}
	Its critical points are called Willmore surfaces (or Willmore immersions), and the Willmore energy can be seen as a way to measure the complexity of a given immersion. This motivated a lot of work to understand the structure of its minimisers among immersions of a fixed abstract closed surface into Euclidean spaces. Notice that if one extends the definition of the Willmore energy to non-closed surfaces, then minimal surfaces (for which $\H_g=0$ by definition) are obvious absolute minimisers of the Willmore energy. Furthermore, this functional is conformal invariant under conformal transformations which implies that all conformal images of minimal surfaces of $\R^3$, $S^3$ or $\mathbb{H}^3$ in $\R^3$ are Willmore surfaces. In particular, there exists a lot of examples of Willmore surfaces, and one would be interested to study their Moduli Space (it does not restrict to the conformal images of minimal surfaces from a space form, see \emph{e.g.} \cite{pinkall}). 
	
	While the structure of minimisers of the Willmore energy in Euclidean spaces is well-known for spheres (as they just consists in round spheres \cite{willmore1}), it remains elusive for higher genera, with the notable exception of the codimension $1$ genus $1$ case which was completely described as the conformal images of the stereographic projection of the Clifford torus in $S^3$ thanks to the work of Marques and Neves (\cite{marqueswillmore}). Another proof of this result was given recently by Heller-Heller-Ndiaye (\cite{2willmore}).
	
	In this work, we advance the understanding of the moduli space of Willmore immersions by providing a new criterion which permits to show  that a lot of \enquote{bubbles} cannot occur). This is an application of this work and of the classification of Bryant \cite{bryant}  (together with our extension to the branched case \cite{classification}, \cite{sagepaper}). Although we did not give any application here, using the classification of Montiel (\cite{montiels4}), one should be able to rule out additional possibilities of bubbling.

	The main result of the paper is a removability result on the second residue (defined in \cite{beriviere}) of branched Willmore surfaces coming as bubbles of sequences of Willmore immersions. Let us first recall the following result of Rivi\`{e}re and Bernard-Rivi\`{e}re.
	
	\begin{propdef2}[\cite{riviere1}, \cite{beriviere}]
		Let $\phi\in W^{2,2}\cap W^{1,\infty}(D^2)\cap C^{\infty}(D^2\setminus\ens{0})$ be a conformal Willmore immersion of finite total curvature on $D^2$. Then there exists an integer $\theta_0\geq 1$ and $\vec{A}_0\in \C^n\setminus\ens{0}$ such that 
		\begin{align}\label{notalgebraic}
			\left\{\begin{alignedat}{1}
		    &\phi(z)=\Re\left(\vec{A}_0z^{\theta_0}\right)+O\left(|z|^{\theta_0+1}\log|z|\right)\\
			&\p{z}\phi(z)=\frac{\theta_0}{2}\vec{A}_0z^{\theta_0-1}+O\left(|z|^{\theta_0}\log|z|\right),
			\end{alignedat}\right.
   		\end{align}
  		and we say that $\phi$ has a branch point of order $\theta_0\geq 1$ at $z=0$.
		Furthermore, provided the mean curvature $\H$ be not identically zero, there exists an integer $m\leq \theta_0-1$ and $\vec{C}_0\in \C^{n}\setminus\ens{0}$ such that for $\theta_0\geq 2$
		\begin{align}\label{theta2}
			\H=\Re\left(\frac{\vec{C}_0}{z^{m}}\right)+O\left(|z|^{1-m}\log|z|\right),
 		\end{align}
		while for $\theta_0=1$, there exists $\vec{\gamma}_0\in\R^n$ such that 
		\begin{align}\label{theta1}
			\H=\vec{\gamma}_0\log|z|+O(|z|\log|z|).
		\end{align}
		We call $r=\max\ens{m,0}\in \ens{0,\cdots,\theta_0-1}$ the second residue of $\phi$ at the branch point $z=0$. More generally, if $\Sigma$ is a closed Riemann surface, $p_1,\cdots,p_d\in \Sigma$ are fixed distinct points and $\phi:\Sigma\setminus\ens{p_1,\cdots,p_d}\rightarrow \R^n$ is a conformal Willmore immersion of finite total curvature, then we define for all $1\leq j\leq d$ the integers $\theta_0(p_j)\in \N$ to be the order of branch point and $0\leq r(p_j)\leq \theta_0(p_j)-1$ to be the associated residue at $z=0$ of the composition $\phi\circ \psi:D^2\rightarrow \R^n$, for any complex chart $\psi:D^2\rightarrow \Sigma$ such that $\psi(0)=p_i$. This definition does not depend on the chart.
	\end{propdef2}

    \begin{remintro}
    	In algebraic geometry, one would say that an immersion having the expansion \eqref{notalgebraic} admits a branch point of order $\theta_0-1$. However, when $\theta_0=1$, although $\phi$ extends as an immersion at $z=0$, it is not necessarily smooth at $0$. Indeed, the coefficient $\vec{\gamma}_0\in \R^n$ in \eqref{theta1} is not always $0$ (take for example the inversion of the catenoid). Furthermore, with this definition, the multiplicity of the branch point of the compactification of a minimal surface is equal to the multiplicity of the corresponding end. 
    \end{remintro}

    We also recall that under the notations of the previous definition, the first residue $\vec{\gamma}_0(\phi,p_j)\in \R^n$ of an immersion $\phi:\Sigma\setminus\ens{p_1,\cdots,p_d}\rightarrow \R^n$ of finite total curvature is defined for all $1\leq j\leq d$ by 
    \begin{align}\label{residue}
    	\vec{\gamma}_0(\phi,p_j)=\frac{1}{4\pi}\,\Im\int_{\gamma}\partial\H+|\H|^2\partial\phi+2\,g^{-1}\otimes \s{\H}{\h_0}\otimes \bar{\partial}\phi,
    \end{align}
    where $\gamma$ is a smooth closed curve around $p_j$ (and not containing other $p_k$ for $k\neq j$ in the same connected component of the one of $p_j$). These two residues precisely measure the singularity at branched points.

    \begin{theorem*}[Bernard-Rivi\`{e}re \cite{beriviere}]
    	Let $\Sigma$ be a closed Riemann surface, $p_1,\cdots,p_d\in \Sigma$ be fixed distinct points and $\phi:\Sigma\setminus\ens{p_1,\cdots,p_d}\rightarrow \R^n$ be a conformal Willmore immersion of finite total curvature. Then $\phi$ is smooth at $p_j$ if and only if $\vec{\gamma}_0(p_j)=0$ and $r(p_j)=0$. 
    \end{theorem*}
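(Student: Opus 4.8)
The plan is to localise the statement: fixing a complex chart $\psi\colon D^2\to\Sigma$ with $\psi(0)=p_j$ and replacing $\phi$ by $\phi\circ\psi$, we are reduced to showing that a conformal Willmore immersion of finite total curvature on $D^2$ which is smooth on $D^2\setminus\ens{0}$ and obeys \eqref{notalgebraic} together with \eqref{theta2} or \eqref{theta1} is smooth at $0$ if and only if $\vec\gamma_0=0$ and $r=0$. The converse is the easy half: a map smooth at $0$ admits Taylor expansions of every order there, so none of the logarithmic terms in \eqref{notalgebraic}--\eqref{theta1} occurs; the absence of the $\log|z|$ term when $\theta_0=1$ is exactly $\vec\gamma_0=0$, and, more generally, by the structure established in \cite{beriviere} the first residue $\vec\gamma_0(\phi,p_j)$ of \eqref{residue} is proportional to the coefficient of the leading logarithm, while the absence of a pole of $\H$ in \eqref{theta2} forces $m\le0$, i.e.\ $r=\max\ens{m,0}=0$ (the case $\H\equiv0$ near $p_j$ being trivial, as branched minimal immersions are smooth and the integral in \eqref{residue} then vanishes identically).

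For the direct implication I would use Rivi\`ere's conservative reformulation of the Willmore equation from \cite{riviere1}. On $D^2\setminus\ens{0}$ the vector field $\vec W$ of that reformulation, a combination of $\nabla\H$, $\pi_{\n}(\nabla\H)$ and $\star(\nabla^\perp\n\wedge\H)$, is divergence free, hence $\vec W=\nabla^\perp\L$ for a potential $\L$ on the punctured disc whose only obstruction to being single valued is a period around $0$ equal to a nonzero multiple of $\vec\gamma_0(\phi,p_j)$; the hypothesis $\vec\gamma_0(p_j)=0$ therefore makes $\L$ well defined on $D^2\setminus\ens{0}$, and by finite total curvature together with \eqref{theta2} its remaining singularity at $0$ is governed by $m$. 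The hypothesis $r(p_j)=\max\ens{m,0}=0$ gives $m\le0$, so $\H$ is bounded and $\L$ is bounded, indeed H\"older continuous, near $0$. I would then introduce, as in \cite{riviere1}, the scalar potential $S$ and the $\Lambda^2\R^n$-valued potential $\vec R$ with $\nabla S=\s{\L}{\nabla^\perp\phi}$ and $\nabla\vec R=\L\wedge\nabla^\perp\phi+(\text{lower order})$, obtaining the critical Jacobian-type system coupling $\phi$, $\H$, $S$ and $\vec R$ in which all coefficients are now at worst bounded near $0$. A preliminary removability step, a Pohozaev-type argument ruling out distributional terms supported at $0$, lets one run this system on all of $D^2$, and combining $\varepsilon$-regularity for it with Wente estimates I would bootstrap $S$, $\vec R$, $\H$ and $\nabla\phi$ into successively better function spaces near $0$.

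The endgame is to convert this integrability into an asymptotic expansion free of logarithms. Writing the system in the conformal coordinate and separating the homogeneous solutions, one sees that logarithmic terms can enter the expansions of $\phi$ and $\H$ only at the indicial exponents resonant with a nonzero residue; since both residues vanish, an induction on the order produces, for every $N$, an expansion $\phi(z)=\Re(\vec A_0 z^{\theta_0})+P_N(z,\z)+o(|z|^N)$ with $P_N$ a polynomial and no $\log|z|$ term, and likewise for $\H$, after which a standard elliptic bootstrap on the resulting smooth right-hand sides gives $\phi\in C^\infty$ near $p_j$. I expect the resonance bookkeeping to be the main obstacle: one must keep careful track of how the branch order $\theta_0$ degenerates the conformal factor, $e^{2\lambda}\sim|z|^{2(\theta_0-1)}$, and hence shifts the indicial roots of the linearised system, and one must verify that the period appearing in \eqref{residue} coincides precisely with the coefficient of the leading logarithm, so that the hypothesis annihilates the logarithmic mode rather than merely some of them.
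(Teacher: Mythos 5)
This theorem is not proved in the paper --- it is quoted verbatim from \cite{beriviere}, and the paper only recalls the statement. Comparing your sketch against the cited source rather than the paper, you have the right strategy: the Rivi\`ere conservative form produces a divergence-free field whose potential $\vec L$ is single-valued precisely when the flux $\vec\gamma_0$ vanishes; the $r=0$ hypothesis bounds $\vec H$ (and hence $\vec L$, cf.\ Lemma~\ref{expl}) near the branch point; and one then runs the $S$--$\vec R$ Jacobian system and extracts asymptotic expansions order by order. Your easy direction is also fine, though for $\vec\gamma_0=0$ it is cleaner to note directly that the closed $1$-form in \eqref{residue} extends continuously across $p_j$ when $\phi$ is smooth, so its period vanishes by Stokes, rather than appealing to the identification of $\vec\gamma_0$ with a logarithm coefficient in the expansion of $\vec H$ (which is itself a nontrivial output of \cite{beriviere}).

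The real gap is exactly the one you flag yourself: the claim that ``logarithmic terms can enter the expansions of $\phi$ and $\vec H$ only at the indicial exponents resonant with a nonzero residue'' is the entire technical content of \cite{beriviere}. It does not follow from the structure of the equation alone, because the conformal factor degenerates like $e^{2\lambda}\sim|z|^{2(\theta_0-1)}$, so the system $\Delta S = -\ast\nabla\vec n\cdot\nabla^\perp\vec R$, $\Delta\vec R=\cdots$ has coefficients that blow up at the puncture. Simply having $\vec H$ and $\vec L$ bounded does not let a ``standard elliptic bootstrap'' start, since $\nabla\phi$ vanishes to order $\theta_0-1$ at $0$ and the $S,\vec R$ quantities inherit that degeneracy. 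One must first obtain the precise pointwise expansions (the analogues of \eqref{notalgebraic} and \eqref{theta2}) with sharp remainder estimates, rule out distributional contributions supported at $0$ (your ``Pohozaev-type argument'' is not a preliminary step but a delicate one --- compare the careful treatment in Section~\ref{sec7} of the present paper), and only then run the induction that kills the $\log$ terms. So: correct plan, matching the cited proof, but a sketch whose substantive content is deferred.
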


    The main result of this paper is an improvement of the regularity of weak limits of Willmore immersions thanks to a removability result on the second residue. We state it in the simplest setting under which the quantization holds, but see the comments after the statement of the theorem. 
	
	\begin{theorem}\label{ta}
		Let $\Sigma$ be a closed Riemann surface, $\{\phi_k\}_{k\in\N}\subset \mathrm{Imm}(\Sigma,\R^n)$ be a sequence of Willmore immersions and assume that the conformal class of $\{\phi_k\}_{k\in \N}$ stays within a compact subset of the Moduli Space and that
		\begin{align}\label{energybound}
		\sup_{k\in \N}W(\phi_k)<\infty.
		\end{align}
		Let $\phi_{\infty}:\Sigma\rightarrow \R^n$ be the weak sequential limit of $\{\phi_k\}_{k\in \N}$ (up to the composition by suitable chosen sequence of conformal transformations in the target and diffeomorphisms in the domain) and $\vec{\Psi}_i:S^2\rightarrow \R^n$, $\vec{\chi}_j:S^2\rightarrow \R^n$ be the bubbles such that
		\begin{align}\label{energyidentity}
		\lim\limits_{k\rightarrow \infty}W(\phi_k)=W(\phi_{\infty})+\sum_{i=1}^{p}W(\vec{\Psi}_i)+\sum_{j=1}^{q}(W(\vec{\xi}_j)-4\pi\theta_j),
		\end{align}
		where $\theta_0(\vec{\xi}_j,p_j)\geq 1$ is the multiplicity of $\vec{\chi}_j$ at some point $p_j\in \vec{\xi}_j(S^2)\subset \R^n$. Then at every branch point $p$ of $\phi_{\infty}, \vec{\Psi}_i$ or $\vec{\xi_j}$ of multiplicity $\theta_0=\theta_0(p)\geq 2$, the second residue $r(p)$ satisfies the inequality $r(p)\leq \theta_0-2$.
	\end{theorem}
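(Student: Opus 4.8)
The plan is to reduce Theorem~\ref{ta} to a local statement about weakly converging sequences of conformal Willmore immersions of the disk, and then to exclude the extremal value $r=\theta_0-1$ by a Pohozaev-type argument built on Rivi\`ere's conservation laws. For the reduction I would use the bubble-tree description underlying the energy identity \eqref{energyidentity}: every branch point $p$ of $\phi_\infty$, of a $\vec{\Psi}_i$ or of a $\vec{\xi}_j$ of order $\theta_0\geq2$ occurs at a concentration point of $\{\phi_k\}$ at some level of the tree, so, after composing with suitable conformal transformations of the target and dilations of the domain and inducting on the (finite) height of the tree, it suffices to treat the model situation: $\phi_k:D^2\to\R^n$ are conformal Willmore immersions with $\sup_k W(\phi_k)<\infty$ and uniformly controlled conformal factor, $\phi_k\rightharpoonup\phi_\infty$ weakly in $W^{2,2}_{\mathrm{loc}}(D^2\setminus\{0\})$, and $\phi_\infty$ is a branched conformal Willmore immersion with a branch point of order $\theta_0\geq2$ at the origin; the goal becomes $r(\phi_\infty,0)\leq\theta_0-2$. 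One point requiring care is that at some concentration points $\phi_\infty$ collapses to a point; there one first renormalizes to the scale at which the relevant bubble is non-trivial before applying the local statement.

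For the local problem I would work with Rivi\`ere's potentials $\vec{L}$, $S$, $\vec{R}$ attached to a conformal Willmore immersion, which satisfy divergence-form identities and in terms of which $\Delta\phi$, $\Delta S$ and $\Delta\vec{R}$ have Jacobian right-hand sides. The first residue $\vec{\gamma}_0$ is the flux of $\nabla^\perp\vec{L}$ around the branch point, while by the pointwise expansion \eqref{theta2} the second residue $r$ is the largest $m$ with $z^{m}\vec{H}\not\to0$ at the branch point, equivalently --- after one integration of the conservation law --- the order of vanishing (equal to $\theta_0-1-r$) of the potential $\vec{R}$ there. The crucial structural fact is that for the \emph{smooth} immersions $\phi_k$ these potentials are single-valued and regular on all of $D^2$, so $\vec{\gamma}_0=0$ and $r=0$; the residues of $\phi_\infty$ are created purely by the concentration at the origin, and it is precisely the uniform Willmore bound and the compactness of the conformal class that allow one to control the potentials across the necks produced by that concentration.

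To exclude $r=\theta_0-1$ I would argue by contradiction. Assuming $r(\phi_\infty,0)=\theta_0-1$, the expansions \eqref{notalgebraic} and \eqref{theta2} give $\vec{H}=\Re\big(\vec{C}_0\,z^{1-\theta_0}\big)+O\big(|z|^{2-\theta_0}\log|z|\big)$ with $\vec{C}_0\neq0$, so the Willmore energy density $|\vec{H}|^2\,d\mathrm{vol}_g$ stays bounded away from zero at the origin, with a finite limit proportional to $|\vec{A}_0|^2|\vec{C}_0|^2$. I would then extract from the conservation laws a Pohozaev-type identity --- contracting the divergence-free fields with the dilation field $z\partial_z+\bar z\partial_{\bar z}$ and integrating over an annulus $\{\rho<|z|<R\}$ --- calibrated so that its inner boundary integral $\mathcal{F}(\rho)$ converges, as $\rho\to0$, to a nonzero multiple of $|\vec{C}_0|^2$ exactly when $r=\theta_0-1$ (and to $0$ when $r\leq\theta_0-2$): this is the removability threshold. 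On the other hand, for each smooth $\phi_k$ the analogous quantity $\mathcal{F}_k(\rho)$ tends to $0$ as $\rho\to0$, and transmitting this through the neck annuli --- using that the Willmore energy in each neck vanishes in the limit, so no extra contribution of size $|\vec{C}_0|^2$ can be hidden there --- and then letting $k\to\infty$ forces $\lim_{\rho\to0}\mathcal{F}(\rho)=0$, hence $\vec{C}_0=0$, contradicting $r=\theta_0-1$. In short, $r=\theta_0-1$ is exactly the borderline at which a Pohozaev boundary term produced by the branch point fails to vanish, which is incompatible with the surface being a weak limit or a bubble, where the conservation laws of the approximating immersions together with the no-neck-energy property force it to vanish.

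The main obstacle will be the neck analysis underpinning this Pohozaev identity: one has to show that the potentials $\vec{L}_k$, $S_k$, $\vec{R}_k$ grow only logarithmically across the neck annuli and that the associated boundary integrals are asymptotically constant there, with no residual oscillation or concentration --- this is where the uniform Willmore bound and the compact conformal class are genuinely used, and where the borderline logarithmic remainders in \eqref{notalgebraic}--\eqref{theta2} must be tracked with precision. A secondary, more bookkeeping difficulty is the inductive reduction of the first paragraph, in particular the choice of renormalization scale at collapsing concentration points. I would carry out the neck estimates along the lines of Bernard-Rivi\`ere's quantization argument, and the pointwise expansions along the lines of the Definition-Proposition recalled above.
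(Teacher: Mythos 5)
Your plan diverges from the paper's actual proof in a way that I believe hides a genuine gap, centred on the claim that a Pohozaev-type identity obtained by contracting the conservation law with the dilation field $z\partial_z+\bar z\partial_{\bar z}$ produces an inner boundary integral $\mathcal{F}(\rho)$ that tends to a nonzero multiple of $|\vec C_0|^2$ precisely when $r=\theta_0-1$. That quantity is, by construction, the dilation residue of the Willmore conservation law, which is a \emph{first}-residue type flux. The paper recalls (via the classification result referenced for \eqref{residue}) that all first residues of $\phi_\infty$ and of the bubbles vanish, so this flux carries no information about the second residue. Concretely, the inverted Enneper surface is a branched Willmore sphere with one branch point of order $\theta_0=3$, second residue $r=2=\theta_0-1$, and vanishing flux: for it $\mathcal{F}(\rho)\to 0$, so your argument would produce no contradiction, even though Theorem~\ref{tb} (which is a consequence of Theorem~\ref{ta}) says this surface cannot be a bubble. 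A ``calibrated'' Pohozaev identity that is sensitive to the second residue and not merely to the first residues would be a new object; it is not supplied and I do not see how to build one from the dilation conservation law.

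The paper's mechanism is different in kind. It first establishes an integrality theorem (Theorem~\ref{integer1}) for the degree of the conformal factor across the neck, so that $e^{\lambda_k}\sim |z|^{\theta_0-1}$ with uniform constants; it upgrades the $L^2$ no-neck energy of \cite{quanta} to an $L^{2,1}$ no-neck energy (Theorem~\ref{improvedquanta}), which is essential because the error term $\vec v_k$ in the decomposition $z^{\theta_0-1}(\vec H_k+2i\vec L_k)=\vec u_k+\vec v_k$ is estimated through the $L^{2,1}$-norm of $\nabla\vec n_k$ dualised against $1/(z-z_0)\in L^{2,\infty}$ in a Cauchy integral; it identifies the expansion at the inner boundary of the neck (the end of a branched Willmore sphere), which gives the crucial \emph{quadratic} smallness $e^{\lambda_k}|\vec H_k+2i\vec L_k|=O(\alpha^2)$ on $\partial B_{\alpha^{-1}\rho_k}$; and it applies a Schwarz lemma for degenerating annuli (Lemma~\ref{schwarz}) to the \emph{holomorphic} part $\vec u_k$, which is small on the inner boundary and close to $\vec C_0$ near the outer one, forcing $|\vec C_0|\lesssim\alpha$ and then $\vec C_0=0$. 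The argument is linear (a Schwarz/Cauchy estimate on a holomorphic potential), not quadratic (Pohozaev), and it hinges on three ingredients you do not invoke: the integer multiplicity across the neck, the $L^{2,1}$ (as opposed to $L^2$) no-neck quantization, and the explicit vanishing rate of $e^\lambda\vec H$ at the end of a compactified branched Willmore sphere. Without a concrete replacement for the Schwarz step and without the $L^{2,1}$ quantization, I do not see how your scheme closes; and as written the Pohozaev boundary term you propose to compute is the first residue, which vanishes independently of the value of $r$.
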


    Recall that for all $p\in \R^n$, the multiplicity of a branched immersion is defined by
    \begin{align*}
    \theta_0(\phi,p)=\lim_{r\rightarrow 0}\frac{\mathrm{Area}(\phi(\Sigma))\cap B_r(p))}{\pi r^2}\in \N.
    \end{align*}
    Furthermore, the Li-Yau inequality (\cite{lieyau}) implies that for all $p\in \R^n$, 
    $
    	W(\phi)\geq 4\pi\,\theta_0(\phi,p),
    $
    so the last component on the right-hand side of \eqref{energyidentity} is non-negative.
    
    We also emphasize that \cite{classification} implies that the first residue of the branched Willmore surfaces $\phi_{\infty},\vec{\Psi}_i$ and $\vec{\chi}_j$ defined in \eqref{residue} all vanish. This condition implies in particular that whenever one of these branched Willmore surfaces is conformally minimal, the flux of the associated minimal surface must vanish. This will be of importance in Theorem \ref{tb} (see also the Figure \ref{figure}).
    
    \begin{remsintro}
    	\begin{enumerate}
    	\item[(1)]
    	Theorem \ref{ta} should also hold in the general case of degenerating conformal class for which the quantization holds, as treated by Laurain-Rivi\`{e}re in \cite{quantamoduli}. Indeed, thanks to another results of these authors (\cite{lauriv1}), under the hypothesis of uniformly bounded total curvature \eqref{energybound} there exists an finite atlas of complex charts $(U_j,\psi_j)_{1\leq j\leq N}$ of $\Sigma$ independent of $k$ such that for all $k\in \N$ and $1\leq j\leq N$, the conformal factor $\lambda_k^j$ of $\phi_k\circ \psi_j^{-1}:\psi(U_j)\subset \C\rightarrow \R^n$ satisfies for some universal constant $C_0=C_0(n)>0$ (independent of $k$ and $j$)
    	\begin{align}\label{controlconf}
    		\npn{\D \lambda_k^j}{2,\infty}{\psi_j(U_j)}\leq C_0\,W(\phi_k),
    	\end{align} 
    	where the conformal factor $\lambda_k^j$ of $\phi_k\circ \psi_j^{-1}$ is explicitly given by
    	\begin{align*}
    		(\phi_k\circ \psi_j^{-1})^{\ast}g_{\R^n}=e^{2\lambda_k^j}|dz|^2,
    	\end{align*}
    	and $|dz|^2$ is the flat metric on the open subset $\psi_j(U_j)\subset \C$. In particular, as the estimates that we obtain are taken in domains shrinking to a point, we can assume that we work in one of the charts $(U_j,\psi_j)$ for some $1\leq j\leq N$ (to have the control \eqref{controlconf} of the $L^{2,\infty}$ norm of the gradient of the conformal factor of $\phi_k$).
    	\item[(2)] More generally, this theorem also holds for a sequence of immersions in the viscosity method (see \cite{eversion}). The details of the proof of this claim can be found in \cite{blow-up2}
    \end{enumerate}
    \end{remsintro}
     
     For conformally minimal Willmore surfaces, the second residue has the following interpretation. Let $\phi:\Sigma\setminus\ens{p_1,\cdots,p_d}\rightarrow \R^3$ be a \emph{non-planar} complete minimal surface with finite total curvature, and fix $1\leq j\leq d$. There exists integers $m_j,k_j\geq 1$ such that for all complex chart $\psi:D^2\rightarrow \Sigma$ such that $\psi(0)=p_j$, there exists $\vec{A}_0,\vec{A}_1\in \C^n\setminus\ens{0}$ such that 
    \begin{align}\label{weierstrass}
    	\phi\circ \psi(z)=\Re\left(\frac{\vec{A}_0}{z^{m_j}}+\frac{\vec{A}_1}{z^{m_j-k_j}}\right)+O(|z|^{k_j-m_j+1}).
    \end{align}
    Then a quick computation shows that the inversion $\vec{\Psi}=\iota\circ \phi$ of $\phi$ has a branch point of order $\theta_0(p_j)=m_j$ at $p_j$ and its second residue satisfies
    \begin{align*}
    	r(\phi,p_j)=\max\ens{m_j-k_j,0},
    \end{align*}
    provided $\vec{A}_0,\vec{A}_1$ be linearly \emph{independent}.
    Using the classification of L\'{o}pez of complete minimal surface with total curvature greater than $-12\pi$ (\cite{lopez}), we can describe the possible bubbles appearing in blow-ups of Willmore immersions.
    	
	\begin{theorem}\label{tb}
		Let $\phi:S^2\setminus\ens{p_1,\cdots,p_d}\rightarrow \R^3$ be a \emph{non-planar} complete minimal surface with finite total curvature arising as a bubble of a sequence of Willmore immersions (or as a conformal image of this bubble). Then  
		\begin{align*}
			\int_{S^2}K_gd\vg\leq -8\pi.
		\end{align*}
		with equality if and only if $\phi$ is either a minimal sphere with exactly one end of multiplicity $5$ whose Gauss map is ramified at its end, or $\phi$ is a minimal sphere with one flat end and one end of multiplicity $3$. 
	\end{theorem}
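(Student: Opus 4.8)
The plan is to reduce the inequality to a statement about the degree of the Gauss map, and then settle the equality case by running through L\'{o}pez's list. First I would record the two classical identities for a complete minimal surface $\phi:S^2\setminus\ens{p_1,\dots,p_d}\to\R^3$ of finite total curvature: it compactifies to $S^2$, its Gauss map extends to a branched cover $g:S^2\to S^2$, and by the Chern--Osserman and Jorge--Meeks formulas
\begin{align*}
\int_{S^2}K_g\,d\vg=-4\pi\deg(g),\qquad \deg(g)=-1+\sum_{j=1}^{d}\frac{m_j+1}{2},
\end{align*}
where $m_j\geq 1$ is the multiplicity of the $j$-th end; by \eqref{weierstrass}, $m_j=\theta_0(\iota\circ\phi,p_j)$ is also the order of the branch point of the inversion $\iota\circ\phi$ at $p_j$. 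Since $\phi$ is non-planar, $\deg(g)\geq 1$, the bound $\int_{S^2}K_g\,d\vg\leq -8\pi$ is equivalent to $\deg(g)\geq 2$, i.e.\ to $\sum_{j}(m_j+1)\geq 6$, and equality holds exactly when $\deg(g)=2$.

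For the inequality it is therefore enough to exclude $\deg(g)=1$. In that case $\int_{S^2}K_g\,d\vg=-4\pi$, and by the classification of complete minimal surfaces of total curvature $-4\pi$ (contained in \cite{lopez}) $\phi$ is, up to a rigid motion and a dilation, the catenoid or Enneper's surface. I would rule out the catenoid by noting that its two ends are catenoidal, so the logarithmic coefficient $\vec{\gamma}_0$ of \eqref{theta1} — equivalently the first residue \eqref{residue} of the bubble $\iota\circ\phi$ — is nonzero at each end, contradicting the vanishing of the first residue of all bubbles recorded after Theorem~\ref{ta} (via \cite{classification}). For Enneper's surface there is a single end, of multiplicity $m_1=3\geq 2$; expanding its Weierstrass representation near the end in the form \eqref{weierstrass} gives $k_1=1$ with $\vec{A}_0,\vec{A}_1$ linearly independent, hence $r(\phi,p_1)=\max\ens{m_1-k_1,0}=2$, which violates the bound $r(p_1)\leq\theta_0-2=1$ of Theorem~\ref{ta} at the branch point $p_1$ of $\iota\circ\phi$. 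Hence $\deg(g)\geq 2$ and $\int_{S^2}K_g\,d\vg\leq-8\pi$.

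For the equality statement, $\int_{S^2}K_g\,d\vg=-8\pi>-12\pi$ places $\phi$ in L\'{o}pez's classification \cite{lopez}, so I would go through the genus-zero families of total curvature $-8\pi$ and keep only those consistent with the two structural features of the bubble $\iota\circ\phi$: (i) the first residue vanishes at every end — equivalently the flux of $\phi$ around each $p_j$ is zero and, at every end with $m_j=1$, the expansion \eqref{theta1} carries no logarithmic term; and (ii) at every end with $m_j\geq 2$ one has $r(\phi,p_j)\leq m_j-2$, which through \eqref{weierstrass} means that the first term of the expansion of $\phi$ transverse to $\vec{A}_0$ appears only at order $|z|^{-(m_j-2)}$ or later (when $\vec{A}_0,\vec{A}_1$ are independent, exactly $k_j\geq 2$). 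Constraint (i) eliminates every family with a catenoidal end, in particular every surface realising the $d=3$ configuration $(1,1,1)$ such as the Jorge--Meeks trinoid, while (i)--(ii) together eliminate the configuration $d=1,\ m_1=5$ with Gauss map unramified at the end, the configuration $d=2,\ (m_1,m_2)=(2,2)$, and the remaining members of the list. What survives is the minimal sphere with a single end of multiplicity $5$ whose (degree two) Gauss map is ramified at that end — the ramification being precisely what pushes the transverse term to order $|z|^{-3}$, forcing $r=3=m_1-2$ — and the minimal sphere with one flat end ($m=1$, hence invisible to (ii)) and one end of multiplicity $3$ with $r=1=m_2-2$; a direct check shows both of these do satisfy (i) and (ii), which gives the claimed characterization.

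The Gauss map degree formula and the Weierstrass expansion of Enneper's surface are routine; the main obstacle is the last paragraph, where for each genus-zero family of total curvature $-8\pi$ in \cite{lopez} one must read off the multiplicities $m_j$, compute the period of the Weierstrass one-form to test the vanishing of the flux, and expand the Weierstrass data far enough to evaluate $r(\phi,p_j)$. The delicate point throughout is the degenerate case $\vec{A}_0\parallel\vec{A}_1$ in \eqref{weierstrass}, where the formula $r=\max\ens{m_j-k_j,0}$ no longer applies and one must unfold the expansion further (or adjust the conformal coordinate at the end) to locate the genuine second residue.
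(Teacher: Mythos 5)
Your proposal follows the same route as the paper's Appendix~\ref{proofTB}: deduce $\deg(g)\geq 2$ from the Jorge--Meeks formula after excluding the catenoid by its nonzero flux and Enneper's surface via the inversion formula $r=\max\ens{m-k,0}$, then comb through L\'{o}pez's genus-zero families of total curvature $-8\pi$ testing vanishing of the residues of $\omega,g\omega,g^2\omega$ and the bound $r\leq\theta_0-2$ at each end. The remaining work you correctly identify --- the explicit Weierstrass expansions for each family, and the caveat when $\vec A_0\parallel\vec A_1$ (which in fact occurs at one parameter value of L\'{o}pez's family VI, already excluded by flux) --- is exactly what the paper carries out, so the outline is sound.
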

    Let $\Sigma$ be a closed Riemann surface. Recall that the flux of a complete minimal immersion (of finite total curvature) $\phi:\Sigma\setminus\ens{p_1,\cdots,p_d}\rightarrow \R^n$  at $p_j$ is defined by 
    \begin{align*}
    	\mathrm{Flux}(\phi,p_j)=\frac{1}{4\pi}\Im\int_{\gamma}\partial\phi,
    \end{align*}
    where $\gamma$ is as in \eqref{residue}. Thanks to \cite{classification}, if $\vec{\Psi}:\Sigma\rightarrow \R^n$ is the inversion at $0$ of $\phi$, we have
    \begin{align*}
    	\vec{\gamma}_0(\vec{\Psi},p_j)=\mathrm{Flux}(\phi,p_j)
    \end{align*}
    and for the Willmore immersions of \ref{ta}, as the first residue vanishes, the flux of the associated minimal surfaces also vanishes. 
    
    For the proof of this result and the Weierstrass data of the  surfaces classified by L\'{o}pez (\cite{lopez}), see the Appendix \ref{proofTB}.
    Notice that out of these twelve families, only five do not have flux, and the two others do not satisfy the condition on the second residue of Theorem \ref{ta}. In other words, only L\'{o}pez surfaces of type II, VIII and IX are admissible (see the Figure \ref{figure}). Furthermore, notice that they all have flux or a positive second residue. Although the Catenoid and the Enneper surfaces are well known (as they are the only complete minimal surfaces of total curvature $-4\pi$), the trinoid is a complete minimal surface with three catenoid ends (there exists a whole family of such minimal surfaces with an arbitrary number of catenoid ends constructed by Jorge-Meeks \cite{jorge}). 
    
    We describe in the following table the twelve families of complete minimal surface of $\R^3$ of total curvature larger than $-12\pi$. Each time that they have non-zero flux (using \cite{classification}) or second residue $r(p)=\theta_0(p)-1$ at an end of multiplicity $\theta_0(p)\geq 2$, they cannot occur. We denote by $d\geq 1$ the number of ends, and by $m_1,\cdots,m_d$ and $r_1,\cdots,r_d$ the multiplicities of the ends and associated second residues. 
    {\tabulinesep=0.6mm
    	\begin{figure}[H]
    		\begin{center}
    			\begin{tabu}{|c|c|c|c|c|c|c|}
    				\hline
    				\begin{tabular}{@{}c@{}}Minimal\\ surface \end{tabular} & \begin{tabular}{@{}c@{}}Total \\ curvature\end{tabular}& \begin{tabular}{@{}c@{}}Non-zero \\ flux?\end{tabular} & \begin{tabular}{@{}c@{}}Number \\ of ends\end{tabular} & \begin{tabular}{@{}c@{}}Multiplicities\\ of ends\end{tabular} & \begin{tabular}{@{}c@{}}Second\\ residues \end{tabular} & \begin{tabular}{@{}c@{}}Possible \\ Willmore bubble?\end{tabular}\\
    				\hline
    				\begin{tabular}{@{}c@{}}Catenoid\\  \end{tabular} & $-4\pi$ & Yes & $d=2$ & \begin{tabular}{@{}c@{}}  $m_1=1$\\ $m_2=1$  \end{tabular}  & \begin{tabular}{@{}c@{}} $r_1=0$\\ $r_2=0$ \end{tabular} & No \\
    				\hline
    				\begin{tabular}{@{}c@{}}Enneper \\ surface\end{tabular} & $-4\pi$ & No & $d=1$ & $m_1=3$ & $r_1=2$ & No\\
    				\hline
    				\begin{tabular}{@{}c@{}}Trinoid\\  \end{tabular} & $-8\pi$ & Yes & $d=3$ & \begin{tabular}{@{}c@{}} $m_j=1$\\ $1\leq j\leq 3$  \end{tabular} & \begin{tabular}{@{}c@{}} $r_j=0$\\ $1\leq j\leq 3$  \end{tabular} & No\\
    				\hline
    				\begin{tabular}{@{}c@{}}L\'{o}pez\\ surface I\end{tabular} & $-8\pi$ & No & $d=1$ & $m_1=5$ & $r_1=4$ & No\\
    				\hline
    				\begin{tabular}{@{}c@{}}L\'{o}pez\\ surface II\end{tabular} & $-8\pi$ & No & $d=1$ & $m_1=5$ & $r_1=3$ & Yes\\
    				\hline
    				\begin{tabular}{@{}c@{}}L\'{o}pez\\ surface III\end{tabular} & $-8\pi$ & Yes & $d=2$ & \begin{tabular}{@{}c@{}} $m_1=2$\\ $m_2=2$  \end{tabular} & \begin{tabular}{@{}c@{}} $r_1=1$\\ $r_2=1$  \end{tabular} & No\\
    				\hline
    				\begin{tabular}{@{}c@{}}L\'{o}pez\\ surface IV\end{tabular} & $-8\pi$ & Yes & $d=2$ & \begin{tabular}{@{}c@{}} $m_1=2$\\ $m_2=2$  \end{tabular} & \begin{tabular}{@{}c@{}} $r_1=1$\\ $r_2=1$  \end{tabular} & No\\
    				\hline
    				\begin{tabular}{@{}c@{}}L\'{o}pez\\ surface V\end{tabular} & $-8\pi$ & Yes & $d=2$ & \begin{tabular}{@{}c@{}} $m_1=2$\\ $m_2=2$  \end{tabular} & \begin{tabular}{@{}c@{}} $r_1=0$\\ $r_2=0$  \end{tabular} & No\\
    				\hline
    				\begin{tabular}{@{}c@{}}L\'{o}pez\\ surface VI\end{tabular} & $-8\pi$ & Yes & $d=2$ & \begin{tabular}{@{}c@{}} $m_1=1$\\ $m_2=3$  \end{tabular} & \begin{tabular}{@{}c@{}} $r_1=0$\\ $1\leq r_2\leq 2$  \end{tabular} & No \\
    				\hline
    				\begin{tabular}{@{}c@{}}L\'{o}pez\\ surface VII\end{tabular} & $-8\pi$ & Yes & $d=2$ & \begin{tabular}{@{}c@{}} $m_1=1$\\ $m_2=3$  \end{tabular} & \begin{tabular}{@{}c@{}} $r_1=0$\\ $r_2=2$  \end{tabular} & No\\
    				\hline
    				\begin{tabular}{@{}c@{}}L\'{o}pez\\ surface VIII\end{tabular} & $-8\pi$  & No & $d=2$ & \begin{tabular}{@{}c@{}} $m_1=1$\\ $m_2=3$  \end{tabular} & \begin{tabular}{@{}c@{}} $r_1=0$\\ $r_2=1$  \end{tabular} & Yes \\
    				\hline
    				\begin{tabular}{@{}c@{}}L\'{o}pez\\ surface IX\end{tabular} & $-8\pi$ & No & $d=2$ & \begin{tabular}{@{}c@{}} $m_1=1$\\ $m_2=3$  \end{tabular} & \begin{tabular}{@{}c@{}} $r_1=0$\\ $r_2=1$  \end{tabular} & Yes \\
    				\hline
    			\end{tabu}
    		\end{center}
    		\caption{Geometric properties of complete minimal surface with total curvature greater than $-12\pi$}\label{figure}
    	\end{figure}
    
    We remark that L\'{o}pez surfaces of type VI are indexed by a real parameter $c\in \R\setminus\ens{-1}$, and all have second residue $r_2=2$ at the end of multiplicity $3$, with the exception of the surface corresponding to the parameter $c=0$ for which the second residue is equal to $r_2=1$.

    As we expect weak limits of Willmore immersions to satisfy the Willmore equation in the weak sense, it is likely that one can improve Theorem \ref{ta} to show that the second residue vanishes. The following observation confirms this intuition.
    
    \begin{theorem}\label{td}
    	A branched Willmore immersion of finite total curvature is smooth if and only if it satisfies the Willmore equation everywhere in the distributional sense.
    \end{theorem}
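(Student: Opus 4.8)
The plan is to deduce Theorem~\ref{td} from the Bernard--Rivi\`ere theorem recalled above, which already reduces the smoothness of $\phi$ at a branch point $p_j$ to the vanishing of \emph{both} residues $\vec\gamma_0(p_j)$ and $r(p_j)$; what is left to do is to recognise these two numbers as exactly the data carried by the distributional defect of the Willmore equation at $p_j$. Throughout, ``the Willmore equation in the distributional sense'' is read as Rivi\`ere's conservative form $\dive\vec{V}=0$, equivalent to the Willmore equation, where in a conformal chart centred at $p_j$ the field $\vec{V}$ is (the vector field associated to) the integrand of \eqref{residue},
\begin{align*}
	\vec{V}=\p{z}\H+|\H|^2\,\p{z}\phi+2\,g^{-1}\otimes\s{\H}{\h_0}\otimes\p{\z}\phi,
\end{align*}
so that $\dive\vec{V}=0$ is the usual Willmore equation away from $p_1,\dots,p_d$ and is the only formulation that survives at a branch point of order $\theta_0\geq2$, where the classical equation $\Delta_g\H+\cdots=0$ has degenerate coefficients.

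For the ``only if'' implication, assume $\phi$ is smooth. By Bernard--Rivi\`ere, $\vec\gamma_0(p_j)=r(p_j)=0$ at each $p_j$. From $r(p_j)=0$ and the expansions \eqref{notalgebraic}--\eqref{theta1} I would check that, near $p_j$, the field $\vec{V}$ lies in $\mathrm{L}^1_{\loc}$ with at worst a first--order pole: for $\theta_0=1$ this pole is $\p{z}\H\sim\vec\gamma_0/(2z)$, and for $\theta_0\geq2$ one uses in addition that the leading term $\tfrac{\theta_0(\theta_0-1)}{2}\vec{A}_0z^{\theta_0-2}$ of $\p{z}^2\phi$ is tangent to the limiting tangent plane $\mathrm{span}_{\R}\{\Re\vec{A}_0,\Im\vec{A}_0\}$, so that $\h_0=\pi_{\n}(\p{z}^2\phi)$, and hence the $g^{-1}$--term, is of strictly lower order. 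Since $\dive\vec{V}=0$ classically off the finite set $\{p_1,\dots,p_d\}$ and $\vec{V}$ has at most a first--order pole there, $\dive\vec{V}$ is a sum of Dirac masses supported on $\{p_1,\dots,p_d\}$ whose coefficients are the periods of the conservation one--form, i.e.\ the fluxes $4\pi\vec\gamma_0(p_j)=0$; hence $\dive\vec{V}=0$ on all of $\Sigma$.

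For the ``if'' implication, assume $\dive\vec{V}=0$ in the distributional sense on $\Sigma$. I would first rule out $r(p_j)\geq1$. If $r:=r(p_j)\geq1$, then \eqref{theta2} gives $\H=\Re(\vec{C}_0z^{-r})+O(|z|^{1-r}\log|z|)$ with $\vec{C}_0\neq0$, so $\p{z}\H=-\tfrac r2\vec{C}_0z^{-r-1}+O(|z|^{-r}\log|z|)$, while $|\H|^2\p{z}\phi=O(|z|^{\theta_0-1-2r})$ and, by the tangency of $\p{z}^2\phi$ noted above, the $g^{-1}$--term is of strictly lower order than $|z|^{-r-1}$; thus $\vec{V}=-\tfrac r2\vec{C}_0z^{-r-1}+(\text{lower order})$ near $p_j$. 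The distributional identity $\p{\z}(z^{-r-1})=\tfrac{(-1)^r\pi}{r!}\,\p{z}^r\delta_{p_j}$ then produces a non--zero term proportional to $\vec{C}_0\,\p{z}^r\delta_{p_j}$ in $\dive\vec{V}$, contradicting $\dive\vec{V}=0$. Hence $r(p_j)=0$ for all $j$, and then, exactly as before, $\vec{V}\in\mathrm{L}^1_{\loc}$ with at most first--order poles and $\dive\vec{V}=\sum_j4\pi\vec\gamma_0(p_j)\,\delta_{p_j}$, so $\dive\vec{V}=0$ forces $\vec\gamma_0(p_j)=0$ for all $j$. By Bernard--Rivi\`ere, $\phi$ is smooth.

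The hard part will be the local analysis near branch points of order $\theta_0\geq2$: extracting the leading singular term of $\vec{V}$ from \eqref{notalgebraic}--\eqref{theta2} requires bootstrapping those $C^0$ asymptotics enough to differentiate their error terms, verifying the cancellation that makes $\h_0$ and the $g^{-1}$--term subleading, and checking that the surviving contributions to the coefficient of $z^{-r-1}$ do not cancel, so that it is a non--zero multiple of $\vec{C}_0$. I also expect to have to state carefully the underlying distribution--theoretic fact — that a one--form $\omega$ closed off an isolated point, of controlled singularity, satisfies $d\omega=\sum_\alpha c_\alpha\,\partial^\alpha\delta$ with $c_0$ its period and with some $c_\alpha\neq0$, $|\alpha|\geq1$, exactly when $r\geq1$ — and to reconcile it with the chosen weak formulation of the Willmore equation at a non--immersed point.
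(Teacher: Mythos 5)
Your overall strategy — reduce smoothness to the vanishing of $\vec{\gamma}_0(p_j)$ and $r(p_j)$ via Bernard--Rivi\`ere, and identify those two residues with the coefficients of the distributional defect of the conservative Willmore equation at $p_j$ — is the same as the paper's. But there is a genuine gap at the heart of the ``if'' direction, and it is precisely the point the paper flags in its remark after the theorem: when $r(p_j)\geq 1$ the quantities you need to divergence are not a priori distributions. You want to read the hypothesis as $\dive\vec{V}=0$ in $\mathscr{D}'(\Sigma)$, but $\vec V$ contains $\p{z}\H=O(|z|^{-r-1})$ and also $|\H|^2\p{z}\phi=O(|z|^{\theta_0-1-2r})$, neither of which is in $L^1_{\loc}$ once $r\geq1$ (for the second term as soon as $2r\geq\theta_0+1$). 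So the statement ``$\dive\vec{V}=0$ in the distributional sense'' simply does not parse against arbitrary smooth test functions, and the identity $\p{\z}(z^{-r-1})=\frac{(-1)^r\pi}{r!}\p{z}^r\delta_0$ you invoke is an identity for a \emph{regularised} (finite--part) distribution, not for the literal singular function that appears in $\vec V$. Your argument therefore assumes the conclusion of a non-trivial regularisation that you never make.

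This is what Section~\ref{sec7} of the paper is for. There, ``distributional Willmore equation'' is defined by testing the first variation only against $\w\in\mathscr{D}_m(D^2,\R^n)$, i.e.\ compactly supported maps with $\w=\w(0)+\Re(\vec{\gamma}z^m)+O(|z|^{m+1})$; with this restriction each term $\s{\Delta_g^N\w+\mathscr{A}(\w)}{\H}$, $|\H|^2\s{d\phi}{d\w}_g$ is genuinely in $L^1$, so that $DW(\phi)(\w)$ is well defined. The resulting identity expresses the defect as $2\pi(\theta_0-1)\Re\s{\vec{\gamma}}{\vec{C}_0}-2\pi\s{\w(0)}{\vec{\gamma}_0}$, and the clean choice $\w(0)=-\vec{\gamma}_0$, $\vec{\gamma}=\bar{\vec{C}_0}$ forces $(\theta_0-1)|\vec{C}_0|^2+|\vec{\gamma}_0|^2=0$. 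Smoothness then comes from an \emph{induction}: once $r\leq\theta_0-2$ the same first variation is $L^1$ against the larger class $\mathscr{D}_{\theta_0-2}$, which knocks $r$ down by one, and so on to $r=0$. Your proposal collapses this induction into a single step, but that single step is exactly the one that is not allowed: to detect the $z^{-r-1}$ pole of $\p{z}\H$ by naive testing one needs the other terms to be integrable, which in general requires the bound $r\leq\theta_0-2$ that one is still trying to prove. The observation about $\pi_{\n}(\p{z}^2\phi)$ making the $g^{-1}$--term subleading is correct and used in the paper, but it does not cure the non--integrability of $\p{z}\H$ and $|\H|^2\p{z}\phi$ themselves.
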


    The meaning of the distributional Willmore equation is not completely straightforward due to the presence of singularities which are not locally integrable, and we refer to Section \ref{sec7} for more details.     
    
    Finally, on the analysis side, the proof builds on the previous work of Rivi\`{e}re (\cite{riviere1}, \cite{rivierecrelle}), Bernard-Rivi\`{e}re (\cite{beriviere}, \cite{quanta}) and Laurain-Rivi\`{e}re (\cite{angular}, \cite{quantamoduli}, \cite{lauriv1}) and on the general philosophy of integration by compensation and geometric analysis on surfaces (including \cite{meyer}, \cite{toro}, \cite{muller}, \cite{helein}). The proof of the main Theorem \ref{ta} relies mostly the improvement of the energy quantization (\cite{quanta}) to obtain a $L^{2,1}$ no-neck energy (Theorem \ref{improvedquanta}), and on the following result, which shows that the \emph{multiplicity} of weakly converging sequence of immersions becomes eventually constant to an integer. This is a significant improvement of the fundamental work of M\"{u}ller-\v{S}ver\'{a}k (\cite{muller}).
    
    \begin{theorem}\label{integer0} Let $n\geq 3$ be a fixed integer. There exists a universal constant $C(n)>0$ with the following property.
    	Let $\{\phi_k\}_{k\in \N}$ be a sequence of smooth conformal immersions from the disk $B(0,1)\subset \C$ into $\R^n$ and $\ens{\rho_k}_{k\in \N}\subset (0,1)$ be such that $\rho_k\conv{k\rightarrow \infty}0$, $\Omega_k=B(0,1)\setminus \bar{B}(0,\rho_k)$ and define for all $0<\alpha<1$ the sub-domain $\Omega_k(\alpha)=B_{\alpha}\setminus \bar{B}_{\alpha^{-1}\rho_k}(0)$. For all $k\in \N$, let
    	\begin{align*}
    	\lambda_k=\log\left(\frac{|\D\phi_k|}{\sqrt{2}}\right)
    	\end{align*}
    	be the conformal factor of $\phi_k$.
        Assume that 
    	\begin{align*}
    	\sup_{k\in \N}\np{\D\lambda_k}{2,\infty}{\Omega_k}<\infty,\qquad \lim_{\alpha\rightarrow 0}\limsup_{k\rightarrow \infty}\int_{\Omega_k(\alpha)}|\D\n_k|^2dx=0
    	\end{align*} 
    	and that there exists a $W^{2,2}_{\mathrm{loc}}(B(0,1)\setminus\ens{0})\cap  C^{\infty}(B(0,1)\setminus\ens{0})$ immersion $\phi_{\infty}$ such that
    	\begin{align*}
    	\log|\D\phi_{\infty}|\in L^{\infty}_{\mathrm{loc}}(B(0,1)\setminus\ens{0})
    	\end{align*} 
    	such that $\phi_k\conv{k\rightarrow  \infty}\phi_{\infty}$ in $C^l_{\mathrm{loc}}(B(0,1)\setminus\ens{0})$ (for all $l\in \N$). Furthermore, there exists an integer $\theta_0\geq 1$,  $\mu_k\in W^{1,(2,1)}(B(0,1))$ such that 
    	\begin{align*}
    	\np{\D\mu_k}{2,1}{B(0,1)}\leq C(n)\int_{\Omega_k}|\D\n_k|^2dx
    	\end{align*}
    	and a harmonic function $\nu_k$ on $\Omega_k$ such that $\nu_k=\lambda_k$ on $\partial B(0,1)$, $\lambda_k=\mu_k+\nu_k$ on $\Omega_k$ and such that for all $0<\alpha<1$ and such that for all $k\in \N$ sufficiently large 
    	\begin{align*}
    	\np{\D(\nu_k-(\theta_0-1)\log|z|)}{2,1}{\Omega_k(\alpha)}\leq C(n)\left(\sqrt{\alpha}\np{\D\lambda_k}{2,\infty}{\Omega_k}+\int_{\Omega_k}|\D\n_k|^2dx\right)
    	\end{align*}
    	for some universal constant $\Gamma_{10}=\Gamma_{10}(n)$. Finally, we have for all $\rho_k\leq r_k\leq 1$ and $k$ large enough
    	\begin{align*}
    	\frac{1}{2\pi}\int_{\partial B_{r _k}}\ast\, d\nu_k=\theta_0-1.
    	\end{align*}
    \end{theorem}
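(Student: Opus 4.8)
The plan is to extract the logarithmic part of the conformal factor by a Hodge-type decomposition on the annulus $\Omega_k$ and control the remainder in the Lorentz space $L^{2,1}$, then identify the integer $\theta_0$ via a degree argument on the Gauss map. First I would recall from the work of Müller–Šverák and its refinements (as used in \cite{muller}, \cite{riviere1}, \cite{beriviere}) that the conformal factor $\lambda_k$ of a conformal immersion with $L^2$-small second fundamental form on an annulus satisfies a Liouville-type equation $-\Delta\lambda_k=K_k e^{2\lambda_k}=\langle\p{x}\n_k,\p{y}\n_k\rangle^{\perp}\wedge\cdots$, whose right-hand side is a sum of Jacobian-type quantities; by Wente's inequality (the $\mathscr H^1$–$\mathrm{BMO}$ duality, cf. \cite{helein}) the Jacobian lies in the Hardy space $\mathscr H^1$ with norm controlled by $\int_{\Omega_k}|\D\n_k|^2$. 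Solving $-\Delta\mu_k=-\Delta\lambda_k$ with zero boundary data on $B(0,1)$ (after extending the curvature term by zero inside $B(0,\rho_k)$, or by an appropriate cutoff) and using the improved regularity $\mathscr H^1\to W^{1,(2,1)}$ for the Laplacian on the disk gives $\mu_k\in W^{1,(2,1)}$ with $\np{\D\mu_k}{2,1}{B(0,1)}\leq C(n)\int_{\Omega_k}|\D\n_k|^2dx$. Setting $\nu_k=\lambda_k-\mu_k$, we get $\Delta\nu_k=0$ on $\Omega_k$, and $\nu_k=\lambda_k$ on $\partial B(0,1)$ after correcting $\mu_k$ by a harmonic extension of its boundary trace (whose $W^{1,(2,1)}$ norm is again controlled, since the trace is small in the right norm by the no-neck hypothesis combined with $\np{\D\lambda_k}{2,\infty}{\Omega_k}<\infty$).

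Next I would analyze the harmonic function $\nu_k$ on the annulus. Writing its conjugate differential, $\nu_k(z)=a_k\log|z|+\widetilde\nu_k(z)$ where $\widetilde\nu_k$ is single-valued harmonic and $a_k=\frac1{2\pi}\int_{\partial B_{r}}\ast\,d\nu_k$ is independent of $r\in(\rho_k,1)$. The key point is to show $a_k\to\theta_0-1$ for an \emph{integer} $\theta_0\geq 1$ and in fact equals it for $k$ large. This is where the hypothesis that $\phi_k\to\phi_\infty$ in $C^l_{\loc}(B(0,1)\setminus\ens0)$ with $\phi_\infty$ a branched immersion enters: by the Bernard–Rivière expansion (the first \propdef2\ recalled above, applied to $\phi_\infty$), the limit has a branch point of some order $\theta_0\geq1$ at $0$, so $\log|\D\phi_\infty|=(\theta_0-1)\log|z|+O(1)$ near $0$, hence the "outer" value of $a_k$ converges to $\theta_0-1$. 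On the other hand $a_k$ must also match the behaviour coming from the inner boundary $\partial B(0,\rho_k)$; since each $\phi_k$ is a \emph{smooth immersion} on $B(0,1)$, the winding of $\p{z}\phi_k$ around $\partial B(0,\rho_k)$ — equivalently $\frac1{2\pi}\int_{\partial B_{\rho_k}}\ast d\lambda_k$ corrected by $\mu_k$ — is quantized, giving that $a_k\in\Z$ once the $L^{2,1}$-small error $\D\mu_k$ is subtracted. Combining, $a_k=\theta_0-1$ for $k$ large; this proves the last displayed identity and fixes the integer $\theta_0$.

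It then remains to prove the quantitative estimate $\np{\D(\nu_k-(\theta_0-1)\log|z|)}{2,1}{\Omega_k(\alpha)}\leq C(n)\bigl(\sqrt\alpha\,\np{\D\lambda_k}{2,\infty}{\Omega_k}+\int_{\Omega_k}|\D\n_k|^2dx\bigr)$. Having removed the logarithm, $w_k:=\nu_k-(\theta_0-1)\log|z|=\widetilde\nu_k$ is single-valued harmonic on $\Omega_k$; expand it in a Fourier/Laurent series $w_k=\Re\sum_{m\neq0}(c_m z^m+d_m z^{-m})+(\text{real const})$ on each circle. On the shrunk annulus $\Omega_k(\alpha)$, the positive-frequency modes are controlled by their boundary data on $\partial B(0,1)$ (which is $\lambda_k-\mu_k$, small in $L^{2,1}$ on the circle up to the $\int|\D\n_k|^2$ and the trace estimates), and carry a gain of $\sqrt\alpha$ from the annular cutoff; symmetrically for the negative-frequency modes controlled from $\partial B(0,\rho_k)$. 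The elementary fact that for a harmonic function on an annulus the $L^{2,1}$ norm of the gradient on the sub-annulus $B_\alpha\setminus B_{\alpha^{-1}\rho_k}$ is bounded by $C\sqrt\alpha$ times the $L^{2,\infty}$ norm of the full gradient (plus the contribution of the removed logarithmic mode, which we have already subtracted) — this is a Lorentz-space version of the standard mean-value/Harnack decay for harmonic annular functions — yields the claim, after absorbing $\mu_k$-errors using the triangle inequality and the first bound. I expect the main obstacle to be the combination of two delicate points: (i) making the degree/quantization argument for $a_k$ fully rigorous in the low-regularity setting — one only has $\phi_k\in W^{2,2}\cap W^{1,\infty}$ uniformly, so the "winding number of $\p{z}\phi_k$" must be read off carefully from the Müller–Šverák representation rather than from classical degree theory — and (ii) tracking all constants so that the $L^{2,1}$ estimates are genuinely universal in $n$ and uniform in $k$, which requires the $\mathscr H^1\to W^{1,(2,1)}$ elliptic estimate and the Lorentz-space interpolation inequalities to be applied with scale-invariant norms on the (degenerating) annulus $\Omega_k$. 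Everything else — the Wente estimate, the Hodge decomposition, the harmonic expansion — is by now standard in this circle of ideas.
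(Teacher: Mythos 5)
Your overall shape is right — decompose $\lambda_k=\mu_k+\nu_k$ with $\mu_k$ controlled in $W^{1,(2,1)}$ via a Wente-type estimate and $\nu_k$ harmonic, then pin down the logarithmic coefficient $d_k$ of $\nu_k$ by showing it is close to $\theta_0-1$ (from the strong convergence to $\phi_\infty$ near $\partial B_\alpha$) and quantized. But the quantization step, which is the whole point of the statement, is not actually proved in your sketch.

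The claim that \enquote{the winding of $\p{z}\phi_k$ around $\partial B(0,\rho_k)$ --- equivalently $\frac{1}{2\pi}\int_{\partial B_{\rho_k}}\ast\,d\lambda_k$ corrected by $\mu_k$ --- is quantized} does not hold as stated. First, $\p{z}\phi_k$ takes values in $\C^n$ with $n\geq 3$; it has no winding number in the classical sense. Second, $\int_{\partial B_{\rho_k}}\ast\,d\lambda_k=\int_{B_{\rho_k}}K_{g_k}\,d\mathrm{vol}_{g_k}$ is just the total Gauss curvature inside the inner disk, an arbitrary real number, not a multiple of $2\pi$. Smoothness of $\phi_k$ on all of $B(0,1)$ by itself produces no integer.

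What the paper does is construct \emph{two} Coulomb frames on the neck: the conformal coordinate frame $(\e_{k,1},\e_{k,2})=e^{-\lambda_k}(\p{x_1}\phi_k,\p{x_2}\phi_k)$, which lives only on $\Omega_k$, and an abstract Coulomb frame $(\f_{k,1},\f_{k,2})$ of a controlled \emph{extension} $\tilde{\n}_k$ of $\n_k$ to all of $B(0,1)$ (this extension is how $\mu_k$ is built — not by cutting off a right-hand side, which would destroy the Jacobian structure needed for Wente). On $\Omega_k$ the two frames span the same plane, so $\f_{k,1}+i\f_{k,2}=e^{i\theta_k}(\e_{k,1}+i\e_{k,2})$ for a multivalued real $\theta_k$, and the Coulomb conditions on both frames give the key identity $\ast\,d\nu_k=d\theta_k$ on $\Omega_k$. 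The rotation $e^{i\theta_k}$ is single-valued and $S^1$-valued there but, precisely because $(\f_{k,1},\f_{k,2})$ frames $\tilde{\n}_k\neq\n_k$ inside $B(0,\rho_k)$, this $S^1$-map does not extend across the hole; so its winding around $\partial B_{\rho_k}$ need not vanish, yet is an integer. The paper then makes this concrete by manipulating the closed $1$-forms $e^{-\mu_k}f_{k,j}^\ast$ (where $d\phi_k(f_{k,j})=\f_{k,j}$) and the reality of $e^{\nu_k}$ to exhibit a non-vanishing holomorphic $\psi_k$ on $\Omega_k$ with $e^{i\theta_k}=\psi_k/|\psi_k|$, so $d\theta_k=\Im(\partial\psi_k/\psi_k)$ and the period is $2\pi\Z$ by the argument principle. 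This holomorphic representation is the mechanism you are missing; without it (or an equivalent argument identifying $\ast\,d\nu_k$ with the differential of an $S^1$-winding), the step $d_k\in\Z$ has no proof. Once that integrality is in hand, combining it with $d_k\to\theta_0-1$ closes the argument exactly as you outline.
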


    
    \textbf{Added in proof.} Nicolas Marque showed that \emph{simple minimal  bubblings} cannot occur in $\R^3$ (see \cite{marque}). These are the bubblings where at each concentration point a single minimal bubble blows up.
 
    \renewcommand{\thetheorem}{\thesection.\arabic{theorem}}

    \section{Uniform control of the conformal factor in necks}\label{sec2}
    
    For the definitions related to Lorentz spaces, we refer the reader to the Appendix (Section \ref{basiclorentz}).

    In this section we obtain a refinement of Lemma V.$3$ of \cite{quanta}.
    
    \begin{theorem}\label{neckfine}
    	There exists a positive real number $\epsilon_1=\epsilon_1(n)$ with the following property. Let $0<2^6r<R<\infty$ be fixed radii and $\phi:\Omega=B(0,R)\setminus\bar{B}(0,r)\rightarrow \R^n$ be a weak immersion of finite total curvature such that 
    	\begin{align}\label{I0}
    		\np{\D\n}{2,\infty}{\Omega}\leq \epsilon_1(n).
    	\end{align}
    	For all $\left(\dfrac{r}{R}\right)^{\frac{1}{2}}<\alpha<1$, define $\Omega_{\alpha}=B(0,\alpha R) \setminus  \bar{B}(0,\alpha^{-1}r)$. Then there exists a universal constant $\Gamma_0=\Gamma_0(n)$ and $d\in \R$ (depending on $r,R,\phi$ but not on $\alpha$) such that for all $\left(\dfrac{r}{R}\right)^{\frac{1}{3}}<\alpha<\dfrac{1}{4}$, we have
    	\begin{align}\label{I1}
    		\np{\D(\lambda-d\log|z|)}{2,1}{\Omega_{\alpha}}\leq \Gamma_0\left(\sqrt{\alpha}\np{\D\lambda}{2,\infty}{\Omega}+\int_{\Omega}|\D\n|^2dx\right)
    	\end{align} 
    	and for all $r\leq \rho<R$, we have
    	\begin{align}\label{I2}
    		\left|d-\frac{1}{2\pi}\int_{\partial B_{\rho}}\partial_{\nu}\lambda \,d\mathscr{H}^1\right|\leq \Gamma_0\left(\int_{B_{\max\ens{\rho,2r}}\setminus \bar{B}_r(0)}|\D\n|^2dx+\frac{1}{\log\left(\frac{R}{\rho}\right)}\int_{\Omega}|\D\n|^2dx\right) 
    	\end{align}
    	In particular, there exists a universal constant $\Gamma_0'=\Gamma_0(n)$ such that for all $\left(\dfrac{r}{R}\right)^{\frac{1}{3}}<\alpha<\dfrac{1}{4}$, there exists $A_{\alpha}\in \R$ such that
      	\begin{align}\label{I3}
    		\np{\lambda-d\log|z|-A_{\alpha}}{\infty}{\Omega_{\alpha}}\leq \Gamma_0'\left(\sqrt{\alpha}\np{\D\lambda}{2,\infty}{\Omega}+\int_{\Omega}|\D\n|^2dx\right).
    	\end{align}    	
    \end{theorem}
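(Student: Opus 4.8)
The plan is to split $\lambda=\mu+\nu$ into a part $\mu$ carrying the Gauss curvature and a harmonic part $\nu$, and then to analyse each in Lorentz spaces. First I would use the Liouville equation: since $\phi$ is conformal, $-\Delta\lambda=K_g\,e^{2\lambda}$ on $\Omega$, where the Gauss equation gives the pointwise bound $|K_g|e^{2\lambda}\leq C|\D\n|^2$, while --- after constructing a Coulomb moving frame on $\Omega$ (legitimate because \eqref{I0} forces the total curvature to be small) and extending the resulting Jacobian structure across the hole $\bar B_r$ --- one has $\|K_g e^{2\lambda}\|_{\mathscr{H}^1}\leq C\int_\Omega|\D\n|^2dx$; this is the mechanism underlying Lemma V.$3$ of \cite{quanta}. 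Solving $\Delta\mu=\Delta\lambda$ on $\Omega$ with $\mu=0$ on $\partial\Omega$, the Wente-type estimate for Hardy-space data yields $\np{\D\mu}{2,1}{\Omega}\leq C\int_\Omega|\D\n|^2dx$ and $\|\mu\|_{\mathrm{L}^\infty(\Omega)}\leq C\int_\Omega|\D\n|^2dx$, and $\nu:=\lambda-\mu$ is harmonic on the annulus.

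Second, I would analyse $\nu$ through the Laurent expansion $2\,\partial_z\nu=\sum_n a_n z^n$: single-valuedness of $\nu$ forces $a_{-1}\in\R$, and I set $d:=a_{-1}=\frac1{2\pi}\int_{\partial B_\rho}\partial_r\nu\,d\mathscr{H}^1$ ($\partial_r$ the radial derivative, independent of $\rho$), so that $\nu-d\log|z|=\Re(g)$ for a single-valued holomorphic $g$ on $\Omega$ whose derivative $g'$ has no $z^{-1}$ mode. Averaging $\int_{\partial B_\rho}\partial_r\nu=2\pi d$ against $d\rho/\rho$ over $(r,R)$ and pairing $\D\nu\in\mathrm{L}^{2,\infty}$ against $|z|^{-1}$ (via $\||z|^{-1}\|_{\mathrm{L}^{2,1}(\Omega)}\leq C\log(R/r)$) bounds $|d|$ by $C(\np{\D\lambda}{2,\infty}{\Omega}+\int_\Omega|\D\n|^2dx)$, so $\|g'\|_{\mathrm{L}^{2,\infty}(\Omega)}$ is controlled by the same quantity. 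The core estimate is the decay of $\D\Re(g)$ on $\Omega_\alpha$: I would split $g'=G_++G_-$ into its Laurent modes $\geq0$ and those $\leq-2$, so that $G_+$ extends holomorphically across $B(0,R)$, while $G_-$ extends across the exterior of $\bar B_r$ with a \emph{double} zero at infinity (this extra order of vanishing is exactly what removing the $d\log|z|$-mode buys). A mollified Cauchy integral over a central sub-annulus bounds $G_+$ pointwise on $B(0,R/2)$, and $G_-$ (together with its Taylor coefficients at infinity) pointwise on $\{|z|>2r\}$, in terms of $\|g'\|_{\mathrm{L}^{2,\infty}(\Omega)}$; since $\Omega_\alpha$ lies in both for $\alpha<1/4$, inserting these into the $\mathrm{L}^{2,1}$-norm computations $\|\mathbf 1_{\Omega_\alpha}\|_{\mathrm{L}^{2,1}}\lesssim\alpha R$ and $\||z|^{-k}\|_{\mathrm{L}^{2,1}(\Omega_\alpha)}\lesssim(\alpha/r)^{k-1}$ produces the quantitative gain $\np{\D(\nu-d\log|z|)}{2,1}{\Omega_\alpha}\leq C\sqrt\alpha\,\np{\D\lambda}{2,\infty}{\Omega}$. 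Adding the $\mathrm{L}^{2,1}$ bound on $\D\mu$ gives \eqref{I1}.

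Third, for \eqref{I2} I would write $\partial_r\lambda=\partial_r\mu+\partial_r\nu$ with $\int_{\partial B_\rho}\partial_r\nu=2\pi d$, so $d-\frac1{2\pi}\int_{\partial B_\rho}\partial_r\lambda=-\frac1{2\pi}\int_{\partial B_\rho}\partial_r\mu$, and split $\mu=\mu_1+\mu_2$ with $\Delta\mu_1=\Delta\lambda$ on $B(0,R)$ (curvature extended by $0$ inside $B_r$), $\mu_1=0$ on $\partial B(0,R)$, and $\mu_2$ harmonic on $\Omega$. Then $\int_{\partial B_\rho}\partial_r\mu_1=-\int_{B_\rho\setminus\bar B_r}K_g e^{2\lambda}$, while the harmonic function $\mu_2$ (which is $0$ on $\partial B(0,R)$ and $-\mu_1$ on $\partial B_r$) has radial flux $2\pi d_2$ with $d_2=-(\text{average of }\mu_1\text{ on }\partial B_r)/\log(R/r)$, whence $|d_2|\leq \|\mu_1\|_{\mathrm{L}^\infty}/\log(R/r)\leq \frac{C}{\log(R/\rho)}\int_\Omega|\D\n|^2dx$; combined with $\int_{B_\rho\setminus\bar B_r}|K_g e^{2\lambda}|\leq C\int_{B_{\max\{\rho,2r\}}\setminus\bar B_r}|\D\n|^2dx$ this is \eqref{I2}. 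Finally \eqref{I3} follows from \eqref{I1}: $\mu$ is already bounded in $\mathrm{L}^\infty(\Omega)$ (it vanishes on $\partial\Omega$ --- here $\mathrm{L}^{2,1}$ is the sharp space, being the one that pairs with $\||z|^{-1}\|_{\mathrm{L}^{2,\infty}}$), and the oscillation of $\Re(g)=\nu-d\log|z|$ on $\Omega_\alpha$ is controlled by its $\mathrm{L}^{2,1}$-gradient through a Poincaré--Sobolev--Lorentz inequality applied on the fixed-shape dyadic sub-annuli of $\Omega_\alpha$, no logarithm accumulating in the sum because $g$ carries no $\log$-mode; choosing $A_\alpha$ to be the mean of $\lambda-d\log|z|$ over $\Omega_\alpha$ gives \eqref{I3}.

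I expect the main obstacle to be the core harmonic estimate in the second step --- upgrading the \emph{weak} $\mathrm{L}^{2,\infty}$ control of $\D\lambda$ on $\Omega$ to the \emph{strong} $\mathrm{L}^{2,1}$ control on $\Omega_\alpha$ with a quantitative gain in $\alpha$, which is precisely the refinement of Lemma V.$3$ of \cite{quanta}. The delicacy is that $\mathrm{L}^{2,\infty}$ does not embed into $\mathrm{L}^2$ on a finite-measure set, so a naive mode-by-mode $\mathrm{L}^2$ computation is unavailable; one must exploit holomorphicity (interior/Cauchy estimates) and, for the exterior modes, the extra order of vanishing at infinity gained by peeling off the $d\log|z|$ term.
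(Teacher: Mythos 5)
Your overall decomposition $\lambda=\mu+\nu$ into a Wente part carrying the curvature and a harmonic remainder, the identification of $d$ as the logarithm coefficient of $\nu$, and the use of a Lorentz--Sobolev embedding on dyadic sub-annuli to pass from \eqref{I1} to \eqref{I3} all mirror the paper's proof. But two of your key steps take genuinely different routes, one successfully, one with a gap.

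For the core estimate \eqref{I1} --- upgrading the $L^{2,\infty}$ control of $\D\nu$ on $\Omega$ to an $L^{2,1}$ control on $\Omega_\alpha$ with a quantitative $\alpha$-gain --- you propose to split $2\p{z}(\nu-d\log|z|)=G_++G_-$ into the holomorphic Laurent modes $\geq 0$ (extending to $B_R$) and $\leq -2$ (extending to $\ens{|z|>r}$ with a double zero at $\infty$), bound each pointwise by a mollified Cauchy integral over a central circle, and then pair against $\np{\bm 1_{\Omega_\alpha}}{2,1}{}$ and $\np{|z|^{-k}}{2,1}{\Omega_\alpha}$. This is a valid and rather clean approach, and as you observe it correctly exploits the extra $\dfrac{1}{|z|^2}$ decay of $G_-$ that comes from removing the $d\log|z|$ mode. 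The paper does it differently: Lemma \ref{l2linfty} first converts the $L^{2,\infty}$ bound on $\Omega$ to an $L^2$ bound on $\Omega_{\sqrt\alpha}$ (via a mean-value estimate and integration by parts pairing $\partial_\nu u$ with the $H^{1/2}$-trace), and then Lemma \ref{l21l2} converts $L^2$ on $\Omega_{\sqrt\alpha}$ to $L^{2,1}$ on $\Omega_\alpha$ by an explicit Fourier coefficient computation that gains a factor $\alpha$. The two-step composition is why the paper's constant is $\sqrt\alpha$ rather than the $\alpha$ your Cauchy-integral route ought to give; since only $\sqrt\alpha$ is claimed, both suffice, and your version is arguably sharper. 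Where you are vague is the first step of the proof: solving $\Delta\mu=\Delta\lambda$ directly on the annulus $\Omega$ with zero Dirichlet data on both circles requires the improved Wente estimate to hold with a constant independent of the (degenerating) conformal class of $\Omega$; the way to justify that, and the way the paper does it, is to extend the Coulomb moving frame across $\bar B_r$ (Lemma IV.1 of Bernard--Rivi\`{e}re) and solve the Jacobian equation on the full disk $B_R$ with zero data only on $\partial B_R$. You gesture at this extension but then solve on $\Omega$ anyway; the cleaner statement is the paper's.

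The genuine gap is in your argument for \eqref{I2}. You write $\mu=\mu_1+\mu_2$ with $\Delta\mu_1=\Delta\lambda$ on $B_R$ where the right-hand side is the curvature $-K_ge^{2\lambda}$ \emph{extended by zero} inside $B_r$, and you then require $\np{\mu_1}{\infty}{\partial B_r}\leq C\displaystyle\int_\Omega|\D\n|^2dx$ in order to bound the harmonic flux $d_2$ of $\mu_2$ by $\dfrac{C}{\log(R/r)}\displaystyle\int_\Omega|\D\n|^2dx$. But the zero extension of a Jacobian is not a Jacobian: $\bm 1_\Omega\cdot(\D^\perp\e_1\cdot\D\e_2)$ is merely $L^1$, not in $\mathscr{H}^1$, so the Wente/CLMS machinery does not apply to $\mu_1$ and the $L^\infty$ bound is not available; indeed $\mu_1(0)=\dfrac{1}{2\pi}\displaystyle\int_\Omega\log\left(\dfrac{R}{|y|}\right)\bigl(-K_ge^{2\lambda}\bigr)dy$ can be as large as $\log\left(\dfrac{R}{r}\right)\displaystyle\int_\Omega|\D\n|^2dx$, destroying the claimed smallness of $d_2$. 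If you instead use the paper's $\mu$ (whose right-hand side on all of $B_R$ is the Jacobian of the \emph{extended} Coulomb frame, hence honestly in $\mathscr{H}^1$), the Stokes computation $\displaystyle\int_{\partial B_\rho}\partial_\nu\mu=\displaystyle\int_{B_\rho}\D^\perp\tilde\e_1\cdot\D\tilde\e_2$ picks up an extra contribution from $B_r$ which is controlled by $\displaystyle\int_{B_{2r}\setminus\bar B_r}|\D\n|^2dx$ --- this is exactly the origin of the $\max\ens{\rho,2r}$ in \eqref{I2}. The paper then handles the remaining term via a Green's identity with the weight $\log\left(\dfrac{|x|}{R}\right)$ and a second Coulomb frame built on $B_\rho$ with controlled energy (the sequence of estimates \eqref{degree}--\eqref{main9}), rather than your $\mu_1+\mu_2$ split. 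You should either adopt that argument or repair the split by replacing the zero extension with the Jacobian of the extended frame and accounting for the extra $B_r$ term.
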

    The proof relies on the strategy developed in \cite{quanta} (and the lemmas from \cite{angular}, \cite{quantamoduli} for the Lemmas \ref{l2linfty} and \ref{l21l2}) and the following two lemmas, which will allow us to move from a $L^{2,\infty}$ bound to a $L^{2,1}$ bound in a quantitative way.
    
    \begin{lemme}\label{l2linfty}
    	Let $u:B_R\setminus \bar{B}_r(0)\rightarrow \R$ be a harmonic function such that for some $\rho_0\in (r,R)$
    	\begin{align*}
    		\int_{\partial B_{\rho_{0}}}\partial_{\nu}u \,d\mathscr{H}^1=0.
          	\end{align*}
    	Then there exists a universal constant $\Gamma_1>0$ (independent of $0<4r<R<\infty$) such that for all $\left(\dfrac{r}{R}\right)^{\frac{1}{2}}<\alpha<\dfrac{1}{2}$, we have
    	\begin{align*}
    		\np{\D u}{2}{B_{\alpha R}\setminus \bar{B}_{\alpha^{-1}r}(0)}\leq 
    		{\Gamma_1}\np{\D u}{2,\infty}{B_{R}\setminus \bar{B}_r(0)}.
    	\end{align*}
    \end{lemme}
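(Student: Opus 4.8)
The plan is to use the flux hypothesis only to eliminate the logarithmic Fourier mode of $u$, and then to run the estimate through the monotonicity of two radial energy profiles attached to the \enquote{interior} and \enquote{exterior} parts of $u$. Write $\Omega:=B_R\setminus\overline{B}_r(0)$, $\Omega_\alpha:=B_{\alpha R}\setminus\overline{B}_{\alpha^{-1}r}(0)$ and $M:=\np{\nabla u}{2,\infty}{\Omega}$. Since $u$ is harmonic, $\partial_z u$ is holomorphic on $\Omega$ and $|\nabla u|=2|\partial_z u|$ is subharmonic there; integrating $\Delta u=0$ over a sub‑annulus shows that $\rho\mapsto\int_{\partial B_\rho}\partial_\nu u\,d\mathscr{H}^1$ is constant on $(r,R)$, so the hypothesis forces it to vanish for every $\rho$. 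As this flux equals $2\pi$ times the coefficient of $\log|z|$ in the Fourier expansion of $u$, that coefficient is zero, and hence one may write $u=c+\Re h_+ +\Re h_-$ on $\Omega$ for some $c\in\R$, where $h_+$ is holomorphic on $B_R$ (carrying only the modes $z^n$, $n\ge1$) and $h_-$ is holomorphic on $\C\setminus\overline{B}_r$ and vanishes at infinity (carrying only the modes $z^{-n}$, $n\ge1$). Put $u_+:=c+\Re h_+$, harmonic on all of $B_R$, and $u_-:=\Re h_-$, harmonic on $\C\setminus\overline{B}_r$ and decaying at infinity, so that $\nabla u=\nabla u_++\nabla u_-$ on $\Omega$.

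The argument then rests on two elementary ingredients. (i) A crude interior bound: whenever $\overline{B}_{|z|/2}(z)\subset\Omega$, the sub‑mean value inequality for the subharmonic function $|\nabla u|$ together with the Lorentz embedding $\|g\|_{L^1(E)}\le 2|E|^{1/2}\|g\|_{L^{2,\infty}(E)}$ gives $|\nabla u(z)|\le C\,|z|^{-1}M$ for a universal $C$; a short check shows that $\overline{B}_{|z|/2}(z)\subset\Omega$ for every $z$ on the circles $\partial B_{\alpha R}$ and $\partial B_{\alpha^{-1}r}$, using precisely the constraints $4r<R$ and $(r/R)^{1/2}<\alpha<1/2$. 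Writing $E(\rho):=\int_0^{2\pi}|\nabla u(\rho e^{i\theta})|^2\,d\theta$, this yields $E(\alpha R)\le 2\pi C^2M^2(\alpha R)^{-2}$ and $E(\alpha^{-1}r)\le 2\pi C^2M^2(\alpha^{-1}r)^{-2}$. (ii) An orthogonality/monotonicity statement for the profiles $E_\pm(\rho):=\int_0^{2\pi}|\nabla u_\pm(\rho e^{i\theta})|^2\,d\theta$: since $h_+'$ carries only Fourier modes of order $\ge0$ and $h_-'$ only modes of order $\le -2$, the pointwise inner product $\nabla u_+\cdot\nabla u_-=\Re(h_+'\,\overline{h_-'})$ has vanishing angular average, whence $E=E_++E_-$ and, $E_\pm$ being nonnegative, also $E_\pm\le E$; reading off the Fourier expansions, $E_+$ is nondecreasing in $\rho$, and a one‑line computation gives $\int_s^\infty E_-(\rho)\,\rho\,d\rho\le\tfrac12 s^2E_-(s)$ for all $s>r$.

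These combine at once. Using $E=E_++E_-$ with $E_\pm\ge0$,
\[
\np{\nabla u}{2}{\Omega_\alpha}^2=\int_{\alpha^{-1}r}^{\alpha R}E_+(\rho)\,\rho\,d\rho+\int_{\alpha^{-1}r}^{\alpha R}E_-(\rho)\,\rho\,d\rho\le\int_0^{\alpha R}E_+(\rho)\,\rho\,d\rho+\int_{\alpha^{-1}r}^{\infty}E_-(\rho)\,\rho\,d\rho.
\]
Monotonicity of $E_+$ bounds the first integral by $\tfrac12(\alpha R)^2E_+(\alpha R)\le\tfrac12(\alpha R)^2E(\alpha R)\le\pi C^2M^2$, and the decay estimate bounds the second by $\tfrac12(\alpha^{-1}r)^2E_-(\alpha^{-1}r)\le\tfrac12(\alpha^{-1}r)^2E(\alpha^{-1}r)\le\pi C^2M^2$. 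Hence $\np{\nabla u}{2}{\Omega_\alpha}\le\Gamma_1 M$ with $\Gamma_1:=C\sqrt{2\pi}$ universal, which is the claim.

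I expect the crux to be the recognition that the flux hypothesis is exactly what removes the $\log|z|$ mode: that mode contributes a radial energy density $\propto\rho^{-1}$, so its $L^2$ norm over $\Omega_\alpha$ carries a non‑absorbable factor $\log(\alpha^2R/r)$ — indeed the naive pointwise bound $|\nabla u|\lesssim M/|z|$ by itself yields only $\np{\nabla u}{2}{\Omega_\alpha}^2\lesssim M^2\log(R/r)$. Once that mode is gone, the mechanism that beats the logarithm is that a nondecreasing radial energy profile integrated out to radius $\alpha R$ — or a fast‑decaying one integrated from $\alpha^{-1}r$ to infinity — is controlled by a single circular slice, which the interior bound estimates by $M$ with no logarithm. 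The two places where care is needed are the exactness of the identity $E=E_++E_-$ (which uses that $h_-'$ has no mode of order $-1$, i.e.\ again the flux hypothesis) and the elementary geometric verification that the balls $\overline{B}_{|z|/2}(z)$ stay inside $\Omega$ on the two boundary circles of $\Omega_\alpha$.
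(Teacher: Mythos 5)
Your proof is correct, but takes a genuinely different route from the paper's. Both arguments use the flux hypothesis to kill the $\log|z|$ Fourier mode, and both exploit a pointwise estimate of the form $|\D u(z)|\lesssim M/|z|$ (with $M:=\np{\D u}{2,\infty}{\Omega}$) obtained from the mean-value property together with the $L^{2,1}/L^{2,\infty}$ duality, although you realize it slightly more directly via the sub-mean-value inequality for the subharmonic function $|\D u|$ averaged over a disk, where the paper pigeonholes for a good intermediate circle. From there the proofs diverge. The paper integrates $|\D u|^2$ by parts over $\Omega_\alpha$, reducing the claim to the two boundary terms $\int_{\partial B_\rho}\partial_\nu u\,(u-\bar{u}_{\rho})\,d\mathscr{H}^1$, each of which is estimated by pairing the pointwise bound on $\partial_\nu u$ with the trace embedding $H^{1/2}(S^1)\hookrightarrow L^1(S^1)$ applied to $u-\bar{u}_{\rho}$; this gives $\np{\D u}{2}{\Omega_\alpha}^2\leq C\,M\np{\D u}{2}{\Omega_\alpha}$, and one divides through. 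You instead split $u$ into its inner and outer holomorphic pieces $u_\pm$, observe that the circle energies $E_\pm(\rho)$ are $L^2$-orthogonal because the Fourier modes of $h_+'$ and $h_-'$ are disjoint (which uses the flux hypothesis a second time, to rule out a $z^{-1}$ mode in $h_-'$), and then exploit the monotonicity of $E_+$ and the rapid decay of $E_-$ to collapse each half of the energy integral onto a single boundary circle of $\Omega_\alpha$, where the pointwise bound finishes with no logarithm. Your version is more elementary and self-contained, needing no $H^{1/2}\hookrightarrow L^1$ trace constant, and it makes completely transparent why the log is beaten; the trade-off is that it is more tightly bound to the harmonicity of $u$, since the identity $E=E_++E_-$ and the two monotonicity facts are exact Laurent-series observations, whereas the paper's integration-by-parts mechanism is designed to be re-used nearly verbatim later in the section for non-harmonic functions solving Wente-type systems.
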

    \begin{proof}
    	First, we show that for all $\alpha^{-1}r\leq \rho\leq \alpha R$, and for all $0<\alpha<\dfrac{1}{2}$ we have
    	\begin{align}\label{i1}
    		\np{\D u}{\infty}{\partial B_{\rho}(0)}\leq \frac{4}{\log(2)}\sqrt{\frac{3}{\pi}}\frac{1}{(1-\alpha)\rho}\np{\D u}{2,\infty}{B_{\alpha^{-1}\rho}\setminus \bar{B}_{\alpha \rho}}.
    	\end{align}
    	By a slight abuse of notation, we will write $r$ instead of $\rho$ in the following estimates.
    	
    	As $0<\alpha<\dfrac{1}{2}$, we have for all $x\in \partial B_r(0)$, the inclusion $B(x,(1-\alpha)r)\subset B_{\alpha^{-1}r}\setminus \bar{B}_{\alpha r}(0)$. Therefore, thanks to the mean value property, we have for all $0<\beta<(1-\alpha)r$
    	\begin{align}\label{mean}
    		\D u(x)=\frac{1}{2\pi \beta}\int_{\partial B (x,\beta)}\D u(y)\,d\mathscr{H}^1(y).
    	\end{align}
    	Now, thanks to the co-area formula, we have (if $I_{\alpha}(r)=(\dfrac{(1-\alpha)}{2}r,(1-\alpha)r)$)
    	\begin{align*}
    		&\int_{B(x,(1-\alpha)r)\setminus \bar{B}(x,(1-\alpha)r/2)}|\D u(y)|dy=\int_{\frac{(1-\alpha)r}{2}}^{(1-\alpha)r}\left(\int_{\partial B(x,\beta)}|\D u(y)|\,d\mathscr{H}^1(y)\right)d\beta\\
    		&\geq \inf_{\beta\in I_{\alpha}(r)}\left(\beta \int_{\partial B(x,\beta)}|\D u(y)|d\mathscr{H}^1(y)\right)\int_{\frac{(1-\alpha)r}{2}}^{(1-\alpha)r}\frac{d\beta}{\beta}
    		=\log(2)\inf_{\beta\in I_{\alpha}(r)}\left(\beta \int_{\partial B(x,\beta)}|\D u(y)|d\mathscr{H}^1(y)\right)
    	\end{align*}
    	Therefore, there exists $\beta\in \left(\frac{(1-\alpha)r}{2},(1-\alpha)r\right)$ (notice that this shows that the limiting values $\rho=\alpha^{-1}r$ and $\rho=\alpha R$ are admissible) such that
    	\begin{align*}
    		\beta\int_{\partial B(x,\beta)}|\D u(y)|\,d\mathscr{H}^1(y)\leq \frac{1}{\log(2)} \int_{B(x,(1-\alpha)r)\setminus \bar{B}(x,(1-\alpha)r/2)}|\D u(y)|dy
    	\end{align*}
    	or
    	\begin{align}\label{ineq1}
    		\frac{1}{2\pi \beta}\int_{\partial B(x,\beta)}|\D u(y)|d\mathscr{H}^1(y)\leq \frac{1}{2\pi \log(2)\beta^2}\int_{B(x,(1-\alpha)r)\setminus \bar{B}(x,(1-\alpha)r/2)}|\D u(y)|dy.
    	\end{align}
    	Now, notice that
    	\begin{align}\label{ineq2}
    		\np{\mathrm{1}_{B(x,(1-\alpha)r)\setminus \bar{B}(x,(1-\alpha)r/2)}}{2,1}{\R^2}&=4\int_{0}^{\infty}\left(\leb^2(B(x,(1-\alpha)r)\setminus \bar{B}(x,(1-\alpha)r/2))\cap\ens{x: 1>t}\right)^{\frac{1}{2}}dt\nonumber\\
    		&=2\sqrt{3\pi}(1-\alpha)r.
    	\end{align}
    	Furthermore, as $\beta>\dfrac{(1-\alpha)r}{2}$, we have
    	\begin{align}\label{ineq3}
    		\frac{1}{\beta^2}\leq \frac{4}{(1-\alpha)^2r^2}
    	\end{align}
        Therefore, we have by the mean value property \eqref{mean}, the inequalities \eqref{ineq1}, \eqref{ineq2}, \eqref{ineq3} and the duality $L^{2,1}/L^{2,\infty}$
    	\begin{align*}
    		|\D u(x)|&\leq \frac{1}{2\pi \beta}\int_{\partial B(x,\beta)}|\D u(y)|\,d\mathscr{H}^1(y)\\
    		&\leq \frac{2}{\pi\log(2)(1-\alpha)^2r^2}\np{\mathrm{1}_{B(x,(1-\alpha)r\setminus \bar{B}(x,(1-\alpha)r/2)}}{2,1}{\R^2}\np{\D u}{2,\infty}{B(x,(1-\alpha)r)\setminus \bar{B}(x,(1-\alpha)r/2)}\\
    		&\leq \frac{4}{\log(2)}\sqrt{\frac{3}{\pi}}\frac{1}{(1-\alpha)r}\np{\D u}{2,\infty}{B_{\alpha^{-1}r}\setminus \bar{B}_{\alpha r}(0)}.
    	\end{align*}
    	As $x\in \partial B_{r}(0)$ was arbitrary, this proves the inequality \eqref{i1}. Now, as $u$ is harmonic, there exists $\ens{a_n}_{n\in \Z}\subset \C$ such that
    	\begin{align*}
    		u(\rho,\theta)=a_0+d\log \rho+\sum_{n\in \Z^{\ast}}^{}\left(a_n\rho^n+\bar{a_{-n}}\rho^{-n}\right)e^{in\theta},
    	\end{align*}
    	which implies by the hypothesis that
    	\begin{align}\label{logarithm}
    		0=\int_{\partial B_{\rho_0}}\partial_{\nu}u\,d\mathscr{H}^1=2\pi d
    	\end{align}
    	so that for all $r<\rho<R$
    	\begin{align*}
    		\int_{\partial B_{\rho}}\partial_{\nu}u=0.
    	\end{align*}
    	Therefore, integrating by parts, we find
    	\begin{align}\label{i2}
    		\int_{B_{\alpha R}\setminus \bar{B}_{\alpha^{-1}r}}|\D u(x)|^2dx&=\int_{\partial B_{\alpha R}}\partial_{\nu} u\,u\,d\mathscr{H}^1-\int_{\partial B_{\alpha^{-1}r}}\partial_{\nu}u\,u\,d\mathscr{H}^1\nonumber\\
    		&=\int_{\partial B_{\alpha R}}\partial_{\nu}u(u-\bar{u}_{\alpha R})d\mathscr{H}^1-\int_{B_{\alpha^{-1}r}}\partial_{\nu} u\left(u-\bar{u}_{{\alpha^{-1}r}}\right)d\mathscr{H}^1
    	\end{align}
    	where $\displaystyle \bar{u}_{\rho}=\dashint{\partial B_{\rho}}u\,d\mathscr{H}^1$ is the average of $u$ on $\rho$, for all $r<\rho<R$. 
    	
    	Now, if $C_0=C_0(H^{\frac{1}{2}}(S^1),L^1(S^1))$ is the constant of the injection $H^{\frac{1}{2}}(S^1)\hookrightarrow L^1(S^1)$ (for the norm defined by the $L^2$ norm of the harmonic extension), we get by \eqref{i1} for all $r<\rho<R$
    	\begin{align*}
    		\left|\int_{\partial B_{\rho}}\partial_{\nu}u\left(u-\bar{u}_{\rho}\right)d\mathscr{H}^1\right|&\leq \np{\D u}{\infty}{\partial B_{\rho}}\np{u-\bar{u}_{\rho}}{1}{\partial B_{\rho}}\\
    		&\leq \frac{4}{\log(2)}\sqrt{\frac{3}{\pi}}\frac{1}{(1-\alpha)\rho}\np{\D u}{2,\infty}{B_R\setminus \bar{B}_r(0)}\times C_0\rho\hs{u}{\frac{1}{2}}{\partial B_{\rho}}\\
    		&\leq \frac{4}{\log(2)}\sqrt{\frac{3}{\pi}}\frac{1}{(1-\alpha)}C_0\np{\D u}{2,\infty}{B_R\setminus \bar{B}_r(0)}\np{\D u}{2}{B_{\alpha R}\setminus \bar{B}_{\alpha^{-1}r}(0)}
    	\end{align*}
    	which implies by \eqref{i2}  that
    	\begin{align*}
    		\np{\D u}{2}{B_{\alpha R}\setminus \bar{B}_{\alpha^{-1}r}}\leq \frac{8}{\log(2)}\sqrt{\frac{3}{\pi}}\frac{1}{(1-\alpha)}C_0\np{\D u}{2,\infty}{B_R\setminus \bar{B}_r(0)}
    	\end{align*}
    	and this concludes the proof of the Lemma.
    \end{proof}
In the following Lemma we obtain a slight improvement from \cite{quantamoduli} and generalise it to a $W^{2,1}$ estimate, that will be used in the proof of Theorem \ref{improvedquanta}.
    \begin{lemme}\label{l21l2}
    	Let $0<4r<R<\infty$ be fixed radii, and $u:\Omega=B_R\setminus \bar{B}_r(0)\rightarrow \R$ be a harmonic function such that for some $\rho_0\in (r,R)$
    	\begin{align*}
    	\int_{\partial B_{\rho_0}}\partial_{\nu}u \,d\mathscr{H}^1=0.
    	\end{align*}
    	Then for all $\left(\dfrac{r}{R}\right)^{\frac{1}{2}}<\alpha<1$, we have
    	\begin{align*}
    		&\np{\D u}{2,1}{B_{\alpha R}\setminus \bar{B}_{\alpha^{-1}r}(0)}\leq 32\sqrt{\frac{2}{15}}\frac{\alpha}{1-\alpha} \np{\D u}{2}{B_R\setminus \bar{B}_r(0)},\\
    		&\np{\D^2u}{1}{B_{\alpha R}\setminus \bar{B}_{\alpha^{-1}r}(0)}\leq 32\sqrt{\frac{\pi}{15}}\frac{\alpha}{1-\alpha}\np{\D u}{2}{B_{R}\setminus \bar{B}_r(0)}.
    	\end{align*}
        \end{lemme}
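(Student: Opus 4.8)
The plan is to linearise the problem and reduce it to Cauchy‑type estimates for holomorphic functions on a disc. First I would use the zero‑flux hypothesis exactly as in the proof of Lemma~\ref{l2linfty}: the Fourier/Laurent decomposition of the harmonic function $u$ on $\Omega=B_R\setminus\bar B_r(0)$ together with $\int_{\partial B_{\rho_0}}\partial_\nu u\,d\mathscr{H}^1=0$ forces the logarithmic coefficient to vanish (cf.\ \eqref{logarithm}), hence $\int_{\partial B_\rho}\partial_\nu u\,d\mathscr{H}^1=0$ for \emph{every} $\rho\in(r,R)$, so that $u=\Re F$ for a single‑valued holomorphic function $F$ on $\Omega$. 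Writing $f:=F'=2\,\p z u$, one has $|\D u|=|f|$, $|\D^2u|=\sqrt2\,|f'|$, and $f$ has a Laurent expansion $f(z)=\sum_{n\neq-1}c_nz^n$ with no $z^{-1}$ term. I would then split $f=f_++f_-$, with $f_+(z)=\sum_{n\geq0}c_nz^n$ holomorphic on $B_R$ and $f_-(z)=\sum_{n\leq-2}c_nz^n$ holomorphic on $\C\setminus\bar B_r(0)$ and $O(|z|^{-2})$ at $\infty$; since these two pieces are $\mathrm L^2(\Omega)$‑orthogonal, $\|f_+\|_{\mathrm L^2(\Omega)}+\|f_-\|_{\mathrm L^2(\Omega)}\leq\sqrt2\,\np{\D u}{2}{\Omega}$, and it suffices to estimate each of them on $\Omega_\alpha=B_{\alpha R}\setminus\bar B_{\alpha^{-1}r}(0)$.

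For the interior piece I would use that $\Omega_\alpha\subset B_{\alpha R}$ and $f_+$ is holomorphic on $B_R$: the sub‑mean‑value inequality on balls of radius $(1-\alpha)R$ gives $\|f_+\|_{\mathrm L^\infty(B_{\alpha R})}\leq\frac1{\sqrt\pi(1-\alpha)R}\,\|f_+\|_{\mathrm L^2(B_R)}$, and a second application (Cauchy's inequality for $f_+'$, or Parseval together with the elementary bound $\sup_{n\geq1}n(n+1)\alpha^{2n}\leq\frac{2\alpha^2}{(1-\alpha)^2}$) gives $\|f_+'\|_{\mathrm L^2(B_{\alpha R})}\lesssim\frac{\alpha}{(1-\alpha)R}\,\|f_+\|_{\mathrm L^2(B_R)}$. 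The hypothesis $R>4r$ enters here and only here: comparing Laurent coefficients term by term, $r^{2n+2}\leq 4^{-(2n+2)}R^{2n+2}\leq\tfrac1{16}R^{2n+2}$ shows $\|f_+\|_{\mathrm L^2(B_r)}^2\leq\tfrac1{15}\|f_+\|_{\mathrm L^2(\Omega)}^2$, hence $\|f_+\|_{\mathrm L^2(B_R)}\leq\sqrt{\tfrac{16}{15}}\,\|f_+\|_{\mathrm L^2(\Omega)}$ — this is the origin of the $\sqrt{1/15}$'s in the statement. Together with the Hölder inequality $\|gh\|_{\mathrm L^{2,1}}\leq\|g\|_{\mathrm L^\infty}\|h\|_{\mathrm L^{2,1}}$ and the elementary values $\np{\mathbf{1}_{\Omega_\alpha}}{2,1}{\R^2}=2|\Omega_\alpha|^{1/2}\leq2\sqrt\pi\,\alpha R$ and $|\Omega_\alpha|\leq\pi\alpha^2R^2$ (so $\|f_+'\|_{\mathrm L^1(\Omega_\alpha)}\leq|\Omega_\alpha|^{1/2}\|f_+'\|_{\mathrm L^2(B_{\alpha R})}$), this yields both estimates for $f_+$ with constant $\lesssim\frac\alpha{1-\alpha}$.

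For the exterior piece I would use the inversion $\psi\colon z\mapsto1/z$: then $\tilde f_-(w):=f_-(1/w)$ is holomorphic on $B_{1/r}$ and vanishes to order $\geq2$ at $0$, so $g(w):=w^{-2}\tilde f_-(w)$ is holomorphic on $B_{1/r}$; the conformal invariance of the $\mathrm L^2$‑norm of the holomorphic $1$‑form $f_-\,dz=-g\,dw$ gives $\|g\|_{\mathrm L^2(\{1/R<|w|<1/r\})}=\|f_-\|_{\mathrm L^2(\Omega)}$, which the same $R>4r$ argument upgrades to $\|g\|_{\mathrm L^2(B_{1/r})}\leq\sqrt{\tfrac{16}{15}}\,\|f_-\|_{\mathrm L^2(\Omega)}$, and then the disc estimates of the previous paragraph apply to $g$ and $g'$. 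The key point is that $f_-(z)=z^{-2}g(1/z)$ and $f_-'(z)=-2z^{-3}g(1/z)-z^{-4}g'(1/z)$, so after the substitution $w=1/z$ all the weighted integrals over $\Omega_\alpha$ become plain $\mathrm L^1$ (or $\mathrm L^{2,1}$) integrals of $g$ or $g'$ over $\psi(\Omega_\alpha)\subset B_{\alpha/r}$; concretely one needs $\np{|z|^{-2}\mathbf{1}_{\Omega_\alpha}}{2,1}{\R^2}\leq 3\sqrt\pi\,\alpha/r$, $\int_{\Omega_\alpha}|z|^{-3}\,dx\leq2\pi\alpha/r$, and $\int_{\Omega_\alpha}|z|^{-4}|g'(1/z)|\,dx=\|g'\|_{\mathrm L^1(\psi(\Omega_\alpha))}\leq|\psi(\Omega_\alpha)|^{1/2}\|g'\|_{\mathrm L^2(B_{\alpha/r})}$, all of which are \emph{free of logarithms in $R/r$}. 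Summing the contributions of $f_+$ and $f_-$ and recalling $\|f_\pm\|_{\mathrm L^2(\Omega)}\leq\np{\D u}{2}{\Omega}$ then gives the two asserted bounds.

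The one genuine obstacle is to avoid spurious $\log(R/r)$ factors. A naive mean‑value estimate only gives the pointwise bound $|\D u(x)|\lesssim\frac1{(1-\alpha)|x|}\np{\D u}{2}{\Omega}$, and $|x|^{-1}$ lies exactly at the borderline of $\mathrm L^{2,1}$ on a long annulus (just as $\int_{\Omega_\alpha}|x|^{-2}\,dx\asymp\log(R/r)$), so this route loses a logarithm. The splitting $f=f_++f_-$ is what cures this: the zero‑flux condition makes the radial ($n=0$) mode of $u$ constant, so $f$ carries only the interior modes ($n\geq1$), controlled on all of $B_R$, and the exterior modes ($n\leq-2$), controlled after inversion on all of $B_{1/r}$, and in both cases the relevant holomorphic function decays one full power faster than $|x|^{-1}$ on $\Omega_\alpha$, which makes every weighted integral converge uniformly in $R/r$. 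Once this structural point is secured, the remainder is careful (if slightly lengthy) bookkeeping of the constants; one subtlety to watch — as the example $f_-(z)=z^{-N}$ makes visible — is that the $\D^2u$‑terms must be estimated by passing through $\mathrm L^1$ rather than $\mathrm L^\infty$, so that the Hessian bound, too, comes out with the clean factor $\frac\alpha{1-\alpha}$ rather than $\frac{\alpha^2}{(1-\alpha)^2}$.
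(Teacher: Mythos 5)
Your proposal is correct, and it takes a genuinely different route from the paper's.  The paper works directly with the Fourier--Laurent coefficients of $u$: it computes the $L^{2,1}$ (resp.\ $L^1$) norms of the elementary radial powers $|z|^m$ on the shrunk annulus, then uses Cauchy--Schwarz against the rapidly convergent series $\sum_{n}|n|\,\alpha^{2|n|}$ (for the $L^{2,1}$ bound) and $\sum_n\tfrac{|n-1|^2}{|n|}\alpha^{2|n|}$ (for the $W^{1,1}$ bound), both of which give the factor $\alpha/(1-\alpha)$ directly, while the hypothesis $4r<R$ produces the $15\pi/4$ lower bound for $\np{\D u}{2}{\Omega}^2$.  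You instead pass to the holomorphic function $f=2\p z u$ (no $z^{-1}$ term once the flux vanishes), split $f=f_++f_-$ into interior and exterior parts, and reduce each piece to classical Cauchy--type estimates for holomorphic functions on a disc: $f_+$ directly via sub‑mean‑value inequalities on $B_R$, $f_-$ after the inversion $z\mapsto1/z$.  The hypothesis $4r<R$ plays exactly the same role in both proofs (here, guaranteeing $\np{f_\pm}{2}{B_R\text{ or }B_{1/r}}\leq\sqrt{16/15}\,\np{f_\pm}{2}{\Omega}$), and both proofs avoid logarithmic losses in $R/r$ --- the paper's because the geometric series converge, yours because the splitting places each piece strictly one power inside the critical decay $|z|^{-1}$ on $\Omega_\alpha$.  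Your diagnosis that the Hessian bound must go through $L^1$ via Cauchy--Schwarz (rather than $L^\infty$) to preserve the single power of $(1-\alpha)^{-1}$ is an accurate observation about what would otherwise go wrong.  The structural argument is sound and produces the asserted $\alpha/(1-\alpha)$ scaling; the specific constants $32\sqrt{2/15}$ and $32\sqrt{\pi/15}$ would need to be re-derived along your route (and some of your intermediate constants are slightly off, e.g.\ with the paper's normalization $\np{\mathbf{1}_E}{2,1}{\R^2}=4|E|^{1/2}$ rather than $2|E|^{1/2}$), but this is exactly the ``bookkeeping'' you flag as remaining, and does not affect the validity of the approach.
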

    \begin{proof}
    	As $u$ is harmonic on $B_R\setminus \bar{B}_r(0)$, there exists $\ens{a_n}_{n\in \Z}\subset \C$ and $d\in \R$ such that
    	\begin{align*}
    		u(z)=a_0+d\log|z|+2\,\Re\left(\sum_{n\in \Z}a_nz^n\right).
    	\end{align*}
    	Thanks to \eqref{logarithm}, we deduce that $d=0$. Furthermore, taking polar coordinates $z=\rho e^{i\theta}$, we have the identity
    	\begin{align}\label{gradient}
    		|\D u|^2=4|\p{z}u|^2=4\left|\sum_{n\in \Z^{\ast}}n a_nz^{n-1}\right|^2=4\sum_{n,m\in \Z^{\ast}}nm\,a_n\bar{a}_m\rho^{n+m-2}e^{i(n-m)\theta}.
    	\end{align}
    	This implies by the inequality $0<4r<R<\infty$ that
    	\begin{align}\label{l22}
    		\int_{B_R\setminus \bar{B}_r(0)}|\D u(x)|^2dx=&8\pi\sum_{n\in \Z^{\ast}}^{}\int_{r}^{R}|n|^2|a_n|^2\rho^{2n-1}d\rho    		=8\pi\sum_{n\in \Z^{\ast}}|n|^2\left(\frac{1}{2n}|a_n|^2\left(R^{2n}-r^{2n}\right)\right)\nonumber\\
    		&=4\pi \sum_{n\geq 1}^{}|n||a_n|^2R^{2|n|}\left(1-\left(\frac{r}{R}\right)^{2|n|}\right)+4\pi\sum_{n\leq -1}^{}|n||a_{n}|^2\frac{1}{r^{2|n|}}\left(1-\left(\frac{r}{R}\right)^{2|n|}\right)\nonumber\\
    		&\geq \frac{15\pi}{4} \sum_{n\geq 1}^{}|n|\left(|a_n|^2R^{2|n|}+|a_{-n}|^2\frac{1}{r^{2|n|}}\right)
    	\end{align}
    	\textbf{First $L^{2,1}$ estimate.}
    	Now, we have
    	\begin{align*}
    		\np{1}{2,1}{B_R\setminus \bar{B}_r}=4\sqrt{\pi}\left(R^2-r^2\right)^{\frac{1}{2}}\leq 4\sqrt{\pi}R
    	\end{align*}
    	while for all $m\geq 1$, 
    	\begin{align*}
    		\np{|z|^{m}}{2,1}{B_R\setminus \bar{B}_r(0)}&=4\sqrt{\pi}r^m\left(R^2-r^2\right)^{\frac{1}{2}}+4\sqrt{\pi}\int_{r^m}^{R^m}(R^2-t^{\frac{2}{m}})^{\frac{1}{2}}\,dt
    		\leq 4\sqrt{\pi}r^{m}R+4\sqrt{\pi}\int_{r^{m}}^{R^m}Rdt	=4\sqrt{\pi}R^{m+1}.
    	\end{align*}
    	Likewise, for all $m\geq 2$
    	\begin{align*}
    		\np{\frac{1}{|z|^{m}}}{2,1}{B_R\setminus \bar{B}_r(0)}&\leq 4\sqrt{\pi}\int_{0}^{\frac{1}{r^m}}\left(\frac{1}{t^{\frac{2}{m}}}-r^2\right)^{\frac{1}{2}}dt
    		\leq 4\sqrt{\pi}\int_{0}^{\frac{1}{r^m}}\frac{dt}{t^{\frac{1}{m}}}
    		=4\sqrt{\pi}\frac{m}{m-1}\frac{1}{r^{m-1}}
    		\leq 8\sqrt{\pi}r^{-m+1}.
    	\end{align*}
    	By \eqref{gradient}, we have
    	\begin{align*}
    	|\D u|&\leq 2\sum_{n\in \Z^{\ast}}^{}|n||a_n|\rho^{n-1}    	\end{align*}
    	and the following estimates by Cauchy-Schwarz inequality
    	\begin{align}\label{l3}
    		\np{\D u}{2,1}{B_{\alpha R}\setminus \bar{B}_{\alpha{-1}r}(0)}&\leq 16\sqrt{\pi}\left(\sum_{n\geq 1}^{}|n||a_n|\left(\alpha R\right)^{|n|}+\sum_{n\geq 1}^{}|n||a_{-n}|\left(\frac{\alpha}{r}\right)^{|n|}\right)\nonumber\\
    		&\leq 16\sqrt{\pi}\left(\sum_{n\in \Z^{\ast}}^{}|n|\alpha^{2|n|}\right)^{\frac{1}{2}}\left(\sum_{n\geq 1}^{}|n||a_n|^2R^{2|n|}+|n||a_{-n}|^2\frac{1}{r^{2|n|}}\right)^{\frac{1}{2}}\nonumber\\
    		&=16\sqrt{2\pi}\frac{\alpha}{1-\alpha^2}\left(\sum_{n\geq 1}^{}|n||a_n|^2R^{2|n|}+|n||a_{-n}|^2\frac{1}{r^{2|n|}}\right)^{\frac{1}{2}}.
    	\end{align}
    	Combining \eqref{l22} and \eqref{l3} yields 
    	\begin{align*}
    		\np{\D u}{2,1}{B_{\alpha R}\setminus \bar{B}_{\alpha^{-1}r}(0)}&\leq \frac{16\sqrt{2\pi}\alpha}{1-\alpha}\times \sqrt{\frac{4}{15\pi}}\np{\D u}{2}{B_R\setminus \bar{B}_r(0)}
    		=32\sqrt{\frac{2}{15}}\frac{\alpha}{1-\alpha}\np{\D u}{2}{B_R\setminus \bar{B}_r(0)},
    	\end{align*}
    	which concludes the proof of the first part of the Lemma.
    	
    	\textbf{Second $W^{1,1}$ estimate.} 
    	As $\Delta u=0$, we have $|\D^2 u|=4|\p{z}^2u|$, and 
    	\begin{align*}
    	\p{z}^2u(z)=\sum_{n\in \Z^{\ast}}n(n-1)z^{n-2}.
    	\end{align*}
    	Now, for all $m\in \Z\setminus\ens{-2}$, we have
    	\begin{align*}
    	\np{|z|^m}{1}{B_{\alpha R}\setminus \bar{B}_{\alpha^{-1}r}(0)}=2\pi\int_{\alpha^{-1}r}^{\alpha R}\rho^{m+1}d\rho=\frac{2\pi}{m+2}\left((\alpha R)^{m+2}-(\alpha^{-1}r)^{m+2}\right)
    	\end{align*}    
    	In particular, we have by the triangle inequality and Cauchy-Schwarz inequality
    	\begin{align*}
    	&\np{\p{z}^2u}{1}{B_{\alpha R}\setminus \bar{B}_{\alpha^{-1}r}(0)}\leq 2\pi\sum_{n\in \Z^{\ast}}\frac{|n||n-1|}{n}|a_n|\left(\left(\alpha R\right)^n-\left(\alpha^{-1}r\right)^{n}\right)\\
    	&=2\pi\sum_{n\geq 1}|n-1||a_n|(\alpha R)^{|n|}\left(1-\left(\frac{\alpha^2r}{R}\right)^{|n|}\right)+2\pi\sum_{n\leq -1}|n-1||a_n|\left(\frac{\alpha}{r}\right)^{|n|}\left(1-\left(\frac{\alpha^2r}{R}\right)^{|n|}\right)\\
    	&\leq 2\pi \sum_{n\geq 1}{|n-1|}|a_n|(\alpha R)^{|n|}+\sum_{n\geq -1}{|n-1|}|a_n|\left(\frac{\alpha}{r}\right)^{|n|}\\
    	&\leq 2\pi\left(\sum_{n\in \Z^{\ast}}\frac{|n-1|^2}{|n|}\alpha^{2|n|}\right)^{\frac{1}{2}}\left(\sum_{n\geq 1}|n||a_n|^2R^{2|n|}+\sum_{n\leq -1}|n||a_n|^2\frac{1}{r^{2|n|}}\right)^{\frac{1}{2}}.
    	\end{align*}
    	Now, notice that 
    	\begin{align*}
    	\sum_{n\in \Z{\ast}}\frac{|n-1|^2}{|n|}\alpha^{2|n|}&
    	=2\sum_{n\geq 1}\frac{n^2+1}{n}\alpha^{2n}=\frac{2\alpha^2}{(1-\alpha^2)^2}+2 \log\left(\frac{1}{1-\alpha^2}\right)\leq \frac{4\alpha^2}{(1-\alpha^2)^2}.
    	\end{align*}
    	Recalling from \eqref{l21l2} that 
    	\begin{align*}
    	\int_{B_{R}\setminus \bar{B}_{\alpha^{-1}r}(0)}|\D u(x)|^2dx\geq \frac{15\pi}{4} \sum_{n\geq 1}^{}|n|\left(|a_n|^2R^{2|n|}+|a_{-n}|^2\frac{1}{r^{2|n|}}\right),
    	\end{align*}
    	we deduce that 
    	\begin{align*}
    	\np{\p{z}^2u}{1}{B_{\alpha R}\setminus \bar{B}_{\alpha^{-1}r}(0)}\leq \frac{4\pi\alpha}{(1-\alpha^2)}\times \sqrt{\frac{4}{15\pi}}\np{\D u}{2}{B_R\setminus \bar{B}_r(0)}=8\sqrt{\frac{\pi}{15}}\frac{\alpha}{1-\alpha^2}\np{\D u}{2}{B_R\setminus \bar{B}_r(0)}
    	\end{align*}
    	which concludes the proof as $|\D^2u|=4|\p{z}^2u|$.
    \end{proof}
\begin{rem}
	Notice that 
	$
		\np{\D\log|z|}{2}{B_R\setminus \bar{B}_r(0)}=\sqrt{2\pi}\sqrt{\log\left(\dfrac{R}{r}\right)}
	$
	while
	\begin{align*}
		&\np{\D\log|z|}{2,1}{B_{\alpha R}\setminus \bar{B}_{\alpha^{-1}r}(0)}=4\int_{0}^{\frac{1}{\alpha R}}\left(\leb^2(B_{\alpha R}\setminus \bar{B}_{\alpha^{-1}r}(0))\right)^{\frac{1}{2}}dt+4\int_{\frac{1}{\alpha R}}^{\frac{1}{\alpha^{-1}r}}\left(\leb^2\left(B_{\frac{1}{t}}\setminus \bar{B}_{\alpha^{-1}r}(0)\right)\right)^{\frac{1}{2}}dt\\
		&=\frac{4\sqrt{\pi}}{\alpha R}\left(\alpha^2R^2-\alpha^{-2}r^2\right)^{\frac{1}{2}}+4\sqrt{\pi}\int_{\frac{1}{\alpha R}}^{\frac{1}{\alpha^{-1}r}}\frac{1}{t}\sqrt{1-\frac{r^2t^2}{\alpha^2}}dt
		=4\sqrt{\pi}\left(\log\left(\frac{\alpha^2R}{r}\right)+\log\left(1+\sqrt{1-\left(\frac{r}{\alpha^2R}\right)^2}\right)\right).
	\end{align*}
	In particular, for  all fixed $0<\alpha<1$, if $\ens{R_k}_{k\in \N},\ens{r_k}_{k\in \N}\subset (0,\infty)$ are sequences chosen such that  $\dfrac{R_k}{r_k}\conv{k\rightarrow \infty} \infty$, we have
	\begin{align*}
		\lim\limits_{k\rightarrow \infty}\frac{\np{\D\log|z|}{2,1}{B_{\alpha R_k}\setminus \bar{B}_{\alpha^{-1}r_k}(0)}}{\np{\D\log|z|}{2}{B_{R_k}\setminus\bar{B}_{r_k}(0)}}=\infty.
	\end{align*}
	If the assumption $4r<R$ does not hold, observe that we get
	 the estimate
	\begin{align*}
		\np{\D u}{2,1}{B_{\alpha R}\setminus \bar{B}_{\alpha^{-1}r}(0)}\leq \frac{8\sqrt{2}}{\sqrt{1-\left(\frac{r}{R}\right)^2}}\frac{\alpha}{1-\alpha^2}\np{\D u}{2}{B_R\setminus \bar{B}_r(0)}.
	\end{align*}
\end{rem}

\begin{prop}\label{l212infty}
		Let $0<2^6r<R<\infty$ be fixed radii, and $u:\Omega=B_R\setminus \bar{B}_r(0)\rightarrow \R$ be a harmonic function such that for some $\rho_0\in (r,R)$
		\begin{align*}
		\int_{\partial B_{\rho_0}}\partial_{\nu}u \,d\mathscr{H}^1=0.
		\end{align*}
		Then for all $\left(\dfrac{r}{R}\right)^{\frac{1}{3}}<\alpha<\dfrac{1}{4}$,
		\begin{align*}
		\np{\D u}{2,1}{B_{\alpha R}\setminus \bar{B}_{\alpha^{-1}r}(0)}\leq 24\,\Gamma_1\sqrt{\alpha} \np{\D u}{2,\infty}{B_R\setminus \bar{B}_r(0)}.
		\end{align*}
\end{prop}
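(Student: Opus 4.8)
The plan is to \emph{compose} Lemmas \ref{l2linfty} and \ref{l21l2} at the geometric mean scale $\sqrt{\alpha}$. Fix $\alpha$ with $\left(r/R\right)^{1/3}<\alpha<1/4$, set $\beta=\sqrt{\alpha}$, and introduce the intermediate annulus $\Omega'=B_{\beta R}\setminus\bar{B}_{\beta^{-1}r}(0)\subset\Omega$. Since $u$ is harmonic on $\Omega$ and its flux vanishes on one circle, the computation \eqref{logarithm} in the proof of Lemma \ref{l2linfty} shows that the logarithmic coefficient of $u$ vanishes, so $\int_{\partial B_{\rho}}\partial_{\nu}u\,d\mathscr{H}^{1}=0$ for \emph{every} $\rho\in(r,R)$; in particular the flux hypotheses of both lemmas hold on $\Omega$ and on $\Omega'$.

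First I would apply Lemma \ref{l2linfty} with dilation parameter $\beta$. This is legitimate: $4r<R$ follows from $2^{6}r<R$, one has $\beta=\sqrt{\alpha}<1/2$ since $\alpha<1/4$, and $\left(r/R\right)^{1/2}\le\left(r/R\right)^{1/6}<\sqrt{\alpha}=\beta$ since $r/R<1$ and $\alpha>\left(r/R\right)^{1/3}$. The lemma gives
\[
\np{\D u}{2}{\Omega'}\le\Gamma_{1}\,\np{\D u}{2,\infty}{\Omega}.
\]
Next I would apply the first estimate of Lemma \ref{l21l2} on the annulus $\Omega'=B_{R'}\setminus\bar{B}_{r'}(0)$ with $R'=\beta R$, $r'=\beta^{-1}r$, and dilation parameter $\beta$. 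The hypothesis $4r'<R'$ reads $4r<\beta^{2}R=\alpha R$, which holds since $r/R<\alpha^{3}<\alpha/4$ (using $\alpha>\left(r/R\right)^{1/3}$ and $\alpha<1/2$); the range condition $\left(r'/R'\right)^{1/2}<\beta<1$ reads $\left(r/R\right)^{1/2}<\beta^{2}=\alpha$, which holds since $r/R<\alpha^{3}<\alpha^{2}$. As $\beta^{2}=\alpha$ and $\beta^{-2}=\alpha^{-1}$, the output annulus $B_{\beta R'}\setminus\bar{B}_{\beta^{-1}r'}(0)$ is exactly $\Omega_{\alpha}:=B_{\alpha R}\setminus\bar{B}_{\alpha^{-1}r}(0)$, so the lemma yields
\[
\np{\D u}{2,1}{\Omega_{\alpha}}\le 32\sqrt{\frac{2}{15}}\,\frac{\beta}{1-\beta}\,\np{\D u}{2}{\Omega'}.
\]

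Chaining the two displays and using $1/(1-\beta)<2$ (valid since $\beta<1/2$) gives $\np{\D u}{2,1}{\Omega_{\alpha}}\le 64\sqrt{2/15}\,\Gamma_{1}\sqrt{\alpha}\,\np{\D u}{2,\infty}{\Omega}$, and $64\sqrt{2/15}<24$, which is the claim. The step most deserving of care is not analytic but a matter of bookkeeping: one must choose the intermediate radius to be \emph{precisely} $\sqrt{\alpha}\,R$ (respectively $\alpha^{-1/2}r$) so that the two annular reductions compose to land exactly on $\Omega_{\alpha}$, and one must verify that the single hypothesis $2^{6}r<R$ together with $\alpha<1/4$ keeps all dilation parameters and the intermediate ratio $r'/R'$ inside the windows required by Lemmas \ref{l2linfty} and \ref{l21l2}. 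This is exactly why the lower threshold on $\alpha$ here is $\left(r/R\right)^{1/3}$ rather than the $\left(r/R\right)^{1/2}$ appearing in the two lemmas.
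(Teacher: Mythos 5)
Your proof is correct and follows the same approach as the paper: compose Lemma \ref{l21l2} (an $L^{2}\to L^{2,1}$ reduction by a factor $\beta$) with Lemma \ref{l2linfty} (an $L^{2,\infty}\to L^{2}$ reduction by a factor $\beta$) at the geometric mean scale $\beta=\sqrt{\alpha}$, with the two dilations landing exactly on $\Omega_{\alpha}$. Your bookkeeping of the hypothesis windows is more careful than the paper's terse presentation, and you correctly identify that the second invocation is Lemma \ref{l2linfty} (the paper cites Lemma \ref{l21l2} twice, a typo).
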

\begin{proof}
	Let $\beta=\sqrt{\alpha}$. Then by Lemma \ref{l21l2}, we have
	\begin{align*}
		\np{\D u}{2,1}{B_{\beta^2R}\setminus \bar{B}_{\beta^{-2}r}(0)}\leq \frac{12\beta}{1-\beta}\np{\D u}{2}{B_{\beta R}\setminus \bar{B}_{\beta^{-1}r}(0)}.
	\end{align*}
	Furthermore, by Lemma \ref{l21l2}, we have
	\begin{align*}
		\np{\D u}{2}{B_{\beta R}\setminus \bar{B}_{\beta^{-1}r}(0)}\leq \Gamma_1\np{\D u}{2,\infty}{B_R\setminus \bar{B}_{r}(0)}
	\end{align*}
	Therefore, as $\beta=\sqrt{\alpha}<1/2$, we find
	\begin{align*}
		\np{\D u}{2,1}{B_{\alpha R}\setminus \bar{B}_{\alpha^{-1}r}(0)}&\leq \frac{12\sqrt{\alpha}}{1-\sqrt{\alpha}}\Gamma_1\np{\D u}{2,\infty}{B_{R}\setminus \bar{B}_r(0)}
		\leq 24\,\Gamma_1\sqrt{\alpha}\np{\D u}{2,\infty}{B_R\setminus 
		\bar{B}_r(0)}
	\end{align*}
	which concludes the proof of the corollary.
\end{proof}

We will also need a quantitative estimate of the Lorentz-Sobolev embedding $W^{1,(2,1)}(\Omega)\rightarrow C^0(\Omega)$. 

\begin{lemme}
	Let $n\geq 2$, $\Omega\subset \R^n$ be a bounded connected open set and $u\in W^{1,(n,1)}(\Omega)$. Then $u\in C^0(\Omega)$ and for all $x,y\in \Omega$ such that $B(x,2|x-y|)\cup B(y,2|x-y|)\subset \Omega$, we have
	\begin{align*}
		|u(x)-u(y)|\leq \frac{2^{n+1}}{\alpha(n)^{\frac{1}{n}}}\np{\D u}{n,1}{\Omega\cap B(x,2|x-y|)}.
	\end{align*}
	Furthermore, if $\Omega$ is a bounded Lipschitz open subset of $\R^n$, then there exists a constant $C_4=C_4(\Omega)$ such that
	\begin{align*}
	\np{u-\bar{u}_{\Omega}}{\infty}{\Omega}\leq C_4\np{\D u}{n,1}{\Omega},
	\end{align*}
	where $\displaystyle \bar{u}_{\Omega}=\dashint{\Omega}u\,d\leb^n$ is the mean of $u$.
\end{lemme}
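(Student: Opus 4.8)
The plan is to deduce both estimates from the classical representation of a Sobolev function by a Riesz potential of its gradient on a convex set, combined with Hölder's inequality in Lorentz spaces between $\mathrm{L}^{n,1}$ and $\mathrm{L}^{n',\infty}$, where $n'=\tfrac{n}{n-1}$.

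First I would recall the pointwise formula: if $\Omega'\subset\R^n$ is a bounded convex open set of diameter $d$ and $S\subset\Omega'$ is measurable, then for a.e. $x\in\Omega'$,
\[
|u(x)-\bar{u}_S|\leq \frac{d^n}{n\,|S|}\int_{\Omega'}\frac{|\D u(z)|}{|x-z|^{n-1}}\,dz .
\]
This follows from writing $u(z)-u(x)=\int_0^{|z-x|}\D u(x+t\omega)\cdot\omega\,dt$ along the segment $[x,z]\subset\Omega'$ (convexity keeps each such segment inside $\Omega'$), averaging over $z\in\Omega'$, and passing to polar coordinates. The Riesz potential is then controlled by the Lorentz--Hölder inequality: since $\leb^n\bigl(\{\,z:\,|x-z|^{1-n}>t\,\}\bigr)=\alpha(n)\,t^{-n'}$, the kernel $z\mapsto|x-z|^{1-n}$ lies in $\mathrm{L}^{n',\infty}(\R^n)$ with $\np{|x-\cdot|^{1-n}}{n',\infty}{\R^n}=\alpha(n)^{1/n'}$, whence
\[
\int_{\Omega'}\frac{|\D u(z)|}{|x-z|^{n-1}}\,dz\leq \alpha(n)^{1/n'}\,\np{\D u}{n,1}{\Omega'} .
\]

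To prove the first inequality, given $x,y\in\Omega$ with $B(x,2|x-y|)\cup B(y,2|x-y|)\subset\Omega$, I would set $\rho=|x-y|$ and choose the auxiliary ball $\Omega'=B\!\bigl(\tfrac{x+y}{2},\tfrac{3}{2}\rho\bigr)$, for which one checks $x,y\in\Omega'$, $\diam\Omega'=3\rho$, $|\Omega'|=\alpha(n)(3\rho/2)^n$, and $\Omega'\subset B(x,2\rho)\subset\Omega$. Applying the representation with $S=\Omega'$ to $x$ and then to $y$ — the geometric prefactor being $\tfrac{d^n}{n|S|}=\tfrac{(3\rho)^n}{n\,\alpha(n)(3\rho/2)^n}=\tfrac{2^n}{n\,\alpha(n)}$ in both cases — and combining via $|u(x)-u(y)|\leq|u(x)-\bar{u}_{\Omega'}|+|u(y)-\bar{u}_{\Omega'}|$ together with the Lorentz estimate yields
\[
|u(x)-u(y)|\leq \frac{2\cdot 2^n}{n\,\alpha(n)}\,\alpha(n)^{1/n'}\,\np{\D u}{n,1}{\Omega'}\leq \frac{2^{n+1}}{\alpha(n)^{1/n}}\,\np{\D u}{n,1}{\Omega\cap B(x,2|x-y|)},
\]
using $1/n'=(n-1)/n$, $n\geq 2$, and the monotonicity of the $\mathrm{L}^{n,1}$ functional under restriction of the domain. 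Since for fixed $x$ the right-hand side tends to $0$ as $y\to x$ (as $\D u\in\mathrm{L}^{n,1}$ has absolutely continuous norm and $\leb^n(B(x,2|x-y|))\to0$), the a.e.-defined $u$ agrees with a function in $C^0(\Omega)$, for which the displayed estimate then holds for all $x,y$.

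For the second estimate, a bounded Lipschitz domain $\Omega$ is a finite union of domains star-shaped with respect to a ball (equivalently, it satisfies a Boman chain / John condition), which upgrades the representation to a global one: there is $C(\Omega)>0$ with $|u(x)-\bar{u}_\Omega|\leq C(\Omega)\int_\Omega|x-z|^{1-n}|\D u(z)|\,dz$ for a.e. $x\in\Omega$. Applying once more $\np{|x-\cdot|^{1-n}}{n',\infty}{\R^n}=\alpha(n)^{1/n'}$ and taking the supremum over $x\in\Omega$ gives $\np{u-\bar{u}_\Omega}{\infty}{\Omega}\leq C(\Omega)\,\alpha(n)^{1/n'}\np{\D u}{n,1}{\Omega}$, i.e. the claim with $C_4=C(\Omega)\,\alpha(n)^{1/n'}$. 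The routine ingredients are the representation formula and the computation of $\alpha(n)^{1/n'}$; the two delicate points are the bookkeeping of constants in the local estimate — which is exactly why the midpoint ball of radius $\tfrac32|x-y|$ is the right auxiliary domain, so that the clean constant $2^{n+1}/\alpha(n)^{1/n}$ emerges — and, in the Lipschitz case, invoking the chain/John-domain version of the representation, which is classical but not elementary.
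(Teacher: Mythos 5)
Your argument is correct, and it takes a genuinely different route from the paper's own proof. The paper proceeds ``from scratch'': it introduces the ball averages $u_{x,r}=\dashint_{B(x,r)}u$, controls $\left|\frac{d}{dr}u_{x,r}\right|$ by $\dashint_{\partial B_r}|\D u|$, integrates this to get a Campanato-type oscillation bound via the $L^{n,1}/L^{n',\infty}$ duality, identifies $u$ with $\lim_{r\to 0}u_{x,r}$ by Lebesgue differentiation, and then handles the cross term $|u_{x,r}-u_{y,r}|$ with a double-integral (Fubini) trick; for the Lipschitz domain it invokes an extension operator, Stein--Weiss interpolation, the Green's function on $\R^n$, and Poincar\'e--Wirtinger. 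You instead quote the Gilbarg--Trudinger representation $|u(x)-\bar u_S|\leq \frac{d^n}{n|S|}\int_{\Omega'}|x-z|^{1-n}|\D u(z)|\,dz$ on the midpoint ball $\Omega'=B(\frac{x+y}{2},\frac{3}{2}|x-y|)$, apply the same Lorentz duality once, and for the Lipschitz case invoke the John-domain version of the Riesz-potential representation. This is shorter and more conceptual, but leans on two classical-but-not-elementary ingredients (the convex representation and the John/Boman estimate) that the paper deliberately avoids in order to stay self-contained. The geometry is right: $x,y\in\Omega'\subset B(x,2|x-y|)\subset\Omega$ and $\diam\Omega'=3|x-y|$, giving the clean prefactor $2^n/(n\alpha(n))$. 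One small bookkeeping remark: your intermediate display $\frac{2\cdot 2^n}{n\alpha(n)}\alpha(n)^{1/n'}$ implicitly uses the quasi-norm duality without the extra factor $n$ that accompanies $\np{\,|x-\cdot|^{1-n}}{n',\infty}{\R^n}=n\,\alpha(n)^{1/n'}$ when the $f_{**}$-based norm is used (as in the paper's \eqref{weaknorm}); either way the chain of inequalities closes, since the resulting constant is at most $2^{n+1}/\alpha(n)^{1/n}$, and your continuity deduction from absolute continuity of the $L^{n,1}$ norm is the right way to upgrade the a.e.\ estimate to a pointwise one.
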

\begin{proof}
	Let $x\in \Omega$ and $d=\mathrm{dist}(x,\partial \Omega)>0$. For all $0<r<d$, let
	\begin{align*}
		u_{x,r}=\dashint{B(x,r)}u\,d\leb^n=\frac{1}{\alpha(n)r^n}\int_{B(x,r)}u\,d\leb^n.
	\end{align*}
	Then for all $0<r<d$, we have
	\begin{align*}
		u_{x,r}=\frac{1}{\alpha(n)}\int_{B(0,1)}u(x+r(y-x))dy
	\end{align*}
	so that
	\begin{align}\label{mean0}
		\left|\frac{d}{dr}u_{x,r}\right|=\left|\int_{B(0,1)}\D u(x+r(y-x))\cdot (y-x)dy\right|\leq \dashint{B(x,r)}|\D u|d\leb^n.
	\end{align}
	Therefore, we have by Fubini theorem and the duality $L^{n,1}/L^{\frac{n}{n-1},\infty}$ (see the estimate \eqref{lnweak}) for all $0<t\leq d$
	\begin{align*}
		\int_{0}^{t}\left|\frac{d}{dr}u_{x,r}\right|dr&\leq \frac{1}{\alpha(n)}\int_{0}^t\frac{1}{r^n}\int_{B(x,r)}|\D u(y)|d\leb^n(y)dr=\frac{1}{\alpha(n)}\int_{0}^t\int_{B(x,t)}\frac{1}{r^n}|\D u(y)|\mathrm{1}_{\ens{|x-y|<r}}d\leb^n(y)dr\\
		&=\frac{1}{\alpha(n)}\int_{B(x,t)}|\D u(y)\left(\int_{|x-y|}^d\frac{dr}{r^n}\right)d\leb^n(y)
		\leq \frac{1}{(n-1)\alpha(n)}\int_{B(x,t)}\frac{|\D u(y)|}{|x-y|^{n-1}}d\leb^n(y)\\
		&\leq \frac{1}{n^2\alpha(n)}\np{\D u}{n,1}{B(x,t)}\np{\frac{1}{|x-\,\cdot\,|^{n-1}}}{\frac{n}{n-1},\infty}{B(x,t)}
		=\frac{1}{n\alpha(n)^{\frac{1}{n}}}\np{\D u}{n,1}{B(x,t)}
	\end{align*}
	as for all $x\in \R^n$
	\begin{align}\label{weaknorm}
		\np{\frac{1}{|x-\,\cdot\,|^{n-1}}}{\frac{n}{n-1},\infty}{\R^n}=n\alpha(n)^{\frac{n}{n-1}}.
    \end{align}
    Therefore, by the Sobolev embedding $W^{1,1}(\R)\subset C^0(\R)$, the function $(0,d]\rightarrow \R, r\mapsto u_{x,r}$ is continuous, and for all $0<s<t\leq d$, we have
    \begin{align}\label{campanato}
    	\left|u_{x,s}-u_{x,t}\right|\leq \int_{s}^{t}\left|\frac{d}{dr}u_{x,r}\right|dr\leq \frac{1}{n\alpha(n)^{\frac{1}{n}}}\np{\D u}{n,1}{B_t(x)}.
    \end{align}
    Let $\ens{r_n}_{n\in \N}\subset (0,\infty)$ such that $r_n\conv{n\rightarrow \infty}0$. Then \eqref{campanato} implies that
    \begin{align*}
    	|u_{x,r_n}-u_{x,r_m}|\leq \frac{1}{n\alpha(n)^{\frac{1}{n}}}\np{\D u}{n,1}{B(x,\max\ens{r_n,r_m})}\conv{n,m\rightarrow \infty}0
    \end{align*}
    which implies that $\ens{u_{x,r_n}}_{n\in \N}$ is a Cauchy sequence.    Now, recall that by the Lebesgue differentiation theorem, for $\leb^n$ almost all $x\in \Omega$, we have
    \begin{align*}
    u(x)=\lim\limits_{r\rightarrow 0}u_{x,r}.
    \end{align*}
    Therefore, for $\leb^n$ almost all $x\in \Omega$ and for all $0<r<d(x)=\mathrm{dist}(x,\partial \Omega)$, we have
    \begin{align}\label{campanato2}
    	|u(x)-u_{x,r}|\leq \frac{1}{(n-1)\alpha(n)^{\frac{1}{n}}}\np{\D u}{n,1}{B(x,r)}.
    \end{align}
    To prove that $u$ is continuous, let $x,y\in \Omega$ such that \eqref{campanato2} holds for $x$ and $y$ (the proof is an adaptation of the H\"{o}lder continuous embedding of Campanato spaces of the right indices). Furthermore, without loss of generality, we can assume that $x\neq y$, and $2|x-y|<\max\ens{d(x),d(y)}$, so that
    \begin{align*}
    	B(x,2|x-y|)\cup B(y,2|x-y|)\subset \Omega.
    \end{align*}
    Therefore, if $r=|x-y|$ we have
    \begin{align}\label{fubini0}
    	\left|u(x)-u(y)\right|&\leq |u(x)-u_{x,r}|+|u_{x,r}-u_{y,r}|+|u(y)-u_{y,r}|\nonumber\\
    	&\leq \frac{1}{n\alpha(n)^{\frac{1}{n}}}\left(\np{\D u}{n,1}{B(x,|x-y|)}+\np{\D u}{n,1}{B(y,|x-y|)}\right)+|u_{x,r}-u_{y,r}|
    \end{align}
    so we need only estimate $|u_{x,r}-u_{y,r}|$, as
    \begin{align*}
    	\np{\D u}{n,1}{B(x,|x-y|)}+\np{\D u}{n,1}{B(y,|x-y|)}\conv{y\rightarrow x}0.
    \end{align*}
    We have
    \begin{align}\label{fubini1}
    	u_{x,r}-u_{y,r}&=\frac{1}{\alpha(n)r^n}\int_{B(x,r)}u(z_1)d\leb^n(z)_1-\frac{1}{\alpha(n)r^n}\int_{B(y,r)}u(z_2)d\leb^n(z_2)\nonumber\\
    	&=\frac{1}{(\alpha(n)r^n)^2}\int_{B(x,r)\times B(y,r)}(u(z_1)-u(z_2))d\leb^n(z_1)d\leb^n(z_2)\nonumber\\
    	&=\frac{1}{(\alpha(n)r^n)^{2}}\int_{B(x,r)\times B(y,r)}\left(\int_{0}^1\D u(z_2+t(z_1-z_2))\cdot (z_1-z_2)dt\right)d\leb^n(z_1)d\leb^n(z_2)    
    \end{align}
    Furthermore, for all $t\in [0,1]$ and $(z_1,z_2)\in B(x,r)\times B(y,r)$, we have $z_2+t(z_1-z_2)\in B(x,2r)$ and $|z_1-z_2|\leq 2r$. Therefore, Fubini's theorem implies that (by \eqref{lnweak})
    \begin{align}\label{fubini2}
    	&\left|\int_{B(x,r)}\left(\int_{0}^{1}\D u(z_2+t(z_1-z_2))\cdot (z_1-z_2)dt\right)d\leb^n(z_1)\right|\nonumber\\
    	&\leq \int_{0}^{1}\left(\int_{B(x,r)}\frac{|\D u(z_2+t(z_1-z_2))|}{|z_1-z_2|^{n-1}}|z_1-z_2|^nd\leb^n(z_1)\right)dt\nonumber\\
    	&\leq \frac{1}{n}2^nr^n\int_{0}^1\np{\D u(z_2+t(\,\cdot\,-z_2))}{n,1}{B(x,r)}\np{\frac{1}{|\,\cdot\,-z_2|}}{\frac{n}{n-1},\infty}{B(x,r)}dt\nonumber\\
    	&\leq 2^nr^n\alpha(n)^{\frac{n}{n-1}}\int_{0}^{1}\np{\D u}{n,1}{B(x,2r)}dt=2^nr^n\alpha(n)^{\frac{n}{n-1}}\np{\D u}{n,1}{B(x,2r)}.
    \end{align}
    Therefore, by \eqref{fubini1} and \eqref{fubini2}, we find
    \begin{align}\label{fubini3}
    	|u_{x,r}-u_{y,r}|\leq \frac{1}{\alpha(n)r^n}\int_{B(y,r)} \frac{2^n}{\alpha(n)^{\frac{1}{n}}}\np{\D u}{n,1}{x,2|x-y|}d\leb^n(z_2)=\frac{2^n}{\alpha(n)^{\frac{1}{n}}}\np{\D u}{n,1}{B(x,2|x-y|)}.
    \end{align}
    Furthermore, as the argument is symmetric in $x$ and $y$ notice that
    \begin{align*}
    	|u_{x,r}-u_{y,r}|&\leq \frac{2^n}{\alpha(n)^{\frac{1}{n}}}\min\ens{\np{\D u}{n,1}{B(x,2|x-y|)},\np{\D u}{n,1}{B(y,2|x-y|)}}.
    \end{align*}
    Finally, thanks to \eqref{fubini0} and \eqref{fubini3} we get
    \begin{align}\label{ascoli}
    	|u(x)-u(y)|\leq \frac{2^{n+1}}{\alpha(n)^{\frac{1}{n}}}\np{\D u}{n,1}{B(x,2|x-y|)}
    \end{align}
    which implies that $u$ is continuous, with modulus of continuity at $x$
    \begin{align*}
    	r\mapsto \frac{2^{n+1}}{\alpha(n)^{\frac{1}{n}}}\np{\D u}{n,1}{\Omega\,\cap\, B(x,2r)}.
    \end{align*}  
    Now, for the $L^{\infty}$ bound, first consider the case $\Omega=\R^n$, and let $G:\R^n\times \R^n\rightarrow\R\cup\ens{\infty}$ be the Green's function of the Laplacian on $\R^n$. Then 
    \begin{align*}
    	\D_y G(x,y)=\frac{1}{n\alpha(n)}\frac{1}{|x-y|^{n-1}}\in L^{\frac{n}{n-1},\infty}(\R^n)
    \end{align*}
    and we have for all $x\in \R^n$
    \begin{align*}
    	u(x)=\int_{\R^n}\Delta_y G(x,y)u(y)dy=-\int_{\R^n}\D_y G(x,y)\cdot \D u(y)dy
    \end{align*}
    and \eqref{weaknorm} implies that
	\begin{align}\label{1}
		\np{u}{\infty}{\R^n}&\leq \frac{n-1}{n^2}\np{\D u}{n,1}{\R^n}\np{\D_y G(x,y)}{\frac{n}{n-1},\infty}{\R^n}=\frac{(n-1)}{n^3\alpha(n)}\np{\D u}{n,1}{\R^n}\np{\frac{1}{|x-\,\cdot\,|^{n-1}}}{\frac{n}{n-1},\infty}{\R^n}\nonumber\\
		&=\frac{(n-1)}{n^2\alpha(n)^{\frac{1}{n}}}\np{\D u}{n,1}{\R^n}\leq \frac{1}{n\alpha(n)^{\frac{1}{n}}}\np{\D u}{n,1}{\R^N}
	\end{align}
	Now, (thanks to \cite{brezis} IX.$7$) there exists a linear extension operator
	\[
	P:\bigcup_{1\leq p<\infty}W^{1,p}(\Omega)\rightarrow \bigcup_{1\leq p<\infty}W^{1,p}(\R^n)
	\]
	such that for $1\leq p<\infty$ the restriction $P|W^{1,p}(\Omega)\rightarrow W^{1,p}(\R^n)$ be a continuous linear operator. Then by identifying $W^{1,p}(\Omega)$ with a closed subset of $L^p(\R^n)^{n+1}$, the Stein-Weiss interpolation theorem implies that for all $P$ extends as a continuous linear operator $W^{1,(n,1)}(\Omega)$ into $W^{1,(n,1)}(\R^n)$, as the Sobolev embedding $L^n(\Omega)\hookrightarrow L^q(\Omega)$ for all $1\leq q<\infty$ shows that $\D u\in L^{n,1}(\Omega)$ implies that $u\in L^{n,1}(\Omega)$. Therefore, by \eqref{1}, for all $u\in W^{1,(n,1)}(\Omega)$, we have
	\begin{align}\label{ascoli2}
		\np{u}{\infty}{\Omega}\leq \np{\D Pu}{\infty}{\R^n}&\leq \frac{1}{n\alpha(n)^{\frac{1}{n}}}\np{P u}{n,1}{\R^n}\leq C\left(\np{u}{n,1}{\Omega}+\np{\D u}{n,1}{\Omega}\right)\nonumber\\
		&\leq C'(\np{u}{n}{\Omega}+\np{\D u}{n,1}{\Omega}),
	\end{align}
	where  we have used in the last line the embedding $W^{1,n}(\Omega)\hookrightarrow L^{n,1}(\Omega)$.
	
	Now, \eqref{ascoli2} implies by the classical Poincaré-Wirtinger inequality and the continuous embedding $L^{n,1}(\Omega)\hookrightarrow L^n(\Omega)$
	\begin{align*}
		\np{u-\bar{u}_{\Omega}}{\infty}{\Omega}&\leq  C'(\np{u-\bar{u}_{\Omega}}{n}{\Omega}+\np{\D u}{n,1}{\Omega})
		\leq C'\left(C''\np{\D u}{n}{\Omega}+\np{\D u}{n,1}{\Omega}\right)\\
		&\leq C_4(\Omega)\np{\D u}{n,1}{\Omega}
	\end{align*} 
	and this concludes the proof of the Lemma.
\end{proof}

Now, we will need to refine the $L^{\infty}$ bound to obtain an estimate independent of the conformal class (bounded away from $-\infty$) of flat annuli in $\R^n$.

\begin{prop}\label{conf}
Let $0<2r<R<\infty$ and $\Omega=B_R\setminus \bar{B}_r(0)\subset \R^n$. Then there exists a universal constant $\Gamma_4=\Gamma_4(n)$ such that for all $u\in W^{1,(n,1)}(\Omega)$, we have
\begin{align*}
	\np{u-\bar{u}_{\Omega}}{\infty}{\Omega}\leq \Gamma_4(n)\np{\D u}{n,1}{\Omega}.
\end{align*}
\end{prop}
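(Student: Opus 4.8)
The plan is to deduce the Proposition from the preceding Lemma by a scaling argument combined with a decomposition of $\Omega$ into a \emph{dimensional} (in particular, bounded independently of $r/R$) number of pieces on which a uniform estimate holds. Since both $\np{u-\bar u_\Omega}{\infty}{\Omega}$ and $\np{\D u}{n,1}{\Omega}$ are invariant under the dilation $x\mapsto tx$, we may normalise $R=1$, so that $\Omega=B_1\setminus\bar B_a$ with $0<a<\tfrac12$. If $a\geq\tfrac18$ then, after rescaling the outer radius to $1$, $\Omega$ is uniformly bi-Lipschitz to the fixed smooth domain $B_1\setminus\bar B_{1/8}$ and the claim follows at once from the preceding Lemma with a constant depending only on $n$; so we assume from now on $a<\tfrac18$.

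First I would control the radial direction \emph{globally}. For $\rho\in(a,1)$ put $v(\rho)=\dashint{\partial B_\rho}u\,d\mathscr{H}^{n-1}$. As $u\in W^{1,(n,1)}(\Omega)\subset W^{1,1}(\Omega)$ the function $v$ is absolutely continuous with $|v'(\rho)|\leq\dashint{\partial B_\rho}|\D u|\,d\mathscr{H}^{n-1}$, so that, by the coarea formula and the $\mathrm{L}^{n,1}/\mathrm{L}^{\frac{n}{n-1},\infty}$ duality together with $\np{|\,\cdot\,|^{1-n}}{\frac{n}{n-1},\infty}{\R^n}=n\,\alpha(n)^{\frac{n}{n-1}}$ of \eqref{weaknorm} (here and below $C(n)$ denotes a dimensional constant that may change from line to line),
\begin{align*}
|v(\rho_1)-v(\rho_2)|\ \leq\ \frac{1}{n\,\alpha(n)}\int_{B_{\rho_2}\setminus\bar B_{\rho_1}}\frac{|\D u(x)|}{|x|^{n-1}}\,dx\ \leq\ C(n)\,\np{\D u}{n,1}{\Omega}
\end{align*}
for all $a<\rho_1<\rho_2<1$; in particular the total oscillation of $v$ on $(a,1)$ is at most $C(n)\np{\D u}{n,1}{\Omega}$.

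Next I would control the oscillation of $u$ on each sphere. For $\rho\in(a,1)$ consider $S_\rho:=\{y:\ \max(a,\rho/2)<|y|<\min(1,2\rho)\}$. One checks directly that $S_\rho\subset\Omega$, that $\partial B_\rho\subset S_\rho$, and that $S_\rho$ is an annulus whose ratio of radii always lies in $[2,4]$, hence---after rescaling its outer radius to $1$---uniformly bi-Lipschitz to $B_1\setminus\bar B_{1/4}$. The preceding Lemma applied to $S_\rho$ (using again the dilation invariance of both sides) therefore gives $\np{u-\bar u_{S_\rho}}{\infty}{S_\rho}\leq C(n)\np{\D u}{n,1}{S_\rho}\leq C(n)\np{\D u}{n,1}{\Omega}$; restricting the continuous representative of $u$ to $\partial B_\rho\subset S_\rho$ and averaging over $\partial B_\rho$ also bounds $|v(\rho)-\bar u_{S_\rho}|$ by the same quantity, and hence
\begin{align*}
\np{u-v(\rho)}{\infty}{\partial B_\rho}\ \leq\ 2\,C(n)\,\np{\D u}{n,1}{\Omega},\qquad \rho\in(a,1).
\end{align*}

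Combining the two displays, for every $x\in\Omega$, with $\rho=|x|$, one gets $|u(x)-v(\tfrac12)|\leq\np{u-v(\rho)}{\infty}{\partial B_\rho}+|v(\rho)-v(\tfrac12)|\leq C(n)\np{\D u}{n,1}{\Omega}$, so $\np{u-v(\tfrac12)}{\infty}{\Omega}\leq C(n)\np{\D u}{n,1}{\Omega}$, and since $|v(\tfrac12)-\bar u_\Omega|\leq\np{u-v(\tfrac12)}{\infty}{\Omega}$ the Proposition follows with a suitable $\Gamma_4(n)$. The main obstacle is precisely the uniformity in $r/R$: the naive alternative of cutting $\Omega$ into $\sim\log(R/r)$ dyadic subannuli and telescoping their averages fails, because it would require an inequality of the form $\sum_j\np{\D u}{n,1}{\hat A_j}\leq C(n)\np{\D u}{n,1}{\Omega}$ over the (boundedly overlapping) dyadic shells $\hat A_j$, which is false for general $\mathrm{L}^{n,1}$ functions; the argument above circumvents this by handling the radial direction in one stroke through the weak-$\mathrm{L}^{n/(n-1)}$ integrability of $|x|^{1-n}$, so that in each angular direction only a single fat annulus of bounded eccentricity around the corresponding sphere is needed.
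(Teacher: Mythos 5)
Your proposal is correct and rests on the same two ingredients as the paper's own argument: (i) the global control of the spherical averages $v(\rho)$ via the $L^{n,1}/L^{n/(n-1),\infty}$ duality and $\|\,|\cdot|^{1-n}\,\|_{n/(n-1),\infty}$, which handles the logarithmically degenerating radial direction in a single stroke, and (ii) the uniform estimate on annuli of bounded eccentricity obtained from the preceding Lemma by scaling invariance. The paper packages (ii) through a fixed dyadic decomposition $\Omega=\bigcup_j B_{2^{j+1}r}\setminus\bar B_{2^j r}$ and then bounds each $|\bar u_j-\bar u_\Omega|$ by the radial estimate; you instead slide a fat annulus $S_\rho$ around every sphere $\partial B_\rho$, which is a cosmetic repackaging of the same idea. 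Your closing remark that the ``naive'' dyadic alternative with telescoping would fail is accurate as stated, but note that the paper's dyadic argument does not telescope either: it uses the radial estimate exactly as you do, so the dyadic choice is not the obstruction you suggest. One small point to tighten: your $S_\rho$ has ratio of radii that varies over $(2,4]$, so ``uniformly bi-Lipschitz to $B_1\setminus\bar B_{1/4}$'' requires an argument that the constant $C_4(\Omega)$ of the preceding Lemma is uniformly bounded over this one-parameter family (bi-Lipschitz transport changes the constant, and the Lemma by itself gives no modulus of dependence on $\Omega$). This is easy to fix: choose $S_\rho$ to have ratio exactly $4$ in all regimes, e.g.\ $S_\rho=B_{2\rho}\setminus\bar B_{\rho/2}$ when $2a\le\rho\le 1/2$, $S_\rho=B_{4a}\setminus\bar B_a$ when $\rho<2a$, and $S_\rho=B_1\setminus\bar B_{1/4}$ when $\rho>1/2$; then after dilation every $S_\rho$ is the single annulus $B_1\setminus\bar B_{1/4}$ and only one constant $C_4(B_1\setminus\bar B_{1/4})$ is needed, in the same way the paper uses only $C_4(B_2\setminus\bar B_1)$.
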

\begin{proof}
First, observe that the $L^{\infty}$ norm and the $(n,1)$ norm of the gradient $\np{\D\,\cdot\,}{n,1}{\Omega}$ are scaling invariant (see \eqref{case2} for the case $n=2$). Therefore, the constant $C_4(\Omega)$ in Theorem \ref{neckfine} is scaling invariant. In particular, there exists a universal constant $\Gamma_5(n)=C_4(B_{2}\setminus B_1(0))$ for all $0<r<\infty$ and $u\in W^{1,(n,1)}(B_{2r}\setminus\bar{B}_r(0))$, we have
\begin{align}\label{conformal}
	\np{u-\bar{u}_{B_{2r}\setminus \bar{B}_r(0)}}{\infty}{\bar{B}_{2r}\setminus \bar{B}_r(0)}\leq \Gamma_5(n)\np{\D u}{n,1}{B_{2r}\setminus \bar{B}_r(0)}.
\end{align}
Now, as $2r<R$ let $J\in \N$ such that
\begin{align*}
	2^{J}r<R\leq 2^{J+1}r.
\end{align*}
Then we have
\begin{align*}
	\Omega= B_{R}\setminus B_{\frac{R}{2}}(0)\cup \bigcup_{j=0}^{J-1}B_{2^{j+1}r}\setminus \bar{B}_{2^{j}r}(0).
\end{align*}
For the convenience of notation, let us write $\Omega_j=B_{2^{j+1}r}\setminus \bar{B}_{2^{j}r}$ for all $0\leq j\leq J-1$. Thanks to \eqref{conformal} for all $0\leq j\leq J$, we have
\begin{align}\label{pw}
	&\np{u-\bar{u}_j}{\infty}{\Omega_j}\leq \Gamma_5(n)\np{\D u}{n,1}{\Omega_j}\quad \text{where}\;\, \bar{u}_j=\dashint{\Omega_j}u\,d\leb^n\nonumber\\
	&\np{u-\bar{u}_{B_R\setminus B_{R/2}(0)}}{\infty}{B_R\setminus B_{R/2}(0)}\leq \Gamma_5(n)\np{\D u}{n,1}{B_R\setminus B_{R/2}(0)}.
\end{align}
Now define for all $r<t<R$
\begin{align*}
	u_t=\dashint{\partial B_t(0)}u\,d\mathscr{H}^{n-1}.
\end{align*}
For all $r<t<R$, thanks to a similar argument as given in \eqref{mean0}, we have
\begin{align*}
	\left|\frac{d}{dt}u_t\right|\leq \dashint{\partial B_t}|\D u|d\mathscr{H}^{n-1}.
\end{align*}
Furthermore, if $r\leq r_1<R$ is a fixed radius, thanks to the co-area formula, we have for $\leb^1$ almost all $t\in (r_1,R)$
\begin{align*}
	\int_{\partial B_t}|\D u|\,d\mathscr{H}^{n-1}=\frac{d}{dt}\int_{r_1}^{t}\left(\int_{\partial B_s}|\D u|d\mathscr{H}^{n-1}\right)d\leb^1(s)=\frac{d}{dt}\int_{B_t\setminus \bar{B}_{r_1}(0)}|\D u|d\leb^n.
\end{align*}
Therefore, we have 
\begin{align}\label{nmean0}
	\int_{r_1}^{r_2}\left|\frac{d}{dt}u_t\right|dt&\leq \frac{1}{n\alpha(n)}\int_{r_1}^{r_2}\frac{1}{t^{n-1}}\left(\int_{\partial B_t}|\D u|d\mathscr{H}^{n-1}\right)dt\nonumber\\
	&=\frac{1}{n\alpha(n)}\left[\frac{1}{t^{n-1}}\int_{B_t\setminus B_{r_1}(0)}|\D u|d\leb^n\right]_{r_1}^{r_2}+\frac{n-1}{n\alpha(n)}\int_{r_1}^{r_2}\frac{1}{t^n}\left(\int_{B_t\setminus B_{r_1}(0)}|\D u|d\leb^n\right)dt\nonumber\\
	&=\frac{1}{n\alpha(n)}\frac{1}{r_2^{n-1}}\int_{B_{r_2}\setminus \bar{B}_{r_1}(0)}|\D u|d\leb^n+\frac{n-1}{n\alpha(n)}\int_{r_1}^{r_2}\int_{B_{r_2}\setminus \bar{B}_{r_1}(0)}\frac{|\D u(x)|}{t^{n}}\mathrm{1}_{\ens{r_1\leq |x|\leq t}}d\leb^n(x) dt.
\end{align}
Furthermore, observe that
\begin{align}\label{nmean1}
	\int_{B_{r_2}\setminus B_{r_1}(0)}\frac{|\D u(x)|}{|x|^{n-1}}d\leb^n(x)&\leq \frac{1}{n}  \np{\D u}{n,1}{B_{r_2}\setminus \bar{B}_{r_1}(0)}\np{\frac{1}{|x|^{n-1}}}{\frac{n}{n-1},\infty}{B_{r_2}\setminus \bar{B}_{r_1}(0)}\nonumber\\
	&\leq \alpha(n)^{\frac{n}{n-1}}\np{\D u}{n,1}{B_{r_2}\setminus \bar{B}_{r_1}(0)}
\end{align}
while by Fubini's theorem
\begin{align}\label{nmean2}
	\int_{r_1}^{r_2}\int_{B_{r_2}\setminus \bar{B}_{r_1}(0)}\frac{|\D u(x)|}{t^{n}}\mathrm{1}_{\ens{r_1\leq |x|\leq t}}d\leb^n(x) dt&=\int_{B_{r_2}\setminus \bar{B}_{r_1}(0)}|\D u(x)|\left(\int_{|x|}^{r_2}\frac{dt}{t^{n-1}}dt\right)d\leb^n(x)\nonumber\\
	&=\frac{1}{n-1}\int_{B_{r_2}\setminus \bar{B}_{r_1}(0)}|\D u(x)|\left(\frac{1}{|x|^{n-1}}-\frac{1}{r_2^{n-1}}\right).
\end{align}
Finally, we get by \eqref{nmean0}, \eqref{nmean1}, \eqref{nmean2}, \eqref{nmean3} and \eqref{weaknorm}
\begin{align}\label{nmean3}
	\int_{r_1}^{r_2}\left|\frac{d}{dt}u_t\right|dt&\leq \frac{1}{n\alpha(n)}\frac{1}{r_2^{n-1}}\int_{B_{r_2}\setminus \bar{B}_{r_1}(0)}|\D u|d\leb^n+\frac{n-1}{n\alpha(n)}\int_{r_1}^{r_2}\int_{B_{r_2}\setminus \bar{B}_{r_1}(0)}\frac{|\D u(x)|}{t^{n}}\mathrm{1}_{\ens{r_1\leq |x|\leq t}}d\leb^n(x) dt\nonumber\\
	&=\frac{1}{n\alpha(n)}\frac{1}{r_2^{n-1}}\int_{B_{r_2}\setminus \bar{B}_{r_1}(0)}|\D u|d\leb^n+\frac{1}{n\alpha(n)}\int_{B_{r_2}\setminus \bar{B}_{r_1}(0)}|\D u(x)|\left(\frac{1}{|x|^{n-1}}-\frac{1}{r_2^{n-1}}\right)\nonumber\\
	&=\frac{1}{n\alpha(n)}\int_{B_{r_2}\setminus B_{r_1}(0)}\frac{|\D u(x)|}{|x|^{n-1}}d\leb^n(x)
	\leq \frac{1}{n\alpha(n)^{\frac{1}{n}}}\np{\D u}{n,1}{B_{r_2}\setminus \bar{B}_{r_1}(0)}
\end{align}
Therefore, we have for all $r\leq r_1<r_2\leq R$
\begin{align}\label{smean}
	\left|u_{r_1}-u_{r_2}\right|\leq \frac{1}{n\alpha(n)^{\frac{1}{n}}}\np{\D u}{n,1}{B_{r_2}\setminus \bar{B}_{r_1}(0)}.
\end{align}
Furthermore, recalling that $\beta(n)=\mathscr{H}^{n-1}(S^{n-1})=n\alpha(n)$ we obtain for all $r\leq s<t\leq R$,  thanks to \eqref{smean} that
\begin{align*}
	\dashint{B_t\setminus \bar{B}_s(0)}u\,d\leb^n&=\frac{n}{\beta(n)(t^n-s^n)}\int_{s}^t\left(\int_{\partial B_{\rho}}u\,d\mathscr{H}^{n-1}\right)d\rho\\
	&\leq \int_{s}^t\left(\frac{\rho^{n-1}}{t^{n-1}}\int_{\partial B_t}u\,d\mathscr{H}^{n-1}+\beta(n)\frac{\rho^{n-1}}{n\alpha(n)^{\frac{1}{n}}}\np{\D u}{n,1}{B_t\setminus \bar{B}_s(0)}\right)\\
	&=\frac{n}{\beta(n)(t^n-s^n)}\left(\frac{t^n-s^n}{n}\beta(n)\dashint{\partial B_t}u\,d\mathscr{H}^{n-1}+\frac{\beta(n)}{n}\frac{t^n-s^n}{n\alpha(n)^{\frac{1}{n}}}\np{\D u}{n,1}{B_{r_2}\setminus \bar{B}_{r_1}(0)}\right)\\
	&=\dashint{\partial B_t}u\,d\mathscr{H}^{n-1}+\frac{1}{n\alpha(n)^{\frac{1}{n}}}\np{\D u}{n,1}{B_{t}\setminus \bar{B}_s(0)}
\end{align*}
and the reverse inequality (given by \eqref{smean})
\begin{align*}
	\int_{\partial B_{\rho}}u\,d\mathscr{H}^{n-1} \geq \frac{\rho^{n-1}}{t^{n-1}}\int_{\partial B_t}u\,d\mathscr{H}^{n-1}-\frac{\beta(n)}{n\alpha(n)^{\frac{1}{n}}}\rho^{n-1}\np{\D u}{n,1}{B_{t}\setminus \bar{B}_{s}(0)}\quad \text{for all}\;\, s<\rho<t
\end{align*}
shows that for all $r\leq s<t\leq R$
\begin{align*}
	\left|\dashint{B_t\setminus \bar{B}_s(0)}u\,d\leb^n-\dashint{\partial B_t}u\,d\mathscr{H}^{n-1}\right|\leq \frac{1}{n\alpha(n)^{\frac{1}{n}}}\np{\D u}{n,1}{B_t\setminus \bar{B}_s(0)}.
\end{align*}
Therefore, by the triangle inequality we finally obtain that for all $0\leq j\leq J-1$,
\begin{align}\label{pw1}
	\left|u_j-\bar{u}_{\Omega}\right|&=\left|\dashint{B_{2^{j+1}r}\setminus B_{2^jr}(0)}u\,d\leb^n-\dashint{B_R\setminus B_r(0)}u\,d\leb^n\right|\leq \left|\dashint{B_{2^{j+1}r}\setminus B_{2^jr}(0)}u\,d\leb^n-\dashint{\partial B_{2^{j+1}r}}u\,d\mathscr{H}^{n-1}\right|\nonumber\\
	&+\left|\dashint{\partial B_{2^{j+1}r}}u\,d\mathscr{H}^{n-1}-\int_{\partial B_R}u\,d\mathscr{H}^{n-1}\right|+\left|\dashint{B_R\setminus B_r(0)}u\,d\leb^n-\int_{\partial B_R}u\,d\mathscr{H}^{n-1}\right|\nonumber\\
	&\leq \frac{3}{n\alpha(n)^{\frac{1}{n}}}\np{\D u}{n,1}{\Omega},
\end{align}
and likewise,
\begin{align}\label{pw2}
	\left|\bar{u}_{B_R\setminus B_{R/2}(0)}-\bar{u}_{\Omega}\right|\leq \frac{3}{n\alpha(n)^{\frac{1}{n}}}\np{\D u}{n,1}{\Omega}.
\end{align}
Finally, thanks to \eqref{pw}, \eqref{pw1} and \eqref{pw2}, we have
\begin{align*}
	\np{u-\bar{u}_{\Omega}}{\infty}{\Omega}\leq \left(\Gamma_5(n)+\frac{3}{n\alpha(n)^{\frac{1}{n}}}\right)\np{\D u}{n,1}{\Omega}
\end{align*}
and this concludes the proof of the Proposition.
\end{proof}

\textbf{We now come back to the proof of Theorem \ref{neckfine}.}

\begin{proof}(of Theorem \ref{neckfine})
	Thanks to Lemma IV.$1$ \cite{quanta}, there exists a universal constant $C_0=C_0(n)>0$ and an extension $\widetilde{\n}:B(0,R)\rightarrow \mathscr{G}_{n-2}(\R^n)$ of $\n$ such that
	\begin{align*}
	&\widetilde{\n}=\n\quad \text{on}\;\, \Omega=B(0,R)\setminus \bar{B}(0,r)\\
	&\big\Vert\D \widetilde{\n}\big\Vert_{\mathrm{L}^{2,\infty}(B(0,R))}\leq C_0 \np{\D \n}{2,\infty}{\Omega}.
	\end{align*}
	Therefore, by Lemma IV.$3$ of \cite{quanta}, there exists a universal constant $C_1=C_1(n)$ and a moving Coulomb frame $(\e_1,\e_2)\in W^{1,2}(B(0,R),S^{n-1})\times W^{1,2}(B(0,R),S^{n-1})$ such that
	\begin{align}\label{movingframe}
	\left\{
	\begin{alignedat}{1}
	&\widetilde{\n}=\star \left(\e_1\wedge\e_2\right)\quad \dive\left(\e_1\cdot \D\e_2\right)=0\\
	&\np{\D\e_1}{2}{B(0,R)}^2+\np{\D\e_2}{2}{B(0,R)}^2\leq C_1\np{\D \n}{2}{\Omega}^2.
	\end{alignedat}\right.
	\end{align}
	Furthermore, notice that for all $u\in  W^{1,{(2,1)}}_{\mathrm{loc}}(\R^2)$, and for all $\rho>0$, we have
	\begin{align}\label{case2}
	&\np{\D u}{2,1}{B(0,\rho)}=4\int_{0}^{\infty}\left(\leb^2(B(0,r)\cap\ens{x:|\D u(x)|>t})\right)^{\frac{1}{2}}d\leb^{1}(t)\nonumber\\
	&=4\int_{0}^{\infty}\left(\int_{B(0,\rho)}\mathrm{1}_{\ens{x:|\D u(x)|>t}}d\leb^2(x)\right)^{\frac{1}{2}}d\leb^{1}(t)
	=4\int_{0}^{\infty}\left(\int_{B(0,1)}\mathrm{1}_{\ens{y:|\D (u\circ \varphi_{\rho})(y)|>\rho t}}\rho^2 d\leb^2(y)\right)^{\frac{1}{2}}d\leb^1(t)\nonumber\\
	&=4\int_{0}^{\infty}\left(\int_{B(0,1)}\mathrm{1}_{\ens{y:|\D (u\circ \varphi_{\rho})(y)|>s}}\rho^2 d\leb^2(y)\right)^{\frac{1}{2}}\rho^{-1}\, d\leb^1(s)=\np{\D(u\circ \varphi_{\rho})}{2,1}{B(0,1)}.
	\end{align}
	where $\varphi_{\rho}(y)=\rho y$. Now, if $\mu:B(0,R)\rightarrow \R$ is the unique solution of the system
	\begin{align}\label{systmu}
	\left\{\begin{alignedat}{2}
	\Delta \mu&=\D^{\perp}\e_1\cdot \D\e_2\quad&& \text{in}\;\, B_R(0)\\
	\mu&=0\quad &&\text{on}\;\,\partial B_R(0)
	\end{alignedat}\right.
	\end{align}
	then $\widetilde{\mu}=\mu\circ \varphi_R$ solves (with evident notations)
	\begin{align*}
	\left\{\begin{alignedat}{2}
	\Delta \widetilde{\mu}&=\D^{\perp}\widetilde{\e_1}\cdot \D\widetilde{\e_2}\quad&& \text{in}\;\, B_1(0)\\
	\widetilde{\mu}&=0\quad &&\text{on}\;\,	S^1
	\end{alignedat}\right.
	\end{align*}
	Therefore, the improved Wente inequality (\cite{helein}, $3.4.1$) shows that there exists a universal constant $\Gamma_2>0$ such that
	\begin{align}\label{main1}
	\np{\D \mu}{2,1}{B(0,R)}=\np{\D \widetilde{\mu}}{2,1}{B_1(0)}&\leq \Gamma_2\np{\D\widetilde{\e_1}}{2}{B_1(0)}\np{\D\widetilde{\e_2}}{2}{B_1(0)}
	=\Gamma_2\np{\D\e_1}{2}{B_R(0)}\np{\D\e_2}{2}{B_R(0)}\nonumber\\
	&\leq \frac{1}{2}{\Gamma_2}C_1\int_{\Omega}|\D\n|^2dx.
	\end{align}
	Furthermore, notice that we also have the optimal inequality
	\begin{align}\label{main2}
	\np{\D \mu}{2}{B_R(0)}&\leq \frac{1}{4}\sqrt{\frac{3}{\pi}}\np{\D\e_1}{2}{B_R(0)}\np{\D\e_2}{2}{B_R(0)}
	\leq \frac{1}{8}\sqrt{\frac{3}{\pi}}C_1\int_{\Omega}|\D\n|^2dx.
	\end{align}
	Now, let $\upsilon=\lambda-\mu$ on $\Omega=B(0,R)\setminus \bar{B}(0,r)$. Then $\upsilon$ is harmonic on $\Omega$ and $\upsilon=\lambda$ on $\partial B_R(0)$. 
	Then as $\upsilon$ is harmonic, there exists $d\in \R$ and $\ens{a_k}_{k\in \Z}\subset \C$ such that
	\begin{align*}
	\upsilon(\rho,\theta)=a_0+d\,\log\rho+\sum_{k\in \Z^{\ast}}\left(a_k\rho^k+\bar{a_{-k}}\rho^{-k}\right)e^{ik\theta}.
	\end{align*}
	Now, noticing that for all $r<\rho<R$
	\begin{align}\label{log}
	d=\frac{1}{2\pi}\int_{\partial B_{\rho}}\partial_{\nu}\upsilon,
	\end{align}
	this implies that $\upsilon-d\log|z|$ satisfies the hypothesis of Proposition \ref{l212infty}. Therefore, using the identity $\upsilon=\lambda-\mu$, the inequalities \eqref{main2} and  $\np{\,\cdot\,}{2,\infty}{\,\cdot\,}\leq 2\np{\,\cdot\,}{2}{\,\cdot\,}$, we have for all  $\left(\dfrac{r}{R}\right)^{\frac{1}{3}}<\alpha<\dfrac{1}{4}$
	\begin{align}\label{main3}
	&\np{\D(\upsilon-d\log|z|)}{2,1}{B_{\alpha R}\setminus \bar{B}_{\alpha^{-1}r}}\leq 24\,\Gamma_1\sqrt{\alpha}\np{\D(\upsilon-d\log|z|)}{2,\infty}{B_R\setminus \bar{B_r}(0)}\nonumber\\
	&\leq 24\,\Gamma_1\sqrt{\alpha}\left(\np{\D(\lambda-d\log|z|)}{2,\infty}{B_R\setminus \bar{B}_r(0)}+\np{\D \mu}{2,\infty}{B_R\setminus \bar{B}_r(0)}\right)\nonumber\\
	&\leq 24\,\Gamma_1\sqrt{\alpha}\left(\np{\D(\lambda-d\log|z|)}{2,\infty}{\Omega}+2\np{\D\mu}{2}{\Omega}\right)\nonumber\\
	&\leq 24\,\Gamma_1\sqrt{\alpha}\left(\np{\D(\lambda-d\log|z|)}{2,\infty}{\Omega}+\frac{1}{4}\sqrt{\frac{3}{\pi}}C_1\int_{\Omega}|\D\n|^2dx\right).
	\end{align}
	Furthermore, notice that by the co-area formula, for all $s\in (r,R)$ such that $2s<R$, we have
	\begin{align*}
	\int_{B_{2s}\setminus \bar{B}_{s}(0)}|\D \upsilon(x)|dx
	&=\int_{s}^{2s}\left(\rho\int_{\partial B_{\rho}}|\D\upsilon|d\mathscr{H}^1\right)\frac{d\rho}{\rho}
	\geq \log(2)\inf_{s< \rho< 2s}\left(\rho\int_{\partial B_{\rho}}|\D\upsilon|d\mathscr{H}^1\right).
	\end{align*}
	Therefore, there exists $\rho\in (s,2s)$ such that 
	\begin{align*}
	\int_{\partial B_{\rho}}|\D\upsilon|d\mathscr{H}^1&\leq \frac{1}{\log(2)\rho}\int_{B_{2s}\setminus \bar{B}_s(0)}|\D\upsilon(x)|dx
	\leq \frac{1}{\log(2)\rho}\np{1}{2,1}{B_{2s}\setminus \bar{B}_s}\np{\D\upsilon}{2,\infty}{B_{2s}\setminus \bar{B}_s(0)}\\
	&=\frac{1}{\log(2)\rho}4\sqrt{3\pi}s\np{\D\upsilon}{2,\infty}{B_{2s}\setminus \bar{B}_s(0)}
	\leq \frac{4\sqrt{3\pi}}{\log(2)}\left(\np{\D\lambda}{2,\infty}{\Omega}+2\np{\D \mu}{2}{\Omega}\right)\\
	&\leq \frac{4\sqrt{3\pi}}{\log(2)}\left(\np{\D\lambda}{2,\infty}{\Omega}+\frac{1}{4}\sqrt{\frac{3}{\pi}}C_1\int_{\Omega}|\D\n|^2dx\right).
	\end{align*}
	This implies by \eqref{log} that
	\begin{align}\label{main4}
	|d|\leq \frac{2}{\log(2)}\sqrt{\frac{3}{\pi}}\left(\np{\D\lambda}{2,\infty}{\Omega}+\frac{1}{4}\sqrt{\frac{3}{\pi}}C_1\int_{\Omega}|\D\n|^2dx\right).
	\end{align}
	As $\np{\D\log|z|}{2,\infty}{\Omega}= 2\sqrt{\pi}$, by \eqref{main3} and \eqref{main4} there exists a universal constant $\Gamma_3=\Gamma_3(n)$ such that
	\begin{align}\label{main5}
	\np{\D(\upsilon-d\log|z|)}{2,1}{B_{\alpha R}\setminus \bar{B}_{\alpha^{-1}r}}\leq \Gamma_3(n)\sqrt{\alpha}\left(\np{\D\lambda}{2,\infty}{\Omega}+\int_{\Omega}|\D\n|^2dx\right).
	\end{align}
	Finally, putting together \eqref{main1}, \eqref{main5} and recalling that $\lambda=\mu+\upsilon$, we have for all $\left(\dfrac{r}{R}\right)^{\frac{1}{4}}\leq \alpha<\dfrac{1}{4}$
	\begin{align}\label{main6}
	&\np{\D(\lambda-d\log|z|)}{2,1}{B_{\alpha R}\setminus \bar{B}_{\alpha^{-1}r}(0)}\leq \np{\D(\upsilon-d\log|z|)}{2,1}{B_{\alpha R}\setminus \bar{B}_{\alpha^{-1}r}}+\np{\D\mu}{2,1}{B_{\alpha R}\setminus B_{\alpha^{-1}}(0)}\nonumber\\
	&\leq \Gamma_3(n)\sqrt{\alpha}\left(\np{\D\lambda}{2,\infty}{\Omega}+\int_{\Omega}|\D\n|^2dx\right)+\frac{1}{2}\Gamma_2C_1(n)\int_{\Omega}|\D\n|^2dx
	\end{align} 
	Now, we  estimate for $r\leq \rho< R$ the following quantity
	\begin{align*}
	\left|d-\frac{1}{2\pi}\int_{\partial B_{\rho}}\partial_{\nu}\lambda\,d\mathscr{H}^1\right|=\left|\frac{1}{2\pi}\int_{\partial B_{\rho}}\partial_{\nu}\mu\,d\mathscr{H}^1\right|.
	\end{align*}
	We have, recalling that $\mu$ is well defined on $B_R(0)$ and satisfies \eqref{systmu}, we find
	\begin{align}\label{degree}
	0&=\int_{B_R\setminus B_{\rho}}\mu(x)\,\Delta\log\left(\frac{|x|}{R}\right)dx
	=-\log\left(\frac{R}{\rho}\right)\int_{\partial B_{\rho}}\partial_{\nu}\mu\,d\mathscr{H}^1+\int_{B_R\setminus B_{\rho}(0)}\Delta\mu\,\log\left(\frac{|x|}{R}\right)dx\nonumber\\
	&=-\log\left(\frac{R}{\rho}\right)\int_{\partial B_{\rho}}\partial_{\nu}\mu\,d\mathscr{H}^1+\int_{B_R(0)}\Delta\mu\log\left(\frac{|x|}{R}\right)dx-\int_{B_\rho(0)}\left(\D^{\perp}\e_1\cdot \D\e_2\right)\log\left(\frac{|x|}{R}\right)dx.
	\end{align}
	First, the previous estimate \eqref{main1} yields
	\begin{align}\label{main7}
	\left|\int_{B_R(0)}\Delta\mu\log\left(\frac{|x|}{R}\right)dx\right|&=\left|\int_{B_R(0)}\D\mu\cdot \D\log|x| dx\right|
	\leq \frac{1}{2}\np{\D\mu}{2,1}{B_R(0)} \np{\frac{1}{|x|}}{2,\infty}{B_R(0)}\nonumber\\
	&\leq \frac{\sqrt{\pi}}{2}\Gamma_2C_1(n)\int_{\Omega}|\D\n|^2dx.
	\end{align}
	
	Now, using once more Lemma IV.$3$ of \cite{quanta}, we see that exists a Coulomb moving frame $(\vec{f_1},\vec{f}_2)\in W^{1,2}(B_{\rho}(0),S^{n-1})\times W^{1,2}(B_{\rho}(0),S^{n-1})$ such that
	\begin{align*}
	\widetilde{\n}=\star (\vec{f}_1\wedge \vec{f}_2)
	\end{align*}
	and
	\begin{align}\label{key}
	&\np{\D \vec{f}_1}{2}{B(0,\rho)}^2+\np{\D \vec{f}_2}{2}{B(0,\rho)}^2\leq C_2(n)\np{\D \widetilde{\n}}{B_{\rho}}{2}^2
	=C_2(n)\int_{B_r(0)}|\D\widetilde{\n}|^2dx+C_2(n)\int_{B_{\rho}\setminus \bar{B}_r(0)}|\D\n|^2dx\nonumber\\
	&\leq C_1(n)\int_{B_{2r}\setminus \bar{B}_r(0)}|\D\n|^2dx+C_2(n)\int_{B_{\rho}\setminus \bar{B}_r(0)}|\D \n|^2dx
	\leq C_3(n)\int_{B_{\max\ens{\rho,2r}}\setminus \bar{B}_r(0)}|\D\n|^2dx.
	\end{align}
	Now, let $\psi$ be the solution of
	\begin{align*}
	\left\{
	\begin{alignedat}{2}
	\Delta\psi&=\D^{\perp}\vec{f}_1\cdot \D\vec{f}_2\quad&& \text{in}\;\, B_{\rho}(0)\\
	\psi&=0\quad &&\text{on}\;\, \partial B_{\rho}(0).
	\end{alignedat}\right.
	\end{align*}
	As in \eqref{key}, we get
	\begin{align}\label{step00}
	\np{\D\psi}{2,1}{B_\rho(0)}\leq \frac{1}{2}\Gamma_2 C_3(n)\int_{B_{\max\ens{\rho,2r}}\setminus \bar{B}_r(0)}|\D\n|^2dx.
	\end{align}
	Furthermore, we have
	\begin{align}\label{step0}
	\int_{B_{\rho}(0)}\left(\D^{\perp}\e_1\cdot\D\e_2\right)\log\left(\frac{|x|}{R}\right)dx&=\int_{B_{\rho}(0)}\left(\D^{\perp}\vec{f}_1\cdot \D\vec{f}_2\right)\log\left(\frac{|x|}{R}\right)dx
	=\int_{B_{\rho}(0)}\Delta\psi\,\log\left(\frac{|x|}{R}\right)dx\nonumber\nonumber\\
	&=-\log\left(\frac{R}{\rho}\right)\int_{\partial B_{\rho}}\partial_{\nu}\psi\,d\mathscr{H}^1-\int_{B_{\rho}}\D\psi\cdot \D\log|x|dx
	\end{align}
	while by the Cauchy-Schwarz inequality
	\begin{align}\label{step1}
	\left|\int_{\partial B_{\rho}}\partial_{\nu}\psi\,d\mathscr{H}^1\right|&=\left|\int_{B_{\rho}(0)}\Delta\psi\,dx\right|=\left|\int_{B_{\rho}(0)}\D^{\perp}\vec{f}_1\cdot\D\vec{f}_2dx\right|
	\leq \frac{1}{2}C_3(n)\int_{B_{\max\ens{\rho,2r}}\setminus B_r(0)}|\D\n|^2dx.
	\end{align}
	We estimate as previously by \eqref{step00}
	\begin{align}\label{step2}
	\left|\int_{B_{\rho}}\D\psi\cdot \D\log|x|dx\right|\leq \frac{1}{2} \np{\D\psi}{2,1}{B_{\rho}(0)}\np{\frac{1}{|x|}}{2,\infty}{B_\rho(0)}\leq \frac{\sqrt{\pi}}{2}\Gamma_2C_3(n)\int_{B_{\max\ens{\rho,2r}}\setminus\bar{B}_r(0)}|\D\n|^2dx.
	\end{align}
	Therefore, \eqref{step0}, \eqref{step1} and \eqref{step2} yield
	\begin{align}\label{main8}
	\left|\int_{B_{\rho}}\left(\D^{\perp}\e_1\cdot \D\e_2\right)\log\left(\frac{|x|}{R}\right)dx\right|\leq \left(\frac{1}{2}C_3(n)\log\left(\frac{R}{\rho}\right)+\frac{\sqrt{\pi}}{2}\Gamma_2C_3(n)\right)\int_{B_{\max\ens{\rho,2r}}\setminus\bar{B}_r(0)}|\D\n|^2dx.
	\end{align}
	Finally, by \eqref{degree}, \eqref{main7} and \eqref{main8} we obtain for some universal constant $\Gamma_0=\Gamma_0(n)$
	\begin{align}\label{main9}
	\frac{1}{2\pi}\left|\int_{\partial B_{\rho}}\partial_{\nu}\mu\,d\mathscr{H}^1\right|\leq \Gamma_0(n)\left(\int_{B_{\max\ens{\rho,2r}}\setminus \bar{B}_r(0)}|\D\n|^2dx+\frac{1}{\log\left(\frac{R}{\rho}\right)}\int_{\Omega}|\D\n|^2dx\right)
	\end{align}
	which completes the proof of the theorem, up to the $L^{\infty}$ estimate which is a direct consequence of the inequality $4r<R$ and of Proposition \ref{conf}.
\end{proof}

\section{Improved energy quantization}\label{sec3}

In this section, we build on \cite{quanta} to obtain an improved no-neck energy. This estimate is crucial to obtain the first result on the removability of the second residue (see Theorems \ref{onebubble},  \ref{multibubble}). 

\begin{theorem}\label{improvedquanta}
	Under the hypothesis of Theorem \ref{ta}, for all $0<\alpha<1$ let $\Omega_k(\alpha)=B_{\alpha R_k}\setminus\bar{B}_{\alpha^{-1}r_k}(0)$ be a neck domain and $\theta_0\in \Z$ such that (by Theorem \ref{integer1})
	\begin{align}
		\theta_0-1=\lim\limits_{\alpha\rightarrow 0}\lim\limits_{k\rightarrow \infty}\int_{\partial B_{\alpha^{-1}r_k}(0)}\partial_{\nu}\lambda_k\,d\mathscr{H}^1,
	\end{align}
	and define
	\begin{align*}
		\Lambda=\sup_{k\in \N}\left(\np{\D\lambda_k}{2,\infty}{\Omega_k(1)}+\int_{\Omega_k(1)}|\D\n_k|^2dx\right).
	\end{align*}
    Then there exist a universal constant $\Gamma_{5}=\Gamma_5(n)$, and $\alpha_0=\alpha_0(\{\phi_k\}_{k\in \N})>0$ such that for all $0<\alpha<\alpha_0$ and $k\in \N$ large enough, 
	\begin{align}\label{l21n}
		\np{\D\n_k}{2,1}{\Omega_k(\alpha)}&\leq \Gamma_5(n)e^{\Gamma_5(n)\Lambda}\left(1+\np{\D\n_k}{2}{\Omega_k(4\alpha)}\right)\np{\D\n_k}{2}{\Omega_k(4\alpha)}. 
	\end{align}
	In particular, we deduce by the $L^2$ no-neck energy
	\begin{align*}
		\lim\limits_{\alpha\rightarrow 0}\limsup_{k\rightarrow\infty}\np{\D\n_k}{2,1}{\Omega_k(\alpha)}=0.
	\end{align*}
\end{theorem}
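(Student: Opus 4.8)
The plan is to rerun the energy-quantization argument of Bernard--Rivi\`ere \cite{quanta} while systematically upgrading each $L^{2}$ step to an $L^{2,1}$ step, the new inputs being Theorem \ref{neckfine} (quantitative $L^{2,1}$ control of the conformal factor on necks, together with its improved Wente ingredient) and the Lorentz-space Lemmas \ref{l2linfty}, \ref{l21l2} and Proposition \ref{l212infty}. Since \eqref{l21n} concerns a single chart and a single concentration scale, by \eqref{controlconf} and the bubble-tree decomposition I would first reduce to one neck $\Omega_k=B_{R_k}\setminus\bar B_{r_k}(0)$ with $\np{\D\lambda_k}{2,\infty}{\Omega_k}\le\Lambda$. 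The $L^{2}$ no-neck energy of \cite{quanta}, namely $\lim_{\alpha\to0}\limsup_k\int_{\Omega_k(\alpha)}|\D\n_k|^2dx=0$, then lets me fix $\alpha_0=\alpha_0(\{\phi_k\}_{k\in\N})$ so small that for every $0<\alpha<\alpha_0$ and $k$ large the hypothesis \eqref{I0} of Theorem \ref{neckfine} holds on $\Omega_k(4\alpha)$; that theorem splits $\lambda_k=\mu_k+\nu_k$ with $\np{\D\mu_k}{2,1}{\Omega_k(2\alpha)}\lesssim\int_{\Omega_k}|\D\n_k|^2dx$, with $\nu_k$ harmonic of degree $d_k=\tfrac1{2\pi}\int_{\partial B_\rho}\p{\nu}\nu_k$ satisfying $d_k\to\theta_0-1$ (Theorem \ref{integer1}), and, crucially, $\np{\D(\lambda_k-d_k\log|z|)}{2,1}{\Omega_k(\alpha)}\lesssim\sqrt{\alpha}\,\Lambda+\int_{\Omega_k}|\D\n_k|^2dx$ by \eqref{I1}.

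Next I would import from \cite{quanta} (Lemmas IV.$1$ and IV.$3$) the extension of $\n_k$ across the hole and a Coulomb moving frame $(\vec e_1,\vec e_2)$ whose Dirichlet energy is $\lesssim\int_{\Omega_k}|\D\n_k|^2dx$, together with Rivi\`ere's conservation laws for Willmore immersions and the associated potentials $(S_k,\vec R_k)$, which solve a Wente-type elliptic system whose right-hand side is a finite sum of Jacobians built from $\D\phi_k$, $\D\n_k$ and the $\D\vec e_i$ (cf.\ \cite{riviere1}). The point is that the full second fundamental form of $\phi_k$ on $\Omega_k(\alpha)$ is reconstructed out of $\D S_k$, $\D\vec R_k$ and $\D(\lambda_k-d_k\log|z|)$ alone: the pure logarithmic term $d_k\,\D\log|z|$ does \emph{not} enter, because it is the conformal factor of the asymptotically flat multiplicity-$\theta_0$ cone at the branch point, which carries no curvature. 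This decoupling is the structural crux. Indeed, by the Remark following Lemma \ref{l21l2}, $\D\log|z|$ is never in $L^{2,1}$ of a degenerating annulus (its $L^{2,1}$ norm over $\Omega_k(\alpha)$ grows like $\log(\alpha^2R_k/r_k)$, whereas its $L^{2}$ norm over $\Omega_k$ grows only like $\sqrt{\log(R_k/r_k)}$), so any surviving logarithmic term in $\D\n_k$ would be fatal, and it is precisely because it cancels that the $\sqrt{\alpha}$-loss in \eqref{I1} becomes harmless (since $\sqrt{\alpha}\,\Lambda\to0$).

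For the genuinely compensated pieces I would apply the improved Wente inequality (\cite{helein}, $3.4.1$) exactly as in the proof of Theorem \ref{neckfine} --- rescaling to the unit disc, extending, and using $\np{\D u}{2,1}{}\lesssim\np{\D a}{2}{}\np{\D b}{2}{}$ for $\Delta u=\D^{\perp}a\cdot\D b$ with zero boundary data --- to bound $\np{\D S_k}{2,1}{\Omega_k(\alpha)}+\np{\D\vec R_k}{2,1}{\Omega_k(\alpha)}$ by $C(n)\big(1+\np{\D\n_k}{2}{\Omega_k(4\alpha)}\big)\np{\D\n_k}{2}{\Omega_k(4\alpha)}$ plus a remainder $C(n)\np{\D\n_k}{2}{\Omega_k(2\alpha)}\np{\D\n_k}{2,1}{\Omega_k(2\alpha)}$ produced by the $\D\n_k$ still present on the right-hand side of the system, the contribution of $\D(\lambda_k-d_k\log|z|)$ --- always multiplied in the reconstruction by the small curvature --- being controlled by $\big(\sqrt{\alpha}\,\Lambda+\int_{\Omega_k(4\alpha)}|\D\n_k|^2dx\big)\lesssim\np{\D\n_k}{2}{\Omega_k(4\alpha)}$ for $\alpha<\alpha_0$. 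Combined with the first step this gives a closed inequality
\[
\np{\D\n_k}{2,1}{\Omega_k(\alpha)}\le C(n)\Big(\int_{\Omega_k(2\alpha)}|\D\n_k|^2dx\Big)^{1/2}\np{\D\n_k}{2,1}{\Omega_k(2\alpha)}+C(n)\big(1+\np{\D\n_k}{2}{\Omega_k(4\alpha)}\big)\np{\D\n_k}{2}{\Omega_k(4\alpha)} .
\]
Iterating this over the dyadic sub-annuli of $\Omega_k$ and absorbing the first term --- legitimate once $\alpha_0$ is small enough --- yields \eqref{l21n}; the constant $\Gamma_5e^{\Gamma_5\Lambda}$ reflects, on one hand, the product of the $(1+C\,\cdot)$ factors over the $\sim\log(R_k/r_k)$ dyadic scales, which stays bounded by $e^{C\Lambda}$ thanks to the geometric decay of the dyadic $L^{2}$ energies proved in \cite{quanta}, and, on the other, the exponential dependence on $\np{\D\lambda_k}{2,\infty}{}$ already carried by the estimates of \cite{quanta}, \cite{angular}, \cite{quantamoduli} on the conformal factor and the angular energy. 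The last assertion $\lim_{\alpha\to0}\limsup_k\np{\D\n_k}{2,1}{\Omega_k(\alpha)}=0$ then follows immediately, since $\np{\D\n_k}{2}{\Omega_k(4\alpha)}\to0$ by the $L^{2}$ no-neck energy.

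I expect two points to be the real obstacles. The first is making rigorous that $d_k\,\D\log|z|$ genuinely drops out of the reconstruction of $\D\n_k$ --- equivalently, that the only logarithmic divergence of the conformal factor on the neck is the benign cone one --- which is exactly where the integrality of the limiting multiplicity (Theorem \ref{integer1}) and the hypotheses of Theorem \ref{ta} are used. The second is the uniform-in-$k$ bookkeeping of the iteration: one must keep the absorbed constant of the form $e^{\Gamma_5\Lambda}$ and prevent it from growing with the dyadic length $\log(R_k/r_k)$ of the neck, which forces one to re-establish, now with $L^{2,1}$ precision, the geometric decay of the dyadic energies underlying the $L^{2}$ quantization of \cite{quanta}.
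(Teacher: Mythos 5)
Your overall plan — upgrade the Bernard--Rivi\`ere $L^{2}$ quantization to $L^{2,1}$ by combining Rivi\`ere's conservation laws with the improved Wente and Coifman--Lions--Meyer--Semmes estimates and with the precise $L^{2,1}$ control of the conformal factor from Theorem \ref{neckfine} — is the same high-level strategy as the paper, and your \textbf{Step~1} (the $L^{2,1}$ control of $e^{\lambda_k}\H_k$ via $(S_k,\vec R_k)$) is essentially Theorem \ref{quantamean}. The genuine gap is in the passage from $e^{\lambda_k}\H_k$ to $\D\n_k$, where you rely on a ``decoupling'' that is not actually how the paper proceeds and which I do not see how to make rigorous.

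Concretely, you claim that $\D\n_k$ is ``reconstructed out of $\D S_k$, $\D\vec R_k$ and $\D(\lambda_k-d_k\log|z|)$ alone'' and that the pure logarithm drops out because it is the conformal factor of the flat cone. But $\D S_k$ and $\D\vec R_k$ only recover the mean-curvature part $e^{\lambda_k}\H_k$ through the structure identity $e^{2\lambda}\H=\frac14\D^\perp S\cdot\D\phi-\frac14\D\vec R\res\D^\perp\phi$; they do not give the full Weingarten tensor. The correct input is the algebraic identity \eqref{algebraic0},
\[
\D\n^\beta=-\ast(\n\wedge\D^\perp\n^\beta)+\sum_\gamma\s{\D\n^\beta}{\n^\gamma}\n^\gamma-2H^\beta\D\phi,
\]
and its divergence \eqref{jacobian}, which is a Jacobian equation for $\Delta\n^\beta$ with source $\D H^\beta\cdot\D\phi$. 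Moreover the obstructive logarithm is \emph{not} in $\lambda_k$: it lives in the harmonic remainder $\vec\varphi_k=\n_k^\beta-\vec u_k-\vec v_k$ obtained after solving the two Wente-type systems, and \emph{a priori} this $\vec\varphi_k$ may carry its own $\vec d_k\log|z|$, which is controlled by the maximum principle only up to a bounded (not small) $L^{2,1}$ norm — this is exactly what produces the $16\sqrt\pi$ in \eqref{quantabis} and \eqref{boundedneck}. No argument about the conformal factor of a flat cone, and no appeal to the integrality of $\theta_0$ (Theorem \ref{integer1} is not used in this proof at all), tells you this term is small.

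The missing idea is the radial-averaging device: one introduces $\bar\n_k^\beta(t)=\dashint_{\partial B_t}\n_k^\beta$, shows via \eqref{algebraic0}, the $\epsilon$-regularity, and Lemma \ref{lpmean} that $\np{\D\bar\n_k^\beta}{2,1}{\Omega_k(\alpha)}$ is genuinely small (controlled by $\np{\D\n_k}{2}{\Omega_k(2\alpha)}$, see \eqref{quanta9}), and then observes that $\vec\varphi_k-\bar{\vec\varphi}_k$ is harmonic with zero radial flux, hence falls under Lemma \ref{l21l2} and gains the factor $\beta$ in the $L^{2,1}$-versus-$L^2$ comparison. This is what kills the log, not the decoupling you invoke. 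Related, your closing dyadic iteration is both unnecessary and dangerous: the paper never iterates across the $\log(R_k/r_k)$ dyadic scales of the neck (the Wente/CLMS estimates are applied once, on the controlled extensions to the full ball $B(0,\alpha R_k)$, and the absorption is done globally via \eqref{quantabis}), and accumulating constants over the neck length is precisely the failure mode that the radial-average trick was designed to avoid.
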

\begin{proof}
	\textbf{Step 1: $L^{2,1}$-quantization of the mean curvature.}
	Here, we will prove that 
	\begin{align*}
		\lim\limits_{\alpha\rightarrow 0}\limsup_{k\rightarrow \infty}\left(\np{e^{\lambda_k}\H_k}{2,1}{\Omega_k(\alpha)}+\np{e^{\lambda_k}\D\H_k}{1}{\Omega_k(\alpha)}\right)=0.
	\end{align*}
    This statement is a consequence of the following lemma.
    \begin{theorem}\label{quantamean}
    	There exists constants $R_0(n),\epsilon_3(n)>0$ with the following property. Let $0<100r<R\leq R_0(n)$, and $\phi:B(0,R)\rightarrow \R^n$ be a weak conformal immersion of finite total curvature, such that 
    	\begin{align*}
    		\sup_{r<s<R/2}\int_{B_{2s}\setminus \bar{B}_s(0)}|\D\n|^2dx\leq \epsilon_3(n).
    	\end{align*}
    	Set $\Omega=B_R\setminus \bar{B}_r(0)$, and
    	\begin{align*}
    		\Lambda=\np{\D\lambda}{2,\infty}{\Omega}+\int_{\Omega}|\D\n|^2dx,
    	\end{align*}
    	where $\lambda$ is the conformal parameter of $\phi$. Then there exists a universal constant $C_{4}=C_{4}(n)$ such that for all $\left(\dfrac{4r}{5R}\right)^{\frac{1}{3}}<\alpha<\dfrac{1}{5}$, we have
    	\begin{align}\label{precisedL21}
    		\np{e^{\lambda}\H}{2,1}{\Omega_{\alpha}}+\np{e^{\lambda}\D\H}{1}{\Omega_{\alpha}}\leq C_{4}(n)\left(1+\Lambda\right)\,e^{4\Gamma_1(n)\Lambda}\left(1+\np{\D\n}{2}{\Omega}\right)\np{\D\n}{2}{\Omega}.
    	\end{align}
    \end{theorem}
\begin{proof}	Define for all $\left(\dfrac{r}{R}\right)^{\frac{1}{2}}<\alpha<1$ the open subset $\Omega_{\alpha}=B_{\alpha R}\setminus \bar{B}_{\alpha^{-1}r}$ of $\Omega$.
	We follow step by steps the proof of Lemma VI.$6$ of \cite{quanta}. First, the pointwise estimate on $\D\n$ is identical and we find that there exists $C_0=C_0(n),C_0'=C_0'(n)>0$ such that for all $z\in B_{4R/5}\setminus \bar{B}_{5r/4}(0)$
	\begin{align}\label{localcontrol}
		|\D\n(z)|\leq \frac{C_0}{|z|^2}\int_{B_{2|z|}\setminus \bar{B}_{|z|/2}(0)}|\D\n|^2d\leb^2\leq \frac{C_0'\sqrt{\epsilon_3(n)}}{|z|}
	\end{align}
	so that 
	\begin{align*}
		\np{\D\n}{2,\infty}{\Omega}\leq \sqrt{\pi}C_0'\sqrt{\epsilon_3(n)}
	\end{align*}
	and we can choose  
	$
	\epsilon_3(n)=\dfrac{\epsilon_1(n)^2}{\sqrt{\pi}C_0'(n)}.
	$
	Therefore, thanks to Theorem \ref{neckfine}, there exists $d\in \R$ such that 
	\begin{align*}
		|d|\leq 3(C_1(n)+1)\Lambda.
	\end{align*}
	and for all $\left(\dfrac{5}{4}\right)^{\frac{2}{3}}\left(\dfrac{r}{R}\right)^{\frac{1}{3}}=\left(\dfrac{\frac{5r}{4}}{\frac{4R}{5}}\right)^{\frac{1}{4}}<\dfrac{5\alpha}{4}<\dfrac{1}{4}$, there exists $A_{\alpha}\in \R$ such that 
	\begin{align}\label{localconf}
		\np{\lambda-d\log|z|-A_{\alpha}}{\infty}{\Omega_{\alpha}}\leq \Gamma_0'(n)\sqrt{\frac{5\alpha}{4}}\Lambda+\Gamma_0'(n)\leq \Gamma_1(n)\left(\sqrt{\alpha}\Lambda+\int_{\Omega}|\D\n|^2dx\right).
	\end{align}
    As $\phi$, is Willmore, the following $1$-form is closed :
    \begin{align*}
    	\vec{\alpha}=\Im\left(\partial\H+|\H|^2\partial\phi+g^{-1}\otimes \s{\H}{\h_0}\otimes \bar{\partial}\phi\right).
    \end{align*}
    As $\phi$ is well-defined on $B(0,R)$, the the Poincar\'{e} lemma, implies that there exists $\vec{L}:B(0,R)\rightarrow \R^n$ such that 
    \begin{align}\label{defL}
    	2i\,\partial\vec{L}=\partial\H+|\H|^2\partial\phi+g^{-1}\otimes \s{\H}{\h_0}\otimes \bar{\partial}\phi.
    \end{align}
    Now, introduce for $0<s<R/2$
    \begin{align*}
    	\delta(s)=\left(\frac{1}{s^2}\int_{B_{2s}\setminus \bar{B}_{s/2}(0)}|\D\n|^2dx\right)^{\frac{1}{2}}.
    \end{align*}
    Then we have trivially for all $2r<s<R/2$
    \begin{align}\label{ineqdelta}
    	s\delta(s)\leq \left(\int_{\Omega}|\D\n|^2dx\right)^{\frac{1}{2}}=\np{\D\n}{2}{\Omega}
    \end{align}
    and Fubini's theorem implies that for all $r\leq r_1<r_2\leq R/2$ 
    \begin{align}\label{intdelta}
    	\int_{r_1}^{r_2}s\delta(s)^2ds&=\int_{r_1}^{r_2}\frac{1}{s}\left(\int_{B_{2s}\setminus \bar{B}_{s/2}(0)}|\D\n(x)|^2dx\right)ds=\int_{r_1}^{r_2}\int_{B_{2r_2}\setminus \bar{B}_{r_1/2}(0)}\frac{|\D\n(x)|^2}{s}\bm{1}_{\ens{s/2\leq |x|\leq 2s}}dxds\nonumber\\
    	&=\log(4)\int_{B_{2r_2}\setminus \bar{B}_{r_1}(0)}|\D\n|^2dx.
    \end{align}
    Now, \eqref{localcontrol} shows that 
    \begin{align}\label{localH}
    	\max\ens{e^{\lambda(z)}|\H(z)|,e^{\lambda(z)}|\H_0(z)|}\leq |\D\n(z)|\leq C_0\delta(|z|).
    \end{align}
    Furthermore, the same argument of \cite{quanta} (see \cite{beriviere} for more details) using a Theorem from \cite{standardargument} implies that there exists a constant $C_1=C_1(n)$ such that 
    \begin{align}\label{localdH}
    	e^{\lambda(z)}|\D\H(z)|\leq C_1\frac{\delta(|z|)}{|z|}\qquad \text{for all}\;\, z\in \Omega_{1/2}
    \end{align}
    Therefore, we have thanks to \eqref{defL}, \eqref{localH} and \eqref{localdH}
    \begin{align}\label{localdL}
    	|\D\vec{L}(z)|=2|\partial \vec{L}(z)|\leq e^{-\lambda(z)}\left(C_1\frac{\delta(z)}{|z|}+2C_0\delta(z)^2\right).
    \end{align}
    Now assume for simplicity that $\alpha=1/2$ (then we do not need to use the precised form \eqref{localconf} and we can use instead Lemma V.$3$ from \cite{quanta}).
    Denoting for all $r<s<R$
    \begin{align*}
    \vec{L}_s=\dashint{\partial B_s}\vec{L}\,d\mathscr{H}^1,
    \end{align*}
    we deduce from \eqref{localdH} that for all $z\in \Omega_{1/2}$ (taking $\alpha=1/2$ in \eqref{localconf})
    \begin{align}\label{pointwiseL1}
    	|\vec{L}(z)-\vec{L}_{|z|}|&\leq \int_{\partial B_{|z|}}|\D\vec{L}|d\mathscr{H}^1\leq 2\pi e^{2\Gamma_1\Lambda}e^{-\lambda(|z|)}\left(C_1\delta(|z|)+2C_0|z|\delta(|z|)\cdot \delta(|z|)\right)\nonumber\\
    	&\leq 2\pi e^{2\Gamma_1\Lambda}e^{-\lambda(|z|)}\left(C_1+2C_0\np{\D\n}{2}{\Omega}\right)\delta(|z|).
    \end{align}
    Then we get
    \begin{align}\label{deuxdeux}
    	&\int_{\Omega_{1/2}}|\vec{L}(z)-\vec{L}_{|z|}|^2e^{2\lambda(z)}|dz|^2\leq 2\pi e^{2\Gamma_1\Lambda}\left(C_1+2C_0\np{\D\n}{2}{\Omega}\right)\int_{\Omega}\delta(|z|)^2|dz|^2\nonumber\\
    	&=4\pi^2e^{2\Gamma_1\Lambda}\left(C_1+2C_0\np{\D\n}{2}{\Omega}\right)\int_{2r}^{R/2}s\delta^2(s)ds\nonumber\\
    	&=4\pi^2\log(4)e^{2\Gamma_1\Lambda}\left(C_1+2C_0\np{\D\n}{2}{\Omega}\right)\np{\D\n}{2}{\Omega}.
    \end{align}
    Now, we continue the proof in an exact same way to obtain the pointwise estimate (for some universal constant $C_2=C_2(n)$)
    \begin{align}\label{pointwiseL2}
    	e^{\lambda(z)}|z||\vec{L}_{|z|}|\leq C_2e^{2\Gamma_1\Lambda}\left(1+\np{\D\n}{2}{\Omega}\right)\np{\D\n}{2}{\Omega}.
    \end{align}
    Therefore, we get
    \begin{align}\label{deuxinfini}
    	\np{e^{\lambda(z)}\vec{L}_{|z|}}{2,\infty}{\Omega_{1/2}}\leq 2\sqrt{\pi}C_2e^{2\Gamma_1\Lambda}\left(1+\np{\D\n}{2}{\Omega}\right)\np{\D\n}{2}{\Omega}.
    \end{align}
    Combining \eqref{deuxdeux} and \eqref{deuxinfini} implies as $\np{\,\cdot\,}{2,\infty}{\,\cdot\,}\leq \np{\,\cdot\,}{2}{\,\cdot\,}$ that 
    \begin{align*}
    	\np{e^{\lambda}\vec{L}}{2,\infty}{\Omega_{1/2}}&\leq \left(8\pi^2\log(4)+2\sqrt{\pi}C_2\right)e^{2\Gamma_1\Lambda}\left(1+\np{\D\n}{2}{\Omega}\right)\np{\D\n}{2}{\Omega}\\
    	&=C_3(n)e^{2\Gamma_1\Lambda}\left(1+\np{\D\n}{2}{\Omega}\right)\np{\D\n}{2}{\Omega}.
    \end{align*}
    The estimates \eqref{pointwiseL1}, \eqref{pointwiseL2} and \eqref{ineqdelta} imply that for all $z\in \Omega_{1/2}$ 
    \begin{align}\label{pointwiseL3}
    	e^{\lambda(z)}|\vec{L}(z)|&\leq \left(2\pi\max\ens{C_1(n),2C_0(n)}+C_3(n)\right)e^{2\Gamma_1\Lambda}\left(1+\np{\D\n}{2}{\Omega}\right)\left(\np{\D\n}{2}{\Omega}+|z|\delta(|z|)\right)|z|^{-1}\nonumber\\
    	&\leq 2\left(2\pi\max\ens{C_1(n),2C_0(n)}+C_3(n)\right)e^{2\Gamma_1\Lambda}\left(1+\np{\D\n}{2}{\Omega}\right)\np{\D\n}{2}{\Omega}\,|z|^{-1}\nonumber\\
    	&=C_4(n)e^{2\Gamma_1\Lambda}\left(1+\np{\D\n}{2}{\Omega}\right)\np{\D\n}{2}{\Omega}\frac{1}{|z|}.
    \end{align}
    Now, recall that there exists $S:B(0,R)\rightarrow \R$ and $\vec{R}:B(0,R)\rightarrow\Lambda^2\R^n$ such that 
    \begin{align*}
    	\left\{\begin{alignedat}{1}
    	&\D S=\vec{L}\cdot\D\phi\\
    	&\D\vec{R}=\vec{L}\wedge \D\phi+2\,\H\wedge \D^{\perp}\phi,
    	\end{alignedat}\right.
    \end{align*}
    we trivially obtain from the pointwise inequality \eqref{pointwiseL3}, \eqref{localH} and \eqref{ineqdelta} for all $z\in \Omega_{1/2}$
    \begin{align*}
    	|\D S(z)|\leq 2C_4(n)e^{2\Gamma_1\Lambda}\left(1+\np{\D\n}{2}{\Omega}\right)\np{\D\n}{2}{\Omega}\frac{1}{|z|}
    \end{align*}
    and
    \begin{align*}
    	|\D\vec{R}(z)|&\leq 2C_4(n)e^{2\Gamma_1\Lambda}\left(1+\np{\D\n}{2}{\Omega}\right)\np{\D\n}{2}{\Omega}\frac{1}{|z|}+4C_0(n)\delta(|z|)\\
    	&\leq 2\left(C_4(n)+2C_0(n)\right)e^{2\Gamma_1\Lambda}\left(1+\np{\D\n}{2}{\Omega}\right)\np{\D\n}{2}{\Omega}\frac{1}{|z|}.
    \end{align*}
    Therefore, if $C_5(n)=4\sqrt{\pi}(C_4(n)+2C_0(n))>0$, we deduce that 
    \begin{align}\label{l2inftybis}
    	&\np{\D S}{2,\infty}{\Omega_{1/2}}\leq C_5(n)e^{2\Gamma_1\Lambda}\left(1+\np{\D\n}{2}{\Omega}\right)\np{\D\n}{2}{\Omega}\nonumber\\
    	&\np{\D \vec{R}}{2,\infty}{\Omega_{1/2}}\leq C_5(n) e^{2\Gamma_1\Lambda}\left(1+\np{\D\n}{2}{\Omega}\right)\np{\D\n}{2}{\Omega}.
    \end{align}
    Now, define for all $2r\leq \rho<\dfrac{R}{2}$
    \begin{align*}
    	S_\rho=\dashint{\partial B_\rho(0)}S\,d\mathscr{H}^1,\qquad \vec{R}_{\rho}=\dashint{\partial B_{\rho}(0)}\vec{R}\,d\mathscr{H}^1
    \end{align*}
    following the exact same steps as \cite{quanta}, we find that for some universal constant $C_6=C_6(n)$
    \begin{align}\label{intdelta2}
    	\left|\frac{dS_\rho}{d\rho}\right|+\left|\frac{d\vec{R}_{\rho}}{d\rho}\right|\leq C_6(n)e^{2\Gamma_1\Lambda}\left(1+\np{\D\n}{2}{\Omega}\right)\np{\D\n}{2}{\Omega}\delta(\rho).
    \end{align}
    Therefore, \eqref{intdelta} and \eqref{intdelta2} imply that 
    \begin{align}\label{radialderivative}
    	\int_{2r}^{\frac{R}{2}}\left(\left|\frac{d S_{\rho}}{d\rho}\right|^2+\left|\frac{d\vec{R}_{\rho}}{d\rho}\right|^2\right)\rho\,d\leb^1(\rho)\leq C_6(n)^2e^{4\Gamma_1\Lambda}\left(1+\np{\D\n}{2}{\Omega}\right)^2\np{\D\n}{2}{\Omega}^3.
    \end{align}
    We will now use a precised version of Lemma VI.$2$ of \cite{quanta}  (proved in \cite{angular}, see also \cite{quantamoduli}). 
    \begin{lemme}\label{newl2estimate}
    	There exists a universal constant $R_0>0$ with the following property.
    	Let $0<4r<R<R_0$, $\Omega=B(0,R)\setminus \bar{B}(0,r)\rightarrow \R$, $a,b:\Omega \rightarrow \R$ such that $\D a\in L^{2,\infty}(\Omega)$ and $\D b\in L^2(\Omega)$, and $u:\Omega\rightarrow \R$ be a solution of 
    	\begin{align*}
    		\D \varphi=\D a\cdot \D^{\perp}b\qquad \text{in}\;\ \Omega.
    	\end{align*}
    	Furthermore, define for $r\leq \rho\leq R$
    	\begin{align*}
    		\bar{\varphi}_r=\dashint{\partial B_{\rho}(0)}\varphi\,d\mathscr{H}^1=\frac{1}{2\pi \rho}\int_{\partial B_{\rho}(0)}\varphi\,d\mathscr{H}^1.
    	\end{align*}
    	Then $\D \varphi\in L^2(\Omega)$, and there exists a positive constant $C_0>0$ independent of $0<4r<R$ such that for all $\left(\dfrac{r}{R}\right)^{\frac{1}{2}}<\alpha<\dfrac{1}{2}$
    	\begin{align*}
    		\np{\D \varphi}{2}{B_{\alpha R}\setminus \bar{B}_{\alpha^{-1}r}}\leq C_0\np{a}{2,\infty}{\Omega}\np{\D b}{2}{\Omega}+C_0\np{\D \bar{\varphi}_r}{2}{\Omega}+C_0\np{\D \varphi }{2,\infty}{\Omega}.
    	\end{align*}
    \end{lemme}
    \begin{proof}
    	Let $\tilde{a}:B(0,R)\rightarrow \R$ and $\tilde{b}:B(0,R)\rightarrow \R$ the extensions of $a$ and $b$ given by Theorem \ref{extop}. As $0<4r<R$ and scaling invariance of the $L^{2,\infty}$ and the $L^{2}$ norm of the gradient, we deduce that  there exists a universal constant $C_0>0$ such that 
    	\begin{align*}
    		&\np{\D \tilde{a}}{2,\infty}{B(0,R)}\leq C_0\left(\np{\D a}{2,\infty}{\Omega}+\np{a}{2,\infty}{\Omega}\right)\\
    		&\np{\D \tilde{b}}{2}{B(0,R)}\leq C_0\left(\np{\D b}{2}{\Omega}+\np{b}{2}{\Omega}\right).
    	\end{align*}
    	Thanks to Poincar\'{e}-Wirtinger inequality, and as $\tilde{a}=a$ and $\tilde{b}=b$ on $\Omega$, we deduce that 
    	\begin{align*}
    		\np{\D \tilde{a}}{2,\infty}{B(0,R)}&\leq C_0\left(\np{\D a}{2,\infty}{\Omega}+\np{a-\bar{\tilde{a}}_{B(0,R)}}{2,\infty}{\Omega}\right)
    		=C_0\left(\np{\D a}{2,\infty}{B(0,R)}+\np{\tilde{a}-\bar{\tilde{a}}_{B(0,R)}}{2,\infty}{\Omega}\right)\\
    		&\leq C_0\left(\np{\D a}{2,\infty}{\Omega}+\np{\tilde{a}-\bar{\tilde{a}}_{B(0,R)}}{2,\infty}{B(0,R)}\right)\\
    		&\leq C_0\np{\D a}{2,\infty}{\Omega}+\Gamma_0C_{PW}(L^{2,\infty})R\np{\D \tilde{a}}{2,\infty}{B(0,R)}.
    	\end{align*}
    	Therefore, if $C_0C_{PW}(L^{2,\infty})R_0\leq \dfrac{1}{2}$, we find
    	\begin{align*}
    		\np{\D \tilde{a}}{2,\infty}{B(0,R)}\leq 2C_0\np{\D a}{2,\infty}{\Omega},
    	\end{align*}
    	and likewise, provided $C_0C_{PW}(L^2)R_0\leq \dfrac{1}{2}$, we find
    	\begin{align*}
    		\np{\D \tilde{b}}{2}{B(0,R)}\leq 2C_0\np{\D b}{2}{\Omega}.
    	\end{align*}
    	Now, let $u:B(0,R)\rightarrow \R$ be the solution of 
    	\begin{align*}
    		\left\{\begin{alignedat}{2}
    		\Delta u&=\D \tilde{a}\cdot \D^{\perp}\tilde{b}\qquad &&\text{in}\;\, B(0,R)\\
    		u&=0\qquad &&\text{on}\;\, \partial B(0,R).
    		\end{alignedat}\right.
    	\end{align*}
    	Then the improved Wente inequality of Bethuel (\cite{helein}, $3.3.6$) and the scaling invariance shows that there exists a universal constant $C_1>0$ such that 
    	\begin{align*}
    		\np{\D u}{2}{B(0,R)}\leq C_1\np{\D \tilde{a}}{2,\infty}{B(0,R)}\np{\D \tilde{b}}{2}{B(0,R)}\leq 4
    		C_0^2C_1\np{\D a}{2,\infty}{\Omega}\np{\D b}{2}{\Omega}.
    	\end{align*}
    	Now, let $v=\varphi-u-\bar{(\varphi-u)}_r$. Then $v$ is a harmonic function such that  for all $r<\rho<R$
    	\begin{align*}
    		\int_{\partial B_{\rho}}\partial_{\nu}v\,d \mathscr{H}^1=0.
    	\end{align*}
    	Therefore, Lemma \ref{l2linfty} implies that 
    	\begin{align*}
    		\np{\D v}{2}{B_{\alpha R}\setminus \bar{B}_{\alpha^{-1}r}(0)}\leq \Gamma_1\np{\D v}{2,\infty}{\Omega}
    		&\leq \Gamma_1'\left(\np{\D a}{2,\infty}{\Omega}\np{\D b}{2}{\Omega}+\np{\D \varphi_r}{2}{\Omega}+\np{\D\varphi}{2,\infty}{\Omega}\right)
    	\end{align*}
    	which concludes the proof. 
    \end{proof}
    Now, recall that the following system holds
    \begin{align*}
    	\left\{\begin{alignedat}{1}
    	\Delta S&=-\ast \D\n\cdot \D^{\perp}\vec{R}\\
    	\Delta \vec{R}&=(-1)^n\ast \left(\D\n \res \D^{\perp}\vec{R}\right)+\ast \D\n \cdot \D^{\perp}S.
    	\end{alignedat} \right.
    \end{align*}
    First, thanks to Lemma IV.$1$ of \cite{quanta}, we extend the restriction $\n:B_R\setminus\bar{B}_r(0)\rightarrow \mathscr{G}_{n-2}(\R^n)$ to a map $\tilde{\n}:B(0,R) \rightarrow \mathscr{G}_{n-2}(\R^n)$ such that 
    \begin{align*}
    	\np{\D\tilde{\n}}{2}{B(0,R)}\leq C_0(n)\np{\D\n}{2}{\Omega}.
    \end{align*}
    In particular we have 
    \begin{align}\label{newsystem}
    \left\{\begin{alignedat}{2}
    \Delta S&=-\ast \D\tilde{\n}\cdot \D^{\perp}\vec{R}\qquad&& \text{in}\;\, \Omega\\
    \Delta \vec{R}&=(-1)^n\ast \left(\D\tilde{\n} \res \D^{\perp}\vec{R}\right)+\ast \D\tilde{\n} \cdot \D^{\perp}S\qquad&& \text{in}\;\, \Omega
    \end{alignedat} \right.
    \end{align}
    Therefore, applying the proof of Lemma \ref{newl2estimate} by using the already constructed extension of $\n$, we deduce thanks to \eqref{l2inftybis} and \eqref{radialderivative} that 
    \begin{align*}
    	\np{\D S}{2}{\Omega_{1/4}}+\np{\D\vec{R}}{2}{\Omega_{1/4}}\leq C_7(n)e^{2\Gamma_1\Lambda}\left(1+\np{\D\n}{2}{\Omega}\right)\np{\D\n}{2}{\Omega}.
    \end{align*}
    As in \cite{quanta}, we obtain readily
    \begin{align}\label{lastbootstrap}
    	&\np{\D S}{2}{\Omega_{1/2}}+\np{\D \vec{R}}{2}{\Omega_{1/2}}\leq C_8(n)e^{2\Gamma_1\Lambda}\left(1+\np{\D\n}{2}{\Omega}\right)\np{\D\n}{2}{\Omega}\nonumber\\
    	&\np{S}{\infty}{\Omega_{1/2}}+\np{\vec{R}}{\infty}{\Omega_{1/2}}\leq C_8(n)e^{2\Gamma_1\Lambda}\left(1+\np{\D\n}{2}{\Omega}\right)\np{\D\n}{2}{\Omega}.
    \end{align}
    Now, introduce the following slight variant from a Lemma of \cite{angular}.
    \begin{lemme}\label{hardy}
    	Let $R_0>0$ be the constant of Lemma \ref{newl2estimate}.
    	Let $0<16r<R<R_0$, $\Omega=B(0,R)\setminus \bar{B}(0,r)\rightarrow \R$, $a,b:\Omega \rightarrow \R$ such that $\D a\in L^{2}(\Omega)$ and $\D b\in L^2(\Omega)$, and $\varphi:\Omega\rightarrow \R$ be a solution of 
    	\begin{align*}
    	\Delta \varphi=\D a\cdot \D^{\perp}b\quad \text{in}\;\ \Omega.
    	\end{align*}
    	Assume that $\np{\varphi}{\infty}{\partial\Omega}<\infty$. Then there exists a universal constant $C_1>0$ such that for all $\left(\dfrac{r}{R}\right)^{\frac{1}{2}}<\alpha<\dfrac{1}{4}$, 
    	\begin{align*}
    		\np{\varphi}{\infty}{\Omega}+\np{\D\varphi}{2,1}{B_{\alpha R}\setminus \bar{B}_{\alpha^{-1}r}(0)}+\np{\D^2\varphi}{1}{B_{\alpha R}\setminus\bar{B}_{\alpha^{-1}r}(0)}\leq C_1\left(\np{\D a}{2}{\Omega}\np{\D b}{2}{\Omega}+\np{\varphi}{\infty}{\partial\Omega}\right).
    	\end{align*}
    \end{lemme}
    \begin{proof}
    As in the proof of Lemma \ref{newl2estimate}, introduce extensions $\tilde{a}:B(0,R)\rightarrow\R$ and $\tilde{b}:B(0,R)\rightarrow \R$ of $a$ and $b$, such that 
    \begin{align*}
    	&\npn{\D\tilde{a}}{2}{B(0,R)}\leq 2C_0\np{\D a}{2}{\Omega}\\
    	&\npn{\D\tilde{b}}{2}{B(0,R)}\leq 2C_0\np{\D b}{2}{\Omega}.
    \end{align*}
    Now, let $v:B(0,R)\rightarrow \R$ be the solution of 
    \begin{align*}
    \left\{\begin{alignedat}{2}
    \Delta v&=\D \tilde{a}\cdot \D^{\perp}\tilde{b}\qquad &&\text{in}\;\, B(0,R)\\
    v&=0\qquad &&\text{on}\;\, \partial B(0,R).
    \end{alignedat}\right.
    \end{align*}
    Then the improved Wente inequality and the Coifman-Lions-Meyer-Semmes estimate (\cite{meyer}) shows (by scaling invariance of the different norms considered) that 
    \begin{align}\label{cmls}
    	\np{v}{\infty}{B(0,R)}+\np{\D v}{2,1}{B(0,R)}+\np{\D^2v}{1}{B(0,R)}\leq C_1\np{\D a}{2}{\Omega}\np{\D b}{2}{\Omega}.
    \end{align}
    Now let $u=\varphi-v$. Then $u$ is harmonic, and let $d\in \R$, $\ens{a_n}_{n\in \Z}\subset \C$ such that 
    \begin{align*}
    	u(z)=a_0+d\log|z|+\Re\left(\sum_{n\in \Z^{\ast}}a_nz^n\right).
    \end{align*}
    Then we have by the maximum principle for all $r\leq \rho\leq R$
    \begin{align*}
    	|a_0+d\log\rho|=\left|\frac{1}{2\pi}\int_{0}^{2\pi}u(\rho e^{i\theta})d\theta\right|\leq \np{u}{\infty}{\partial \Omega}.
    \end{align*}
    Therefore, we have
    \begin{align}\label{throwlog0}
    	|d|\log\left(\frac{R}{r}\right)=\left|a_0+d\log R-\left(a_0+d\log r\right)\right|\leq |a_0+d\log R|+|a_0+d\log r|\leq 2\np{u}{\infty}{\partial \Omega}.
    \end{align}
    Now, recall that 
    \begin{align}\label{throwlog1}
    	&\np{\D \log|z|}{2,1}{B_{R}\setminus \bar{B}_{r}(0)}=4\sqrt{\pi}\left(\log\left(\frac{R}{r}\right)+\log\left(1+\sqrt{1-\left(\frac{r}{R}\right)^2}\right)\right)\nonumber\\
    	&\np{\D^2\log|z|}{1}{B_R\setminus \bar{B}_{r}(0)}=4\np{\p{z}^2\log|z|}{1}{B_R\setminus\bar{B}_r(0)}=4\pi\log\left(\frac{R}{r}\right).
    \end{align}
    Therefore, as $R>4r$, \eqref{throwlog0} and \eqref{throwlog1} imply that  
    \begin{align}\label{throwlog2}
    	&\np{\D\left(d\log|z|\right)}{2,1}{B_R\setminus \bar{B}_r(0)}\leq 4\sqrt{\pi}\left(\log\left(\frac{R}{r}\right)+\log(2)\right)|d|\leq 16\sqrt{\pi}\np{u}{\infty}{\partial \Omega}\nonumber\\
    	&\np{\D^2\left(d\log|z|\right)}{1}{B_R\setminus \bar{B}_r(0)}\leq 8\pi\np{u}{\infty}{\partial \Omega}.
    \end{align}
    These estimates \eqref{throwlog0} imply by Lemmas \ref{l21l2} and \ref{hardy} imply that 
    \begin{align}\label{throwlog3}
    	&\np{\D u}{2,1}{B_{\alpha R}\setminus \bar{B}_{\alpha^{-1}r}(0)}+\np{\D^2u}{1}{B_{\alpha R}\setminus \bar{B}_{\alpha^{-1}r}(0)}\leq 64\frac{\sqrt{2}+\sqrt{\pi}}{\sqrt{15}}(2\alpha) \np{\D \left(u-d\log|z|\right)}{2}{B_{R/2}\setminus \bar{B}_{r/2}(0)}\nonumber\\
    	&+24\pi\np{u}{\infty}{\partial \Omega}\nonumber\\
    	&\leq 128\frac{\sqrt{2}+\sqrt{\pi}}{\sqrt{15}}\alpha\np{\D u}{2}{B_{R/2}\setminus\bar{B}_{2r}(0)}+\left(24\pi+256\pi \frac{\sqrt{2}+\sqrt{\pi}}{\sqrt{15}}\alpha\right)\np{u}{\infty}{\partial \Omega}.
    \end{align} 
    Now, recall that the mean value formula and the maximum principle (\cite{hanlin} $1.10$) imply that for all $x\in B_{R}\setminus \bar{B}_r(0)$, and $0<\rho<\dist(x,\partial \Omega)$,
    \begin{align}\label{meanvalue}
    	|\D u(x)|\leq \frac{2}{\rho}\np{u}{\infty}{\partial B_{\rho}(x)}\leq \frac{2}{\rho}\np{u}{\infty}{\partial\Omega}.
    \end{align}
    As 
    \begin{align}\label{throwlog4}
    	\np{\D u}{2}{B_{R/2}^2\setminus \bar{B}_{2r}(0)}^2&=\int_{\partial B_{R/2}(0)} u\,\partial_{\nu}u\,d\mathscr{H}^1-\int_{\partial B_{2r}(0)}u\,\partial_{\nu}u\,d\mathscr{H}^1\nonumber\\
    	&\leq \np{u}{\infty}{\partial \Omega}\left(\int_{\partial B_{R/2}(0)}|\D u|d\mathscr{H}^1+\int_{\partial B_{2r}(0)}|\D u|d\mathscr{H}^1\right)
    \end{align}
    the estimate \eqref{meanvalue} shows that 
    \begin{align}\label{throwlog5}
    	\int_{\partial B_{R/2}(0)}|\D u|d\mathscr{H}^1+\int_{\partial B_{2r}(0)}|\D u|d\mathscr{H}^1&\leq 4\int_{\partial B_{R/2}(0)}\frac{\np{u}{\infty}{\partial \Omega}}{R}d\mathscr{H}^1+\int_{\partial B_{2r}(0)}\frac{\np{u}{\infty}{\partial \Omega}}{r}d\mathscr{H}^1\nonumber\\
    	&=8\pi\np{u}{\infty}{\partial \Omega}. 
    \end{align}
    Therefore, we have by \eqref{throwlog4} and \eqref{throwlog5}
    \begin{align}\label{throwlog6}
    	\np{\D u}{2}{B_{R/2}\setminus \bar{B}_{2r}(0)}\leq 2\sqrt{2\pi}\np{u}{\infty}{\partial \Omega}.
    \end{align}
    Combining \eqref{throwlog3} and \eqref{throwlog6} shows that 
    \begin{align}\label{throwlog7}
    	\np{\D u}{2,1}{B_{\alpha R}\setminus \bar{B}_{\alpha^{-1}r}(0)}+\np{\D^2u}{1}{B_{\alpha R}\setminus \bar{B}_{\alpha^{-1}r}(0)}&\leq \left(256\frac{2\sqrt{\pi}+\pi\sqrt{2}}{\sqrt{15}}\alpha+24\pi+256\pi\frac{\sqrt{2}+\sqrt{\pi}}{\sqrt{15}}\alpha\right)\np{u}{\infty}{\partial \Omega}\nonumber\\
    	&\leq \left(24\pi+512\pi\frac{\sqrt{2}+\sqrt{\pi}}{\sqrt{15}}\alpha\right)\np{u}{\infty}{\partial \Omega}.
    \end{align}
    Combining the maximum principle and inequalities \eqref{cmls}, \eqref{throwlog7} yields the expected estimate.
    \end{proof}
Now, apply Lemma \ref{hardy} to the estimates \eqref{lastbootstrap} shows by using the previous extension $\tilde{\n}$ of $\n$ that 
    \begin{align}\label{endl10}
    	&\np{\D S}{2,1}{\Omega_{1/2}}+\np{\D\vec{R}}{2,1}{\Omega_{1/2}}\leq C_9(n)e^{4\Gamma_1\Lambda}\left(1+\np{\D\n}{2}{\Omega}\right)\np{\D\n}{2}{\Omega}\nonumber\\
    	&\np{\D^2S}{1}{\Omega_{1/2}}+\np{\D^2\vec{R}}{1}{\Omega_{1/2}}\leq C_{9}(n)e^{4\Gamma_1\Lambda}\left(1+\np{\D\n}{2}{\Omega}\right)\np{\D\n}{2}{\Omega}.
    \end{align}
    As
    \begin{align}\label{structure}
    	e^{2\lambda}\H=\frac{1}{4}\D^{\perp}S\cdot \D\phi- \frac{1}{4}\D\vec{R}\res \D^{\perp}\phi,
    \end{align}
    we trivially have
    \begin{align}\label{endl1}
    	\np{e^{\lambda}\H}{2,1}{\Omega_{1/2}}\leq \np{\D S}{2,1}{\Omega_{1/2}}+\np{\D\vec{R}}{2,1}{\Omega_{1/2}}\leq 2C_9(n)e^{4\Gamma_1\Lambda}\left(1+\np{\D\n}{2}{\Omega}\right)\np{\D\n}{2}{\Omega}.
    \end{align}
    Now, \eqref{structure} implies that 
    \begin{align*}
    	2(\p{z}\lambda)e^{2\lambda}\H+e^{2\lambda}\p{z}\H=\frac{1}{4}\D^{\perp}\left(\p{z}S\right)\cdot \D\phi+\frac{1}{4}\D^{\perp}S\cdot \D(\p{z}\phi)-\frac{1}{4}\D(\p{z}\vec{R})\res \D^{\perp}\phi-\frac{1}{4}\D\vec{R}\res \D^{\perp}\left(\p{z}\phi\right),
    \end{align*}
    so that
    \begin{align*}
    	e^{\lambda}\p{z}\H&=-2(\p{z}\lambda)e^{\lambda}\H+\frac{1}{4}\D^{\perp}\left(\p{z}S\right)\cdot e^{-\lambda}\D\phi+\frac{1}{4}\D^{\perp}S\cdot e^{-\lambda}\D(\p{z}\phi)-\frac{1}{4}\D(\p{z}\vec{R})\res e^{-\lambda}\D^{\perp}\phi\\
    	&-\frac{1}{4}\D\vec{R}\res e^{-\lambda}\D^{\perp}\left(\p{z}\phi\right).
    \end{align*}
    As $\D\lambda\in L^{2,\infty}$, $e^{\lambda}\H\in L^{2,1}$ and $e^{-\lambda}\D^2\phi\in L^{2,\infty}$, we deduce by \eqref{endl10} and \eqref{endl1} that 
    \begin{align*}
    	\np{e^{\lambda}\D\H}{1}{\Omega_{1/2}}\leq C_{10}(n)\left(1+\Lambda\right)e^{4\Gamma_1\Lambda}\left(1+\np{\D\n}{2}{\Omega}\right)\np{\D\n}{2}{\Omega},
    \end{align*}
    and this concludes the proof of the Theorem. 
\end{proof}
For all neck region of the form $\Omega_k=B_{R_k}\setminus \bar{B}_{r_k}(0)$, define for all $0<\alpha<1$
\begin{align*}
	\Omega_k(\alpha)=B_{\alpha R_k}\setminus \bar{B}_{\alpha^{-1}r_k}(0),
\end{align*}
the estimate \eqref{precisedL21} implies that 
\begin{align}\label{ineq}
	\np{e^{\lambda_k}\H_k}{2,1}{\Omega_k(\alpha)}+\np{e^{\lambda_k}\D\H_k}{1}{\Omega_k(\alpha)}\leq C_4(n)\left(1+\Lambda\right)e^{4\Gamma_1(n)\Lambda}\left(1+\np{\D\n_k}{2}{\Omega_k(2\alpha)}\right)\np{\D\n_k}{2}{\Omega_k(2\alpha)},
\end{align}
where
\begin{align*}
	\Lambda=\sup_{k\in \N}\left(\np{\D\lambda_k}{2,\infty}{\Omega_k}+\np{\D\n_k}{2}{\Omega_k}\right)<\infty,
\end{align*}
is finite by hypothesis. 
Therefore, the no-neck energy
\begin{align}\label{noneck}
	\lim\limits_{\alpha\rightarrow 0}\limsup_{k\rightarrow \infty}\np{\D\n_k}{2}{\Omega_k(\alpha)}=0
\end{align}
implies by \eqref{ineq} that 
\begin{align*}
	\lim\limits_{\alpha\rightarrow 0}\limsup_{k\rightarrow \infty}\left(\np{e^{\lambda_k}\H_k}{2,1}{\Omega_k(\alpha)}+\np{e^{\lambda_k}\D \H_k}{1}{\Omega_k(\alpha)}\right)=0.
\end{align*}

\textbf{Step 2: $L^{2,1}$-quantization of the Weingarten tensor} The proof relies on an algebraic computation first given in \cite{riviere1} (II.$10$). We will give its easy derivation in codimension $1$.

\textbf{Algebraic identity in codimension 1.} Let $\phi:B(0,1)\rightarrow \R^3$ be a conformal immersion, and $\n:B(0,1)\rightarrow S^2$ be its unit normal. If $e^{\lambda}=\dfrac{1}{\sqrt{2}}|\D\phi|$ is the associate conformal parameter and $\e_j=e^{-\lambda}\p{x_j}\phi$ for $j=1,2$, we have by definition
\begin{align*}
	\n=\e_1\times \e_2,
\end{align*}
where $\times$ is the vector product. Recall the Grassmann identity valid for all $\vec{u},\vec{v},\vec{w}\in \R^3$
\begin{align*}
	\left(\vec{u}\times \vec{v}\right)\times \vec{w}=\s{\vec{u}}{\vec{w}}\vec{v}-\s{\vec{v}}{\vec{w}}\vec{u}.
\end{align*}
Therefore, we deduce that 
\begin{align}\label{prodvect}
\left\{\begin{alignedat}{1}
	\n\times \e_1&=\left(\e_1\times \e_2\right)\times \e_1=\s{\e_1}{\e_1}\e_2-\s{\e_2}{\e_1}\e_1=\e_2\\
	\n\times \e_2&=-\e_1.
	\end{alignedat}\right.
\end{align}
As $|\n|=1$, we have for all $j=1,2$
\begin{align*}
	\p{x_j}\n&=\s{\D_{\p{x_j}}\n}{\e_1}\e_1+\s{\D_{\p{x_j}}\n}{\e_2}\e_2
	=-\I_{1,j}\e_1-\I_{2,j}\e_2.
\end{align*}
This implies that 
\begin{align}\label{cod14}
	\D\n=(-\I_{1,1}\e_1-\I_{1,2}\e_2,-\I_{1,2}\e_1-\I_{2,2}\e_2)
\end{align}
and \eqref{prodvect} with the identity $\vec{u}\times \vec{v}=-(\vec{v}\times \vec{u})$ (valid for all $\vec{u},\vec{v}\in \R^3$) yield
\begin{align*}
	\p{x_j}\n\times \n=-\I_{1,j}\e_1\times \n-\I_{2,j}\e_2\times \n=\I_{1,j}\e_2-\I_{2,j}\e_1.
\end{align*}
Therefore, we deduce that 
\begin{align}\label{cod11}
	\D^{\perp}\n\times \n=\left(\p{x_2}\n\times \n,-\p{x_1}\n\times \n\right)=(-\I_{2,2}\e_1+\I_{1,2}\e_2,-\I_{1,1}\e_2+\I_{1,2}\e_1).
\end{align}
As
\begin{align}\label{cod12}
	e^{\lambda}H=\frac{1}{2}\left(\I_{1,1}+\I_{2,2}\right),
\end{align}
the identities \eqref{cod11} and \eqref{cod12} show that  
\begin{align}\label{cod13}
	\D^{\perp}\n\times \n+2H\D\phi
	&=(\I_{1,1}\e_1+\I_{1,2}\e_2,\I_{1,2}\e_1+\I_{2,2}\e_2).
\end{align}
         Comparing \eqref{cod13} and \eqref{cod14}, we deduce that 
\begin{align}\label{div0}
	\D\n=\n\times \D^{\perp}\n-2H\,\D\phi.
\end{align}
Taking the divergence of this equation we find
\begin{align*}
	\Delta\n=\D\n\times \D^{\perp}\n-2\,\dive(H\D\phi).
\end{align*}
\textbf{Argument in arbitrary codimension.} Then we can find a trivialisation of $\n$ such that $\n=\n_1\wedge\n_2\wedge \cdots\wedge \n_{n-2}$ satisfying the Coulomb condition
\begin{align}\label{coulomb}
	\dive\left(\D\n_{\beta}\cdot\n_{\gamma}\right)=0\qquad \text{for all}\;\, 1\leq \beta,\gamma\leq n-2.
\end{align}
Furthermore, recall that for all $1\leq \beta\leq n-2$, \cite{riviere1} implies that (using \eqref{coulomb} for the second condition)
\begin{align}\label{algebraic0}
	\D\n_{\beta}&=-\ast\left(\n\wedge\D^{\perp}\n_{\beta}\right)+\sum_{\gamma=1}^{n-2}\s{\D\n_{\beta}}{\n_{\gamma}}\cdot\n_{\gamma}-2\,H_{\beta}\D\phi
\end{align}
Taking the divergence of this equation yields by the Coulomb condition \eqref{coulomb}
\begin{align}\label{jacobian}
	\Delta\n_{\beta}=-\ast\left(\D\n\wedge \D^{\perp}\n_{\beta}\right)+\sum_{\gamma=1}^{n-2}\s{\D\n_{\beta}}{\n_{\gamma}}\cdot \D\n_{\gamma}-2\,\dive\left(H_{\beta}\D\phi\right).
\end{align}
Now, as in \eqref{movingframe} (recall that this comes from Lemma IV.$1$. in \cite{quanta}), construct for small enough $\alpha>0$ and $k$ large enough (thanks to the no-neck property) an extension $\tilde{\n}_k:B(0,\alpha R_k)\rightarrow \mathscr{G}_{n-2}(\R^n)$ of $\n_k:\Omega_k(\alpha)=B(0,\alpha R_k)\setminus \bar{B}(0,\alpha^{-1}r_k)\rightarrow \mathscr{G}_{n-2}(\R^n)$ such that  for some universal constant $C_0=C_0(n)>0$
\begin{align}\label{controlledextension}
\np{\D\tilde{\n}_k}{2}{B(0,\alpha R_k)}\leq C_0\np{\D\n_k}{2}{\Omega_k(\alpha)}.
\end{align}
Furthermore, as in Lemma IV.$1$ of \cite{quanta} (see also \cite{helein} $4.1.3-4.1.7$) we can construct extensions $\tilde{\vec{n}}_{k}^{\beta}$ of $\n_k^{\beta}$ on $B(0,\alpha R_k)$ such that 
\begin{align*}
	\tilde{\n}_k=\tilde{\n}_k^{1}\wedge \cdots\wedge \tilde{\n}_k^{n-2}\qquad \text{on}\;\, B(0,\alpha R_k)
\end{align*}
satisfying the Coulomb condition for all $1\leq \beta,\gamma\leq n-2$
\begin{align}\label{coulomb2}
	\left\{
	\begin{alignedat}{2}
	\dive\left(\D\tilde{\n}_k^{\beta}\cdot \tilde{\n}_k^{\gamma}\right)&=0\qquad &&\text{for all}\;\, B(0,\alpha R_k)\\
	\partial_{\nu}\tilde{\n}_k^{\beta}\cdot \tilde{\n}_k^{\gamma}&=0\qquad&& \text{on}\;\,\text{on}\;\, \partial B(0,\alpha R_k)
	\end{alignedat}\right.
\end{align}
and for all $1\leq \beta\leq n-2$ (by \eqref{controlledextension} for the second inequality)
\begin{align}\label{controlledtrivialisation}
	\np{\D\tilde{\n}_k^{\beta}}{2}{B(0,\alpha R_k)}\leq \tilde{C}_0(n)\np{\D\tilde{\n}_k}{2}{B(0,\alpha R_k)}\leq  C_0'(n)\np{\D\n_k}{2}{\Omega_k(\alpha)}.
\end{align}
Furthermore, using \cite{helein} $4.1.7$, we have the estimate for all $1\leq \beta\leq n-2$
\begin{align}\label{improvedcontrol}
	\np{\D\tilde{\n}_{k}^{\beta}\cdot\tilde{\n}_k^{\gamma}}{2,1}{B(0,\alpha R_k)}\leq C_0''(n)\np{\D\n_k}{2}{\Omega_k(\alpha)}^2.
\end{align}
Let us recall the argument for this crucial step.
By \eqref{coulomb2}, there exists $\vec{A}_{\beta,\gamma}:B(0,\alpha R_k)\rightarrow \R^n$ such that 
\begin{align}\label{algebraic}
	\D^{\perp}\vec{A}_{\beta,\gamma}=\D\tilde{\n}_{k}^{\beta}\cdot \tilde{\n_k}^{\gamma}.
\end{align}
Furthermore, the boundary conditions of \eqref{coulomb2} implies that we can choose $\vec{A}_{\beta\gamma}$ such that $\vec{A}_{\beta,\gamma}=0$ on $\partial B(0,\alpha R_k)$. Therefore, we have 
\begin{align}\label{cmls2}
	\left\{\begin{alignedat}{2}
	\Delta \vec{A}_{\beta,\gamma}&=\D\tilde{\n}_{k}^{\beta}\cdot\D^{\perp}\tilde{\n}_k^{\gamma}\qquad&&\text{in}\;\, B(0,\alpha R_k)\\
	\vec{A}_{\beta,\gamma}&=0\qquad&& \text{on}\;\, \partial B(0,\alpha R_k)
	\end{alignedat} \right.
\end{align}
Therefore, we get by the improved Wente estimate and \eqref{controlledtrivialisation}
\begin{align}\label{algebraic2}
	\np{\D\vec{A}_{\beta,\gamma}}{2,1}{B(0,\alpha R_k)}\leq C_0''\np{\D\tilde{\n}_k^{\beta}}{2}{B(0,\alpha R_k)}\np{\D\tilde{\n}_k^{\gamma}}{2}{B(0,\alpha R_k)}\leq C_0''C_0'(n)\np{\D\n_k}{2}{\Omega_k(\alpha)}^2.
\end{align}
Combining the pointwise identity \eqref{algebraic} with \eqref{algebraic2} yields \eqref{improvedcontrol}.

Now fix some $1\leq \beta\leq n-2$ and let $\vec{u}_k:B(0,\alpha R_k)\rightarrow \R^n$ be the unique solution of
\begin{align}
\left\{\begin{alignedat}{2}
\Delta\vec{u}_k&=-\ast\left(\D\tilde{\n}_k\wedge \D^{\perp}\tilde{\n}_k^{\beta}\right)+\sum_{\gamma=1}^{n-2}\s{\D\tilde{\n}_k^{\beta}}{\tilde{\n}_k^{\gamma}}\cdot\D\tilde{\n}_k^{\beta}\qquad &&\text{in}\;\, B(0,\alpha R_k)\\
\vec{u}_k&=0\qquad &&\text{in}\;\, \partial B(0,\alpha R_k).
\end{alignedat}\right.
\end{align}
Now, thanks to \eqref{coulomb}, we can apply \cite{meyer}, scaling invariance  and \eqref{controlledextension} to find that there exists $C_1=C_1(n)>0$ such that 
\begin{align*}
	\np{\D^2\vec{u}_k}{1}{B(0,\alpha R_k)}\leq C_1\np{\D\tilde{\n}_k}{2}{B(0,\alpha R_k)}
	^2\leq C_0^2C_1\np{\D\n_k}{2}{\Omega_k(\alpha)}^2.
\end{align*}
Furthermore, as $\vec{u}_k=0$ on $\partial B(0,\alpha R_k)$, and scaling invariance of $\np{u_k}{\infty}{B(0,\alpha R_k)}$, $\np{\D \vec{u}_k}{2,1}{B(0,\alpha R_k)}$ and Sobolev embedding, there exists $C_3=C_3(n)>0$ such that  
\begin{align}\label{quanta1}
	\np{\vec{u}_k}{\infty}{B(0,\alpha R_k)}+\np{\D\vec{u}_k}{2,1}{B(0,\alpha R_k)}+\np{\D^2\vec{u}_k}{1}{B(0,\alpha R_k)}\leq C_3\np{\D\n_k}{2}{\Omega_k(\alpha)}^2.
\end{align}
Now, by Theorem \eqref{quantamean} $H_k^{\beta}\D\phi_k\in L^{2,1}(\Omega_k(\alpha))$. Furthermore, as 
\begin{align*}
	\lim_{k\rightarrow \infty}\frac{R_k}{r_k}=0,\qquad \limsup_{k\rightarrow \infty}R_k<\infty,
\end{align*}
there exists by Theorem \ref{extop} an extension $\vec{F}_k:B(0,\alpha R_k)\rightarrow \R^n$ of $H_k^{\beta}\D\phi_k$ such that for all $k$ large enough
\begin{align*}
	\np{\vec{F}_k}{2,1}{B(0,\alpha R_k)}\leq C_4(n)\np{H_{k}^{\beta}\D\phi_k}{2,1}{\Omega_k(\alpha)}
\end{align*}
where $C_4(n)>0$ is independent of $k$ large enough and $0<\alpha<\alpha_0(n)$ fixed (small enough with respect to some $\alpha_0(n)>0$).
Now, let $\vec{v}_k:\Omega_k(\alpha)\rightarrow \R^n$ be the solution of the system
\begin{align*}
	\left\{\begin{alignedat}{2}
	\Delta \vec{v}_k&=-2\dive\left(\vec{F}_k\right)\qquad&& \text{in}\;\, B(0,\alpha R_k)\\
	\vec{v}_k&=0\qquad&& \text{on}\;\, \partial B(0,\alpha R_k).
	\end{alignedat}\right.
\end{align*}
As we trivially have
\begin{align*}
	\wp{\dive(\vec{F}_k)}{-1,(2,1)}{B(0,\alpha R_k)}\leq \np{\vec{F}_k}{2,1}{B(0,\alpha R_k)},
\end{align*}
scaling invariance and standard Calder\'{o}n-Zygmund estimates show that there exits a universal constant $C_5=C_5(n)$ such that 
\begin{align}\label{quanta2}
	\np{\D\vec{v}_k}{2,1}{B(0,\alpha R_k)}&\leq C_5(n)\np{\vec{F}_k}{2,1}{B(0,\alpha R_k)}\leq C_4(n)C_5(n)\np{H_{k}^{\beta}\D\phi_k}{2,1}{\Omega_k(\alpha)}\nonumber\\
	&\leq 2C_4(n)C_5(n)\np{e^{\lambda_k}\H_k}{2,1}{\Omega_k(\alpha)}.
\end{align}
Furthermore, the Sobolev embedding and the explicit representation of $\vec{v}_k$ (see \eqref{nulltrace}) shows that
\begin{align}\label{quanta3}
	\np{\vec{v}_k}{\infty}{B(0,\alpha R_k)}\leq \frac{1}{\sqrt{\pi}}\np{\D\vec{v}_k}{2,1}{B(0,\alpha R_k)}\leq \frac{C_4(n)C_5(n)}{\sqrt{\pi}}\np{e^{\lambda_k}\H_k}{2,1}{\Omega_k(\alpha)}.
\end{align}
Finally, let $\vec{\varphi}_k=\vec{n}_k^{\beta}-\vec{u}_k-\vec{v}_k$. The $\vec{\varphi}_k:\Omega_k(\alpha)\rightarrow \R^n$ is harmonic and
\begin{align*}
	\left\{\begin{alignedat}{2}
	\Delta\vec{\varphi}_k&=0\qquad&& \text{in}\;\, \Omega_k(\alpha)\\
	\varphi_k&=\vec{n}_k^{\beta}\qquad&& \text{on}\;\, \partial B(0,\alpha R_k)\\
	\varphi_k&=\vec{n}_k^{\beta}-\vec{u}_k-\vec{v}_k\qquad&& \text{on}\;\, \partial B(0,\alpha^{-1}r_k).
	\end{alignedat}\right.
\end{align*}
In particular, as $\vec{u}_k,\vec{v}_k,\vec{n}_k^{\beta}\in L^{\infty}(\Omega_k(\alpha))$ (as $|\n_k^{\alpha}|=1$ and using the bounds \eqref{quanta1} and \eqref{quanta4}), if $\vec{d}_k\in \R$ and $\ens{\vec{a}_n}_{n\in \Z}\subset \C^n$ are such that 
\begin{align*}
	\vec{\varphi}_k(z)=\vec{a}_0+\vec{d_k}\log|z|+\Re\left(\sum_{n\in \Z^{\ast}}\vec{a}_nz^n\right),
\end{align*}
then
\begin{align}
	|\vec{d}_k|\leq \frac{\np{\vec{\varphi}_k}{\infty}{\partial \Omega_k(\alpha)}}{\log\left(\frac{\alpha^2R_k}{r_k}\right)}\leq \frac{2}{\log\left(\frac{\alpha^2R_k}{r_k}\right)}\left(1+C_7(n)\np{\D\n_k}{2}{\Omega_k(\alpha)}^2+C_7(n)\np{e^{\lambda_k}\H_k}{2,1}{\Omega_k(\alpha)}\right),
\end{align}
so that by the proof of Lemma \ref{hardy}
\begin{align}\label{quanta0}
	&\np{\D\vec{\varphi}_k}{2,1}{\Omega_k(\alpha/2)}\leq 16\sqrt{\pi}+C_8(n)\left(\left(1+\np{\D\n_k}{2}{\Omega_k(\alpha)}\right)\np{\D\n_k}{2}{\Omega_k(\alpha)}+\np{e^{\lambda_k}\H_k}{2,1}{\Omega_k(\alpha)}\right)\nonumber\\
	&\np{\D^2\vec{\varphi}_k}{1}{\Omega_k(\alpha/2)}\leq 8\pi +C_8(n)\left(\left(1+\np{\D\n_k}{2}{\Omega_k(\alpha)}\right)\np{\D\n_k}{2}{\Omega_k(\alpha)}+\np{e^{\lambda_k}\H_k}{2,1}{\Omega_k(\alpha)}\right).
\end{align}
Finally, we have by \eqref{quanta1}, \eqref{quanta2}, \eqref{quanta0} and Theorem \ref{quantamean} for some $C_9(n)>0$
\begin{align}\label{quantabis}
	\np{\D\n_k^{\beta}}{2,1}{\Omega_k(\alpha/2)}&\leq \np{\D\varphi_k}{2,1}{\Omega_k(\alpha^2)}+\np{\D\vec{u}_k}{2,1}{\Omega_k(\alpha^2)}+\np{\vec{v}_k}{2,1}{\Omega_k(\alpha^2)}\nonumber\\
	&\leq 16\sqrt{\pi}+C_9(n)\left(1+\Lambda\right)\left(1+\np{\D\n_k}{2}{\Omega_k(2\alpha)}\right)\np{\D\n_k}{2}{\Omega_k(2\alpha)}.
\end{align}
Therefore, the no-neck energy yields for all $1\leq \beta\leq n-2$ 
\begin{align}\label{boundedneck}
	\limsup_{\alpha\rightarrow 0}\limsup_{k\rightarrow \infty}\np{\D \n_k^{\beta}}{2,1}{\Omega_k(\alpha)}\leq 16\sqrt{\pi}.
\end{align}
Now, as 
\begin{align}\label{algebraic3}
	|\D\n_k|&=\left|\sum_{\beta=1}^{n-2}\n_{k}\wedge\cdots\wedge \D\n_{k}^{\beta}\wedge\cdots\wedge \n_k^{n-2}\right|\leq \sum_{\beta=1}^{n-2}|\D\n_k^{\beta}|,
\end{align}
we deduce from \eqref{boundedneck} that 
\begin{align}\label{boundedneck2}
	\limsup_{\alpha\rightarrow 0}\limsup_{k\rightarrow \infty}\np{\D\n_{k}}{2,1}{\Omega_k(\alpha)}\leq 16\sqrt{\pi}(n-2)<\infty
\end{align}
Now, define $\bar{\n}_{k}:B(0,\alpha R_k)\setminus \bar{B}_{\alpha^{-1}r_k}(0)$ such that for all $z\in \Omega_k(\alpha)$ such that $|z|=r$
\begin{align*}
	\bar{\n}_{k}(z)=\dashint{\partial B_r(0)}\n_k\,d\mathscr{H}^1.
\end{align*}
We will prove that for certain universal constants $C_{17}(n)$, $\Gamma_2(n)$
\begin{align}\label{lastquanta}
	\np{\D\bar{\n}_{k}}{2,1}{\Omega_k(\alpha)}\leq C_{17}(n)e^{\Gamma_2(n)\Lambda}\left(1+\np{\D\n_k}{2}{\Omega_k(2\alpha)}\right)\np{\D\n_k}{2}{\Omega_k(2\alpha)},
\end{align}
and this will finish the proof of the Theorem by using Lemmas \ref{newl2estimate} and \ref{newl1}. Indeed, notice that the following Lemma imply by \eqref{quanta1} and \eqref{quanta2} that 
\begin{align}\label{quanta10}
	&\np{\D\bar{\vec{u}_k}}{2,1}{B(0,\alpha R_k)}\leq C_{10}(n)\np{\D\n_k}{2}{\Omega_k(\alpha)}^2\nonumber\\
	&\np{\D\bar{\vec{v}_k}}{2,1}{B(0,\alpha R_k)}\leq C_{10}(n)\np{e^{\lambda_k}\H_k}{2,1}{\Omega_k(\alpha)}.
\end{align}
\begin{lemme}\label{lpmean}
	Let $n\geq 2$, $0<r<R<\infty$, $\Omega=B_R\setminus \bar{B}_r(0)\subset \R^n$, $1\leq p<\infty$ and assume that $u\in W^{1,p}(B_R\setminus \bar{B}_r(0))$. Define $\bar{u}:\Omega\rightarrow \R$ to be the radial function such that for all $r<t<R$ if $t=|x|$, then
	\begin{align*}
	    \bar{u}(x)=u_t=\dashint{\partial B_t(0)}u\,d\mathscr{H}^{n-1}=\frac{1}{\beta(n)t^{n-1}}\int_{\partial B_t(0)}u\,d\mathscr{H}^{n-1}.
	\end{align*}
	Then $\bar{u}\in W^{1,p}(\Omega)$ and 
	\begin{align*}
		\np{\D \bar{u}}{p}{\Omega}\leq \np{\D u}{p}{\Omega}.
	\end{align*}
	Furthermore, for all $1<p<\infty$, and $1\leq q\leq \infty$, there exists a constant $C(p,q)$ independent of $0<r<R<\infty$ such that for all $u\in W^{1,(p,q)}(\Omega)$, $\bar{u}\in W^{1,(p,q)}(\Omega)$ and
	\begin{align*}
		\np{\D\bar{u}}{p,q}{\Omega}\leq C(p,q)\np{\D u}{p,q}{\Omega}.
	\end{align*}
\end{lemme}
\begin{proof}
	First, assume that $u\in W^{1,p}(\Omega)$ for some $1\leq p<\infty$.
	Recall that by the proof of Proposition \ref{conf}, we have
	\begin{align}\label{levelset}
		\left|\frac{d}{dt}u_t\right|\leq \dashint{\partial B_t(0)}|\D u|\,\mathscr{H}^{n-1}.
	\end{align}
	Therefore, as $\bar{u}$ is radial, we have by the co-area formula
	\begin{align}\label{mean1}
		\np{\D \bar{u}}{p}{\Omega}^p=\beta(n)\int_{r}^{R}\left|\frac{d}{dt}u_t\right|^pt^{n-1}dt.
	\end{align}
	Furthermore, by H\"{o}lder's inequality and \eqref{levelset} 
	\begin{align}\label{mean2}
		\left|\frac{d}{dt}u_t\right|^p&\leq \frac{1}{\left(\beta(n)t^{n-1}\right)}\left|\int_{\partial B_t(0)}|\D u|d\mathscr{H}^{n-1}\right|^p\leq \frac{1}{(\beta(n)t^{n-1})^p}\int_{\partial B_t(0)}|\D u|^p\,d\mathscr{H}^{n-1}\left(\beta(n)t^{n-1}\right)^{\frac{p}{p'}}\\
		&=\frac{1}{\beta(n)t^{n-1}}\int_{\partial B_t(0)}|\D u|^p\,d\mathscr{H}^{n-1}.
	\end{align}
	Putting together \eqref{mean1} and \eqref{mean2}, we find by a new application of the co-area formula
	\begin{align*}
		\np{\D\bar{u}}{p}{\Omega}^p\leq \int_{r}^R\left(\int_{\partial B_t(0)}|\D u|^pd\mathscr{H}^{n-1}\right)dt=\int_{B_R\setminus\bar{B}_r(0)}|\D u|^pd\leb^n=\np{\D u}{p}{\Omega}^p.
	\end{align*}
	The last statement comes from the Stein-Weiss interpolation theorem (\cite{helein}, $3.3.3$).
\end{proof}
Now, in order to obtain \eqref{lastquanta}, recall the algebraic equation on $\Omega_k(\alpha)$ from \eqref{algebraic0}
\begin{align*}
	\D\n_{k}^{\beta}=-\ast\left(\n_k\wedge \D^{\perp}\n_k^{\beta}\right)+\sum_{\gamma=1}^{n-2}\s{\D\n_k^{\beta}}{\n_k^{\gamma}}-2\,H_k^{\beta}\D\phi_k.
\end{align*}
To simplify notations, let 
\begin{align*}
	\vec{G}_k=\sum_{\beta=1}^{n-2}\s{\D\n_k^{\beta}}{\n_k^{\gamma}}-2\,H_k^{\beta}\D\phi_k.
\end{align*}
Then \eqref{improvedcontrol} implies that 
\begin{align}\label{quanta4}
	\np{\vec{G}_k}{2,1}{\Omega_k(\alpha)}\leq C_{10}(n)\np{\D\n_k}{2}{\Omega_k(\alpha)}^2+4\np{e^{\lambda_k}\H_k}{2,1}{\Omega_k(\alpha)}.
\end{align}
We have
\begin{align}\label{quanta5}
	\left|\frac{d}{dt}{\n}_{k,t}^{\beta}\right|\leq \left|\dashint{\partial B_t(0)}\n_k\wedge \partial_{\tau}\n_k^{\beta}\,d\mathscr{H}^1\right|+\left|\frac{d}{dt}{\vec{G}}_{k,t}\right|=
	\left|\dashint{\partial B_t(0)}\left(\n_k-\bar{\n}_{k,t}\right)\wedge \partial_{\tau}\n_k^{\beta}\,d\mathscr{H}^1\right|+\left|\frac{d}{dt}{\vec{G}}_{k,t}\right|
\end{align}
Furthermore, by \eqref{quanta4} and Lemma \ref{lpmean}, we have (as $\bar{\vec{G}}_k$ is radial)
\begin{align}\label{quanta6}
	\np{\frac{d}{dt}{\vec{G}}_{k,t}}{2,1}{\Omega_k(\alpha)}=\np{\D\bar{\vec{G}}_k}{2,1}{\Omega_k(\alpha)}\leq C_{11}(n)\left(\np{\D\n_k}{2}{\Omega_k(\alpha)}^2+\np{e^{\lambda_k}\H_k}{2,1}{\Omega_k(\alpha)}\right).
\end{align}
Now, the $\epsilon$-regularity (\cite{riviere1} I.$5$) combined with the small $L^2$ norm of $\D\n_k$ in $\Omega_k(2\alpha)$ implies that there exists a universal constant $C_{12}(n)$ such that 
\begin{align*}
	\np{\D\n_k}{\infty}{\partial B_t}\leq \frac{C_{12}(n)}{t}\left(\int_{B_{2t}\setminus \bar{B}_{t/2}(0)}|\D\n_k|^2dx\right)^{\frac{1}{2}}
\end{align*}
so that 
\begin{align*}
	\np{\n_k-\bar{\n}_{k,t}}{\infty}{\partial B_t(0)}\leq \int_{\partial B_t(0)}|\D\n_k|\,d\mathscr{H}^1\leq 2\pi C_{12}(n)\np{\D\n_k}{2}{B_{2t}\setminus \bar{B}_{t/2}(0)}.
\end{align*}
Therefore, 
\begin{align}\label{quanta14}
	\left|\dashint{\partial B_t(0)}\left(\n_k-\bar{\n}_{k,t}\right)\wedge \partial_{\tau}\n_k^{\beta}\,d\mathscr{H}^1\right|\leq 2\pi C_{12}(n)\np{\D\n_k}{2}{\Omega_k(2\alpha)}\dashint{\partial B_t(0)}|\D\n_k^{\beta}|\,d\mathscr{H}^1.
\end{align}
The proof of Lemma \ref{lpmean} now implies by \eqref{quanta14} that 
\begin{align}\label{quanta7}
	\np{\left|\dashint{\partial B_t(0)} \left(\n_k-\bar{\n}_{k,t}\right)\wedge \partial_{\tau}\n_k^{\alpha}\,d\mathscr{H}^1\right|}{2,1}{\Omega_k(\alpha)}\leq C_{13}(n)\np{\D\n_k}{2}{\Omega_k(2\alpha)}\np{\D\n_k^{\beta}}{2,1}{\Omega_k(\alpha)}.
\end{align}
Finally, thanks to \eqref{quanta5}, \eqref{quanta6} and \eqref{quanta7}, we find
\begin{align}\label{quanta8}
	\np{\D\bar{\n}_{k}^{\beta}}{2,1}{\Omega_k(\alpha)}\leq C_{11}(n)\left(\np{\D\n_k}{2}{\Omega_k(\alpha)}^2+\np{e^{\lambda_k}\H_k}{2,1}{\Omega_k(\alpha)}\right)+C_{13}(n)\np{\D\n_k}{2}{\Omega_k(2\alpha)}\np{\D\n_k^{\beta}}{2,1}{\Omega_k(\alpha)}.
\end{align}
Therefore, \eqref{quantabis} and \eqref{quanta8} imply that
\begin{align}\label{quanta9}
	\np{\D\bar{\n}_k^{\beta}}{2,1}{\Omega_k(\alpha)}&\leq C_{11}(n)\left(\np{\D\n_k}{2}{\Omega_k(\alpha)}^2+\np{e^{\lambda_k}\H_k}{2,1}{\Omega_k(\alpha)}\right)\nonumber\\
	&+C_{13}(n)\left(16\sqrt{\pi}+C_9(n)\left(1+\Lambda\right)\left(1+\np{\D\n_k}{2}{\Omega_k(2\alpha)}\right)\np{\D\n_k}{2}{\Omega_k(2\alpha)}\right)\np{\D\n_k}{2}{\Omega_k(2\alpha)}\nonumber\\
	&\leq C_{14}(n)\left(1+\Lambda\right)^2e^{4\Gamma_1(n)\Lambda}\left(1+\np{\D\n_k}{2}{\Omega_k(2\alpha)}\right)\np{\D\n_k}{2}{\Omega_k(2\alpha)}.
\end{align}
Therefore, \eqref{quanta10} and \eqref{quanta9}  imply that 
\begin{align}\label{quanta11}
	\np{\D\bar{\vec{\varphi}_k}}{2,1}{\Omega_k(\alpha)}&\leq \np{\D\bar{\vec{n}_k}}{2,1}{\Omega_k(\alpha)}+\np{\D\bar{\vec{u}_k}}{2,1}{\Omega_k(\alpha)}+\np{\D\bar{\vec{v}}_k}{2}{\Omega_k(\alpha)}\nonumber\\
	&\leq C_{15}(n)e^{\Gamma_2(n)\Lambda}\left(1+\np{\D\n_k}{2}{\Omega_k(2\alpha)}\right)\np{\D\n_k}{2}{\Omega_k(2\alpha)}
\end{align}
We can now use Lemma \ref{l21l2} (or equivalently Proposition \ref{l212infty}) and Lemma \ref{lpmean} to get for all $0<\beta<1$
\begin{align}\label{quanta12}
	\np{\D\left(\vec{\varphi}_k-\bar{\vec{\varphi_k}}\right)}{2,1}{\Omega_k(\beta\alpha)}&\leq 24\beta\np{\D\left(\vec{\varphi_k}-\bar{\vec{\varphi}_k}\right)}{2}{\Omega_k(\alpha)}
	\leq 48\beta\np{\D\vec{\varphi}_k}{2}{\Omega_k(\alpha)}\nonumber\\
	&\leq 48\beta\left(\np{\D\n_k}{2}{\Omega_k(\alpha)}+\np{\D\vec{u}_k}{2}{\Omega_k(\alpha)}+\np{\D\vec{v}_k}{2}{\Omega_k(\alpha)}\right)\nonumber\\
	&\leq C_{16}(n)\beta\, e^{\Gamma_2(n)\Lambda}\left(1+\np{\D\n_k}{2}{\Omega_k(2\alpha)}\right)\np{\D\n_k}{2}{\Omega_k(2\alpha)}.
\end{align}
Therefore, taking $\beta=1/2$ in \eqref{quanta12}, we get by \eqref{quanta11} and \eqref{quanta12} show that 
\begin{align}\label{quanta13}
	\np{\D\vec{\varphi}_k}{2,1}{\Omega_k(\alpha/2)}\leq C_{17}(n)\, e^{\Gamma_2(n)\Lambda}\left(1+\np{\D\n_k}{2}{\Omega_k(2\alpha)}\right)\np{\D\n_k}{2}{\Omega_k(2\alpha)}.
\end{align}
Finally, by \eqref{quanta1}, \eqref{quanta2} and \eqref{quanta3} we obtain the expected estimate for $\vec{n}_k^{\beta}=\vec{u}_k+\vec{v}_k+\vec{\varphi}_k$ on $\Omega_k(\alpha/2)$, and for $\n_k$ by the algebraic inequality \eqref{algebraic3}.
\end{proof}

\begin{rem}
	Observe that for the mean curvature, we have the improved (because of the Sobolev embedding $W^{1,1}(\R^2)\hookrightarrow L^{2,1}(\R^2)$) no-neck energy
	\begin{align*}
		\lim\limits_{\alpha \rightarrow 0}\limsup_{k\rightarrow\infty}\np{e^{\lambda_k}\D\H_k}{1}{\Omega_k(\alpha)}=0
	\end{align*}
	but this is not completely clear if this also holds for $\D^2\n_k$. However, notice that \eqref{cmls2} implies that 
	\begin{align*}
		\np{\D^2\vec{A}_{\beta,\gamma}}{1}{B(0,\alpha R_k)}\leq C(n)\np{\D\n}{2}{\Omega_k(\alpha)}^2
	\end{align*}
	and as $\D^{\perp}\vec{A}_{\beta,\gamma}=\D\tilde{\n}_k^{\beta}\cdot \tilde{\n}_k^{\gamma}$, we deduce that 
	\begin{align*}
		\D^2\tilde{\n}_k^{\beta}\cdot\tilde{\n}_k^{\gamma}+\D\tilde{\n}_{k}^{\beta}\cdot \D\tilde{\n}_{k}^{\gamma}\in L^1(B(0,\alpha R_k)),
	\end{align*}
	and by the Cauchy-Schwarz inequality, this implies that for all $1\leq \beta,\gamma\leq n-2$
	\begin{align*}
		\np{\D^2\tilde{\n}_k^{\beta}\cdot\tilde{\n}_{k}^{\gamma}}{1}{B(0,\alpha R_k)}\leq C'(n)\np{\D\n_k}{2}{\Omega_k(\alpha)}^2.
	\end{align*}
	Therefore, we deduce as $\tilde{\n}_k^{\beta}=\n_k^{\beta}$ on $\Omega_k(\alpha)$ that 
	\begin{align*}
		\lim\limits_{\alpha\rightarrow 0}\limsup_{k\rightarrow \infty}\np{(\D^2{\n}_k)^N}{1}{\Omega_k(\alpha)}=0,
	\end{align*}
	but this is not completely clear how one may obtain the same result for the tangential part of $\D^2\n_k$.
\end{rem}

\section{Singularity removability of weak limits of immersions : case of one bubble}\label{sec4}

\begin{theorem}\label{onebubble}
	Let $\Sigma$ be a closed Riemann surface, $\{\phi_k\}_{k\in \N}\subset \mathrm{Imm}(\Sigma,\R^n)$ be a sequence of Willmore immersions and assume that the conformal class of $\{\phi_k\}_{k\in \N}$ stays within a compact subset of the Moduli Space and that 
	\begin{align*}
		\sup_{k\in\N} W(\phi_k)<\infty.
	\end{align*}
	Assume furthermore, that there is only one bubble $\vec{\Psi}_{\infty}:S^2\rightarrow \R^n\cup\ens{\infty}$ such  that 
	\begin{align*}
		\lim\limits_{k\rightarrow \infty}W(\phi_k)=W(\phi_{\infty})+W(\vec{\Psi}_{\infty}),
	\end{align*}
	where $\phi_{\infty}:\Sigma\rightarrow \R^n$  is the $W^{2,2}$ weak limit of $\{\phi_k\}_{k\in \N}$ in $\Sigma\setminus\ens{p}$, and $p\in \Sigma$ is the unique point of concentration of Willmore energy. Assume now that $\phi_{\infty}$ has a branch point of order $\theta_0\geq 1$ at $p$. Then there exists a conformal transformation $\vec{\Xi}:\R^n\cup\ens{\infty}\rightarrow \R^n\cup\ens{\infty}$ such that $\vec{\Xi}\circ \vec{\Psi}_{\infty}:S^2\rightarrow \R^n$ be a \emph{compact} Willmore sphere with a unique branch point of multiplicity $\theta_0 $, and if $\theta_0\geq 2$, the second residue $\alpha(\phi_{\infty},p)$ of ${\phi}_{\infty}$ satisfies
	\begin{align*}
		\alpha(\phi_{\infty},p)\leq \theta_0-2.
	\end{align*} 	
	Furthermore, the bubble $\vec{\Psi}_{\infty}$ is non-compact, has a non-compact end of multiplicity $\theta_0$ and satisfies
	\begin{align}\label{liyau1}
		\int_{S^2}K_{\vec{\Psi}_{\infty}}d\mathrm{vol}_{g_{\vec{\Psi}_{\infty}}}=-2\pi(\theta_0-1)
	\end{align}
	while
	\begin{align*}
		\int_{\Sigma}K_{\phi_{\infty}}d\mathrm{vol}_{g_{\phi_{\infty}}}=2\pi\chi(\Sigma)+2\pi(\theta_0-1)
	\end{align*}
\end{theorem}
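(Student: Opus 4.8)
The plan is to run the bubble--neck decomposition of $\{\phi_k\}$ near the concentration point $p$, extract the integer $\theta_0$ from the integrality statement of Theorem~\ref{integer0}, sharpen the picture with the no-neck estimates of Theorems~\ref{neckfine} and~\ref{improvedquanta}, and close with a Gauss--Bonnet computation. First I would fix a conformal chart $\psi:D^2\to\Sigma$ with $\psi(0)=p$ and work with $\phi_k\circ\psi$ on $D^2$. By the standard analysis of \cite{lauriv1} and \cite{quanta}, after composing with dilations there is a scale $\rho_k\to0$ such that the rescalings $\psi_k(y):=(\phi_k\circ\psi)(\rho_k y)$ converge in $C^{\ell}_{\loc}(\C)$ to the bubble $\vec{\Psi}_\infty$ (which, restricted to $\C$, is a smooth Willmore immersion of finite total curvature), such that $\phi_k\circ\psi\to\phi_\infty\circ\psi$ in $C^{\ell}_{\loc}(D^2\setminus\{0\})$, and such that on the neck regions $\Omega_k(\alpha)=B_\alpha\setminus\bar B_{\alpha^{-1}\rho_k}(0)$ one has the $L^2$ no-neck energy $\lim_{\alpha\to0}\limsup_{k\to\infty}\|\nabla\n_k\|_{L^2(\Omega_k(\alpha))}=0$; the assumption of a \emph{single} bubble is exactly what guarantees there is no further concentration along the neck. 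Theorem~\ref{improvedquanta} (through Theorem~\ref{quantamean}) then upgrades this to the $L^{2,1}$ no-neck estimate for $\nabla\n_k$ together with the quantization $\lim_{\alpha\to0}\limsup_{k\to\infty}\big(\|e^{\lambda_k}\H_k\|_{L^{2,1}(\Omega_k(\alpha))}+\|e^{\lambda_k}\nabla\H_k\|_{L^1(\Omega_k(\alpha))}\big)=0$, which is what makes the singularity analysis of the limit tractable.

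Second I would identify $\theta_0$ and the branch orders. Applying Theorem~\ref{integer0} on $\Omega_k$ produces an integer $\theta_0\geq1$ with $\theta_0-1=\lim_{\alpha\to0}\lim_{k\to\infty}\frac1{2\pi}\int_{\partial B_{\alpha^{-1}\rho_k}}\partial_\nu\lambda_k\,d\mathscr{H}^1$, and the splitting $\lambda_k=\mu_k+\nu_k$ of that theorem, combined with Theorem~\ref{neckfine}, gives $\lambda_k(z)=(\theta_0-1)\log|z|+O(1)$ on $\Omega_k(\alpha)$, with the $O(1)$ controlled by the neck energy. Letting $k\to\infty$ on the macroscopic side shows that the conformal factor of $\phi_\infty$ is $(\theta_0-1)\log|z|+O(1)$ near $p$, so by the expansion \eqref{notalgebraic} recalled above $\phi_\infty$ has a branch point of order \emph{exactly} $\theta_0$ at $p$; rescaling and letting $k\to\infty$ shows the conformal factor of $\vec{\Psi}_\infty$ is $(\theta_0-1)\log|y|+O(1)$ as $|y|\to\infty$, i.e. $\vec{\Psi}_\infty$ has, at the point $q\in S^2$ where it attaches to the neck (the point $y=\infty$), an end of multiplicity $\theta_0$. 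Since $W(\vec{\Psi}_\infty)<\infty$, the point-removability of \cite{beriviere} applied in the coordinate $w=1/y$ gives near $q$ an expansion of the form \eqref{weierstrass} with a pole of order $\theta_0$; choosing for $\vec{\Xi}$ an inversion of $\R^n\cup\{\infty\}$ sending $\infty$ to a point $x_0\in\R^n$ turns this pole into a zero of order $\theta_0$, so $\vec{\Xi}\circ\vec{\Psi}_\infty:S^2\to\R^n$ is a compact branched Willmore immersion whose only branch point is $q$, of multiplicity $\theta_0$ (there is none on $\C$, where $\vec{\Psi}_\infty$ is a smooth immersion).

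The main obstacle is the bound $\alpha(\phi_\infty,p)\leq\theta_0-2$ for $\theta_0\geq2$, which I would establish by contradiction. If the second residue were maximal, $\alpha(\phi_\infty,p)=\theta_0-1$, then by \eqref{theta2} the mean curvature of $\phi_\infty$ near $p$ would be $\H_\infty=\Re(\vec{C}_0 z^{-(\theta_0-1)})+O(|z|^{2-\theta_0}\log|z|)$ with $\vec{C}_0\neq0$, and since $e^{\lambda_\infty}\simeq|z|^{\theta_0-1}$ this forces $e^{\lambda_\infty}|\H_\infty|$ to be bounded away from $0$ near $p$ on circular averages and $e^{\lambda_\infty}|\nabla\H_\infty|\simeq|z|^{-1}$. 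The plan is then to test the $L^{2,1}$ and $L^1$ no-neck estimates of Theorem~\ref{improvedquanta} on dyadic annuli $B_{2s}\setminus\bar B_s(0)\subset\Omega_k(\alpha)$ with $s$ fixed and small, pass to the limit using $\phi_k\circ\psi\to\phi_\infty\circ\psi$, and thereby bound the scale-invariant pieces of $e^{\lambda_\infty}\nabla\H_\infty$ and $\nabla\n_\infty$ on each such annulus by the local $L^2$-energy of $\nabla\n_k$ there; the quantitative decay of the neck built into Theorems~\ref{neckfine}--\ref{improvedquanta} (the $\sqrt\alpha$ factors and the Lorentz-norm gains) then forces this local energy to decay faster as $s\to0$ than a maximal second residue permits, which is the contradiction. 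I expect this step to be the delicate one, precisely because it requires the \emph{quantitative} rate of decay of the neck energy rather than the mere qualitative $L^2$ no-neck, and because one must keep careful track of the passage between the $z$-chart at $p$, the $y$-chart of the bubble, and the identification of the residue across the neck.

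Finally, the two curvature identities follow from Gauss--Bonnet for branched immersions and the behaviour of total Gauss curvature under inversion. Since $\phi_\infty$ is a branched immersion of $\Sigma$ whose only branch point is $p$, of order $\theta_0$, the induced metric carries a cone point of total angle $2\pi\theta_0$, contributing $2\pi-2\pi\theta_0$ on the left of the Gauss--Bonnet identity, whence $\int_\Sigma K_{\phi_\infty}\,d\mathrm{vol}_{g_{\phi_\infty}}=2\pi\chi(\Sigma)+2\pi(\theta_0-1)$. The same computation for the compact bubble gives $\int_{S^2}K_{\vec{\Xi}\circ\vec{\Psi}_\infty}\,d\mathrm{vol}=2\pi\chi(S^2)+2\pi(\theta_0-1)=4\pi+2\pi(\theta_0-1)$; since $\vec{\Psi}_\infty$ is the inversion of $\vec{\Xi}\circ\vec{\Psi}_\infty$ centred at the image $x_0$ of its branch point $q$ of multiplicity $\theta_0$, the conformal change $g_{\vec{\Psi}_\infty}=|\,\cdot\,-x_0|^{-4}\,g_{\vec{\Xi}\circ\vec{\Psi}_\infty}$ adds $2\int\Delta_g\log|\,\vec{\Xi}\circ\vec{\Psi}_\infty-x_0\,|\,d\mathrm{vol}_g=-4\pi\theta_0$ to the total curvature, so $\int_{S^2}K_{\vec{\Psi}_\infty}\,d\mathrm{vol}_{g_{\vec{\Psi}_\infty}}=4\pi+2\pi(\theta_0-1)-4\pi\theta_0=-2\pi(\theta_0-1)$, which is \eqref{liyau1}; in particular this value is not equal to $2\pi\chi(S^2)$, so $\vec{\Psi}_\infty$ cannot be a smooth compact immersion of $S^2$ and is therefore non-compact, with its non-compact end --- of multiplicity $\theta_0$ --- at $q$. (The same phenomenon for the Willmore energy, which drops by $4\pi\theta_0$ under such an inversion, is the source of the $-4\pi\theta_j$ terms in the energy identity of Theorem~\ref{ta}.)
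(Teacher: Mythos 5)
Your setup (neck decomposition, $L^{2,1}$ upgrade via Theorem~\ref{improvedquanta}, identification of the branch order $\theta_0$ via Theorem~\ref{integer0}, and compactification of the bubble by an inversion) matches the paper and is fine; the Gauss--Bonnet identities at the end are also correct, although you obtain them by computing the change of total Gauss curvature under the inversion of the compactified bubble, whereas the paper deduces them more directly from the quantization of the total Gauss curvature along the necks (as in \cite{quanta}) together with the generalised Gauss--Bonnet theorem for branched immersions --- two valid routes to the same identities.

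The genuine gap is in the main step, the bound $\alpha(\phi_\infty,p)\leq\theta_0-2$. Your plan is to assume the maximal residue, observe that then $e^{\lambda_\infty}|\H_\infty|\simeq 1$ and $e^{\lambda_\infty}|\nabla\H_\infty|\simeq|z|^{-1}$ near $p$, and then derive a contradiction by \enquote{testing} the $L^{2,1}$/$L^1$ no-neck estimates of Theorem~\ref{improvedquanta} on dyadic annuli. This cannot close. The no-neck estimates state that $\lim_{\alpha\to0}\limsup_k\|e^{\lambda_k}\H_k\|_{L^{2,1}(\Omega_k(\alpha))}=0$ and $\lim_{\alpha\to0}\limsup_k\|e^{\lambda_k}\nabla\H_k\|_{L^1(\Omega_k(\alpha))}=0$, but these are taken on regions whose outer radius is $\alpha$. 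With the maximal residue one has $\|e^{\lambda_\infty}\H_\infty\|_{L^{2,1}(B_\alpha\setminus B_{\alpha^2})}\simeq\alpha$ and $\|e^{\lambda_\infty}\nabla\H_\infty\|_{L^1(B_\alpha\setminus B_{\alpha^2})}\simeq\alpha$, so both quantities tend to zero anyway; the \enquote{$\sqrt\alpha$ factors and Lorentz gains} in Theorems~\ref{neckfine}--\ref{improvedquanta} are perfectly compatible with a non-vanishing second residue, and no contradiction comes out of a direct integral comparison on shrinking annuli.

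What is actually needed --- and what the paper does --- is a propagation argument, not an integral estimate. One must introduce the potential $\vec{L}_k$ of the conservation law (the primitive of $\Im(\partial\H_k+|\H_k|^2\partial\phi_k+2g_k^{-1}\otimes\langle\H_k,\h_k^0\rangle\otimes\bar\partial\phi_k)$, which exists on the \emph{full} disk $B(0,R_k)$, not merely the neck), consider the quantity $\vec f_k=z^{\theta_0-1}(\H_k+2i\vec L_k)$, and observe that $\bar\partial\vec f_k$ is controlled by $\|\D\n_k\|_{L^{2,1}}$ times $\|\vec f_k\|_\infty$ (Step~4 of the paper). One then computes the boundary values of $\vec f_k$: on the outer boundary $\partial B_\alpha$ the strong convergence to $\phi_\infty$ and Lemma~\ref{expl} give $\vec f_k\to\vec C_0+O(\alpha)$; on the inner boundary $\partial B_{\alpha^{-1}\rho_k}$ the careful asymptotic analysis of the bubble's end (that the compactified bubble is a branched Willmore sphere whose mean curvature at the branch point decays, so $e^{\lambda_{\vec\Psi_\infty}}(\H_{\vec\Psi_\infty}+2i\vec L_{\vec\Psi_\infty})=O(\alpha^2)$) gives $|\vec f_k|=O(\alpha^2)$. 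The Schwarz lemma on degenerating annuli (Lemma~\ref{schwarz}) then propagates the inner smallness across the neck: evaluated at the intermediate radius $|z|=\alpha^2$, the holomorphic part of $\vec f_k$ is $O(\alpha|\vec C_0|)+O(\alpha^2)$, while by strong convergence $\vec f_k(z)\to\vec C_0+O(\alpha^2)$ there; letting $\alpha\to0$ forces $\vec C_0=0$. None of these three ingredients --- the conserved quantity $\vec L_k$ and the almost-holomorphic combination $z^{\theta_0-1}(\H_k+2i\vec L_k)$, the $O(\alpha^2)$ bound on the inner boundary coming from the fine structure of the bubble, and the Schwarz lemma on degenerating annuli --- appears in your proposal, and without them the argument does not close.
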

\begin{rem}
	If $n=3$, then $\vec{\Psi}_{\infty}$ is conformally minimal by Bryant's classification (\cite{bryant}). Furthermore, as $\vec{\Psi}_{\infty}$ admits a unique non-compact end of multiplicity $\theta_0$, if $\vec{\chi}_{\infty}$ is a compactification of $\vec{\Psi}_{\infty}$, we have by the Gauss-Bonnet theorem and the Li-Yau inequality
	\begin{align}\label{liyau2}
		W(\vec{\chi}_{\infty})=4\pi \theta_0\qquad \int_{S^2}K_{\vec{\chi}_{\infty}}d\mathrm{vol}_{g_{\vec{\Psi}_{\infty}}}=4\pi+2\pi(\theta_0-1)=2\pi(\theta_0+1)
	\end{align}
	Therefore, the conformal invariance of the Willmore energy and \eqref{liyau1} and \eqref{liyau2} imply that 
	\begin{align*}
		W(\vec{\Psi}_{\infty})=W(\vec{\chi}_{\infty})-\int_{S^2}K_{\vec{\chi}_{\infty}}d\mathrm{vol}_{g_{\vec{\chi}_{\infty}}}+\int_{S^2}K_{\vec{\Psi}_{\infty}}d\mathrm{vol}_{g_{\vec{\Psi}_{\infty}}}
		=4\pi\theta_0-2\pi(\theta_0+1)+2\pi(\theta_0-1)=0.
	\end{align*}
	Therefore, $\vec{\Psi}_{\infty}$ is minimal, with a unique end of \emph{odd} multiplicity $\theta_0\geq 3$ (by the Li-Yau formula).
\end{rem}
\begin{proof}
	Let us first prove the assertion on the Gauss curvature. Using the quantization of the total curvature, we deduce thanks of the strong convergence of the rescaled immersions (as in the end of the proof of the main Theorem in \cite{quanta}) that 
	\begin{align}\label{01}
		2\pi\chi(\Sigma)=\int_{\Sigma}K_{{\phi}_{\infty}}d\mathrm{vol}_{g_{\phi_{\infty}}}+\int_{S^2}K_{\vec{\Psi}_{\infty}}d\mathrm{vol}_{g_{\vec{\Psi}_{\infty}}}.
	\end{align}
	Furthermore, as $\phi_{\infty}$ has a unique branched branched point of order $\theta_0-1$, we have by the generalised Gauss-Bonnet theorem
	\begin{align}\label{02}
		\int_{\Sigma}K_{\phi_{\infty}}d\mathrm{vol}_{g_{\phi_{\infty}}}=2\pi\chi(\Sigma)+2\pi(\theta_0-1)=2\pi\,\chi(\Sigma)+2\pi(\theta_0-1).
	\end{align}
	Therefore, \eqref{01} and \eqref{02} imply \eqref{liyau1}.	
	\textbf{Step 0.} Thanks to the analysis of \cite{beriviere}, there exists a chart $z:U\rightarrow D^2\subset\C$ around $p$ such that $z(p)=0$ and $\vec{C}_0\in \C^n$ such that
	\begin{align}\label{expansion}
		\H_{{\phi_{\infty}}}=\Re\left(\frac{\vec{C}_0}{z^{\theta_0-1}}\right)+O(|z|^{2-\theta_0}\log^2|z|).
	\end{align} 
	Here $\alpha(\phi_{\infty},p)\leq \theta_0-2$ if and only $\vec{C}_0=0$ (by definition of the second residue, see \cite{beriviere}), and this is the result that we will prove in this Theorem.	As we can restrict to any small neighbourhood of $p$ for the analysis, all estimates will be taken with respect to a fixed chart for which the expansion \eqref{expansion} holds.
	
	\textbf{Step 1. Estimate of the metric in necks.}
	
	First, define $\Omega_k(\alpha)=B_{\alpha}\setminus \bar{B}_{\alpha^{-1}\rho_k}(0)$ and recall that by Theorem \ref{neckfine}, we have (applying the inequality on $\Omega_{\alpha}$) for all $\alpha^{-1} \rho_k<\rho<\alpha$
	\begin{align*}
	\left|d_k-\frac{1}{2\pi}\int_{\partial B_{\rho}}\partial_{\nu}\lambda\,d\mathscr{H}^1\right|\leq \Gamma_0\left(\int_{B_{\max\ens{\rho,2\alpha^{-1}\rho_k}}\setminus B_{\alpha^{-1}\rho_k}(0)}|\D\n|^2dx+\frac{1}{\log\left(\dfrac{\alpha^2}{\rho_k}\right)}\int_{\Omega_k(\alpha)}|\D\n|^2dx\right).
	\end{align*}
	Now, taking $\rho=\alpha^2$, we get
	\begin{align*}
	\left|d_k-\frac{1}{2\pi}\int_{\partial B_{\alpha^2}}\partial_{\nu}\lambda_k\,d\mathscr{H}^1\right|\leq \Gamma_0\left(\int_{B_{\alpha^2}\setminus B_{\alpha^{-1}\rho_k}}|\D\n_k|^2dx+\frac{1}{\log\left(\frac{\alpha^2}{\rho_k}\right)}\int_{\Omega_k(\alpha)}|\D\n_k|^2dx\right).
	\end{align*}
	Therefore, the no-neck energy (see \cite{quanta})
	\begin{align*}
	\lim\limits_{\alpha\rightarrow 0}\limsup_{k\rightarrow \infty}\int_{\Omega_k(\alpha)}|\D\n_k|^2dx=0
	\end{align*}
	implies that
	\begin{align*}
	\lim\limits_{\alpha\rightarrow 0}\limsup_{k\rightarrow \infty}\left|d_k-\frac{1}{2\pi}\int_{\partial B_{\alpha^2}}\partial_{\nu}\lambda_k\,d\mathscr{H}^1\right|=0.
	\end{align*}
	Furthermore, as $\phi_{\infty}$ has a branch point of order $\theta_0-1\geq 0$ at $z=0$, we have the expansion for some $\beta\in \R$
	\begin{align*}
	\lambda_{\infty}(z)=(\theta_0-1)\log|z|+\beta+O(|z|)
	\end{align*}
	we have by the strong convergence
	\begin{align*}
	\frac{1}{2\pi}\int_{\partial B_{\alpha^2}}\partial_{\nu}\lambda_k\,d\mathscr{H}^1\conv{k\rightarrow \infty}\frac{1}{2\pi}\int_{\partial  B_{\alpha^2}}\partial_{\nu}\lambda_{\infty}\,d\mathscr{H}^1=\theta_0-1+O(\alpha^2)
	\end{align*}
	Finally, this implies that
	\begin{align}\label{limitdk}
	\lim\limits_{\alpha\rightarrow 0}\limsup_{k\rightarrow \infty
	}|d_k-(\theta_0-1)|=0.
	\end{align}
	Now, recalling that $d_k$ is \emph{independent} of $\alpha>0$ (as it corresponds to the coefficient in front of the logarithm of the associated harmonic function $\nu_k$ on $B_1\setminus\bar{B}_{\alpha^{-1}\rho_k}(0)$), we deduce that  \eqref{limitdk} implies that
	\begin{align}\label{necklimit}
		d_k\conv{k\rightarrow \infty}\theta_0-1
	\end{align}
	
	\textbf{Step 2. Taylor expansion of the second fundamental form on the interior boundary of the neck region.}
	
	\begin{lemme}[Bernard-Rivi\`{e}re \cite{beriviere}]\label{expl}
		Let $\phi:D^2\rightarrow \R^n$ be a \emph{true} Willmore disk with a unique branch point at $z=0$ of multiplicity $\theta_0\geq 1$. Then there exists $\vec{L}\in C^{\infty}(D^2\setminus\ens{0},\R^n)$ such that 
		\begin{align*}
		d\vec{L}=\Im\left(\partial\H+|\H|^2\partial\phi+2\,g^{-1}\otimes\s{\H}{\h_0}\otimes \bar{\partial}\phi\right)
		\end{align*}
		and $\vec{C}_0\in \C^n$ such that for all $\epsilon>0$
		\begin{align*}
		&\H=\Re\left(\frac{\vec{C}_0}{z^{\theta_0-1}}\right)+O(|z|^{2-\theta_0-\epsilon}),\qquad
		\vec{L}=\frac{1}{2}\Im\left(\frac{\vec{C}_0}{z^{\theta_0-1}}\right)+O(|z|^{2-\theta_0-\epsilon}).
		\end{align*}
	\end{lemme}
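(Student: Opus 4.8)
The plan is to deduce both expansions from the conservation law underlying the Willmore equation together with the already established asymptotics of the mean curvature near a branch point. First I would recall, following Rivière, that for a Willmore immersion the $\R^n$-valued $1$-form
\[
\vec\alpha:=\Im\Bigl(\partial\H+|\H|^2\partial\phi+2\,g^{-1}\otimes\s{\H}{\h_0}\otimes\bar\partial\phi\Bigr)
\]
is closed on $D^2\setminus\ens{0}$, and that its period over a small loop encircling the origin equals $4\pi\,\vec\gamma_0(\phi,0)$, the first residue of \eqref{residue}. Since $\phi$ is a \emph{true} Willmore disk, $\vec\gamma_0(\phi,0)=0$, so $\vec\alpha$ has no period and the Poincaré lemma on the punctured disk yields $\L\in C^\infty(D^2\setminus\ens{0},\R^n)$, unique up to an additive constant, with $d\L=\vec\alpha$; equivalently $2i\,\p{z}\L=\p{z}\H+|\H|^2\p{z}\phi+2\,g^{-1}\otimes\s{\H}{\h_0}\otimes\p{\bar z}\phi$. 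If $\H\equiv0$ then $\vec\alpha\equiv0$ and one takes $\L=0$, $\vec{C}_0=0$, so we may assume $\H\not\equiv0$.

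For the mean curvature I would invoke the Definition--Proposition recalled above: for $\theta_0\ge2$ there are an integer $m\le\theta_0-1$ and $\vec{B}_0\in\C^n$ with $\H=\Re(\vec{B}_0\,z^{-m})+O(|z|^{1-m}\log|z|)$, whose $\p{z}$-derivative is $\p{z}\H=-\tfrac{m}{2}\vec{B}_0\,z^{-m-1}+O(|z|^{-m}\log|z|)$ (for $\theta_0=1$ the expansion is \eqref{theta1}). If $m=\theta_0-1$ I set $\vec{C}_0:=\vec{B}_0$; using $|z|^{a}|\log|z||=O(|z|^{a-\epsilon})$ as $z\to0$ for every $\epsilon>0$, this reads $\H=\Re(\vec{C}_0\,z^{-(\theta_0-1)})+O(|z|^{2-\theta_0-\epsilon})$. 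If $m\le\theta_0-2$ I set $\vec{C}_0:=0$: for $0<|z|<1$ one has $|z|^{-m}\le|z|^{2-\theta_0}\le|z|^{2-\theta_0-\epsilon}$, hence $\H=O(|z|^{2-\theta_0-\epsilon})$. In either case $\p{z}\H=-\tfrac{\theta_0-1}{2}\vec{C}_0\,z^{-\theta_0}+O(|z|^{1-\theta_0-\epsilon})$.

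The expansion of $\L$ then follows by integrating $2i\,\p{z}\L=\p{z}\H+|\H|^2\p{z}\phi+2\,g^{-1}\otimes\s{\H}{\h_0}\otimes\p{\bar z}\phi$. For the last two terms I would use $\p{z}\phi=\tfrac{\theta_0}{2}\vec{A}_0\,z^{\theta_0-1}+O(|z|^{\theta_0}\log|z|)$, the expansion $\lambda=(\theta_0-1)\log|z|+O(1)$ of the conformal factor (so $e^{-2\lambda}=O(|z|^{2-2\theta_0})$), and the refined bound $\h_0=(\p{z}^2\phi)^{N}=O(|z|^{\theta_0-1-\epsilon})$ valid for every $\epsilon>0$, which is part of the Bernard--Rivière analysis and reflects the cancellation of the leading $z^{\theta_0-2}$ parts of $\p{z}^2\phi$ and of its tangential projection $2(\p{z}\lambda)\p{z}\phi$, together with the regularity of $\lambda-(\theta_0-1)\log|z|$. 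Then $|\H|^2\p{z}\phi=O(|z|^{\theta_0-1-2m})$ and $g^{-1}\otimes\s{\H}{\h_0}\otimes\p{\bar z}\phi=O(|z|^{-m-\epsilon})$, both strictly less singular than $|z|^{-m-1}$ since $m\le\theta_0-1$. Hence $\p{z}\L=\tfrac{1}{2i}\p{z}\H+O(|z|^{1-\theta_0-\epsilon})=-\tfrac{\theta_0-1}{4i}\vec{C}_0\,z^{-\theta_0}+O(|z|^{1-\theta_0-\epsilon})$; since $\L$ is real-valued we have $\p{\bar z}\L=\overline{\p{z}\L}$, and integrating while normalizing the additive constant to $0$ gives $\L=\tfrac12\Im(\vec{C}_0\,z^{-(\theta_0-1)})+O(|z|^{2-\theta_0-\epsilon})$, the explicit term being absent when $\vec{C}_0=0$.

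The main obstacle is controlling the term $g^{-1}\otimes\s{\H}{\h_0}\otimes\bar\partial\phi$ in the last step: the naive bound $\h_0=O(|z|^{\theta_0-2})$ would make it as singular as $\p{z}\H$ and collapse the argument, so one genuinely needs the leading-order cancellation in the Hopf differential --- and, more broadly, the full Bernard--Rivière description of the conservative elliptic system satisfied by $(\phi,\H,\L)$ near a branch point, which also supplies the $\H$-expansion used above. Everything else is bookkeeping with these Laurent-type expansions.
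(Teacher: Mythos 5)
The paper does not prove this lemma: it is stated as a recall from Bernard--Rivi\`{e}re \cite{beriviere} (the attribution is in the lemma heading) and used as a black box in Step~2 of the proof of Theorem~\ref{onebubble}, so there is no in-paper proof to compare against. Your reconstruction nonetheless follows what must be the intended argument and is structurally sound: you obtain $\L$ from the Poincar\'{e} lemma on the punctured disk once the period of the closed Willmore $1$-form $\vec{\alpha}$ vanishes, import the Laurent-type expansion of $\H$ from the Definition--Proposition recalled in the introduction, and derive the $\L$-expansion by integrating
$2i\,\p{z}\L=\p{z}\H+|\H|^2\p{z}\phi+2\,g^{-1}\otimes\s{\H}{\h_0}\otimes\p{\z}\phi$,
checking that the last two terms are strictly less singular than $\p{z}\H$, and folding the two cases $m=\theta_0-1$ and $m\le\theta_0-2$ into one formula by declaring $\vec{C}_0=0$ in the latter. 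The bookkeeping of exponents, the use of $\p{\z}\L=\overline{\p{z}\L}$ to recover the full differential from the $(1,0)$ part, and the resulting coefficient $\tfrac12\Im(\vec{C}_0 z^{-(\theta_0-1)})$ are all correct.

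One step deserves more care than you give it. You infer $\vec{\gamma}_0(\phi,0)=0$ from the adjective ``true,'' but ``true Willmore'' (as opposed to conformally constrained Willmore) only removes the Lagrange multiplier from the Euler--Lagrange equation; it does not by itself kill the period of $\vec{\alpha}$. The inversion of the catenoid is a true branched Willmore sphere with a branch point of multiplicity $\theta_0=1$ and non-zero first residue, for which the asserted single-valued $\L$ cannot exist and $\H$ has a genuine $\log|z|$ divergence incompatible with the stated $O(|z|^{1-\epsilon})$ error. So $\vec{\gamma}_0=0$ is an additional, implicit hypothesis of the lemma. In the paper's only application this is harmless --- the weak limit $\phi_\infty$ and the bubbles all have vanishing first residue by \cite{classification}, as the paper says just after Theorem~\ref{ta} --- but your inference as written is not a consequence of the hypothesis and should be stated as an assumption.

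You do correctly identify the one place where naive Laurent-exponent bookkeeping fails: with only $\h_0=O(|z|^{\theta_0-2})$, the term $g^{-1}\otimes\s{\H}{\h_0}\otimes\bar\partial\phi$ sits at the same order $|z|^{-\theta_0}$ as $\p{z}\H$ and the argument collapses. The cancellation of the leading $z^{\theta_0-2}$ coefficient of $\p{z}^2\phi$ against its tangential part $2(\p{z}\lambda)\p{z}\phi$, giving $\h_0=O(|z|^{\theta_0-1-\epsilon})$, is precisely the content supplied by the Bernard--Rivi\`{e}re bootstrap (and what the paper itself re-derives in Section~\ref{sec7} from the Codazzi identity $\bar\partial\h_0=g\otimes\partial\H+K_g\,g\otimes\partial\phi$). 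Flagging this as the genuine input of the reference rather than hiding it in $O$-notation is the right instinct, and it is the reason the lemma is not merely a formal integration.
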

    Thanks to the analysis of \cite{quanta}, if
    \begin{align*}
    	e^{\bar{\lambda_k}}=\dashint{B(0,\rho_k)}e^{\lambda_k(z)}d\leb^2(z),
    \end{align*}
    the sequence of Willmore disks
    \begin{align*}
    \vec{\Psi}_k&:B(0,\alpha^{-1})\rightarrow \R^n\\
    &z\mapsto e^{-\bar{\lambda_k}}\left(\phi_k(\rho_kz)-\phi_k(0)\right)
    \end{align*}
    converges in $C^{l}_{\loc}(\C)$ to a Willmore plane $\vec{\Psi}_{\infty}:\C\rightarrow \R^n$. Furthermore, we have
    \begin{align}\label{invariance}
    \int_{B(0,\alpha^{-1}\rho_k)}|\H_{\phi_k}|^2d\mathrm{vol}_{g_{\phi_k}}=\int_{B(0,\alpha^{-1})}|\H_{\vec{\Psi}_k}|^2d\mathrm{vol}_{g_{\vec{\Psi}_k}}\conv{k\rightarrow \infty}\int_{B(0,\alpha^{-1})}|\H_{\vec{\Psi}_{\infty}}|^2d\mathrm{vol}_{g_{\vec{\Psi}_{\infty}}}
    \end{align}
    Now, by composing $\vec{\Psi}_{\infty}:\C\rightarrow \R^n$ with a stereographic projection $\C\rightarrow S^2$, we extend it as a map $S^2\rightarrow \R^n$ (which may have a branch point at $\infty\in S^2=\C\cup\ens{\infty}$), and thanks to \eqref{invariance} we get
    \begin{align*}
    W(\vec{\Psi}_{\infty})=\limsup_{\alpha\rightarrow 0}\int_{B(0,\alpha^{-1})}|\H_{\vec{\Psi}_{\infty}}|^2d\mathrm{vol}_{g_{\vec{\Psi}_{\infty}}}\leq \sup_{k\in \N}W(\phi_k)<\infty.
    \end{align*}
    In particular, $\vec{\Psi}_{\infty}:S^2\rightarrow \R^n$ is an immersion of finite total curvature and at most one branch point. Now, recall that
    \begin{align}\label{obs1}
    \lim\limits_{k\rightarrow \infty}d_k=\theta_0-1.
    \end{align}
    Furthermore, we also have the inequality
    \begin{align}\label{obs2}
    \left|d_k-\frac{1}{2\pi}\int_{\partial B_{\alpha^{-1}\rho_k}}\partial_{\nu}\lambda_k\,d\mathscr{H}^1\right|&\leq \Gamma_0\left(\int_{B_{2\alpha^{-1}\rho_k}\setminus \bar{B}_{\alpha^{-1}\rho_k}(0)}|\D\n_k|^2+\frac{1}{\log\left(\frac{\alpha^2}{\rho_k}\right)}\int_{\Omega_k(\alpha)}|\D\n_k|^2dx\right).
    \end{align}
    Observing that 
    \begin{align*}
    \lambda_{\psi_k}(z)=\lambda_k(\rho_kz)-\bar{\lambda_k},
    \end{align*}
    we deduce that
    \begin{align}\label{obs3}
    \int_{\partial B_{\alpha^{-1}}}\partial_{\nu}\lambda_{\vec{\Psi}_k}\,d\mathscr{H}^1=\int_{\partial B_{\alpha^{-1}\rho_k}}\partial_{\nu}\lambda_k\,d\mathscr{H}^1.
    \end{align}
    Combining \eqref{obs1}, \eqref{obs2} and \eqref{obs3}, we obtain
    \begin{align*}
    \lim\limits_{\alpha\rightarrow 0}\limsup_{k\rightarrow \infty}\left|\frac{1}{2\pi}\int_{\partial B_{\alpha^{-1}}}\partial_{\nu}\lambda_{\vec{\Psi}_k}\,d\mathscr{H}^1-(\theta_0-1)\right|=0.
    \end{align*}
    Now, let $\iota:\C\setminus\ens{0}\rightarrow \C\setminus\ens{0}$ be the inversion centred at $0$. Then one directly checks that
    \begin{align*}
    \int_{\partial B_{\alpha}}\partial_{\nu}\lambda_{\vec{\Psi}_k\circ \iota}\,d\mathscr{H}^1=-\left(\int_{\partial B_{\alpha^{-1}}}\partial_{\nu}\lambda_{\vec{\Psi}_k}\,d\mathscr{H}^1+2\right).
    \end{align*}
    Therefore, we finally get
    \begin{align}\label{noncompactend}
    \lim\limits_{\alpha\rightarrow 0}\limsup_{k\rightarrow \infty}\left|\frac{1}{2\pi}\int_{\partial B_{\alpha}}\partial_{\nu}\lambda_{ \vec{\Psi}_k\circ \iota}\,d\mathscr{H}^1+(\theta_0+1)\right|=0.
    \end{align}
    Thanks to the strong convergence of $\vec{\Psi}_k\circ \iota\rightarrow \vec{\Psi}_{\infty}\circ \iota$ in $C^l_{\mathrm{loc}}(\C\setminus\ens{0})$ for all $l\in \N$, we have
    \begin{align}\label{eqinv}
    \int_{\partial B_{\alpha}}\partial_{\nu}\lambda_{ \vec{\Psi}_k\circ \iota}\,d\mathscr{H}^1\conv{k\rightarrow \infty}\int_{\partial B_{\alpha}}\partial_{\nu}\lambda_{\vec{\Psi}_{\infty}\circ \iota}\,d\mathscr{H}^1.
    \end{align}
    Therefore, $\vec{\Psi}_{\infty}\circ \iota:\C\setminus\ens{0}\rightarrow \R^n$ has a non-compact end at zero of multiplicity $\theta_0$. Indeed, \eqref{noncompactend} implies that for some $\gamma\in \R$
    \begin{align*}
    e^{\lambda_{\vec{\Psi}_{\infty}\circ \iota}}=\frac{e^{\gamma}}{|z|^{\theta_0+1}}\left(1+o(1)\right)
    \end{align*}
    and the analysis of \cite{beriviere} implies that there exists $\vec{D}_0\in \C^n$ such that 
    \begin{align}\label{metric}
    &e^{\lambda_{\vec{\Psi}_{\infty}\circ \iota}}=\frac{e^{\gamma}}{|z|^{\theta_0+1}}\left(1+O(|z|)\right)\nonumber\\
    &\H_{\vec{\Psi}_{\infty}\circ \iota}=\Re\left(\vec{D}_0z^{\theta_0+1}\right)+O(|z|^{\theta_0+1}\log^2|z|).
    \end{align}
    Notice that \eqref{metric} implies that we have the expansion as $|z|\rightarrow 0$
    \begin{align*}
    g_{\vec{\Psi}_{\infty}\circ \iota}=\frac{e^{\gamma}}{|z|^{\theta_0+1}}\left(1+O(|z|)\right)|dz|^2
    \end{align*}
    so taking $w=\dfrac{1}{z}$, we have the expansion as $|w|\rightarrow \infty$
    \begin{align*}
    g_{\vec{\Psi}_{\infty}}={e^{\gamma}}|w|^{2\theta_0+2}\left(1+O\left(\frac{1}{|w|}\right)\right)\left|-\frac{dw}{w^2}\right|^2=e^{\gamma}|w|^{2\theta_0-2}\left(1+O\left(\frac{1}{|w|}\right)\right)|dw|^2.
    \end{align*}
    Therefore, coming back to the original definition on $\partial B_{\alpha^{-1}}$, we obtain as $|z|\rightarrow \infty$ the expansion
    \begin{align*}
    &e^{\lambda_{\vec{\Psi}_{\infty}}}=e^{\beta}|z|^{\theta_0-1}\left(1+O\left(\frac{1}{|z|}\right)\right)\\
    &\H_{\vec{\Psi}_{\infty}}=\Re\left(\frac{\vec{D}_0}{z^{\theta_0+1}}\right)+O\left(\frac{1}{|z|^{\theta_0+2}}\right).
    \end{align*}
    This implies that for all $z\in \partial B_{\alpha^{-1}}$, we have
    \begin{align}\label{hl1}
    e^{\lambda_{\vec{\Psi}_{\infty}}(z)}\H_{\vec{\Psi}_{\infty}}(z)=O(\alpha^2).
    \end{align}
    Likewise, the analysis of \cite{beriviere} (see also Lemma \ref{expl}) shows that for all $z\in \partial B_{\alpha^{-1}}$
    \begin{align}\label{hl2}
    e^{\lambda_{\vec{\Psi}_{\infty}}(z)}\vec{L}_{\vec{\Psi}_{\infty}}(z)=O(\alpha^2).
    \end{align}
    Now, by definition of $\vec{\Psi}_k$, one sees easily that for all $z\in \bar{B}(0,\alpha^{-1})$, 
    \begin{align}\label{hl3}
    e^{\lambda_{\vec{\Psi}_k}(z)}\left(\vec{H}_k(z)+2i\vec{L}_k(z)\right)=e^{\lambda_k(\rho_kz)}\left(\H_k(\rho_kz)+2i\vec{L}_k(\rho_kz)\right).
    \end{align}
    Finally, \eqref{hl1}, \eqref{hl2} and \eqref{hl3} imply that for all $z\in \partial B_{\alpha^{-1}\rho_k}$
    \begin{align}\label{inneradius}
    \limsup_{k\rightarrow \infty}\left|e^{\lambda_k(z)}\left(\H_k(z)+2i\vec{L}_k(z)\right)\right|=O(\alpha^2).
    \end{align}
    Now, thanks to Lemma \ref{expl}, as ${\phi}_{\infty}$ has a branch point of order $\theta_0\geq 1$ there exists $\gamma\in \R$ and $\vec{C}_0\in \C^n$ such that
    \begin{align}\label{devphinfty}
        &e^{2\lambda_{\phi_{\infty}}}=e^{2\gamma}|z|^{2\theta_0-2}\left(1+O(|z|)\right)\nonumber\\
    	&z^{\theta_0-1}\left(\H_{{\phi}_{\infty}}(z)+2i\vec{L}_{\phi_{\infty}}(z)\right)=\vec{C}_0+O(|z|).
    \end{align}
    Furthermore, for all $k\in \N$ and $0<\alpha<\alpha_0$, there exists by Theorem \ref{neckfine} real numbers $d_k,A_{k}(\alpha)\in \R$ (with $d_k$ independent of $\alpha$) such that 
    \begin{align*}
    	\np{\lambda_k-d_k\log|z|-A_{k}(\alpha)}{\infty}{\Omega_k(\alpha)}\leq \Gamma_0'(n)\left(\sqrt{\alpha}^{\frac{1}{4}}\Lambda+\np{\D\n_k}{2}{\Omega_k(\sqrt{\alpha})}\right).
    \end{align*}
    where
    \begin{align*}
    	\Lambda=\sup_{k\in \N}\left(1+\np{d\lambda_k}{2,\infty}{\Sigma}+W(\phi_k)\right)<\infty
    \end{align*}
    by hypothesis (as the conformal class stays within a compact set of the Moduli Space, $\np{d\lambda_k}{2,\infty}{\Sigma}$ is uniformly bounded). As $\lambda_k$ is continuous on $\Omega_k(\alpha)$ thanks to the estimate \ref{I1} and the no-neck energy, we have for all $z\in \Omega_k(\alpha)$ 
    \begin{align*}
    	e^{\lambda_k(z)}=e^{A_k(\alpha)+o_{k,\alpha}(1)}|z|^{d_k},
    \end{align*}
    where $
    	\lim_{\alpha\rightarrow 0}\limsup_{k\rightarrow\infty}|o_{k,\alpha}(1)|=0.
    $ 
    Therefore, as by \eqref{necklimit} $d_k\conv{k\rightarrow \infty}\theta_0-1$, and using the strong convergence of $\{\phi_k\}_{k\in \N}$ towards $\phi_{\infty}$ on all compact subsets of $B_{\alpha}(0)\setminus\ens{0}$, for all $z\in B_{\alpha}(0)\setminus \ens{0}$, we have
    \begin{align*}
    	\limsup\limits_{k\rightarrow \infty}e^{A_k(\alpha)}\leq \limsup_{k\rightarrow \infty}e^{|o_{k,\alpha}(1)|}\frac{e^{\lambda_k(z)}}{|z|^{d_k}}=e^{|o_{k,\alpha}(1)|}e^{\gamma}\left(1+O(|z|)\right)
    \end{align*}
    and likewise, for all $z\in B_{\alpha}(0)\setminus\ens{0}$
    \begin{align*}
    	e^{-|o_{\alpha}(1)|}e^{\gamma}(1+O(|z|))\leq \liminf_{k\rightarrow \infty}e^{A_k(\alpha)}.
    \end{align*}
    Therefore, taking $|z|\rightarrow 0$, we deduce that 
    \begin{align*}
    	\lim\limits_{\alpha\rightarrow 0}\limsup\limits_{k\rightarrow \infty}e^{A_k(\alpha)}=\lim_{\alpha\rightarrow 0}\liminf\limits_{k\rightarrow \infty}e^{A_k(\alpha)}=e^{\gamma}.
    \end{align*}
    Now, Theorem \ref{integer1} implies that there exists a constant $C>0$ independent of $k\in \N$ and $\alpha>0$ such that for all $k$ large enough and $0<\alpha<\alpha_0$ (for some fixed $\alpha_0$)
    \begin{align}\label{nondegenerate}
    	\frac{1}{C}\leq \frac{e^{\lambda_k(z)}}{|z|^{\theta_0-1}}\leq C\qquad \text{for all}\;\, z\in B_1\setminus \bar{B}_{\rho_k}(0).
    \end{align}
    Then \eqref{inneradius} implies that for all $z\in \partial B_{\alpha^{-1}\rho_k}$
    \begin{align}\label{inestimate}
    	\limsup\limits_{k\rightarrow \infty}\left|z^{\theta_0-1}\left(\H_k(z)+2i\,\vec{L}_k(z)\right)\right|=O(\alpha^2).
    \end{align}

	\textbf{Step 3. Taylor expansion of the second fundamental form on the exterior boundary of the neck regions.}
	
	Here, the strong convergence of $\{\phi_k\}_{k\in \N}$ towards $\phi_{\infty}$ in $C^l_{\mathrm{loc}}(B_{\alpha}(0)\setminus\ens{0})$ (for all $l\in \N$), the expansion \eqref{expansion} and Lemma \ref{expl} imply that for all $z\in B_{\alpha}(0)\setminus\ens{0}$ (regardless if \eqref{nondegenerate} holds or not)
	\begin{align*}
		\lim\limits_{k\rightarrow \infty} z^{\theta_0-1}\left(\H_k(z)+2i\,\vec{L}_k(z)\right)=\vec{C}_0+O(|z|).
	\end{align*}
	
	\textbf{Step 4. $L^{\infty}$ estimate of the second fundamental form in neck regions.}	
	Now remember the system on $\Omega_k(\alpha)$
	\begin{align}\label{system}
	\left\{\begin{alignedat}{1}
	&2i\,\partial \vec{L}_k=\partial\H_k+|\H_k|^2\partial\phi_k+2\,g_k^{-1}\otimes\s{\H_k}{\h^0_k}\otimes \bar{\partial}\phi_k\\
	&\D S_k=\vec{L}_k\cdot\D\phi_k\\
	&\D \vec{R}_k=\vec{L}_k\wedge \D\phi_k+2\H_k\wedge \D^{\perp}\phi_k
	\end{alignedat}\right.
	\end{align}
	Now, let us rewrite \eqref{system} as
	\begin{align*}
	\bar{\partial}\left(\H_k+2i\vec{L}_k\right)=-|\H_k|^2\bar{\partial}\phi_k-2\,g_k^{-1}\otimes \s{\H_k}{\bar{\h_k^0}}\otimes \partial\phi_k
	\end{align*}
	This implies that
	\begin{align*}
	\bar{\partial}\left(z^{\theta_0-1}\left(\H_k+2i\vec{L}_k\right)\right)=-z^{\theta_0-1}|\H_k|^2\bar{\partial}\phi_k-2z^{\theta_0-1}\,g^{-1}\otimes \s{\H_k}{\bar{\h_k^0}}\otimes \partial\phi_k
	\end{align*}
	Now, we write $z^{\theta_0-1}\left(\H_k+2i\vec{L}_k\right)=\vec{u}_k+\vec{v}_k$, where 
	\begin{align*}
		\left\{\begin{alignedat}{2}
		\bar{\partial}\vec{u}_k&=0\qquad &&\text{in}\;\, \Omega_k(\alpha)\\
		\vec{u}_k&=z^{\theta_0-1}\left(\H_k+2i\,\vec{L}_k\right)\qquad && \text{on}\;\, \partial \Omega_k(\alpha)
		\end{alignedat}\right.
	\end{align*}
	and
	\begin{align*}
	\left\{
	\begin{alignedat}{2}
	\bar{\partial}\vec{v}_k&=-z^{\theta_0-1}|\H_k|^2\bar{\partial}\phi_k-2z^{\theta_0-1}\,g^{-1}\otimes \s{\H_k}{\bar{\h^0_k}}\otimes\partial\phi_k\qquad && \text{in}\;\, \Omega_k(\alpha)\\
	\vec{v}_k&=0\qquad && \text{on}\;\, \partial \Omega_k(\alpha).
	\end{alignedat}\right.
	\end{align*}
	Now, we obtain trivially
	\begin{align}\label{inest}
	\np{\bar{\partial}\vec{v}_k}{2,1}{\Omega_k(\alpha)}&\leq \np{e^{\lambda_k}\H_k}{2,1}{\Omega_k(\alpha)}\np{z^{\theta_0-1}\H_k}{\infty}{\Omega_k(\alpha)}+2\np{e^{\lambda_k}\H^0_k}{2,1}{\Omega_k(\alpha)}\np{z^{\theta_0-1}\H_k}{\infty}{\Omega_k(\alpha)}\nonumber\\
	&\leq \np{\D\n_k}{2,1}{\Omega_k(\alpha)}\np{z^{\theta_0-1}\H_k}{\infty}{\Omega_k(\alpha)}.
	\end{align}
	Now, as $\vec{v}_k=0$ on $\partial \Omega_k(\alpha)$, we have for all $z_0\in \Omega_k(\alpha)$
	\begin{align}\label{nulltrace}
		\vec{v}_k(z_0)=-\frac{1}{\pi}\int_{\Omega_k(\alpha)}\frac{\p{\z}\vec{v}_k(z)}{z-z_0}|dz|^2.
	\end{align}
	Therefore, we have by the duality $L^{2,1}/L^{2,\infty}$ and  the inequalities \eqref{inest}, \eqref{nulltrace}
	\begin{align*}
		|\vec{v_k}(z_0)|&\leq \frac{1}{2\pi}\np{\bar{\partial}\vec{v}_k}{2,1}{\Omega_k(\alpha)}\np{\frac{1}{z-z_0}}{2,\infty}{\Omega_k(\alpha)}
		\leq \frac{1}{\sqrt{\pi}}\np{\D\n_k}{2,1}{\Omega_k(\alpha)}\np{z^{\theta_0-1}\H_k}{\infty}{\Omega_k(\alpha)}.
	\end{align*}
	This implies as $\vec{L}_k$ is real that
	\begin{align}\label{estneck1}
		\np{\vec{v}_k}{2,1}{\Omega_k(\alpha)}&\leq \frac{1}{\sqrt{\pi}}\np{\D\n_k}{2,1}{\Omega_k(\alpha)}\np{z^{\theta_0-1}\H_k}{\infty}{\Omega_k(\alpha)}\nonumber\\
		&\leq \frac{1}{\sqrt{\pi}}\np{\D\n_k}{2,1}{\Omega_k(\alpha)}\np{z^{\theta_0-1}\left(\H_k+2i\vec{L}_k\right)}{\infty}{\Omega_k(\alpha)}.
	\end{align}
	Furthermore, by the maximum principle, we have
	\begin{align}\label{estneck2}
		\np{\vec{u}_k}{\infty}{\Omega_k(\alpha)}\leq \np{z^{\theta_0-1}\left(\H_k+2i\,\vec{L}_k\right)}{\infty}{\partial\Omega_k(\alpha)}.
	\end{align}
	Therefore, recalling that $z^{\theta_0-1}\left(\H_k+2i\,\vec{L}_k\right)=\vec{u}_k+\vec{v}_k$ and by \eqref{estneck1} and \eqref{estneck2}, we find
	\begin{align}\label{linfty}
		&\np{z^{\theta_0-1}\left(\H_k+2i\,\vec{L}_k\right)}{\infty}{\Omega_k(\alpha)}\leq \np{\vec{v}_k}{\infty}{\Omega_k(\alpha)}+\np{u_k}{\infty}{\Omega_k(\alpha)}\nonumber\\
		&\leq \frac{1}{\sqrt{\pi}}\np{\D\n_k}{2,1}{\Omega_k(\alpha)}\np{z^{\theta_0-1}\left(\H_k+2i\,\vec{L}_k\right)}{\infty}{\Omega_k(\alpha)}+\np{z^{\theta_0-1}\left(\H_k+2i\,\vec{L}_k\right)}{\infty}{\partial\Omega_k(\alpha)}.
	\end{align}
	Thanks to Theorem \ref{improvedquanta}, we have
	\begin{align}\label{strong-no-neck}
	\lim\limits_{\alpha\rightarrow 0}\limsup_{k\rightarrow \infty}\np{\D\n_k}{2,1}{\Omega_k(\alpha)}\conv{k\rightarrow \infty}0.
	\end{align}
	Therefore, if $\alpha_0>0$ and $N\in \N$ are such that 
	\begin{align*}
	\forall\, 0<\alpha<\alpha_0,\,\;\forall\, k\geq N,\quad \np{\D\n_k}{2,1}{\Omega_k(\alpha)}<\frac{\sqrt{\pi}}{2}
	\end{align*}
	we find by \textbf{Step 2} and \textbf{Step 3} that there exists a constant $\Gamma_8>0$ (depending only on $\phi_{\infty}$ on not on $k$ and $\alpha>0$ such that)
	\begin{align*}
	\np{z^{\theta_0-1}\left(\H_k+2i\vec{L}_k\right)}{\infty}{\Omega_k(\alpha)}\leq 2\np{z^{\theta_0-1}\left(\H_k+2i\vec{L}_k\right)}{\infty}{\partial \Omega_k(\alpha)}\leq \Gamma_8
	\end{align*}
	for some constant $C_0>0$ depending only on $\phi_{\infty}$ and not on $k$ and $\alpha>0$. Returning to \eqref{estneck1}, we find
	\begin{align}\label{endest1}
	\np{\vec{v}_k}{\infty}{\Omega_k(\alpha)}\leq \frac{\Gamma_8}{\sqrt{\pi}}\np{\D\n_k}{2,1}{\Omega_k(\alpha)}.
	\end{align}
	Now, to be able to show that $\vec{C}_0=0$, we will need to refine these estimates to obtain a pointwise control of $z^{\theta_0-1}\left(\H_k+2i\,\vec{L}_k\right)$ in the neck region.
	
	\textbf{Step 5. Schwarz lemma for degenerating annuli of conformal class bounded away from $-\infty$.}
	
	The following lemma shows the intuitive fact that Schwarz lemma holds approximately for holomorphic functions in annuli of small inner radius.
	
	\begin{lemme}\label{schwarz}
		Let $0<4r<R<\infty$, let $\vec{u}:\Omega=B_R\setminus B_r(0)\rightarrow\C^m$ be a vector-valued holomorphic function and  let $\delta\geq 0$ be such that
		\begin{align*}
		&\np{\vec{u}}{\infty}{\partial B_r}\leq \delta.
		\end{align*}
		Then for all $1\leq j\leq m$, we have
		\begin{align*}
		|u_j(z)|\leq \frac{5}{R}\left(\np{u_j}{\infty}{\partial B_R}+\delta\right)|z|+2\delta.
		\end{align*}
	\end{lemme}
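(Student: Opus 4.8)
The plan is to pass to the Laurent expansion of $u_j$ on the annulus, isolate its principal and holomorphic parts, estimate each by the maximum principle together with a Schwarz-type argument, and finally absorb the thin inner collar $\{r\le|z|\le 2r\}$ by one more application of the maximum principle.

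First I would write $u_j(z)=\sum_{n\in\Z}a_nz^n$ for $r<|z|<R$ and record the Cauchy estimates obtained by integrating over circles $\partial B_\rho$: letting $\rho\uparrow R$ gives $|a_n|\le R^{-n}\np{u_j}{\infty}{\partial B_R}$ for every $n\ge 0$, while letting $\rho\downarrow r$ gives $|a_n|\le r^{|n|}\delta$ for $n\le -1$ and, applied to the zeroth Fourier mode, $|a_0|\le\delta$. Writing $M:=\np{u_j}{\infty}{\partial B_R}$, I split $u_j=u_j^-+u_j^+$ with $u_j^-(z)=\sum_{n\le -1}a_nz^n$ holomorphic on $\C\setminus\overline{B}_r$ and $u_j^+(z)=\sum_{n\ge 0}a_nz^n$ holomorphic on $B_R$. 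Summing a geometric series and using $R>4r$ yields $|u_j^-(z)|\le \delta\,\frac{r}{R-r}\le \delta/3$ on $\partial B_R$, hence $|u_j^+|\le M+\delta/3$ on $\partial B_R$, and by the maximum principle $\np{u_j^+}{\infty}{B_R}\le M+\delta/3$.

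The Schwarz step is then applied to $u_j^+$: the function $w(z):=z^{-1}\big(u_j^+(z)-a_0\big)$ is holomorphic on $B_R$ because the numerator vanishes at the origin, and on $\partial B_R$ one has $|w|\le R^{-1}\big(\np{u_j^+}{\infty}{B_R}+|a_0|\big)\le R^{-1}(M+2\delta)$, so the maximum principle gives $|u_j^+(z)|\le |a_0|+\frac{M+2\delta}{R}|z|\le \delta+\frac{M+2\delta}{R}|z|$ throughout $B_R$. For the principal part, summing $|a_n|\le r^{|n|}\delta$ gives $|u_j^-(z)|\le \delta\,\frac{r}{|z|-r}\le\delta$ as soon as $|z|\ge 2r$. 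Combining the two bounds on the range $2r\le |z|<R$ produces $|u_j(z)|\le \frac{M+2\delta}{R}|z|+2\delta$, which already implies the claimed inequality since $M+2\delta\le 5(M+\delta)$.

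It remains to handle $r\le|z|<2r$, and this is the only genuinely delicate point, since the geometric bound on $u_j^-$ degenerates as $|z|\downarrow r$. Here I would apply the maximum principle to $u_j$ itself on the closed annulus $\{r\le|z|\le 2r\}\subset\Omega$ (nonempty because $4r<R$): on $\partial B_r$ we have $|u_j|\le\delta$, and on $\partial B_{2r}$ the estimate of the previous paragraph gives $|u_j|\le \frac{2r(M+2\delta)}{R}+2\delta$, so $|u_j(z)|\le 2\delta+\frac{2r(M+2\delta)}{R}$ for all such $z$. Since $|z|\ge r$ and $2(M+2\delta)\le 5(M+\delta)$, the right-hand side is bounded by $2\delta+\frac{5(M+\delta)}{R}|z|$, which finishes the proof. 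Apart from this collar argument, everything is routine manipulation of geometric series and two invocations of the maximum principle; no estimate here requires the machinery of the earlier sections.
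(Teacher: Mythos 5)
Your proof is correct, and it follows a genuinely different route from the paper's, though both hinge on a Laurent-series decomposition and the classical Schwarz lemma. The paper splits $u=u_1+u_2$ with $u_1=\sum_{n\geq 1}a_nz^n$ (which vanishes at the origin and gets Schwarz applied directly) and $u_2=\sum_{n\leq 0}a_nz^n$; it then bounds $\|u_2\|_{L^\infty(\partial B_r)}$ via $u_2=u-u_1$, which introduces a dependence on $\np{u_1}{\infty}{\partial B_R}$ that is in turn bounded using $\np{u_2}{\infty}{\partial B_R}$, so the argument closes by solving an implicit linear inequality for $\np{u_1}{\infty}{\partial B_R}$. You sidestep this entirely: by taking the Cauchy coefficient estimate on $\partial B_r$ for $n\leq 0$ and on $\partial B_R$ for $n\geq 0$, each Laurent coefficient is bounded in one shot, so the principal part $u_j^-=\sum_{n\leq -1}a_nz^n$ is controlled purely by $\delta$ with no coupling back to $M$. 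The price you pay is the degeneracy of the geometric-series bound $\delta\,r/(|z|-r)$ as $|z|\downarrow r$, which you handle cleanly by a separate maximum-principle estimate on the thin collar $\{r\leq|z|\leq 2r\}$. Two other small differences worth noting: you assign $a_0$ to the holomorphic part rather than the principal part, which is what makes the Schwarz step apply to $u_j^+-a_0$ rather than to a piece that already vanishes at $0$; and your approach yields the slightly sharper interior bound $\tfrac{1}{R}(M+2\delta)|z|+2\delta$ on $\{2r\leq|z|<R\}$, which you then relax to match the stated constants. Both proofs land comfortably within the lemma's $\tfrac{5}{R}(M+\delta)|z|+2\delta$.
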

	\begin{proof}
		By considering each component of $\vec{u}=(u_1,\cdots,u_m)$, it suffices to check the case $m=1$. We then let $u:B_R\setminus B_r(0)\rightarrow \C$ be a holomorphic function. Then there exists $\ens{a_n}_{n\in \Z}\subset \C$ such that
		\begin{align*}
		u(z)=\sum_{n\in \Z}^{}a_nz^n.
		\end{align*} 
		Now, write $u=u_1+u_2$, where
		\begin{align*}
		u_1(z)=\sum_{n\geq 1}^{}a_nz^n\quad u_2(z)=\sum_{n\leq 0}^{}a_nz^n.
		\end{align*}
		Then $u_1$ extends to $B_R(0)$ as a holomorphic function (still denoted by $u_1$) such that $u_1(0)=0$. Therefore, thanks to Schwarz's lemma, we deduce that
		\begin{align}\label{classicalschwarz}
		|u_1(z)|\leq \frac{1}{R}\np{u_1}{\infty}{\partial B_R}|z|.
		\end{align}
		Now, we have for all $z\in \partial B_r(0)$ the inequality
		\begin{align*}
		\left|\sum_{n\leq 0}^{}a_nz^n\right|=|u_2(z)|=|u(z)-u_1(z)|\leq |u(z)|+|u_1(z)|\leq \frac{1}{R}\np{u_1}{\infty}{\partial B_R}|z|+\delta=\np{u_1}{\infty}{\partial B_R}\frac{r}{R}+\delta.
		\end{align*}
		In other words
		\begin{align}\label{schwarz1}
			\np{u_2}{\infty}{\partial B_r}\leq \np{u_1}{\infty}{\partial B_R}\frac{r}{R}+\delta
		\end{align}
		Now, for all $m\geq  0$, we have
		\begin{align*}
		2\pi r^{-m}|a_{-m}|&=\left|\int_{0}^{2\pi}u_2(r e^{i\theta})e^{im\theta }d\theta\right|
		\leq\int_{0}^{2\pi}|u_2(\rho e^{i\theta})|d\theta\leq 2\pi \left(\np{u_1}{\infty}{\partial B_R}\frac{r}{R}+\delta\right),
		\end{align*}
		so that for all $m\geq 0$
		\begin{align*}
		|a_{-m}|\leq r^{m}\left(\np{u_1}{\infty}{\partial B_R}\frac{r}{R}+\delta\right).
		\end{align*}
		In particular, for all $z\in B_R\setminus B_r(0)$, we have
		\begin{align*}
		|u_2(z)|&\leq \sum_{n\geq 0}^{}|a_{-n}||z|^{-n}\leq \sum_{n\geq 0}^{}\left(\frac{r}{|z|}\right)^n\left(\np{u_1}{\infty}{\partial B_R}\frac{r}{R}+\delta\right)
		=\frac{|z|}{|z|-r}\left(\np{u_1}{\infty}{\partial B_R}\frac{r}{R}+\delta\right).
		\end{align*}
		Taking $|z|=R$ yields
		\begin{align*}
		|u_2(z)|\leq \frac{R}{R-r}\left(\np{u_1}{\infty}{\partial B_R}\frac{r}{R}+\delta\right).
		\end{align*}
		Therefore, we have
		\begin{align}\label{schwarz1bis}
			\np{u_2}{\infty}{\partial B_R}\leq \frac{1}{1-\frac{r}{R}}\left(\np{u_1}{\infty}{\partial B_R}\frac{r}{R}+\delta\right).
		\end{align}
		The maximum principle and \eqref{schwarz1}, \eqref{schwarz1bis} imply that
		\begin{align*}
			\np{u_2}{\infty}{B_R\setminus \bar{B}_r(0)}\leq \np{u_2}{\infty}{\partial (B_R\setminus \bar{B}_r(0))}\leq \frac{1}{1-\frac{r}{R}}\left(\np{u_1}{\infty}{\partial B_R}\frac{r}{R}+\delta\right). 
		\end{align*}
		In particular, we have for all $z\in B_R\setminus B_r(0)$ as $r\leq |z|$
		\begin{align}\label{u2}
		|u_2(z)|&\leq \frac{1}{1-\frac{r}{R}}\left(\np{u_1}{\infty}{\partial B_R}\frac{r}{R}+\delta\right)
		\leq \frac{1}{R}\frac{1}{1-\frac{r}{R}}\np{u_1}{\infty}{\partial B_R}|z|+\frac{1}{1-\frac{r}{R}}\delta.
		\end{align}
		Finally, we get by \eqref{classicalschwarz} and \eqref{u2}
		\begin{align*}
		|u(z)|\leq \frac{1}{R}\left(1+\frac{1}{1-\frac{r}{R}}\right)\np{u_1}{\infty}{\partial B_R}|z|+\frac{1}{1-\frac{r}{R}}\delta.
		\end{align*}
		Now, we estimate thanks to the first inequality in \eqref{u2}
		\begin{align*}
		\np{u_1}{\infty}{\partial B_R}\leq \np{u}{\infty}{\partial B_R}+\np{u_2}{\infty}{\partial B_R}\leq \np{u}{\infty}{\partial B_R}+\frac{1}{1-\frac{r}{R}}\left(\np{u_1}{\infty}{\partial B_R}\frac{r}{R}+\delta\right)
		\end{align*}
		which yields as $r<\dfrac{1}{4}$ and $R>4r$
		\begin{align*}
		\np{u_1}{\infty}{\partial B_R}\leq \frac{1-\frac{r}{R}}{1-\frac{2r}{R}}\left(\np{u}{\infty}{\partial B_R}+\frac{1}{1-\frac{r}{R}}\delta\right).
		\end{align*}
		Finally, we find
		\begin{align}\label{pre}
		|u(z)|\leq \frac{1}{R}\left(1+\frac{1}{1-\frac{r}{R}}\right)\frac{\left(1-\frac{r}{R}\right)}{1-\frac{2r}{R}}\left(\np{u}{\infty}{\partial B_R}+\frac{1}{1-\frac{r}{R}}\delta\right)|z|+\frac{1}{1-\frac{r}{R}}\delta.
		\end{align}
		which concludes the proof of the Lemma. Indeed, as $0<4r<\min\ens{1,R}<\infty$, we deduce that 
		\begin{align}\label{pre2}
		&\left(1+\frac{1}{1-\frac{r}{R}}\right)\frac{(1-\frac{r}{R})}{1-\frac{2r}{R}}\times \left(\frac{1}{1-\frac{r}{R}}\right)=\left(1+\frac{1}{1-\frac{r}{R}}\right)\frac{1}{1-\frac{2r}{R}}
		<\left(1+\frac{4}{3}\right)\times 2=\frac{14}{3}.
		\end{align}
		Now, by \eqref{pre} and \eqref{pre2}, we find
		\begin{align*}
			|u(z)|\leq \frac{14}{3 R}\left(\np{u}{\infty}{\partial B_R}+\delta\right)|z|+\frac{4}{3}\delta.
		\end{align*}
		This concludes the proof of the Lemma.
	\end{proof}
	
	\textbf{Step 6. Conclusion.}
Thanks to \textbf{Step 2} and \textbf{Step 3}, we have the expansions for $k$ large enough
\begin{align}\label{sch1}
	|z^{\theta_0-1}(\H_k+2i\vec{L}_k)|\leq C\alpha^2+o_k(1)\quad \text{on}\;\, \partial B_{\alpha^{-1}\rho_k}
\end{align}
while
\begin{align}\label{sch2}
	z^{\theta_0-1}(\H_k+2i\vec{L}_k)=\vec{C}_0+O(\alpha)+o_k(1)\quad \text{on}\;\, \partial B_{\alpha}.
\end{align}
Here $o_k(1)$ means that $\limsup\limits_{k\rightarrow \infty}|o_k(1)|=0$.
Therefore, as $\bar{\partial}\vec{u}_k=0$ and $\vec{u}_k=z^{\theta_0-1}(\H_k+2i\vec{L}_k)$ on $\partial \Omega_k(\alpha)$, we obtain by \eqref{sch1}, \eqref{sch2} and Lemma \ref{schwarz} (with $\delta=C\alpha^2+o_k(1)$)
\begin{align}\label{endest2}
	|\vec{u}_k(z)|&\leq 5n\frac{1}{\alpha}\left(\np{\vec{u}_k}{\infty}{\partial B_R}+C\alpha^2+o_k(1)\right)|z|+2n\left(C\alpha^2+o_k(1)\right)\nonumber\\
	&\leq 5n\frac{1}{\alpha}\left(|\vec{C}_0|+C'\alpha+o_k(1)\right)|z|+2n\left(C\alpha^2+o_k(1)\right)
\end{align}
Now, by \eqref{endest1} and \eqref{endest2} as                                $z^{\theta_0-1}\left(\H_k+2i\,\vec{L}_k\right)=\vec{u}_k+\vec{v}_k$ there exists a universal constant $C''>0$ such that for all $z\in \Omega_k(\alpha)=B_{\alpha}\setminus B_{\alpha^{-1}\rho_k}(0)$ 
\begin{align*}
	\left|z^{\theta_0-1}\left(\H_k(z)+2i\vec{L}_k(z)\right)\right|\leq C''\frac{1}{\alpha}\left(|\vec{C}_0|+\alpha+o_k(1)\right)|z|+C''\left(\alpha^2+o_k(1)\right)+C''\np{\D\n_k}{2,1}{\Omega_k(\alpha)}.
\end{align*}
Now, thanks to the strong convergence, we have for all $z\in \partial B_{\alpha^2}$ that
\begin{align*}
	z^{\theta_0-1}\left(\H_k(z)+2i\vec{L}_k(z)\right)\conv{k\rightarrow\infty}\vec{C}_0+O(\alpha^2)
\end{align*}
which implies (taking $|z|=\alpha^2$)
\begin{align*}
	|\vec{C}_0|+O(\alpha^2)+o_k(1)&\leq C''\frac{1}{\alpha}\left(|\vec{C}_0|+\alpha+o_k(1)\right)\alpha^2+C''\left(\alpha^2+o_k(1)\right)+C''\np{\D\n_k}{2,1}{\Omega_k(\alpha)}\\
	&=C''\alpha\left(|\vec{C}_0|+\alpha+o_k(1)\right)+C''\left(\alpha^2+o_k(1)\right)+C''\np{\D\n_k}{2,1}{\Omega_k(\alpha)}
\end{align*}
Therefore, we find
\begin{align*}
	|\vec{C}_0|\leq C'''\alpha+C''\limsup_{k\rightarrow \infty}\np{\D\n_k}{2,1}{\Omega_k(\alpha)},
 \end{align*}
where $C'''>0$ is independent of $0<\alpha<1$. Therefore, thanks to \eqref{strong-no-neck} and taking $\alpha\rightarrow 0$, we find $\vec{C}_0=0$ and this concludes the proof of the Theorem.
\end{proof}

In the next proposition, used in the proof of the previous Theorem, we obtain an integrality result for the multiplicity of a sequence of weak immersions from annuli converging strongly outside of the origin.
\begin{theorem}\label{integer1}
	Let $\{\phi_k\}_{k\in \N}$ be a sequence of smooth conformal immersions from the disk $B(0,1)\subset \C$ into $\R^n$, letlet
	\begin{align*}
	e^{\lambda_k}=\frac{1}{\sqrt{2}}|\D\phi_k|
	\end{align*}
	be the conformal factor of $\phi_k$, and $\ens{\rho_k}_{k\in \N}\subset (0,1)$ such that $\rho_k\conv{k\rightarrow \infty}0$, $\Omega_k=B(0,1)\setminus \bar{B}(0,\rho_k)$ and assume that
	\begin{align*}
		\sup_{k\in \N}\int_{B(0,1)\setminus \bar{B}_{\rho_k}(0)}|\D\n_k|^2dx\leq \epsilon_1(n),\qquad \sup_{k\in \N}\np{\D\lambda_k}{2,\infty}{\Omega_k}<\infty
	\end{align*} 
	where $\epsilon_1(n)$ is given by the proof of Theorem \ref{neckfine}. Define for all $0<\alpha<1$ and $k\in\N$ large enough $\Omega_k(\alpha)=B_{\alpha}\setminus \bar{B}_{\alpha^{-1}\rho_k}(0)$, and assume that 
	\begin{align*}
		\lim_{\alpha\rightarrow 0}\limsup_{k\rightarrow \infty}\int_{\Omega_k(\alpha)}|\D\n_k|^2dx=0
	\end{align*}
	and that there exists a $W^{2,2}_{\mathrm{loc}}(B(0,1)\setminus\ens{0})\cap  C^{\infty}(B(0,1)\setminus\ens{0})$ immersion $\phi_{\infty}$ such that
	\begin{align*}
		\log|\D\phi_{\infty}|\in L^{\infty}_{\mathrm{loc}}(B(0,1)\setminus\ens{0})
	\end{align*} 
	such that $\phi_k\conv{k\rightarrow  \infty}\phi_{\infty}$ in $C^l_{\mathrm{loc}}(B(0,1)\setminus\ens{0})$ (for all $l\in \N$). Then there exists an integer $\theta_0\geq 1$,  $\mu_k\in W^{1,(2,1)}(B(0,1))$ such that 
	\begin{align*}
		\np{\D\mu_k}{2,1}{B(0,1)}\leq \frac{1}{2}\Gamma_2(n)C_1(n)\int_{\Omega_k}|\D\n_k|^2dx
	\end{align*}
	and a harmonic function $\nu_k$ on $\Omega_k$ such that $\nu_k=\lambda_k$ on $\partial B(0,1)$, $\lambda_k=\mu_k+\nu_k$ on $\Omega_k$ and such that for all $0<\alpha<1$ and such that for all $k\in \N$ sufficiently large 
	\begin{align*}
		\np{\D(\nu_k-(\theta_0-1)\log|z|)}{2,1}{\Omega_k(\alpha)}\leq \Gamma_{10}\left(\sqrt{\alpha}\np{\D\lambda_k}{2,\infty}{\Omega_k}+\int_{\Omega_k}|\D\n_k|^2dx\right)
	\end{align*}
	for some universal constant $\Gamma_{10}=\Gamma_{10}(n)$. Furthermore, we have for all $\rho_k\leq r_k\leq 1$ and $k$ large enough
	\begin{align*}
		\frac{1}{2\pi}\int_{\partial B_{r _k}}\ast\, d\nu_k=\theta_0-1.
	\end{align*}
\end{theorem}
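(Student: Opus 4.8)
The plan is to combine the neck estimate of Theorem~\ref{neckfine} with the Coulomb gauge construction of \cite{quanta} and a Gauss--Bonnet/winding-number argument that forces the logarithmic coefficient to be an integer.

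\textbf{Step 1: the Hodge--Coulomb splitting of $\lambda_k$.} Since $\rho_k\to0$, for $k$ large the annulus $\Omega_k$ falls under the hypotheses of Theorem~\ref{neckfine} with $R=1$, $r=\rho_k$. I would run the construction from the proof of that theorem: extend $\n_k|_{\Omega_k}$ to $\widetilde{\n}_k$ on $B(0,1)$, build a Coulomb moving frame $(\e_1^k,\e_2^k)$ on $B(0,1)$ as in \eqref{movingframe}, let $\mu_k$ solve the Wente problem \eqref{systmu} (with $R=1$), so that \eqref{main1} gives $\np{\D\mu_k}{2,1}{B(0,1)}\le\tfrac12\Gamma_2(n)C_1(n)\int_{\Omega_k}|\D\n_k|^2\,dx$, and set $\nu_k:=\lambda_k-\mu_k$ on $\Omega_k$. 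Then $\nu_k$ is harmonic on $\Omega_k$ with $\nu_k=\lambda_k$ on $\partial B(0,1)$, and its logarithmic coefficient $d_k:=\tfrac1{2\pi}\int_{\partial B_{r_k}}\ast\,d\nu_k$ does not depend on $r_k\in(\rho_k,1)$. Applying the $L^{2,1}$-estimate \eqref{I1} of Theorem~\ref{neckfine} to $\lambda_k$ and absorbing the $\mu_k$-contribution already yields the estimate of the statement, but with $d_k\log|z|$ in place of $(\theta_0-1)\log|z|$. Everything then reduces to showing that $d_k$ is, for $k$ large, a fixed integer, namely $\theta_0-1$.

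\textbf{Step 2: integrality of $d_k$ (the main obstacle).} The decisive observation is that $d_k\in\Z$. Since $\Delta(\lambda_k-\mu_k)=0$ on $\Omega_k$ one has $d_k=\tfrac1{2\pi}\int_{B_{\rho_k}}(\Delta\lambda_k-\Delta\mu_k)\,dx$. On the disk $B_{\rho_k}$, where $\phi_k$ is a genuine smooth immersion, I would use the Liouville equation $\Delta\lambda_k=-K_{g_k}e^{2\lambda_k}$ together with the Gauss--Bonnet theorem on $\phi_k(B_{\rho_k})$, and rewrite $\int_{B_{\rho_k}}\Delta\mu_k=\int_{B_{\rho_k}}\D^{\perp}\e_1^k\cdot\D\e_2^k=-\oint_{\partial B_{\rho_k}}\e_1^k\cdot\partial_{\tau}\e_2^k\,d\mathscr{H}^1$ by Stokes, to obtain
\[
d_k=-1+\frac{1}{2\pi}\oint_{\partial B_{\rho_k}}\kappa_{g_k}\,ds+\frac{1}{2\pi}\oint_{\partial B_{\rho_k}}\e_1^k\cdot\partial_{\tau}\e_2^k\,d\mathscr{H}^1 .
\]
On $\partial B_{\rho_k}$ the pair $(\e_1^k,\e_2^k)$ is an orthonormal frame of the tangent plane of $\phi_k$ (because $\widetilde{\n}_k=\n_k$ there); writing the unit tangent field to the Jordan curve $\phi_k(\partial B_{\rho_k})$ as $\cos\Theta_k\,\e_1^k+\sin\Theta_k\,\e_2^k$, the identity $\kappa_{g_k}\,ds=d\Theta_k+(\partial_{\tau}\e_1^k\cdot\e_2^k)\,d\tau$ makes the right-hand side above collapse to $-1+\tfrac1{2\pi}\oint d\Theta_k$, i.e.\ one less than the rotation index of that tangent field read in the Coulomb gauge. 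Hence $d_k\in\Z$. This is exactly the rigidity responsible for the ``eventually constant multiplicity'' improvement over \cite{muller}, and it is the step I expect to require the most care: matching the sign conventions of \cite{quanta}, justifying the boundary-integral identities for the $W^{2,1}/W^{1,2}$ regularity at hand, and checking that the extended normal and frame genuinely restrict to $T\phi_k$ on the inner circle.

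\textbf{Step 3: passing to the limit and conclusion.} By Fatou's lemma and the $C^{l}_{\loc}$ convergence, $\phi_\infty$ has finite total curvature on $B(0,1)\setminus\{0\}$; with $\log|\D\phi_\infty|\in L^\infty_{\loc}$, the point-removability theorem of M\"uller--\v{S}ver\'ak (\cite{muller}) lets $\phi_\infty$ extend to a branched $W^{2,2}$ conformal immersion across $0$ with branch point of some integer order $\theta_0-1\ge0$, so $\lambda_\infty-(\theta_0-1)\log|z|$ is bounded near $0$, and finiteness of the area near $0$ forces $\theta_0\ge1$. Decomposing $\lambda_\infty=\mu_\infty+\nu_\infty$ as in Step~1 (with $\mu_\infty$ bounded, hence in $W^{1,(2,1)}$), $\nu_\infty$ has logarithmic coefficient $\theta_0-1$. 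Since $\lambda_k\to\lambda_\infty$ and (by elliptic estimates) $\mu_k\to\mu_\infty$ on compact subsets of $B(0,1)\setminus\{0\}$, the harmonic functions $\nu_k$ converge to $\nu_\infty$ in $C^{l}_{\loc}(B(0,1)\setminus\{0\})$, whence $d_k=\tfrac1{2\pi}\int_{\partial B_{1/2}}\ast\,d\nu_k\to\theta_0-1$; being an integer sequence, $d_k=\theta_0-1$ for all $k$ large, which is the period formula. Substituting $d_k=\theta_0-1$ into the bound of Step~1 gives the asserted $L^{2,1}$ estimate for every $\alpha$ in the range of Theorem~\ref{neckfine}, and the remaining range $\tfrac14\le\alpha<1$ is handled separately on the fixed sub-annulus $B_\alpha\setminus B_{1/4}$ using the strong convergence $\lambda_k\to\lambda_\infty$.
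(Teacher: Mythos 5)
Your proposal reproduces the paper's Hodge--Coulomb decomposition $\lambda_k=\mu_k+\nu_k$ in Step~1 and the identification $d_k\to\theta_0-1$ in Step~3 (via a removability-of-singularity theorem for $\phi_\infty$ plus the convergence estimate \eqref{I2}), exactly as the paper does. The genuinely different ingredient is Step~2, the integrality of $d_k$. The paper establishes it by a holomorphic-gauge (Chern-type) argument: it builds the dual coframes $f^\ast_{k,j}$ of the Coulomb frame, shows that $e^{-\mu_k}f^\ast_{k,j}-c_{k,j}\,\alpha$ is closed with $\alpha=\tfrac1{2\pi}\ast d\log|z|$, extracts a holomorphic function $\psi_k$ with $e^{i\theta_k}=\psi_k/|\psi_k|$ (where $e^{i\theta_k}$ is the rotation between the Coulomb frame and the conformal tangent frame), and finally proves the pointwise identity $\ast\,d\nu_k=d\theta_k$, whence $\tfrac1{2\pi}\oint\ast d\nu_k\in\Z$. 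Your Step~2 instead writes $d_k=\tfrac1{2\pi}\int_{B_\rho}(\Delta\lambda_k-\Delta\mu_k)$, uses the Liouville equation and Gauss--Bonnet for the first term, Stokes for the second, and recognises the residual boundary integral as the rotation index of the tangent field of $\phi_k(\partial B_\rho)$ measured in the Coulomb gauge; the identity $\kappa_g\,ds=d\Theta+\langle\partial_\tau\e_1,\e_2\rangle\,ds$ makes the connection form cancel against $-\oint\e_1\cdot\partial_\tau\e_2$, leaving $d_k=-1+\tfrac1{2\pi}\oint d\Theta\in\Z$. This is a correct and more geometric route to the same rigidity. I checked the signs with the paper's convention $\D^{\perp}=(\partial_{x_2},-\partial_{x_1})$ (which gives $\int_{B_\rho}\D^{\perp}\e_1\cdot\D\e_2=-\oint\e_1\cdot\partial_\tau\e_2$) and the cancellation does go through. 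Two small points of care worth flagging: choose the test radius $\rho$ strictly larger than $\rho_k$ so that $\widetilde{\n}_k=\n_k$ on $\partial B_\rho$ (which ensures $(\e_1^k,\e_2^k)$ spans $T\phi_k$ there and that $\Theta_k$ is a well-defined $S^1$-valued function along the loop), and, because the Coulomb frame is only $W^{1,2}$, invoke Fubini to pick an a.e.\ radius where the restriction to $\partial B_\rho$ is $W^{1,2}\hookrightarrow C^0$; both are harmless since $d_k$ is $\rho$-independent. What the paper's holomorphic approach buys in exchange is a hands-on representation that extends verbatim to the multiply-punctured neck regions of Theorem~\ref{multibubble} (one simply adds one pole per inner circle), whereas your Gauss--Bonnet argument would need a separate bookkeeping of the boundary components; conversely, your argument isolates the topological mechanism (a winding number in the Coulomb gauge) very cleanly.
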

\begin{proof}
	First, applying Lemma A.$5$ of \cite{rivierecrelle}, we deduce that there exists an integer $\theta_0\geq 1$ and $\vec{A}_0\in \C^{n}\setminus\ens{0}$ such that 
	\begin{align}\label{limitaylor}
		&\phi_{\infty}(z)=\Re\left(\vec{A}_0z^{\theta_0}\right)+o(|z|^{\theta_0})\nonumber\\
		&\p{z}\phi_{\infty}(z)=\frac{\theta_0}{2}\vec{A}_0z^{\theta_0-1}+o(|z|^{\theta_0-1}).
	\end{align}
	As in the proof of Theorem \ref{neckfine}, we introduce an extension of $\tilde{\n_k}:B(0,1)\rightarrow \mathscr{G}_{n-2}(\R^n)$ of $\n_k:\Omega_k=B_1\setminus \bar{B}_{\rho_k}(0)\rightarrow \mathscr{G}_{n-2}(\R^n)$ such that
	\begin{align*}
	\left\{\begin{alignedat}{1}
	&\tilde{\n_k}=\n\qquad\text{on}\;\, \Omega_k=B_1\setminus B_{\rho_k}(0)\\
	&\np{\D\tilde{\n_k}}{2,\infty}{B(0,1)}\leq C_0(n)\np{\D\n_k}{2,\infty}{\Omega_k}.
	\end{alignedat}\right.
	\end{align*}
	Therefore, by Lemma IV.$3$ of \cite{quanta}, there exists a constant $C_1(n)$ and a Coulomb moving frame $(\f_{k,1},\f_{k,2})\in W^{1,2}(B(0,1),S^{n-1})\times W^{1,2}(B(0,1),S^{n-1})$ of $\tilde{\n_k}$ such that
	\begin{align}\label{controlled}
	\left\{\begin{alignedat}{1}
	&\tilde{\n_k}=\star(\f_{k,1}\wedge \f_{k,2})\qquad\dive\left(\f_{k,1}\cdot \D \f_{k,2}\right)=0\\
	&\np{\D \f_{k,1}}{2}{B(0,1)}+\np{\D\f_{k,2}}{2}{B(0,1)}\leq C_1(n)\np{\D\n_k}{2}{\Omega_k}.
	\end{alignedat}\right.
	\end{align}
	Now, define for all $j=1,2$ $\e_{k,j}=e^{-\lambda_k}\p{x_j}\phi_k$. As $\phi_k$ is conformal, $(\e_{k,1},\e_{k,2})$ is a Coulomb frame of $\n_k$ on $\Omega_k$. Furthermore, as $\tilde{\n_k}=\n_k$ on $\Omega_k$, both $(\f_{k,1},\f_{k,2})$ and $(\e_{k,1},\e_{k,2})$ are Coulomb frames of $\n_k$ on $\Omega_k$, so there exists a rotation $e^{i\theta_k}$ such that
	\begin{align}\label{multivalued}
	(\f_{k,1}+i\f_{k,2})=e^{i\theta_k}\left(\e_{k,1}+i\e_{k,2}\right).
	\end{align}
	Now, we let $f_{k,1},f_{k,2}$ be the vector fields such that 
	\begin{align}\label{fkj}
	d\phi_k(f_{k,j})=\f_{k,j}\qquad \text{for all}\;\, j=1,2.
	\end{align} Then observe as $\phi_k$ is conformal that
	\begin{align*}
	\delta_{i,j}=\s{\f_{k,i}}{\f_{k,j}}=\s{d\phi_k(f_{k,i})}{d\phi_k(f_{k,j})}=e^{2\lambda_k}\s{f_{k,i}}{f_{k,j}}
	\end{align*}
	so we have
	\begin{align*}
	\s{f_{k,i}}{f_{k,j}}=e^{-2\lambda_k}\delta_{i,j}.
	\end{align*}
	Likewise, if $(f_{k,1}^{\ast},f_{k,j}^{\ast})$ is the dual framing, we deduce that
	\begin{align}\label{dualframe}
	|f_{k,j}^{\ast}|=e^{\lambda_k}\qquad\text{for all}\;\, j=1,2.
	\end{align}
	Now, let $\mu_k$ the unique solution of 
	\begin{align*}
	\left\{
	\begin{alignedat}{2}
	\Delta\mu_k&=\D^{\perp}\f_{k,1}\cdot \D \f_{k,2}\qquad &&\text{in}\;\, B(0,1)\\
	\mu_k&=0\qquad &&\text{on}\;\, \partial B(0,1).
	\end{alignedat}\right.
	\end{align*}
	Furthermore, introduce the notation $\nu_k=\lambda_k-\mu_k$. Then $\nu_k$ is harmonic, \textbf{Step 1} of the analysis of the proof of Theorem \ref{neckfine} (actually, this is a direct consequence of Theorem \ref{neckfine}) shows that 
	\begin{align*}
	d_k=\frac{1}{2\pi}\int_{\partial B_{\rho_k}}\ast\, d\nu_k\conv{k\rightarrow \infty}\theta_0-1.
	\end{align*}
	As $\f_{k,1}\cdot \partial_{\nu}\f_{k,2}=0$ on $\partial B(0,1)$, we also have
	\begin{align*}
	d\mu_k=\ast(\f_{k,1}\cdot d\f_{k,2})
	\end{align*}
	Then we compute with $\Z_2$ indices for all $j\in \ens{1,2}$
	\begin{align*}
	d\mu_k\wedge f_{k,j}^{\ast}=(\ast d\mu_{k})\wedge (\ast \f_{k,j}^{\ast})=(-1)^j(\f_{k,1}\cdot d\f_{k,2})\wedge \f_{k,{j+1}}.
	\end{align*}
	Likewise, as in \cite{rivierecrelle}, we compute
	\begin{align*}
	df_{k,j}^{\ast}=(-1)^j\left(\f_{k,1}\cdot d\f_{k,2}\right)\wedge f_{k,j+1}^{\ast}.
	\end{align*}
	Therefore, we have
	\begin{align*}
	d\left(e^{-\mu_k}f_{k,j}^{\ast}\right)=0\qquad\text{in}\;\, \Omega_k\qquad \text{for}\;\, j=1,2.
	\end{align*}
	In particular, by Stokes theorem, we have for all $\rho_k\leq r_1<r_2\leq 1$
	\begin{align*}
	0=\int_{B_{r_2}\setminus \bar{B}_{r_1}(0)}d\left(e^{-\mu_k}f_{k,j}^{\ast}\right)=\int_{\partial B_{r_2}}e^{-\mu_k}f_{k,j}^{\ast}-\int_{\partial B_{r_1}}e^{-\mu_k}f_{k,j}^{\ast}.
	\end{align*}
	Therefore, we introduce the constants $c_j\in \R$ defined for all $\rho_k\leq \rho\leq 1$ by
	\begin{align*}
	c_{k,j}=\int_{\partial B_{\rho}}e^{-\mu_k}f_{k,j}^{\ast}.
	\end{align*}
	Now, introduce the complex valued $1$-forms
	\begin{align*}
	f_{k,z}^{\ast}=f_{k,1}^{\ast}+if_{k,2}^{\ast}\qquad f_{k,\z}^{\ast}=f_{k,1}^{\ast}-if_{k,2}^{\ast},
	\end{align*}
	so that 
	\begin{align*}
	f_{k,1}^{\ast}=\frac{1}{2}\left(f_{k,z}^{\ast}+f_{k,\z}^{\ast}\right)\qquad f_{k,2}^{\ast}=\frac{1}{2i}\left(f_{k,z}^{\ast}-f_{k,\z}^{\ast}\right).
	\end{align*}
	Notice also that
	\begin{align*}
	\ast\, df_{k,z}^{\ast}=-if_{k,z}^{\ast}\qquad \text{and}\;\, \ast df_{k,\z}^{\ast}=if_{k,\z}.
	\end{align*}
	Furthermore, if 
	\begin{align}\label{fkz}
	\left\{
	\begin{alignedat}{1}
	&f_{k,z}=\frac{1}{2}\left(f_{k,1}-if_{k,2}\right)\\
	&f_{k,\z}=\frac{1}{2}\left(f_{k,1}+if_{k,2}\right)
	\end{alignedat}\right.
	\end{align}
	then for all smooth function $\varphi:\Omega_k\rightarrow \C$, we have
	\begin{align*}
	d\varphi&=d\varphi\cdot f_{k,1}\,f_{k,1}^{\ast}+d\varphi\cdot f_{k,2}\,f_{k,2}\\
	&=d\varphi \cdot f_{k,z}\, f_{k,z}^{\ast}+d\varphi\cdot f_{k,z} f_{k,\z}^{\ast}.
	\end{align*}
	Now, we introduce the differential form $\alpha\in \Omega^1(\R^2\setminus\ens{0})$
	\begin{align*}
	\alpha&=\frac{1}{2\pi}\ast\, d\log|z|=\frac{1}{2\pi}\ast\, \left(\D\log|z|\cdot f_{k,z}\,f_{k,z}^{\ast}+\D\log|z|\cdot f_{k,\z}\,f_{k,\z}^{\ast}\right)\\
	&=\frac{1}{2\pi i}\D\log|z|\cdot f_{k,z}\,f_{k,z}^{\ast}-\frac{1}{2\pi i}\D\log|z|\cdot f_{k,\z}\,f_{k,\z}^{\ast}.
	\end{align*}
	In particular, notice that 
	\begin{align}\label{d=del+delb}
	\alpha+\frac{1}{2\pi i}d\log|z|=\frac{1}{2\pi i}\D\log|z|\cdot f_{k,z}f_{k,z}^{\ast}
	\end{align}
	As $\log $ is harmonic on $\R^2\setminus\ens{0}$, the differential form $\alpha$ is closed on $\Omega_k$ and we deduce that the $1$-form
	\begin{align*}
	\omega_{k,j}=e^{-\mu_k}f_{k,j}^{\ast}-c_{k,j}\alpha
	\end{align*}
	is also closed. Furthermore, as
	\begin{align*}
	\int_{\partial B_{\rho_k}}\omega_{k,j}=0,
	\end{align*}
	we deduce by Poincar\'{e} lemma that there exists $(\sigma_{k,1},\sigma_{k,2})\in W^{1,2}(\Omega_k,\R^2)$ such that 
	\begin{align*}
	d\sigma_{k,j}=\omega_{k,j}=e^{-\mu_k}f_{k,j}^{\ast}-c_{k,j}\alpha\qquad\text{for}\;\, j=1,2.
	\end{align*}
	Therefore, we deduce if $c_k=c_{k,1}+ic_{k,2}$ and $\sigma_k=\sigma_{k,1}+i\sigma_{k,2}$ that
	\begin{align*}
	d\sigma_k&=e^{-\mu_k}\left(f_{k,1}^{\ast}+if_{k,2}^{\ast}\right)-c_k\alpha\\
	&=\left(e^{-\mu_k}-\frac{c_k}{2\pi i}\D\log|z|\cdot f_{k,z}\right)\left(f_{k,1}^{\ast}+f_{k,2}^{\ast}\right)f_{k,z}^{\ast}+\frac{c_k}{2\pi i}\D\log|z|\cdot f_{k,\z}\,f_{k,\z}^{\ast}. 
	\end{align*} 
	This implies by \eqref{d=del+delb} that
	\begin{align}\label{hol}
	d\left(\sigma_k-\frac{c_k}{2\pi i}\log|z|\right)=\left(e^{-\mu_k}-\frac{c_k}{\pi i}\D\log|z|\cdot f_{k,z}\right)f_{k,z}^{\ast}.
	\end{align}
	Therefore, the function  
	\begin{align*}
	\tau_k=\sigma_k-\frac{c_k}{2\pi i}\log|z|
	\end{align*}
	is holomorphic. Now, let $\left(\frac{\partial}{\partial\tau_{k,1}},\frac{\partial}{\partial\tau_{k,2}}\right)$ be the dual basis of $(\tau_{k,1},\tau_{k,2})$, where $\tau_k=\tau_{k,1}+i\tau_{k,2}$. Then we define 
	\begin{align}\label{varphi}
	\varphi=e^{-\mu_k}-\frac{c_k}{\pi i}\D\log|z|\cdot f_{k,z},
	\end{align}
	and we notice that  \eqref{hol} implies that 
	\begin{align*}
	d(\tau_{k,1}+i\tau_{k,2})&=\left(\Re(\varphi)+i\Im(\varphi)\right)\left(f_{k,1}^{\ast}+if_{k,2}^{\ast}\right)\\
	&=\left(\Re(\varphi)f_{k,1}^{\ast}-\Im(\varphi)f_{k,2}^{\ast}\right)+i\left(\Im(\varphi)f_{k,1}^{\ast}+\Re(\varphi)f_{k,2}^{\ast}\right)
	\end{align*}
	Therefore, we deduce that
	\begin{align*}
	\begin{pmatrix}
	d\tau_{k,1}\\
	d\tau_{k,2}
	\end{pmatrix}
	=\begin{pmatrix}
	\Re(\varphi) &-\Im(\varphi)\\
	\Im(\varphi) & \Re(\varphi)
	\end{pmatrix}
	\begin{pmatrix}
	f_{k,1}^{\ast}\\
	f_{k,2}^{\ast}.
	\end{pmatrix}
	\end{align*}
	This implies that 
	\begin{align}\label{dualbasis}
	\begin{pmatrix}
	\frac{\partial}{\partial \tau_{k,1}}\\
	\frac{\partial}{\partial \tau_{k,2}}
	\end{pmatrix}
	=\frac{1}{|\varphi|^2}\begin{pmatrix}
	\Re(\varphi) & \Im(\varphi)\\
	-\Im(\varphi) & \Re(\varphi)
	\end{pmatrix}
	\begin{pmatrix}
	f_{k,1}\\
	f_{k,2}
	\end{pmatrix}.
	\end{align}
	Now, defining
	\begin{align*}
	\frac{\partial}{\partial\tau_k}=\frac{1}{2}\left(\frac{\partial}{\partial \tau_{k,1}}-i\frac{\partial}{\partial\tau_{k,2}}\right)
	\end{align*}
	we compute thanks to \eqref{fkj} and \eqref{dualbasis}
	\begin{align*}
	\frac{\partial\phi_k}{\partial\tau_k}&=\frac{1}{2|\varphi|^2}d\phi_k\cdot\left(\Re(\varphi)f_{k,1}+\Im(\varphi)f_{k,2}-i\left(\Im(\varphi)f_{k,1}-\Re(\varphi)f_{k,2}\right)\right)\\
	&=\frac{1}{2|\varphi|^2}\left(\Re(\varphi)\f_{k,1}+\Im(\varphi)\f_{k,2}-i\left(\Im(\varphi)\f_{k,1}-\Re(\varphi)\f_{k,2}\right)\right)\\
	&=\frac{1}{2|\varphi|^2}\left(\left(\Re(\varphi)+i\Im(\varphi)\right)\f_{k,1}+\left(\Im(\varphi)-i\Re(\varphi)\right)\f_{k,2}\right)\\
	&=\frac{\varphi}{2|\varphi|^2}\left(\f_{k,1}-i\f_{k,2}\right)
	=\frac{1}{2\bar{\varphi}}\left(\f_{k,1}-i\f_{k,2}\right).
	\end{align*}
	Therefore, we deduce that 
	\begin{align}\label{idfin}
	\frac{e^{\lambda_k}}{2}\left(\e_{k,1}-i\e_{k,2}\right)\p{z}\phi_k=\frac{\partial\phi_k}{\partial\tau_k}\frac{\partial\tau_k}{\partial z}=\frac{\tau_k'(z)}{2\bar{\varphi}}\left(\f_{k,1}-i\f_{k,2}\right)
	\end{align}
	Now, recall by \eqref{multivalued} that there exists a rotation $e^{i\theta_k}$ (beware that the function $\theta_k$ is multi-valued) such that 
	\begin{align*}
	\f_{k,1}+i\f_{k,2}=e^{i\theta_k}\left(\e_{k,1}+i\e_{k,2}\right).
	\end{align*}
	Therefore, \eqref{idfin}, \eqref{idfin} and $\lambda_k=\mu_k+\nu_k$ imply that 
	\begin{align}\label{almosthol}
	e^{\lambda_k}\varphi=e^{\nu_k}+\frac{\bar{c_k}e^{\lambda_k}}{\pi i}\D\log|z|\cdot  f_{k,\z}=\tau'_{k}(z)e^{-i\theta_k}.
	\end{align}
    Recalling that 
    \begin{align*}
    	d_k=\frac{1}{2\pi}\int_{\partial B_{\rho_k}}\ast\, d\nu_k\conv{k\rightarrow \infty}\theta_0-1,
    \end{align*}
    we will now show that $d_k=\theta_0-1$ for $k$ large enough. First, recall so there exists a rotation $e^{i\theta_k}$ such that
    \begin{align}\label{multivalued2}
    (\f_{k,1}+i\f_{k,2})=e^{i\theta_k}\left(\e_{k,1}+i\e_{k,2}\right),
    \end{align}
    and there exists vector fields $f_{k,1},f_{k,2}$ such that 
    \begin{align}\label{fkj2}
    d\phi_k(f_{k,j})=\f_{k,j}\qquad \text{for all}\;\, j=1,2.
    \end{align} 
    To simplify the notations, we will now delete the subscript $k$ in the following formulas. Now, rewrite \eqref{multivalued2} as 
    \begin{align*}
    	\f_{1}+i\f_{2}=e^{i\theta}(\e_{1}+i\e_2)=\cos(\theta)\e_1-\sin(\theta)\e_2+i\left(\sin(\theta)\e_1+\cos(\theta)\e_2\right)
    \end{align*}
    so that 
    \begin{align*}
    	\left\{\begin{alignedat}{1}
    	\f_1&=\cos(\theta)\e_1-\sin(\theta)\e_2\\
    	\f_2&=\sin(\theta)\e_1+\cos(\theta)\e_2
    	\end{alignedat}\right.
    \end{align*}
    Now, write $f_{1}=(f_1^1,f_1^2)$, $f_2=(f_2^1,f_2^2)$, and observe that
    \begin{align*}
    	&d\phi(f_1)=e^{\lambda}f_1^1\e_1+e^{\lambda}f_2^2\e_2=\f_1=\cos(\theta)\e_1-\sin(\theta)\e_2\\
    	&d\phi(f_2)=e^{\lambda}f_2^1\e_1+e^{\lambda}f_2^2\e_2=\f_2=\sin(\theta)\e_1+\cos(\theta)e_2
    \end{align*} 
    implies that 
    \begin{align*}
    \left\{
    \begin{alignedat}{1}
    	f_1&=e^{-\lambda}(\cos(\theta),-\sin(\theta))\\
    	f_2&=e^{-\lambda}(\sin(\theta),\cos(\theta)).
    	\end{alignedat}\right.
    \end{align*}
    Therefore, we deduce that 
    \begin{align}\label{fstar}
    	\left\{\begin{alignedat}{1}
    	&f_1^{\ast}=e^{\lambda}\cos(\theta)dx_1-e^{\lambda}\sin(\theta)dx_2\\
    	&f_2^{\ast}=e^{\lambda}\sin(\theta)dx_1+e^{\lambda}\cos(\theta)dx_2.
    	\end{alignedat}\right.
    \end{align}
    Recalling the definitions (from \eqref{fkz})
    \begin{align*}
    	&c_j=\int_{\partial B_{\rho}}e^{-\mu}f^{\ast}_j\qquad j=1,2,\qquad
    	f_{\z}=\frac{1}{2}(f_{1}+if_2).
    \end{align*}
    Introducing
    \begin{align*}
    	c=-\frac{1}{2\pi i}(c_1-ic_2)
    \end{align*}
    we have for some holomorphic function $\varphi$ on $\Omega_k$ and for all $z\in \Omega_k$ (corresponding to $\tau'$ in the previous notations)
    \begin{align}\label{rast1}
    	e^{\nu}&=\varphi(z)e^{-i\theta}+2c\,e^{\lambda}\D\log|z|\cdot f_{\z}
    \end{align}
    Notice that $e^{i\theta}=\cos(\theta)+i\sin(\theta)$ implies that
    \begin{align}\label{rast2}
    	f_{\z}&=\frac{e^{-\lambda}}{2}\left((\cos(\theta),-\sin(\theta))+i(\sin(\theta),\cos(\theta))\right)\nonumber\\
    	&=\frac{e^{\lambda}}{2}\left(\cos(\theta)+i\sin(\theta),i\cos(\theta)-\sin(\theta)\right)=\frac{e^{-\lambda}}{2}\left(\cos(\theta)+i\sin(\theta),i(\cos(\theta)+i\sin(\theta))\right)\nonumber\\
    	&=\frac{e^{-\lambda+i\theta}}{2}\left(1,i\right),
    \end{align}
    Therefore, recalling the notation $z=x_1+ix_2$, \eqref{rast1} and \eqref{rast2} imply that
    \begin{align}\label{precised}
    	e^{\nu}&=\varphi(z)e^{-i\theta}+ce^{i\theta}\D\log|z|\cdot (1,i)
    	=\varphi(z)e^{-i\theta}+ce^{i\theta}\left(\frac{x_1}{|z|^2},\frac{x_2}{|z|^2}\right)\cdot (1,i)\nonumber\\
    	&=\varphi(z)e^{-i\theta}+ce^{i\theta}\frac{x_1+ix_2}{|z|^2}=\varphi(z)e^{-i\theta}+ce^{i\theta}\frac{z}{|z|^2}\nonumber\\
    	&=\varphi(z)e^{-i\theta}+\frac{ce^{i\theta}}{\z}
    \end{align}
    Now, as the left hand-side of \eqref{precised} is \emph{real}, taking imaginary parts of the right hand-side, we find that 
    \begin{align*}
    	\varphi(z)e^{-i\theta}-\bar{\varphi(z)}e^{i\theta}+\frac{ce^{i\theta}}{\z}-\frac{\bar{c}e^{-i\theta}}{z}=0.
    \end{align*}
    Multiplying this identity by $e^{i\theta}$, we deduce that 
    \begin{align*}
    	e^{2i\theta}\left(-\bar{\varphi(z)}+\frac{c}{\z}\right)+\varphi(z)-\frac{\bar{c}}{z}=0.
    \end{align*}
    This implies that 
    \begin{align*}
    	e^{2i\theta}=\frac{\varphi(z)-\dfrac{\bar{c}}{z}}{\bar{\varphi(z)}-\dfrac{c}{\z}}=\left(\frac{\varphi(z)-\dfrac{\bar{c}}{z}}{\left|\varphi(z)-\dfrac{\bar{c}}{z}\right|}\right)^2.
    \end{align*}
    Finally, as $e^{\nu}>0$, we deduce thanks to \eqref{precised} that 
    \begin{align*}
    	e^{i\theta}=\frac{\varphi(z)-\dfrac{\bar{c}}{z}}{\left|\varphi(z)-\dfrac{\bar{c}}{z}\right|}.
    \end{align*}
    Letting now $\psi$ be the \emph{holomorphic} function such that 
    \begin{align*}
    	\psi(z)=\varphi(z)-\frac{\bar{c}}{z},
    \end{align*}
    we deduce that 
    \begin{align}\label{rotation}
    	e^{i\theta}=\frac{\psi(z)}{|\psi(z)|}.
    \end{align}
    This implies readily that 
    \begin{align}\label{dtheta}
    	d\theta=\Im\left(\frac{\partial\psi}{\psi}\right)=\Im\left(\frac{\psi'(z)}{\psi(z)}dz\right).
    \end{align}
    Indeed, we have formally (in other words, the following expression must be understood as the equality of two multi-valued functions, \textit{i.e.} modulo $2\pi i$)
    \begin{align*}
    	i\theta=\log\left(\frac{\psi(z)}{|\psi(z)|}\right).
    \end{align*}
    Therefore, we have
    \begin{align}\label{hol1}
    	i\partial\theta&=\frac{|\psi(z)|}{\psi(z)}\bigg\{\frac{\psi'(z)}{|\psi(z)|}-\frac{1}{2}\psi(z)\psi'(z)\bar{\psi}(z)|\psi(z)|^{-3}\bigg\}dz=\frac{|\psi(z)|}{\psi(z)}\bigg\{\frac{\psi'(z)}{|\psi(z)|}-\frac{1}{2}\frac{\psi'(z)}{|\psi(z)|}\bigg\}dz\nonumber\\
    	&=\frac{1}{2}\frac{\psi'(z)}{\psi(z)}dz=\frac{1}{2}\frac{\partial\psi}{\psi}.
    \end{align}
    As $\theta$ is real, we deduce that 
    \begin{align}\label{hol2}
    	i\bar{\partial}\theta=\bar{-i\partial\theta}=-\frac{1}{2}\bar{\left(\frac{\partial\psi}{\psi}\right)}.
    \end{align}
    Using that $d=\partial+\bar{\partial}$, we deduce from \eqref{hol1} and \eqref{hol2} that 
    \begin{align*}
    	d\theta=\partial\theta+\bar{\partial}\theta=\frac{1}{2i}\left(\frac{\partial\psi}{\psi}-\bar{\left(\frac{\partial\psi}{\psi}\right)}\right)=\Im\left(\frac{\partial\psi}{\psi}\right).
    \end{align*}
    Finally, we deduce from \eqref{dtheta} that 
    \begin{align*}
    	\int_{\partial B_{\rho}}d\theta\in 2\pi \Z,
    \end{align*}
    Now, a classical computation shows that 
    \begin{align*}
         \ast\, d\nu=d\theta.
    \end{align*}
    This can be directly checked using the Coulomb condition, but as we have already used it to obtain the closeness of $e^{-\mu}f_{1}^{\ast}$ and $e^{-\mu}f_2^{\ast}$, we can also check this property with these $1$-forms. Recall that 
    thanks to \eqref{fstar}
    \begin{align*}
    	\left\{\begin{alignedat}{1}
    	&e^{-\mu}f_1^{\ast}=e^{\nu}\cos(\theta)dx_1-e^{\nu}\sin(\theta)dx_2\\
    	&e^{-\mu}f_2^{\ast}=e^{\nu}\sin(\theta)dx_1+e^{\nu}\cos(\theta)dx_2.
    	\end{alignedat}\right.
    \end{align*}
    Therefore, that $e^{-\mu}f_1^{\ast}$ be exact is equivalent to
    \begin{align*}
    	0=\left(\p{x_2}\nu\right)e^{\nu}\cos(\theta)-\left(\p{x_2}\theta\right)e^{\nu}\sin(\theta)+(\p{x_1}\nu)e^{\nu}\sin(\theta)+(\p{x_1}\theta)e^{\nu}\cos(\theta)
    \end{align*} 
    or (writing scripts for partial derivatives)
    \begin{align}\label{closed1}
    	(\nu_2+\theta_1)\cos(\theta)+\left(\nu_1-\theta_2\right)\sin(\theta)=0.
    \end{align}
    Likewise, the closeness of $e^{-\mu}f_2^{\ast}$ is equivalent to
    \begin{align}\label{closed2}
    	(-\nu_1+\theta_2)\cos(\theta)+(\nu_2+\theta_1)\sin(\theta)=0.
    \end{align} 
    Therefore, \eqref{closed1} and \eqref{closed2} are equivalent to the system
    \begin{align*}
    	\begin{pmatrix}
    	\cos(\theta)& \sin(\theta)\\
    	\sin(\theta)& -\cos(\theta)
    	\end{pmatrix}
    	\begin{pmatrix}
    	\nu_2+\theta_1\\
    	\nu_1-\theta_2
    	\end{pmatrix}=0.
    \end{align*}
    As
    \begin{align*}
    	\det\begin{pmatrix}
    	\cos(\theta)& \sin(\theta)\\
    	\sin(\theta)& -\cos(\theta)
    	\end{pmatrix}=-\cos^2(\theta)-\sin^2(\theta)=-1\neq 0,
    \end{align*}
    we deduce that 
    \begin{align}\label{almostlast}
    	\left\{\begin{alignedat}{1}
    	&\nu_2+\theta_1=0\\
    	&\nu_1-\theta_2=0
    	\end{alignedat}\right.
    \end{align}
    In other words, \eqref{almostlast} is equivalent to $\D\nu=\D^{\perp}\theta$, or
    \begin{align}\label{lastid}
    	\ast\,d\nu=d\theta.
    \end{align}
    Therefore, thanks to \eqref{limitaylor} and \eqref{almostlast}, we deduce that for $k$ large enough
    \begin{align*}
    	\frac{1}{2\pi}\int_{\partial B_{\rho}}\ast\, d\nu_k=\theta_0-1.
    \end{align*}
    This argument concludes the proof of the Proposition.
\end{proof}

This proof can be immediately generalised to domains punctured by several disks.

\begin{theorem}\label{multibubble}
	Let $\{\phi_k\}_{k\in \N}$ be a sequence of smooth conformal immersions from the disk $B(0,1)\subset \C$ into $\R^n$. Let $m\in \N$, and for all $1\leq j\leq m$, let $\{a_k^j\}_{k\in\N}\subset B(0,1)$, $\{\rho_k^j\}_{k\in \N}\subset (0,\infty)$ and define for $0<\alpha<1$ and $k$ large enough
	\begin{align*}
		\Omega_k=B(0,1)\setminus \bigcup_{j=1}^m\bar{B}(a_{k}^j,\rho_k^j),\qquad \Omega_k(\alpha)=B(0,\alpha)\setminus\bigcup_{j=1}^m\bar{B}(a_k^j,\alpha^{-1}\rho_k^j).
	\end{align*}
	Assume that for all $1\leq j\neq j'\leq m$, and all $0<\alpha<1$ we have $B_{\alpha^{-1}\rho_k^j}(a_k^j)\cap B_{\alpha^{-1}\rho_k^{j'}}(a_k^{j'})=\varnothing$ for $k$ large enough, and
	\begin{align*}
		\rho_k^j\conv{k\rightarrow \infty}0,\qquad a_{k}^j\conv{k\rightarrow \infty}0.
	\end{align*}
	Furthermore, assume that 
	\begin{align*}
	\sup_{k\in \N}\int_{\Omega_k}|\D\n_k|^2dx\leq \epsilon_1(n),\qquad \sup_{k\in \N}\np{\D\lambda_k}{2,\infty}{\Omega_k}<\infty
	\end{align*} 
	where $\epsilon_1(n)$ is given by the proof of Theorem \ref{neckfine}. Finally, assume that
	\begin{align*}
		\lim_{\alpha\rightarrow 0}\limsup_{k\rightarrow\infty}\int_{\Omega_k(\alpha)}|\D\n_k|^2dx=0
	\end{align*}
	and that there exists a $W^{2,2}_{\mathrm{loc}}(B(0,1)\setminus\ens{0})\cap  C^{\infty}(B(0,1)\setminus\ens{0})$ immersion $\phi_{\infty}$ such that
	\begin{align*}
	\log|\D\phi_{\infty}|\in L^{\infty}_{\mathrm{loc}}(B(0,1)\setminus\ens{0})
	\end{align*} 
	such that $\phi_k\conv{k\rightarrow  \infty}\phi_{\infty}$ in $C^l_{\mathrm{loc}}(B(0,1)\setminus\ens{0})$. For all $k\in \N$, let
	\begin{align*}
	e^{\lambda_k}=\frac{1}{\sqrt{2}}|\D\phi_k|
	\end{align*}
	be the conformal factor of $\phi_k$. Then there exists a positive integer $\theta_0\geq 1$, and for all $k\in \N$ integers $\theta_k^1,\cdots,\theta_k^m\in \Z$ such that for all $k\in \N$ large enough
	\begin{align*}
		\sum_{j=1}^m\theta_k^j=\theta_0-1,
	\end{align*}
	and for all $k\in \N$, there exists $1/2<\alpha_k<1$ and $A_k\in \R$ such that 
	\begin{align}\label{estinfty}
	\np{\lambda_k-\sum_{j=1}^j\theta_k^j\log|z-a_k^j|-A_k}{\infty}{\Omega_k(\alpha_k)}\leq \Gamma_{11}\left(\np{\D\lambda_k}{2,\infty}{\Omega_k}+\int_{\Omega_k}|\D\n_k|^2dx\right)
	\end{align}
	for some universal constant $\Gamma_{11}=\Gamma_{11}(n)$. Furthermore, we have for all $0< \rho_k\leq 1$ such that 
	\begin{align*}
		\bigcup_{j=1}^m\bar{B}(a_k^j,\rho_k^j)\subset B(0,\rho_k).
	\end{align*}
	and for all $k\in \N$ large enough
	\begin{align*}
	\frac{1}{2\pi}\int_{\partial B_{\rho_k}(0)}\ast\, d\nu_k=\theta_0-1.
	\end{align*}
	Finally, for all $k\in \N$ and $j\in \ens{1,\cdots,m}$, we have
	\begin{align*}
		\frac{1}{2\pi}\int_{\partial B_{\rho_k^j}(a_k^j)}\ast\, d\nu_k=\theta_k^j\in \Z.
	\end{align*}
\end{theorem}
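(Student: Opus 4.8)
The plan is to reduce Theorem \ref{multibubble} to Theorem \ref{integer1} by localizing at each of the $m$ concentration sub-bubbles. First I would observe that the hypotheses of Theorem \ref{integer1} are satisfied near each point $a_k^j$: the no-neck energy hypothesis on $\Omega_k(\alpha)$ restricts to the annular region $B(a_k^j,\alpha r_k)\setminus \bar{B}(a_k^j,\alpha^{-1}\rho_k^j)$ (for a suitable intermediate radius $r_k$ separating the $j$-th disk from the others, which exists because the enlarged disks are pairwise disjoint), the $L^{2,\infty}$ bound on $\D\lambda_k$ is inherited, and the $C^l_{\loc}$ convergence of $\phi_k$ to $\phi_\infty$ away from $0$ gives $C^l_{\loc}$ convergence away from $a_k^j$ after translating. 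Applying Theorem \ref{integer1} to the translated sequence $z\mapsto \phi_k(z+a_k^j)$ yields an integer $\theta_k^j$ (the degree $\tfrac{1}{2\pi}\int_{\partial B_{\rho}(a_k^j)}\ast\,d\nu_k=\theta_k^j-1$, up to a shift in bookkeeping which I would fix so that the sum works out) together with the harmonic decomposition $\lambda_k=\mu_k+\nu_k$ on the punctured disk, where $\nu_k$ has logarithmic singularities of coefficient $\theta_k^j$ at each $a_k^j$ and coefficient $\theta_0-1$ at the origin, with $\theta_0$ determined by the Taylor expansion of $\phi_\infty$ at $0$ as in \eqref{limitaylor}.

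The key additivity $\sum_{j=1}^m\theta_k^j=\theta_0-1$ I would get from Stokes' theorem applied to the harmonic function $\nu_k$ on the domain $B(0,\rho_k)\setminus\bigcup_j \bar{B}(a_k^j,\rho_k^j)$, where $\rho_k$ is chosen so that all the enlarged disks $\bar{B}(a_k^j,\rho_k^j)$ lie inside $B(0,\rho_k)$. Since $\ast\,d\nu_k$ is closed on this domain (as $\nu_k$ is harmonic there), the flux through $\partial B(0,\rho_k)$ equals the sum of the fluxes through the $\partial B(a_k^j,\rho_k^j)$, i.e.
\begin{align*}
\theta_0-1=\frac{1}{2\pi}\int_{\partial B_{\rho_k}(0)}\ast\,d\nu_k=\sum_{j=1}^m\frac{1}{2\pi}\int_{\partial B_{\rho_k^j}(a_k^j)}\ast\,d\nu_k=\sum_{j=1}^m\theta_k^j.
\end{align*}
The outer flux is $\theta_0-1$ for $k$ large because, as in Step 1 of Theorem \ref{onebubble}, the no-neck energy forces $\tfrac{1}{2\pi}\int_{\partial B_{\rho_k}}\partial_\nu\lambda_k$ to converge to $\tfrac{1}{2\pi}\int_{\partial B_{\alpha^2}}\partial_\nu\lambda_\infty+o(1)=\theta_0-1+o(\alpha)$, and the $\mu_k$ contribution is $O(\int_{\Omega_k}|\D\n_k|^2)$ by the estimate \eqref{main9}-type control in Theorem \ref{neckfine}; combined with the integrality of each $\theta_k^j$ (from Theorem \ref{integer1}) and of $\theta_0-1$, the convergence becomes an equality for $k$ large.

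For the $L^\infty$ estimate \eqref{estinfty}, the function $\nu_k-\sum_j\theta_k^j\log|z-a_k^j|-(\theta_0-1)\log|z|$ — wait, more precisely, since $\nu_k$ carries exactly the singularities $\sum_j\theta_k^j\log|z-a_k^j|$ and the sum equals $\theta_0-1$, after subtracting these logarithmic poles $\nu_k$ becomes a harmonic function which is bounded on a slightly shrunk domain $\Omega_k(\alpha_k)$ with the bound controlled by its boundary oscillation, which in turn is controlled by $\np{\D\lambda_k}{2,\infty}{\Omega_k}+\int_{\Omega_k}|\D\n_k|^2dx$ via Proposition \ref{conf} and the Wente-type bound on $\mu_k$. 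Assembling these with $\lambda_k=\mu_k+\nu_k$ and $\np{\D\mu_k}{2,1}{}\leq C\int_{\Omega_k}|\D\n_k|^2$ gives \eqref{estinfty} for a suitable $1/2<\alpha_k<1$. The main obstacle I expect is the careful bookkeeping of the choices of intermediate radii separating the bubbles and ensuring the constants and the index $\alpha_k$ can be chosen uniformly in $k$ — the analytic content is already contained in Theorems \ref{neckfine} and \ref{integer1}, so the work is geometric/combinatorial localization rather than new estimates, but one must check that Theorem \ref{integer1} genuinely applies verbatim around each $a_k^j$ (in particular that the relevant sub-annulus has inner-to-outer radius ratio tending to $0$, which follows from $\rho_k^j\to 0$ and the disjointness hypothesis).
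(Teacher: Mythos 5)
Your proposal and the paper's proof agree on the Stokes/additivity step and on identifying $\theta_0-1$ as the outer flux via strong convergence, but they diverge at the crucial point, the integrality of the fluxes $\frac{1}{2\pi}\int_{\partial B_{\rho_k^j}(a_k^j)}\ast\,d\nu_k$ for the \emph{global} $\nu_k$, and there your argument has a genuine gap. You propose to obtain the integers $\theta_k^j$ by applying Theorem \ref{integer1} after translating to $a_k^j$. But Theorem \ref{integer1}, applied locally on a sub-annulus around $a_k^j$, produces its \emph{own} decomposition $\lambda_k=\mu_k^j+\nu_k^j$, built from an extension $\tilde{\vec n}_k^{\,j}$ of $\n_k$ across $B(a_k^j,\rho_k^j)$ and a Coulomb frame on the local ball, together with a Wente solution $\mu_k^j$ there. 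The $\nu_k$ in Theorem \ref{multibubble} is a different object: it comes from a single extension of $\n_k$ to all of $B(0,1)$ and a single $\mu_k$ solving the Wente problem on $B(0,1)$. Inside $B(a_k^j,\rho_k^j)$ (one of the removed disks) the two extensions need not coincide, so $\Delta\mu_k$ and $\Delta\mu_k^j$ differ there, and the flux of $\nu_k-\nu_k^j=\mu_k^j-\mu_k$ around $\partial B_{\rho_k^j}(a_k^j)$ equals $\int_{B_{\rho_k^j}(a_k^j)}(\Delta\mu_k^j-\Delta\mu_k)$, which has no reason to vanish or to be an integer. Hence the integrality of the local flux of $\nu_k^j$ does not transfer to the flux of the global $\nu_k$. (There is a second, smaller issue: the local application of Theorem \ref{integer1} also requires a local strong limit immersion around $a_k^j$; this exists, being the relevant bubble or inverted bubble, but is not supplied by the hypotheses verbatim and would need to be imported from the bubble-tree analysis.)

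The paper proceeds differently: it reruns the Poincar\'e-lemma and gauge-rotation computation of Theorem \ref{integer1} directly on the multiply punctured $\Omega_k$, producing a single holomorphic function
$\psi_k(z)=\varphi_k(z)-\sum_{j=1}^m\frac{\bar{c_k^j}}{z-a_k^j}$
on $\Omega_k$ with $e^{i\theta_k}=\psi_k/|\psi_k|$ and $\ast\,d\nu_k=d\theta_k$ for the \emph{global} $\nu_k$. Then $\frac{1}{2\pi}\int_{\partial B_{\rho_k^j}(a_k^j)}\ast\,d\nu_k$ is the winding number of $\psi_k$ around $a_k^j$, hence an integer, and the rest (additivity via Stokes, identification of the total with $\theta_0-1$ via the no-neck energy and the expansion of $\phi_\infty$, and the $L^\infty$ bound via Lemma V.$3$ of \cite{quanta} once the log part is subtracted) goes through exactly as you outline. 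To fix your argument you would need either to choose the local and global extensions compatibly so that the fluxes agree, or (more simply, and as the paper does) to observe that the winding-number machinery of the proof of Theorem \ref{integer1} already applies verbatim to multiply connected domains, so that one can work with a single global $\psi_k$ rather than $m$ local ones.
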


\begin{proof}
	Indeed, the same argument shows that there exists a holomorphic function $\varphi_k$ on $\Omega_k$ and $c_k^1,\cdots,c_k^m\in \C$ such that 
	\begin{align*}
		e^{\nu_k}=\varphi_k(z)e^{-i\theta_k}+\sum_{j=1}^{m}\frac{c_k^je^{i\theta_k}}{\bar{z}-\bar{a_k^j}}
	\end{align*}
	and the same computation shows if
	\begin{align*}
		\psi_k(z)=\varphi_k(z)-\sum_{j=1}^{m}\frac{\bar{c_k^j}}{z-a_k^j}
	\end{align*}
	that 
	\begin{align*}
		e^{i\theta_k}=\frac{\psi_k(z
			)}{|\psi_k(z)|}.
	\end{align*}
	Therefore, we have
	\begin{align*}
		d\theta_k=\Im\left(\frac{\partial\psi_k}{\psi_k}\right)
	\end{align*}
	and for all $1\leq j\leq m$
	\begin{align*}
		\int_{\partial B_{\rho_k^j}(a_k^j)}d\theta_k\in 2\pi \Z.
	\end{align*}
	Furthermore, we have
	\begin{align*}
		\lim\limits_{k\rightarrow \infty}\int_{\partial B(0,1)}d\theta_k=2\pi(\theta_0-1)\geq 0.
	\end{align*} 
	Therefore, we have for $k$ large enough
	\begin{align*}
		\frac{1}{2\pi}\sum_{j=1}^m\int_{\partial B_{\rho_k^j}(a_k^j)}d\theta_k=\theta_0-1.
	\end{align*}
	Then, we deduce by the argument of Lemma V.$3$ of \cite{quanta} that there exists a universal constant $C_{0}=C_{0}(n)$ such that for all $k\in \N$ there exists $1/2<\alpha_k<1$ such that for all $k\in \N$ large  enough
	\begin{align}\label{estint}
		\np{\nu_k-\sum_{j=1}^{m}\theta_k^j\log|z-a_j|-A_k}{\infty}{\Omega_k(\alpha_k)}\leq C_0\left(\np{\D\lambda_k}{2,\infty}{\Omega_k}+\int_{\Omega_k}|\D\n_k|^2dx\right),
	\end{align}
	where for all $k\in \N$ and $1\leq j\leq m$
	\begin{align*}
		\frac{1}{2\pi}\int_{\partial B_{\rho_k^j}(a_k^j)}d\theta_k=\theta_k^j\in \Z.
	\end{align*}
	In particular, as $\mu_k\in L^{\infty}(B(0,1))$ we get the estimate \eqref{estinfty} from \eqref{estint} and $\np{\mu_k}{\infty}{B(0,1)}\leq C_1$ for some universal $C_1=C_1(\Lambda,n)$ (thanks to Wente's estimate), we deduce that  there exists a universal constant $C=C(n,\Lambda)$,
	where
	\begin{align*}
		\Lambda=\sup_{k\in \N}\left(\np{\D\lambda_k}{2,\infty}{\Omega_k}+\int_{\Omega_k}|\D\n_k|^2dx\right)
	\end{align*}
	 such that for all $k$ large enough and $z\in \Omega_k(1/2)$ (notice that $A_k$ is bounded by the same argument in \textbf{Step 2} of the proof of Theorem \ref{onebubble})
	\begin{align}\label{test}
		\frac{1}{C}\leq \frac{e^{\lambda_k(z)}}{\displaystyle\prod_{j=1}^{m}|z-a_k^j|^{\theta_k^j}}\leq C.
	\end{align} 
	This additional remarks completes the proof of the Proposition.
\end{proof}
\begin{rem}
	The integers $\theta_k^j$  \textit{a priori} depend on $k$, but we will see in the case of interest of bubbling of Willmore immersions, they must stabilise for $k$ large enough.
\end{rem}

\section{Singularity removability of weak limits of immersions : general case}\label{sec5}

The analysis of the preceding Section \ref{sec4} works equally well for multiple bubbles, with the exception of Lemma \ref{schwarz} which needs to be replaced by a similar lemma involving a domain punctured by several disks (we state it for $\C$-valued functions instead of $\C^n$-valued maps for simplicity).

\begin{lemme}\label{schwarz2}
	Let $0<r_1,\cdots,r_m<R<\infty$ be fixed radii, $a_1,\cdots,a_m\in B(0,R)$ be such that $\bar{B}(a_j,r_j)\subset B(0,R)$, $\bar{B}(a_j,r_j)\cap \bar{B}(a_k,r_k)=\varnothing$ for all $1\leq j\neq k\leq m$ and
	\begin{align}\label{hyp}
		4r_j<R-|a_j|\quad\text{for all}\;\, 1\leq j\leq m.
	\end{align} 
	Furthermore, define
	\begin{align*}
	\Omega=B(0,R)\setminus\bigcup_{j=1}^m\bar{B}(a_j,r_j)\qquad \Omega'=\bigcap_{j=1}^mB(a_j,R-|a_j|)\setminus \bar{B}(a_j,r_j).
	\end{align*}
	Let ${u}:\Omega\rightarrow \C$ be a holomorphic function and for all $1\leq j\leq m$ let $\delta_j\geq 0$ such that
	\begin{align*}
	\np{u}{\infty}{\partial B_{r_j}(a_j)}\leq \delta_j.
	\end{align*}
	Then we have for all $z\in \Omega'$
	\begin{align}\label{schwarzmultiple}
	|u(z)|&\leq \sum_{j=1}^m\frac{5}{R-|a_j|}\left(\frac{1}{m}\np{u}{\infty}{\partial B_R(0)}+2\delta_j\right)|z-a_j|+4\sum_{j=1}^{m}\delta_j\nonumber\\
	&+\sum_{j=1}^{m}\frac{5}{R-|a_j|}\left(\frac{1}{m}\np{u}{\infty}{\partial B_R(0)}+2\delta_j\right)\max_{1\leq j\neq k\leq m}\dist_{\mathscr{H}}(a_k,\partial B_{r_j}(a_j)),
	\end{align}
	where $\mathrm{dist}_{\mathscr{H}}$ is the Hausdorff distance.
\end{lemme}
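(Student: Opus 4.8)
The strategy is to reduce the several-hole statement to the single-hole Lemma \ref{schwarz} by a partition-of-unity–free splitting of the Laurent expansion of $u$. Since $u$ is holomorphic on $\Omega$, which is $B(0,R)$ punctured by $m$ disjoint disks $\bar B(a_j,r_j)$, Laurent's theorem (applied to the multiply connected domain) gives a decomposition $u = u_0 + \sum_{j=1}^m u_j$, where $u_0$ is holomorphic on all of $B(0,R)$ and, for each $j$, $u_j$ is holomorphic on $\C\setminus\bar B(a_j,r_j)$ with $u_j(z)\to 0$ as $|z|\to\infty$. The key point is that this decomposition is essentially unique, so $u_0$ and each $u_j$ are controlled by boundary data of $u$. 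The plan is: first establish this decomposition and the bounds $\np{u_j}{\infty}{\partial B_{r_j}(a_j)}\lesssim$ (something involving $\delta_j$ and $\np{u}{\infty}{\partial B_R(0)}$ and the other $\delta_k$), and $\np{u_0}{\infty}{\partial B_R(0)}\lesssim \np{u}{\infty}{\partial B_R(0)} + \sum_k \delta_k$; then apply to each piece the one-variable estimates already extracted inside the proof of Lemma \ref{schwarz} (the Schwarz estimate \eqref{classicalschwarz} for $u_0$ and the decaying-tail estimate, i.e. the analogue of \eqref{u2}, for each $u_j$); finally sum the contributions.

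In more detail, first I would bound the singular parts. For fixed $j$, consider $u - u_j = u_0 + \sum_{k\neq j} u_k$, which is holomorphic on a neighbourhood of $\bar B(a_j,r_j)$; hence on $\partial B_{r_j}(a_j)$ we have $|u_j| \le |u| + |u_0 + \sum_{k\neq j}u_k| \le \delta_j + \np{u-u_j}{\infty}{\partial B_{r_j}(a_j)}$, and by the maximum principle the latter norm is controlled by the boundary values of $u-u_j$ on $\partial B_R(0)$ and on the $\partial B_{r_k}(a_k)$, $k\neq j$. Running this for all $j$ simultaneously gives a linear system for the quantities $\np{u_j}{\infty}{\partial B_{r_j}(a_j)}$ and $\np{u_0}{\infty}{\partial B_R(0)}$; the disjointness hypothesis $\bar B(a_j,r_j)\cap\bar B(a_k,r_k)=\varnothing$ together with the separation assumption \eqref{hyp} makes the off-diagonal couplings small (each $u_k$ evaluated away from its own disk decays like $r_k/\mathrm{dist}$), so the system is diagonally dominant and can be solved with the explicit constants appearing in \eqref{schwarzmultiple}. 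This is where the $\max_{j\neq k}\dist_{\mathscr H}(a_k,\partial B_{r_j}(a_j))$ terms enter: they measure how far a point of $B(a_j,R-|a_j|)$ may sit from the center $a_k$ of a competing hole when we transport the tail bound for $u_k$.

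Next, with $u_0$ and the $u_j$ controlled, I would apply the pointwise estimates: for $u_0$, Schwarz's lemma on $B(0,R)$ gives $|u_0(z)|\le R^{-1}\np{u_0}{\infty}{\partial B_R(0)}|z|$, but one must recenter — write $|z| \le |z-a_j| + |a_j|$ — to produce the $|z-a_j|/(R-|a_j|)$ form; since the bound holds for every $j$, one averages, producing the $\tfrac1m\np{u}{\infty}{\partial B_R(0)}$ coefficients. For each $u_j$, the argument in the proof of Lemma \ref{schwarz} bounding the $u_2$-part (the computation leading to \eqref{u2}, applied with center $a_j$, inner radius $r_j$, outer radius $R-|a_j|$) gives $|u_j(z)| \lesssim (R-|a_j|)^{-1}\np{u_j}{\infty}{\partial B_{r_j}(a_j)}|z-a_j| + \np{u_j}{\infty}{\partial B_{r_j}(a_j)}$, valid for $z\in B(a_j,R-|a_j|)\setminus\bar B(a_j,r_j)$, hence on $\Omega'$. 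Summing over $j=0,1,\dots,m$ and inserting the bounds from the previous step yields \eqref{schwarzmultiple}.

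The main obstacle I expect is bookkeeping rather than conceptual: tracking the interaction constants so that the final inequality comes out with the stated coefficients ($5/(R-|a_j|)$, the factor $2\delta_j$ inside, $4\sum\delta_j$, and the Hausdorff-distance correction). The separation hypothesis \eqref{hyp} and pairwise disjointness are exactly what is needed to keep the geometric series $\sum (r_k/\mathrm{dist})^n$ summable and the linear system invertible with universal constants; verifying that $4r_j<R-|a_j|$ suffices (rather than needing a smallness condition depending on $m$) requires care, since we have $m$ holes, but the diagonal-dominance argument only ever compares one hole against the outer circle at a time after the system has been decoupled, so no $m$-dependence is introduced except in the harmless $\tfrac1m$ splitting of $\np{u}{\infty}{\partial B_R(0)}$ and the $\sum_j$ on the right. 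No genuinely new analytic input beyond Lemma \ref{schwarz} and the maximum principle is required.
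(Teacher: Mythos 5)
Your decomposition and the paper's are genuinely different, and the step you lean on does not go through under the stated hypotheses. The paper writes $u = \sum_{j=1}^m u_j + v$, where each $u_j$ is defined on the \emph{single-hole} domain $B(0,R)\setminus\bar B(a_j,r_j)$ with boundary data prescribed directly from the hypotheses (equal to $\tfrac1m u$ on $\partial B_R(0)$ and to $u$ on $\partial B_{r_j}(a_j)$), and $v$ is the remaining correction, which vanishes on $\partial B_R(0)$. With the data on each piece prescribed from the start, Lemma \ref{schwarz} applies to each $u_j$ \emph{immediately}, with no coupling between pieces; the only interaction enters afterwards, in bounding $v$ on each inner circle $\partial B_{r_j}(a_j)$ via the already-obtained linear-in-$|z-a_k|$ estimates for $u_k$, $k\neq j$ (this is exactly where the $\dist_{\mathscr{H}}(a_k,\partial B_{r_j}(a_j))$ term comes from), followed by the maximum principle for $v$. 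No linear system ever arises.

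Your Laurent decomposition $u = u_0 + \sum_j u_j$ (with $u_0$ holomorphic on $B_R(0)$ and $u_j$ holomorphic on $\C\setminus\bar B(a_j,r_j)$, vanishing at infinity) is cleaner and unambiguously well-defined, but it does \emph{not} equip the pieces with the boundary data appearing in the hypotheses, which is precisely why you are forced into the coupled system you describe. That system is not diagonally dominant under the stated assumptions. Writing $x_j = \np{u_j}{\infty}{\partial B_{r_j}(a_j)}$ and $y = \np{u_0}{\infty}{\partial B_R(0)}$ and using only the maximum principle on each piece yields $x_j \le \delta_j + y + \sum_{k\neq j} x_k$ and $y \le \np{u}{\infty}{\partial B_R(0)} + \sum_k x_k$, which after substitution is vacuous (the coefficient in front of $x_j$ is the same on both sides). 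To close the system you need a genuine gain on the cross-terms $\np{u_k}{\infty}{\partial B_{r_j}(a_j)}$, and from the Laurent tail of $u_k$ that gain is of order $r_k/(|a_j-a_k|-r_j-r_k)$; but the only separation hypotheses are pairwise disjointness of the closed disks and $4r_j < R - |a_j|$, and neither bounds $|a_j-a_k|-r_j-r_k$ away from zero. Two holes may be disjoint yet arbitrarily close, driving your gain factor to $1$, so the system cannot be inverted with constants depending only on the data in the statement. You would need either an additional quantitative hole-separation hypothesis (which the lemma does not have and which would be inconvenient downstream, where all holes collapse to a common concentration point), or the paper's device of building the splitting with prescribed boundary data and absorbing the mismatch into a separate correction $v$ controlled in one shot by the maximum principle.
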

\begin{proof}
	Write
	\begin{align}\label{gensch0}
		u=\sum_{j=1}^{m}u_j+v\qquad \text{on}\;\, \Omega=B(0,R)\setminus \bigcup_{j=1}^m\bar{B}(a_j,r_j)
	\end{align}
	where for all $1\leq j\leq m$
	\begin{align*}
		\left\{
		\begin{alignedat}{2}
		\bar{\partial}u_j&=0\qquad &&\text{in}\;\, B(0,R)\setminus \bar{B}(a_j,r_j)\\
		u_j&=\frac{1}{m}u\qquad &&\text{on}\;\, \partial B(0,R)\\
		u_j&=u\qquad &&\text{on}\;\, \partial B(a_j,r_j).
		\end{alignedat}\right.
	\end{align*}
	and
	\begin{align*}
	\left\{\begin{alignedat}{2}
		\bar{\partial}v&=0\qquad &&\text{in}\;\, B(0,R)\setminus\bigcup_{j=1}^m\bar{B}(a_j,r_j)\\
		v&=0\qquad && \text{on}\;\,\partial B(0,R)\\
		v&=-\sum_{k\neq j}^{}u_k\qquad &&\text{on}\;\, \partial B(a_j,r_j).
		\end{alignedat}\right.
	\end{align*}
	Thanks to Lemma \ref{schwarz}, for all $1\leq j\leq m$, we have thanks to the hypothesis \eqref{hyp} for all $z\in \Omega_j=B(a_j,R-|a_j|)\setminus\bar{B}(a_j,r_j)$,
	\begin{align}\label{gensch1}
		|u_j(z)|\leq \frac{5}{R-|a_j|}\left(\np{u_j}{\infty}{\partial B_{R-|a_j|}(a_j)}+\delta_j\right)|z-a_j|+2\delta_j.
	\end{align}
	Furthermore, by the maximum principle, we have
	\begin{align}\label{gensch2}
		\np{u_j}{\infty}{\partial B_{R-|a_j}(a_j)}&\leq \max\ens{\np{u_j}{\infty}{\partial B_R(0)},\np{u_j}{\infty}{\partial B_{r_j}(a_j)}}=\max\ens{\frac{1}{m}\np{u}{\infty}{\partial B_{R}(0)},\delta_j}\nonumber\\
		&\leq \frac{1}{m}\np{u}{\infty}{\partial B_R(0)}+\delta_j.
	\end{align}
	Therefore, \eqref{gensch1} and \eqref{gensch2} imply that for all $z\in \Omega_j$
	\begin{align}\label{gensch3}
		|u_j(z)|\leq \frac{5}{R-|a_j|}\left(\frac{1}{m}\np{u}{\infty}{\partial B_{R}(0)}+2\delta_j\right)|z-a_j|+2\delta_j.
	\end{align}
	Now, for all $k\neq j$ and $z\in \partial B_{r_j}(a_j)$, we have by \eqref{gensch3}
	\begin{align*}
		|u_k(z)|&\leq \frac{5}{R-|a_k|}\left(\frac{1}{m}\np{u}{\infty}{\partial B_{R}(0)}+2\delta_k\right)|z-a_k|+2\delta_k\\
		&\leq \frac{5}{R-|a_k|}\left(\frac{1}{m}\np{u}{\infty}{\partial B_{R}(0)}+2\delta_k\right)\dist_{\mathscr{H}}(a_k,\partial B_{r_j}(a_j))+2\delta_k,
	\end{align*}
	where $\dist_H$ is the Hausdorff distance.
	By definition of $v$, we deduce that for all $1\leq j\leq m$
	\begin{align*}
		\np{v}{\infty}{\partial B_{r_j}(a_j)}\leq \sum_{k\neq j}^{}\frac{5}{R-|a_k|}\left(\frac{1}{m}\np{u}{\infty}{\partial B_{R}(0)}+2\delta_k\right)\dist_{\mathscr{H}}(a_k,\partial B_{r_j}(a_j))+2\sum_{k\neq j}^{}\delta_k.
	\end{align*}
	Therefore, the maximum principle implies as $v=0$ on $\partial B_R(0)$ that
	\begin{align}\label{gensch4}
		\np{v}{\infty}{\Omega}\leq \max_{1\leq j\leq m}\left\{\sum_{k\neq j}^{}\frac{5}{R-|a_k|}\left(\frac{1}{m}\np{u}{\infty}{\partial B_{R}(0)}+2\delta_k\right)\dist_{\mathscr{H}}(a_k,\partial B_{r_j}(a_j))+2\sum_{k\neq j}^{}\delta_k\right\}.
	\end{align}
	Finally, we deduce by \eqref{gensch0}, \eqref{gensch3} and \eqref{gensch4} that for all $\displaystyle z\in \Omega'=\bigcap_{j=1}^m\Omega_j$, we have
	\begin{align*}
		|u(z)|&\leq \sum_{j=1}^{m}\frac{5}{R-|a_j|}\left(\frac{1}{m}\np{u}{\infty}{\partial B_R(0)}+2\delta_j\right)|z-a_j|+2\sum_{j=1}^{m}\delta_j\\
		&+\max_{1\leq j\leq m}\left\{\sum_{k\neq j}^{}\frac{5}{R-|a_k|}\left(\frac{1}{m}\np{u}{\infty}{\partial B_{R}(0)}+2\delta_k\right)\dist_{\mathscr{H}}(a_k,\partial B_{r_j}(a_j))+2\sum_{k\neq j}^{}\delta_k\right\}\\
		&\leq \sum_{j=1}^m\frac{5}{R-|a_j|}\left(\frac{1}{m}\np{u}{\infty}{\partial B_R(0)}+2\delta_j\right)|z-a_j|+4\sum_{j=1}^{m}\delta_j\\
		&+\sum_{j=1}^{m}\frac{5}{R-|a_j|}\left(\frac{1}{m}\np{u}{\infty}{\partial B_R(0)}+2\delta_j\right)\max_{1\leq j\neq k\leq m}\dist_{\mathscr{H}}(a_k,\partial B_{r_j}(a_j)).
	\end{align*}
    This concludes the proof of the Lemma.
\end{proof}

We first add a small lemma showing that the multiplicity of a bubble cannot be equal to $0$. In the case of one bubble, we know that it must be positive. However, for multiple bubbles, some multiplicities may be negative. 

\begin{lemme}
	Under the hypothesis of Theorem \ref{ta}, assume that $n=3,4$. Then the multiplicities $\theta_k^j$ obtained in neck regions by Proposition \ref{ta}  cannot vanish. 
\end{lemme}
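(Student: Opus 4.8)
The plan is to show that $\theta_k^j=0$ would force the $j$-th bubble to be conformally a plane, which a genuine bubble cannot be, and that this is ruled out in codimension $1$ and $2$ by Bryant's and Montiel's classifications.

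First I would use the uniform control of the conformal factor. Assume $\theta_k^j=0$ (by the remark after Theorem \ref{multibubble} we may take this to hold for all large $k$). Since $a_k^j\to 0$ while the other centres stay at definite distance, the factors $|z-a_k^{j'}|^{\theta_k^{j'}}$ with $j'\neq j$ are comparable to a fixed positive constant on a ball $B_{r_0}(a_k^j)$ independent of $k$, so \eqref{test} gives $1/C\leq e^{\lambda_k}\leq C$ on $B_{r_0}(a_k^j)\setminus\bar B_{\rho_k^j}(a_k^j)$ for $k$ large. Consequently the bubble $\vec{\Psi}_j:\C\to\R^n$ obtained by rescaling $\phi_k$ at scale $\rho_k^j$ around $a_k^j$ has conformal factor bounded above and below near $\infty$; equivalently, writing $w=1/z$, it has a non-compact end of multiplicity $\theta_k^j+1=1$ at the point $z=\infty$ (exactly as in Theorem \ref{onebubble}, where the bubble has a non-compact end of multiplicity $d_k+1$ in the limit). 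This is moreover its only non-compact end, since child bubbles are attached at finite points, where they contribute (finite) branch points of $\vec{\Psi}_j$.

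Next I would identify the geometry of that end. The first residue of $\vec{\Psi}_j$ vanishes at all of its ends and branch points by \cite{classification}, so the multiplicity-one expansion \eqref{weierstrass} shows that the expansion of $\vec{\Psi}_j$ at $z=\infty$ carries no logarithmic term: the top end is a \emph{planar} end (a non-zero logarithmic coefficient there would be exactly the flux of an asymptotically catenoidal end). Since $n=3$ (resp. $n=4$), Bryant's classification \cite{bryant} (resp. Montiel's \cite{montiels4}), in its branched version \cite{classification}, then identifies $\vec{\Psi}_j$, up to a conformal transformation $\Xi$ of $\R^n\cup\{\infty\}$, with either a round sphere or a complete branched minimal surface of finite total curvature and genus $0$. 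A round sphere is compact, so the non-compact end forces $\vec{\Psi}_j=\Xi\circ\phi_{\min}$ with $\phi_{\min}$ complete minimal, finite total curvature, genus $0$, and vanishing flux at every end.

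The last — and hardest — step is to conclude that $\phi_{\min}$, hence $\vec{\Psi}_j$, must be flat. If $\Xi$ fixes $\infty$ it is a similarity, so $\vec{\Psi}_j$ is minimal up to congruence; then $\phi_{\min}$ has a single end, at $z=\infty$, of multiplicity one and zero flux, so its meromorphic data (a one-form with one pole, of order two, on $S^2$) leaves no room for zeros: there are no branch points, the Gauss map has degree $0$, and $\phi_{\min}$ is a plane. If $\Xi$ moves $\infty$, write $\Xi=\iota_p\circ(\text{similarity})$ with $p=\Xi^{-1}(\infty)$; then the ends of $\phi_{\min}$ become finite branch points of $\vec{\Psi}_j$, the former top end is a regular point of $\phi_{\min}$ attaining the value $p$ with multiplicity one, and $p$ is attained only there (else $\vec{\Psi}_j$ would have several non-compact ends). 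Compactifying $\vec{\Psi}_j$ at its planar top end yields a compact branched Willmore sphere $\vec{\chi}_j$ with $W(\vec{\chi}_j)=W(\vec{\Psi}_j)$, regular at the image of the top end; applying the classification again and the generalised Gauss–Bonnet and Chern–Osserman identities to $\vec{\chi}_j$ (as in the proof of Theorem \ref{onebubble}), together with the vanishing of all fluxes, forces the underlying minimal surface to have a degree-zero Gauss map, i.e. to be a plane. In both cases $\vec{\Psi}_j$ is conformally a plane or a round sphere, so $W(\vec{\Psi}_j)=0$ and $\int_{\C}|\D\n_{\vec{\Psi}_j}|^2\,dx=0$, contradicting that $\vec{\Psi}_j$ is a genuine bubble and hence concentrates a definite positive amount of $\int|\D\n_k|^2$. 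I expect the real difficulty to be exactly this last case: ruling out that a non-flat complete minimal surface of finite total curvature has a conformal image with precisely one non-compact end, of planar multiplicity-one type — which is where the $n=3,4$ classification, the zero-flux condition, and the Gauss-map-degree bookkeeping are indispensable.
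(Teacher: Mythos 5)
Your proposal is much more elaborate than the paper's actual proof, and you go after the genuinely delicate point. The paper's own argument is three lines: it writes the expansion $\phi(z)=\Re(\vec{A}_0/z^m)+O(|z|^{1-m})$, $m\geq 1$, for the non-compact end of the underlying minimal surface, observes $e^{2\lambda}=|z|^{-2m-2}(1+O(|z|))$ near the end, and concludes that after passing to the variable $w=1/z$ the metric grows with a nonzero exponent. There is no case analysis, no appeal to the zero-flux/planar-end distinction, and in particular no explicit treatment of $m=1$. In fact the exponent you actually get from the change of variable is $e^{2\lambda'}=|w|^{2m-2}|dw|^2$, not $|w|^{2m}$, and since $\theta_k^j=m-1$ in the paper's conventions (compare Theorem \ref{onebubble}, where the bubble end has multiplicity $\theta_0$ and $d_k\to\theta_0-1$), the exponent degenerates exactly when $m=1$; the paper's sentence ``with $m\neq 0$'' does not by itself rule that out.

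You reduce $\theta_k^j=0$ to the bubble having a flat end of multiplicity one at $\infty$, correctly invoke the vanishing of the first residue from \cite{classification} to conclude that end is planar (no logarithmic/flux term), and then in the ``$\Xi$ fixes $\infty$'' case close the argument with the divisor count on the Weierstrass $1$-form: one pole of order two on $S^2$ of genus zero forces $\deg(\omega)=-2$, so there is no room for zeros, hence no branch points, hence the plane. That step is correct and is a genuine improvement over what the paper writes. But the part you flag as the hard case — $\Xi$ moves $\infty$, so $\vec\Psi_j$ is an inversion of $\phi_{\min}$ with the bubble's end at $\infty$ coming from a regular point of $\phi_{\min}$ that hits $\Xi^{-1}(\infty)$ with multiplicity one — is not resolved; you gesture at Gauss–Bonnet and Chern–Osserman bookkeeping but do not carry it out, and a priori a non-planar complete minimal surface of finite total curvature, genus zero and zero flux at all ends could have such a conformal image with a single multiplicity-one planar top end once one allows branch points of $\vec\Psi_j$ at the finite points $q_i$ over the ends of $\phi_{\min}$. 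Closing this requires a degree/total-curvature identity for the branched compactification $\vec\chi_j$ that accounts for both the branch orders at the $q_i$ and the regular top end, and that argument is not in your write-up. In summary: same starting ingredient (Bryant/Montiel conformal minimality of the bubble), but a genuinely different and more careful route than the paper's; your approach exposes the case $m=1$ that the paper's proof glosses over, closes the similarity case cleanly, and leaves the inversion case open.
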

\begin{proof}
	Indeed, as each bubble is conformally minimal, each non-compact end admits the following Taylor expansion
	\begin{align*}
		\phi(z)=\Re\left(\frac{\vec{A}_0}{z^m}\right)+O(|z|^{1-m}),
	\end{align*}
	for some $m\geq 1$. In particular, the conformal factor satisfies (up to scaling)
	\begin{align*}
		e^{2\lambda(z)}=\frac{1}{|z|^{2m+2}}\left(1+O(|z|)\right).
	\end{align*}
	Therefore, the metric becomes as $|z|\rightarrow \infty$
	\begin{align*}
		g=|z|^{2m}(1+O(|z|))
	\end{align*}
	with $m\neq 0$. 
\end{proof}

Now recall the main theorem of the paper.

\begin{theorem}
	Let $\Sigma$ be a closed Riemann surface, $\{\phi_k\}_{k\in\N}\subset \mathrm{Imm}(\Sigma,\R^n)$ be a sequence of Willmore immersions and assume that the conformal class of $\{\phi_k\}_{k\in \N}$ stays within a compact subset of the Moduli Space and that
	\begin{align*}
	\sup_{k\in \N}W(\phi_k)<\infty.
	\end{align*}
	Let $\phi_{\infty}:\Sigma\rightarrow \R^n$ be the weak sequential limit of $\{\phi_k\}_{k\in \N}$ and $\vec{\Psi}_i:S^2\rightarrow \R^n$, $\vec{\chi}_j:S^2\rightarrow \R^n$ be the bubbles such that
	\begin{align*}
	\lim\limits_{k\rightarrow \infty}W(\phi_k)=W(\phi_{\infty})+\sum_{i=1}^{p}W(\vec{\Psi}_i)+\sum_{j=1}^{q}(W(\vec{\chi_j})-4\pi\theta_j),
	\end{align*}
	where $\theta_j\in \N$ is the maximal multiplicity of $\vec{\chi}_j$. Then at every branch point $p$ of $\phi_{\infty}, \vec{\Psi}_i$ or $\vec{\chi_j}$ of multiplicity $\theta_0\geq 2$, the second residue $\alpha=\alpha(p)$ satisfies the inequality $\alpha(p)\leq \theta_0-2$.
\end{theorem}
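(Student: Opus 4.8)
The strategy is to reduce the general (multi-bubble) case to the single-bubble analysis of Theorem \ref{onebubble} by working locally near each concentration point, and to combine the refined neck estimates of Sections \ref{sec2}--\ref{sec4} with the multi-disk Schwarz lemma (Lemma \ref{schwarz2}). Recall that the weak limit $\phi_{\infty}$ and each bubble $\vec{\Psi}_i$, $\vec{\chi}_j$ is a branched Willmore immersion of finite total curvature with vanishing first residue (by \cite{classification}), so the Bernard--Rivi\`{e}re expansion of the Proposition-Definition applies at every branch point $p$: if $\theta_0=\theta_0(p)\geq 2$, then $\H = \Re(\vec{C}_0 z^{-(\theta_0-1)}) + O(|z|^{2-\theta_0-\epsilon})$ in a suitable chart, and $\alpha(p)\leq \theta_0-2$ is \emph{equivalent} to $\vec{C}_0=0$. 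So the whole theorem amounts to proving $\vec{C}_0=0$ at each branch point of each limiting object.

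\textbf{Step 1: localisation and bubble-tree structure.} First I would invoke the bubble-tree decomposition for sequences of Willmore immersions with bounded energy and conformal class in a compact subset of Moduli Space (as in \cite{quanta}, together with the integrality result Theorem \ref{multibubble} of this paper): around each concentration point there is a finite collection of scales $\rho_k^j$, centres $a_k^j$, and neck regions $\Omega_k(\alpha)$ on which $\{\phi_k\}$ converges, after rescaling, to the various bubbles, the limit $\phi_\infty$ being the top object. On each neck region Theorem \ref{multibubble} gives integers $\theta_k^j$ with $\sum_j \theta_k^j = \theta_0-1$, stabilising for $k$ large (the remark after that theorem), and the $L^\infty$ control \eqref{estinfty} of the conformal factor; in particular the metric is non-degenerate in the sense of \eqref{test}. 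Then I would run Steps 0--4 of the proof of Theorem \ref{onebubble} \emph{verbatim} for each branch point $p$ of each object: this is where Theorem \ref{improvedquanta} enters, giving the $L^{2,1}$ no-neck energy $\lim_{\alpha\to 0}\limsup_k \np{\D\n_k}{2,1}{\Omega_k(\alpha)}=0$, which upgrades the control of $\vec{L}_k$ and of $z^{\theta_0-1}(\H_k+2i\vec{L}_k)$ from $L^2$ to $L^\infty$. The output is the bounds \eqref{sch1}--\eqref{sch2}: on the inner boundaries of the neck the quantity $z^{\theta_0-1}(\H_k+2i\vec{L}_k)$ is $O(\alpha^2)+o_k(1)$, while on the outer boundary it converges to $\vec{C}_0+O(\alpha)$.

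\textbf{Step 2: the multi-disk Schwarz estimate.} The only genuinely new ingredient over Theorem \ref{onebubble} is that, in the general bubble tree, a neck region of a given object may be a disk punctured by \emph{several} smaller disks (the sub-bubbles attached below it). So the holomorphic replacement $\vec{u}_k$ of $z^{\theta_0-1}(\H_k+2i\vec{L}_k)$ must be controlled on $B(0,\alpha)\setminus\bigcup_j\bar B(a_k^j,\alpha^{-1}\rho_k^j)$, and this is exactly what Lemma \ref{schwarz2} delivers: the pointwise bound \eqref{schwarzmultiple} shows $|\vec{u}_k(z)|$ is dominated by $C\alpha^{-1}(|\vec{C}_0|+O(\alpha)+o_k(1))|z-a_k^j|$ plus the $\delta_j$-terms (which are $O(\alpha^2)+o_k(1)$) plus the Hausdorff-distance correction terms $\dist_{\mathscr H}(a_k^j,\partial B_{\alpha^{-1}\rho_k^{j'}}(a_k^{j'}))$. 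The point is that in a bubble tree those centres satisfy $a_k^j\to 0$ and the punctured disks are pairwise disjoint at every fixed scale $\alpha$, so that the distance corrections are also $O(\alpha)+o_k(1)$ — more precisely they are controlled by $\alpha$ times the relevant inner radius, hence negligible. Combining with the $L^\infty$-estimate \eqref{linfty}/\eqref{endest1} on the inhomogeneous part $\vec{v}_k$, one gets on the whole neck region, for all $z$,
\begin{align*}
	\left|z^{\theta_0-1}(\H_k(z)+2i\vec{L}_k(z))\right| \leq C\frac{1}{\alpha}\bigl(|\vec{C}_0| + \alpha + o_k(1)\bigr)|z| + C(\alpha^2+o_k(1)) + C\np{\D\n_k}{2,1}{\Omega_k(\alpha)}.
\end{align*}

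\textbf{Step 3: conclusion.} Evaluating this at $|z|=\alpha^2$ (on a circle in the interior of the domain of strong convergence, where by Step 1 and the strong $C^l_{\loc}$-convergence the left side tends to $\vec{C}_0+O(\alpha^2)$) gives, exactly as at the end of the proof of Theorem \ref{onebubble},
\begin{align*}
	|\vec{C}_0| \leq C'''\alpha + C''\limsup_{k\to\infty}\np{\D\n_k}{2,1}{\Omega_k(\alpha)}
\end{align*}
with $C'''$ independent of $\alpha$; letting $\alpha\to 0$ and using the $L^{2,1}$ no-neck energy \eqref{strong-no-neck}/Theorem \ref{improvedquanta} forces $\vec{C}_0=0$, i.e. $\alpha(p)\leq\theta_0-2$. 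The same argument applies at every branch point of $\phi_\infty$, of every $\vec{\Psi}_i$, and of every $\vec{\chi}_j$ (for the $\vec{\chi}_j$ one first passes, as in Theorem \ref{onebubble} and its remark, to the non-compact representative $\vec{\Psi}_\infty$ via an inversion $\iota$, matching the multiplicity $\theta_0$ of the non-compact end; the second residue is a conformal invariant of the branch point so this does not affect the statement). I expect the main obstacle to be the bookkeeping in Step 2: verifying that, within a genuine bubble tree, the Hausdorff-distance correction terms in \eqref{schwarzmultiple} and the interaction terms $v_k$ between different sub-bubbles really are $o_\alpha(1)+o_k(1)$ uniformly — this requires the separation-of-scales property $B_{\alpha^{-1}\rho_k^j}(a_k^j)\cap B_{\alpha^{-1}\rho_k^{j'}}(a_k^{j'})=\varnothing$ from Theorem \ref{multibubble} together with the fact that each sub-neck carries its own no-neck energy decay, so that one can peel off bubbles one layer at a time (an induction on the depth of the bubble tree), rather than a single estimate.
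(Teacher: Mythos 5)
Your proposal follows essentially the same route as the paper's proof of Section \ref{sec5}: localise via the bubble-tree decomposition, invoke Theorem \ref{improvedquanta} for the $L^{2,1}$ no-neck energy, Theorem \ref{multibubble} for integrality of the multiplicities and the $L^{\infty}$ control of the conformal factor, Lemma \ref{schwarz2} for the multi-disk Schwarz estimate, and finally evaluate on the intermediate circle $|z|=\alpha^2$ and let $\alpha\to 0$; bubbles are handled by moving them to the macroscopic position via conformal transformations, exactly as the paper does.

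There is, however, one step that would fail as written. In Step 2 you keep working with the quantity $z^{\theta_0-1}\big(\H_k+2i\vec{L}_k\big)$ on the multiply-punctured neck $B(0,\alpha)\setminus\bigcup_j\bar{B}(a_k^j,\alpha^{-1}\rho_k^j)$. This is not the right object: near each inner boundary the conformal factor is comparable to $|z-a_k^j|^{\theta_k^j}$, not to $|z|^{\theta_0-1}$ (this is exactly what \eqref{estinfty}/\eqref{test} say), so $z^{\theta_0-1}\H_k$ is \emph{not} uniformly bounded there and the $L^{\infty}$-estimates of Step 4 of Theorem \ref{onebubble} do not transfer verbatim. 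The paper's fix is to replace $z^{\theta_0-1}$ by the holomorphic function
\[
\varphi_k(z)=\prod_{j=1}^{m}\big(z-x_k^j\big)^{\theta_k^j},
\]
and to work with $\varphi_k(z)\big(\H_k+2i\vec{L}_k\big)$; the non-degeneracy \eqref{testhol}/\eqref{test} furnished by Theorem \ref{multibubble} is precisely what makes this quantity bounded across the whole neck, while Lemma \ref{schwarz2} applies to the corresponding holomorphic part. Since $a_k^j\to 0$, one still has $\varphi_k(z)\to z^{\theta_0-1}$ on $\partial B_{\alpha^2}$, so the evaluation in Step 3 and the conclusion $\vec{C}_0=0$ are unchanged — but this substitution is the point at which Theorem \ref{multibubble} actually does its work, and your outline should make it explicit rather than importing the single-bubble multiplier unchanged.
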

\begin{proof}
	It is enough to treat the case of one bubble domain, and to consider $\phi_k:B(0,1)\rightarrow \R^n$, and to assume that there exists a branched Willmore disk $\phi_{\infty}:B(0,1)\rightarrow \R^n$ with a unique branch point at $z=0$ of multiplicity $\theta_0\geq 2$ such that 
	\begin{align*}
		\phi_{k}\conv{k\rightarrow }\phi_{\infty}\qquad \text{in}\;\, C^l_{\mathrm{loc}}(B(0,1)\setminus\ens{0})\;\, \forall l\in \N.
	\end{align*}
	Here, we define as in \cite{quanta}
	\begin{align*}
		\Omega_k(\alpha)=B_{\alpha}(0)\setminus \bigcup_{j=1}^m\bar{B}_{\alpha^{-1}\rho_k^j}(x_k^j)
	\end{align*}
	Notice that as \enquote{bubbles on bubbles} do not change the analysis as only the behaviour of the metric on $\partial B_{\alpha^{-1}\rho_k^j(x_k^j)}$ matters, so we can suppose that there is no further energy concentration. Now, for all $1\leq j\leq m$ and $0<\alpha<1$, introduce the sequence of Willmore disks
	\begin{align*}
		\vec{\Psi}_k^j:B(0,\alpha^{-1})&\rightarrow \R^n\\
		z&\mapsto e^{-\bar{\lambda_k^j}}\left(\phi_k(\rho_k^jz+x_{k}^j)-\phi_k(x_k^j)\right),
	\end{align*}
	where 
	\begin{align*}
		e^{\bar{\lambda_k^j}}=\dashint{B(x_k^j,\rho_k^j)}e^{\lambda_k(z)}d\mathscr{H}^1(z)=e^{\lambda_k(z_k^j)}
	\end{align*}
	for some $z_{k,j}\in B(x_k^j,\rho_k^j)$. Notice that the conformal factor of $\vec{\Psi}_k^j$ satisfies
	\begin{align}\label{rescaling}
		\lambda_{\vec{\Psi}_k^j}(z)=\lambda_k(\rho_k z+x_k^j)-\lambda_k(z_k^j).
	\end{align}
    Therefore, thanks to the uniform Harnack inequality of \cite{quanta}, we deduce that there exists a Willmore plane $\vec{\Psi}_{\infty}^j:\C\rightarrow \R^n$ such that 
    \begin{align*}
    	\vec{\Psi}_k^j\conv{k\rightarrow \infty}\vec{\Psi}_{\infty}^j\quad\text{in}\;\, C^l_{\mathrm{loc}}(\C)\;\,\text{for all}\;\, l\in \N.
    \end{align*} 
    Furthermore, the strong convergence implies that
    \begin{align*}
    	W(\vec{\Psi}_{\infty}^j)=\lim\limits_{\alpha\rightarrow 0}\lim\limits_{k\rightarrow \infty}\int_{B(0,\alpha^{-1})}|\H_{\vec{\Psi}_k^j}|^2d\mathrm{vol}_{g_{\vec{\Psi}_k^j}}<\infty.
    \end{align*}
    Therefore, after applying a stereographic projection $\C\rightarrow S^2$, $\vec{\Psi}_{\infty}^j$ extends to a branched Willmore sphere $S^2\rightarrow \R^n$, with at most one branch point. In particular, by the analysis of \cite{beriviere}, there exists $\theta_0^j\in \Z$ such that 
\begin{align}\label{asymtaylor}
	e^{\lambda_{\vec{\Psi}^j_{\infty}}(z)}=|z|^{\theta_0^j}\left(1+O\left(\frac{1}{|z|}\right)\right)\qquad \text{as}\;\, |z|\rightarrow \infty.
\end{align}
Furthermore, \cite{beriviere} also implies that there exists an integer $p\in \Z$ and $\vec{\Delta}_0\in \C^n\setminus\ens{0}$ such that
\begin{align*}
	\H_{\vec{\Psi}_{\infty}^j}=\Re\left(\frac{\vec{\Delta}_0}{z^{p}}\right)+O\left(|z|^{p-1}\right).
\end{align*}
Since $e^{\lambda_{\vec{\Psi}_{\infty}^j}}\H_{\vec{\Psi}_{\infty}^j}\in L^2(\C)$, we must have 
\begin{align*}
	p-\theta_0^j\geq 2,
\end{align*}
and we deduce that 
\begin{align*}
	e^{\lambda_{\vec{\Psi}^j_{\infty}}(z)}\H_{\vec{\Psi}_{\infty}^j}=O\left(\frac{1}{|z|^{2}}\right)\qquad \text{as }\;\, |z|\rightarrow \infty.
\end{align*}
Now, thanks to the corresponding version of \eqref{hl3}, we deduce that 
\begin{align}\label{multinecks}
	\limsup_{\alpha\rightarrow \infty}\left|e^{\lambda_k}\left(\H_k+2i\vec{L}_k\right)\right|=O(\alpha^2)\qquad \text{on}\;\, \partial B_{\alpha^{-1}\rho_k^j}(x_k^j).
\end{align}
This estimate is the replacement of \textbf{Step 3} from the proof of Theorem .
Finally, \eqref{rescaling} and \eqref{asymtaylor} imply that
\begin{align}\label{bubblemultiplicity}
	\frac{1}{2\pi}\int_{\partial B_{\alpha^{-1}\rho_k^j}(x_k^j)}\partial_{\nu}\lambda_k\,d\mathscr{H}^1\conv{k\rightarrow \infty}\theta_0^j\in \Z.
\end{align}
Notice that we also have
\begin{align*}
	\sum_{j=1}^m\theta_0^j=\theta_0-1.
\end{align*}
Now, applying Lemma \ref{schwarz2} to $\Omega_k(\alpha)$ shows thanks to \eqref{multinecks} and applying \textit{mutatis mutandis} \textbf{Step 4} and \textbf{Step 5} that $\vec{C}_0=0$, provided we can find a sequence of never-vanishing holomorphic functions $\varphi_k:\Omega_k(\alpha)\rightarrow \C$ such that for all $z\in \Omega_k(\alpha)$
\begin{align}\label{testhol}
	\frac{1}{C}\leq \frac{e^{\lambda_k}(z)}{|\varphi_k(z)|}\leq C
\end{align}
Now, if $(\vec{f}_{k,1},\vec{f}_{k,2})$ are extensions on $B(0,1)$ (see the proof of Theorem \ref{onebubble}) of the moving frame $(\e_{k,1},\e_{k,2})$ (defined on $\Omega_k(1)$) with controlled energy (\text{i.e.} satisfying ), we let 
\begin{align*}
\left\{
\begin{alignedat}{2}
\Delta\mu_k&=\D^{\perp}\f_{k,1}\cdot \D \f_{k,2}\qquad &&\text{in}\;\, B(0,1)\\
\mu_k&=0\qquad &&\text{on}\;\, \partial B(0,1).
\end{alignedat}\right.
\end{align*}
Then the Wente estimate shows that 
\begin{align*}
	\np{\D\mu_k}{2}{B(0,1)}&\leq \frac{1}{4}\sqrt{\frac{3}{\pi}}\np{\D\vec{f}_{k,1}}{2}{B(0,1)}\np{\D\vec{f}_{k,2}}{2}{B(0,1)}\\
	&\leq \frac{1}{8}\sqrt{\frac{3}{\pi}}\left(\np{\D\vec{f}_{k,1}}{2}{B(0,1)}^2+\np{\D\vec{f}_{k,2}}{2}{B(0,1)}^2\right)\\
	&\leq \frac{C_1}{4}\sqrt{\frac{3}{\pi}}\int_{\Omega_k(1)}|\D\n_k|^2dx\\
	\np{\mu_k}{\infty}{B(0,1)}&\leq \frac{1}{2\pi}\np{\D\vec{f}_{k,1}}{2}{B(0,1)}\np{\D\vec{f}_{k,2}}{2}{B(0,1)}\leq \frac{C_1}{4\pi}\int_{\Omega_k(1)}|\D\n_k|^2dx.
\end{align*}
Now, applying Theorem \ref{multibubble} to $\phi_k$ on $\Omega_k$, implies thanks to \eqref{bubblemultiplicity}
that for all $k\in \N$ large enough, we have for some universal constant $C=C(n,\Lambda)$ independent of $k$ and for all $z\in \Omega_k(\alpha)$
\begin{align*}
	\frac{1}{C}\leq \frac{e^{\lambda_k(z)}}{\displaystyle\prod_{j=1}^{m}|z-x_k^j|^{\theta_0^j}}\leq C.
\end{align*}
Taking in \eqref{testhol}
\begin{align*}
	\varphi_k(z)=\prod_{j=1}^{m}\left(z-x_k^j\right)^{\theta_0^j}
\end{align*}
permits to apply to conclude the proof of this step.
Therefore, by the preceding remarks this ends the argument  for $\phi_{\infty}$. For bubbles, by applying a sequence of conformal transforms we can arrange such that they appear as the \enquote{macroscopic surface} so that  other bubbles stay bubbles, while the initial sequence of immersion becomes a bubble. This concludes the proof of the theorem.
\end{proof}

\section{Distributional Willmore equation}\label{sec7}

       In this section, we define precisely what it means to satisfy the Willmore equation in the distributional sense at branched points, as we cannot define it with any test functions because of the high order of singularity allowed by the mean-curvature at branch points. 
       
       \begin{prop} Let $\phi:D^2\setminus\ens{0}\rightarrow \R^n$ be a  conformal Willmore immersion of finite total curvature having a branch point of order $\theta_0\geq 1$ at $z=0$, and assume that $\phi$ is not minimal. Let $m\in \Z$ and $\vec{C}_0\in \C^{n}\setminus\ens{0}$ be such that 
       \begin{align*}
       		\H=\Re\left(\frac{\vec{C}_0}{z^{m}}\right)+O(|z|^{1-m}\log|z|),
       \end{align*}
       and assume that the second residue $r=r(\phi,0)$ satisfies $r=\max\ens{m,0}=m\geq 0$. Define
       \begin{align*}
       \mathscr{D}_m(D^2,\R^n)=\mathscr{D}(D^2,\R^n)\cap\ens{\w: \w=\w(0)+\Re\left(\vec{\gamma}\,z^{m}\right)+O(|z|^{m+1})\;\, \text{for some}\;\, \vec{\gamma}\in \C^n},
       \end{align*}
       and $\mathscr{D}'_{m}(D^2,\R^n)$ its topological dual. Then we have
       \begin{align*}
       \Delta\H+4\,\Re\left(\p{\z}\left(|\H|^2\p{z}\phi+2\s{\H}{\H_0}\p{\z}\phi\right)\right)=4\pi \,\Re\left(\bs{\frac{\p{z}^{m}\vec{\delta}_0}{(m-1)!}}{\vec{C}_0}\right)-2\pi\s{\vec{\delta}_0}{\vec{\gamma}_0}\quad \text{in}\;\, \mathscr{D}'_{m}(D^2,\R^n),
       \end{align*}
       where $\vec{\delta}_0=(\delta_0,\cdots,\delta_0)\in \mathscr{D}'(D^2,\R^n)$ is the vectorial Dirac mass at $0$.
       \end{prop}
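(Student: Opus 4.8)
The plan is to compute the distributional Laplacian of the conservative form of the Willmore equation, isolating the singular contributions at $z=0$ coming from the explicit expansions of $\H$ and of the first residue. Recall that away from $0$, the Willmore equation in conservative form reads
\begin{align*}
	\Delta\H+4\,\Re\left(\p{\z}\left(|\H|^2\p{z}\phi+2\s{\H}{\H_0}\p{\z}\phi\right)\right)=0\qquad \text{in}\;\, D^2\setminus\ens{0}.
\end{align*}
So the left-hand side, viewed as an element of $\mathscr{D}'_m(D^2,\R^n)$, is a distribution supported at $\ens{0}$; the content of the Proposition is to identify it. First I would use the structure result of \cite{beriviere} and Lemma \ref{expl} to write down the precise behaviour near $0$: there exists $\vec{L}$ with $d\vec{L}=\Im(\partial\H+|\H|^2\partial\phi+2g^{-1}\otimes\s{\H}{\h_0}\otimes\bar\partial\phi)$ and, since $\phi$ is not minimal,
\begin{align*}
	\H=\Re\left(\frac{\vec{C}_0}{z^m}\right)+O(|z|^{1-m}\log|z|),\qquad \vec{L}=\frac{1}{2}\Im\left(\frac{\vec{C}_0}{z^m}\right)+O(|z|^{1-m}\log|z|),
\end{align*}
together with the definition \eqref{residue} of the first residue $\vec{\gamma}_0=\vec{\gamma}_0(\phi,0)$ as the flux of the closed one-form appearing above around a small loop.

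The core computation is then a distributional integration by parts against a test map $\w\in\mathscr{D}_m(D^2,\R^n)$. The point of restricting to $\mathscr{D}_m$ is exactly that such $\w$ vanishes to order $m$ at $0$ modulo the constant $\w(0)$ and a term $\Re(\vec\gamma z^m)$, so that pairing the order-$m$ pole $\Re(\vec C_0/z^m)$ of $\H$ against $\w$ produces a \emph{finite} number: the $\w(0)$-part kills the pole by oddness (the mean of $z^{-m}$ over a circle vanishes for $m\geq 1$), while the $\Re(\vec\gamma z^m)$-part pairs with $\Re(\vec C_0/z^m)$ to give a residue $\propto \Re\s{\vec\gamma}{\vec C_0}$. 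Concretely I would excise $B_\epsilon(0)$, apply Green's formula on $D^2\setminus B_\epsilon(0)$ where the equation holds classically, and let $\epsilon\to 0$; the boundary terms on $\partial B_\epsilon$ split into (i) the flux of $\Im(\partial\H+\cdots)$, which by definition of \eqref{residue} converges to $-2\pi\s{\vec\gamma_0}{\cdot}$ evaluated at $\w(0)$, hence the term $-2\pi\s{\vec\delta_0}{\vec\gamma_0}$ once one identifies $\vec\gamma$ via $\w$'s jet, and (ii) the contribution of $\Delta\H$ paired with the $\Re(\vec\gamma z^m)$ part of $\w$, which after Taylor-expanding $\H$ to the relevant order and using $\Delta\Re(\vec C_0/z^m)=0$ on the punctured disk but a distributional $\p{z}^m\delta_0$ when tested against $z^m$, yields the coefficient $4\pi\Re\bs{\p{z}^m\vec\delta_0/(m-1)!}{\vec C_0}$. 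The normalisation $(m-1)!$ and the factor $4$ come from $\Delta=4\p{z}\p{\z}$ and the standard identity $\p{\z}(1/z)=\pi\delta_0$, differentiated $m-1$ times; the error terms $O(|z|^{1-m}\log|z|)$ contribute nothing in the limit since they are beaten by the order-$m$ vanishing of $\w$.

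The main obstacle I anticipate is bookkeeping the two types of singular boundary contributions simultaneously and making sure the pairing with $\mathscr{D}_m$ is the right functional framework: one must check that $\mathscr{D}_m(D^2,\R^n)$ is large enough that the two distributions on the right-hand side are well-defined and distinct (i.e.\ that $\p{z}^m\vec\delta_0$ genuinely acts on the $\Re(\vec\gamma z^m)$ component while $\vec\delta_0$ acts on $\w(0)$), yet small enough that all the $O(|z|^{1-m}\log|z|)$ remainders in the expansions of $\H$, $\vec L$ and the lower-order pieces of the nonlinearity $|\H|^2\p{z}\phi+2\s{\H}{\H_0}\p{\z}\phi$ pair to zero. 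A secondary technical point is justifying the passage $\epsilon\to 0$ in the $\p{\z}$-term: one uses that $|\H|^2\p{z}\phi$ is $O(|z|^{-2m+\theta_0-1})$ times a bounded factor and $\w$ vanishes to order $m$, so for $m\leq\theta_0-1$ the product is integrable enough that only the explicitly identified residue survives, with all genuinely singular pieces already accounted for by $\vec C_0$ and $\vec\gamma_0$. Once these are in place, collecting the limits of (i) and (ii) gives exactly the claimed identity in $\mathscr{D}'_m(D^2,\R^n)$.
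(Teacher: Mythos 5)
Your strategy is the same as the paper's: view the equation as a distribution supported at the origin, test against $\w\in\mathscr{D}_m$, excise $B_\epsilon(0)$, integrate by parts, and collect the limiting boundary terms on $\partial B_\epsilon$. The two singular sources you identify — the order-$m$ pole of $\H$ paired with the $\Re(\vec\gamma z^m)$ jet of $\w$, and the logarithmic term/flux giving $\vec\gamma_0$ against $\w(0)$ — are exactly what the paper computes, and your heuristic $\Delta=4\p{z}\p{\z}$, $\p{\z}(1/z)=\pi\delta_0$ does reproduce the correct coefficient $4\pi/(m-1)!$.

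The one place where your sketch is materially thinner than the paper's argument is the treatment of the nonlinear part $4\,\Re\,\p{\z}(|\H|^2\p{z}\phi + 2\s{\H}{\H_0}\p{\z}\phi)$. You assert that after excision it contributes ``only the explicitly identified residue'' and that all singular pieces are absorbed into $\vec C_0$ and $\vec\gamma_0$. This is true, but it is not free: the paper must write out the local expansions of $|\H|^2\partial\phi$ and $g^{-1}\otimes\s{\H}{\h_0}\otimes\bar\partial\phi$ to degree $m$, track the boundary integrals on $\partial B_\epsilon$, and observe that the surviving coefficients involve only Taylor jets of $\w$ of order strictly less than $m$ — which then vanish precisely because $\w\in\mathscr{D}_m$. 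A second point worth flagging: you describe the Dirac-derivative contribution as ``$\Delta\H$ paired with the $\Re(\vec\gamma z^m)$ part of $\w$,'' but for $m\geq 2$ the field $\H$ is not in $L^1_{\mathrm{loc}}$, so $\Delta\H$ is not a distribution in the ordinary sense; the correct statement is that the boundary term $-\int_{\partial B_\epsilon}(\s{\partial_\nu\w}{\H}-\s{\w}{\partial_\nu\H})$ produces that term in the limit, which is what you in fact compute through Green's formula. These are refinements of exposition rather than gaps in strategy: your route matches the paper's, and filling in the missing jet bookkeeping for the nonlinear boundary terms would complete it.
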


     	\begin{proof} 
     	Recall that for all Willmore immersion $\vec{\Psi}:\Sigma\rightarrow \R^n$, and all variation $\vec{w}\in W^{2,2}_{\iota}\cap W^{1,\infty}(\Sigma,\R^n)$, we have
     	\begin{align*}
     		DW(\phi)(\vec{w})=\int_{\Sigma}\bigg(\s{\Delta_g^N\vec{w}+\mathscr{A}(\vec{w})}{\H}+|\H|^2\s{d\phi}{d\w}_g\bigg)d\vg,
     	\end{align*}
     	where $\Delta_g^N$ is the normal Laplacian, and $\vec{A}(\w)$ is the Simon's operator
     	\begin{align*}
     		\mathscr{A}(\w)=-\frac{1}{2}\vec{\I}\res_g\left(d\phi\totimes d\w+d\w\totimes d\phi\right)
     	\end{align*}
     	Now, recall that the Willmore equation in divergence form is given on by 
     	\begin{align*}
     		d\,\Im\left(\partial \H+|\H|^2+2\,g^{-1}\otimes\s{\H}{\h_0}\totimes\bar{\partial}\phi\right)=0,\quad \text{on}\;\, D^2\setminus\ens{0}
     	\end{align*}
     	or equivalently 
     	\begin{align*}
     		\Delta\H+4\,\Re\left(\p{\z}\left(|\H|^2\p{z}\phi+2\s{\H}{\H_0}\p{\z}\phi\right)\right)=0\quad \text{on}\;\, D^2\setminus \ens{0}.
     	\end{align*}
     	Now, we want to understand what happens at $0$. At the equation makes sense in distributional sense, we know by a classical theorem of Schwartz that the missing term at $z=0$ in the Euler-Lagrange equation is a linear combination of derivatives of Dirac masses. Now, let $\vec{w}:\C\rightarrow \R^n$ be a compactly supported (in $D^2$) smooth variation of $\phi$, and $\vec{\gamma}_{j,k}\in \C^n$ be such that for all $m\geq 0$
     	\begin{align*}
     		\vec{w}(z)=\Re\left(\sum_{j,k=0}^{m}\vec{\gamma}_{j,k}z^j\z^k\right)+O(|z|^{m+1}).
     	\end{align*}
     	Then we compute
     	\begin{align*}
     		\int_{\C\setminus\bar{B}(0,\epsilon)}\s{\Delta\vec{w}}{\vec{H}}d\mathscr{L}^2&=-\int_{\partial B(0,\epsilon)}\left(\s{\partial_{\nu}\vec{w}}{\H}-\s{\w}{\partial_{\nu}\vec{H}}\right)d\mathscr{H}^1+\int_{\C\setminus\bar{B}(0,\epsilon)}\s{\w}{\Delta \H}d\mathscr{L}^2.
     	\end{align*}
     	Furthermore, observe that for all smooth function $f:\bar{B}(0,\epsilon)\rightarrow \R$, we have
     	\begin{align*}
     	\partial_{\nu}f&=\D f(x)\cdot\frac{x}{|x|}=\frac{1}{|z|}\left(\p{x_1}f\cdot x_1+\p{x_2}f\cdot x_2\right)
     	=\frac{1}{2|z|}\left(\left(z+\z\right)\left(\partial+\bar{\partial}\right)+\frac{\left(z-\z\right)}{i}i\left(\partial-\bar{\partial}\right)\right)f\\
     	&=\frac{1}{|z|}\left(z\cdot \partial+\z\cdot \bar{\partial}\right)f=\frac{2}{|z|}\,\Re\left(z\cdot \p{z}f\right).
     	\end{align*}
     	Now, recall that the volume form on $\partial B(0,\epsilon)$ is
     	\begin{align}
     	\frac{x_1dx_2-x_2dx_1}{|x|}=\frac{1}{4i|z|}\left(\left(z+\z\right) \left(dz-d\z\right)-(z-\z)(dz+d\z)\right)=\frac{1}{2i|z|}\left(\z dz-z d\z\right)=\frac{1}{|z|}\Im\left(\z dz\right).
     	\end{align}
     	Finally, we deduce that for all $f,g\in C^{\infty}(\bar{B}(0,\epsilon))$
     	\begin{align*}
     	\int_{\partial B(0,\epsilon)}g\,\partial_{\nu}f\,d\mathscr{H}^1=\Im\int_{\partial B(0,\epsilon)}g\,\frac{2}{|z|}\,\Re\left(z\cdot \p{z}f\right)\frac{\z}{|z|}dz=2\,\Im\int_{\partial B(0,\epsilon)}g\,\Re\left(z\cdot \p{z}f\right)\frac{dz}{z}.
     	\end{align*}
 Now, we trivially have
 \begin{align*}
    \Re\left(z\cdot \p{z}\H\right)&=-\frac{m}{2}\Re\left(\frac{\vec{C}_0}{z^{m}}\right)+O(|z|^{m+1}),\qquad
 	\Re(z\cdot\p{z}\vec{w})
 	=\frac{1}{2}\Re\left(\sum_{j,k}^{m}(j+k)\vec{\gamma}_{j,k}z^j\z^k\right)+O(|z|^{m+1}) 	
 	\end{align*}
	Therefore, we find as for all $j,k\geq 0$
	\begin{align*}
		\int_{\partial B(0,\epsilon)}z^k\z^l\frac{1}{z^{m}}\frac{dz}{z}=2\pi i \epsilon^{2l}\delta_{k-m,l}
	\end{align*} 
	we obtain
	\begin{align*}
		&\int_{\partial B(0,\epsilon)}\s{\Re(z\cdot \p{z}\vec{w})}{\H}\frac{dz}{z}\\
		&=\int_{B(0,\epsilon)}\left(\frac{1}{8}(m+0)\s{\vec{\gamma}_{m,0}}{\vec{C}_0}+\frac{1}{8}(0+m)\s{\bar{\gamma_{0,m}}}{\vec{C}_0}+\frac{1}{8}(m+0)\s{\vec{\gamma}_{0,m}}{\bar{\vec{C}_0}}+\frac{1}{8}(0+m)\s{\bar{\vec{\gamma}_{m,0}}}{\bar{\vec{C}_0}}\right)\frac{dz}{z}+O(\epsilon^2)\\
		&=\frac{\pi i m}{2}\Re\left(\s{\vec{\gamma}_{m,0}+\bar{\vec{\gamma}_{0,m}}}{\vec{C}_0}\right)+F_1(\vec{\gamma}_{j,k})_{j+k<m}+O(\epsilon^2),
	\end{align*}
	where $F_1(\vec{\gamma}_{j,k})_{j+k<m}$ says that the other constant terms only depend on lower order jets of $\w$ (as they involve terms of $\H$ of lower degree).
	Likewise, we have
	\begin{align*}
		\int_{\partial B(0,\epsilon)}\s{\vec{w}}{\Re\left(z\cdot \p{z}\H\right)}\frac{dz}{z}&=-\frac{m}{8}\int_{\partial B(0,\epsilon)}\left(\s{\vec{\gamma}_{m,0}+\bar{\vec{\gamma}_{0,m}}}{\vec{C}_0}+\s{\bar{\vec{\gamma}_{m,0}}+\vec{\gamma}_{0,m}}{\bar{\vec{C}_0}}\right)\frac{dz}{z}+O(\epsilon^2)=\\
		&=-\frac{\pi i m }{2}\Re\left(\s{\vec{\gamma}_{m,0}+\bar{\vec{\gamma}_{0,m}}}{\vec{C}_0}\right)+F_2(\vec{\gamma}_{j,k})_{j+k<m}+O(\epsilon^2).
	\end{align*}
	Therefore,
	\begin{align*}
		-\int_{\partial B(0,\epsilon)}\left(\s{\partial_{\nu}\vec{w}}{\H}-\s{\w}{\partial_{\nu}\vec{H}}\right)d\mathscr{H}^1&=2\,\Im \int_{\partial B(0,\epsilon)}\left(\s{\Re(z\cdot \p{z}\vec{w})}{\H}-\s{\vec{w}}{\Re\left(z\cdot \p{z}\H\right)}\right)\frac{dz}{z}\\
		&=2\pi m\,\Re\left(\s{\vec{\gamma}_{m,0}+\bar{\vec{\gamma}_{0,m}}}{\vec{C}_0}\right)+F(\vec{\gamma}_{j,k})_{j+k<m}+O(\epsilon^2).
	\end{align*}
	Now, notice that
	\begin{align*}
		\partial_z^{m}\vec{w}(0)=\frac{1}{2}m!\left(\vec{\gamma}_{m,0}+\bar{\vec{\gamma}_{0,m}}\right)
	\end{align*}
	so we have
	\begin{align*}
		2\pi m\,\Re\left(\s{\vec{\gamma}_{m,0}+\bar{\vec{\gamma}_{0,m}}}{\vec{C}_0}\right)=4\pi m\,\Re\left(\bs{\frac{\partial_z^{m}\vec{\delta}_0}{m!}}{\vec{C}_0}\right)(\w),
	\end{align*}
	and
	\begin{align}\label{imp}
		\int_{\C\setminus\bar{B}(0,\epsilon)}\s{\Delta\w}{\H}d\mathscr{L}^2=\int_{\C\setminus\bar{B}(0,\epsilon)}\s{\w}{\Delta\H}d\mathscr{L}^2+4\pi m\,\Re\left(\bs{\frac{\p{z}^{m}\vec{\delta}_0}{m!}}{\vec{C}_0}\right)(\vec{\w}).
	\end{align}
	Now, we observe as $\partial\phi=O(|z|^{\theta_0-1})$, $\h_0=O(|z|^{\theta_0-1})$ that
	\begin{align*}
		&|\H|^2\partial \phi=O(|z|^{\theta-2m-1}),\qquad
		2\,g^{-1}\otimes \s{\H}{\h_0}\otimes\bar{\partial}\phi=O(|z|^{-m}).
	\end{align*}
	Now, we compute
	\begin{align*}
		&|\H|^2=\frac{1}{2}\Re\left(\frac{\s{\vec{C}_0}{\vec{C}_0}}{z^{2m}}\right)+\frac{1}{2}\frac{|\vec{C}_0|^2}{|z|^{2m}}+O(|z|^{1-2m})\\
		&|\H|^2\partial\phi=\frac{1}{4}\s{\vec{C}_0}{\vec{C}_0}\vec{A}_0z^{\theta_0-2m-1}dz
		+\frac{1}{4}\bar{\s{\vec{C}_0}{\vec{C}_0}}\vec{A}_0\z^{\theta_0-1}\z^{-2m}dz+\frac{1}{2}|\vec{C}_0|^2\vec{A}_0z^{\theta_0-m-1}\z^{-m}dz+O(|z|^{\theta_0-2m})
	\end{align*}
	and we recall that by Codazzi identity
	\begin{align*}
		\bar{\partial}\h_0&=g\otimes \partial\H+K_g\,g\otimes\partial\phi
		=|z|^{2\theta_0-2}\times\left(-\frac{m}{2}\frac{\vec{C}_0}{z^{m+1}}\right)dz^2\otimes d\z+O(|z|^{\theta_0-1})\\
		&=-\frac{m}{2}\vec{C}_0z^{\theta_0-m-2}{\z^{\theta_0-1}}dz^2\otimes d\z+O(|z|^{\theta_0-1})
	\end{align*}
	Therefore, we find that there exists a holomorphic quadratic differential $\vec{\beta}=\vec{F}(z)dz^2$ such that $\vec{F}=O(|z|^{\theta_0-1})$ and
	\begin{align*}
		\h_0=-\frac{m}{2\theta}\vec{C}_0z^{\theta_0-m-2}\z^{\theta_0}dz^2+\vec{\beta}+O(|z|^{\theta_0})
	\end{align*}
    Now, assume that $m=\theta_0-1$ for simplicity. We find that there exists $\vec{D}_0\in \C^n$ such that
    \begin{align*}
    	\h_0=-\frac{\theta_0-1}{2\theta_0}\frac{\z^{\theta_0}}{z}+\vec{D}_0z^{\theta_0-1}+O(|z|^{\theta_0}).
    \end{align*}
    We compute
    \begin{align*}
    	\int_{\C\setminus\bar{B}(0,\epsilon)}\s{\p{\z}\vec{w}}{|\H|^2\partial_{z}\phi}\frac{d\z\wedge dz}{2i}&=\int_{\C\setminus\bar{B}(0,\epsilon)}d\left(\s{\w}{|\H|^2\partial_z\phi}\frac{dz}{2i}\right)-\int_{\C\setminus\bar{B}(0,\epsilon)}\s{\vec{w}}{\p{\z}\left(|\H|^2\p{z}\phi\right)}\frac{d\z\wedge dz}{2i}\\
    	&=-\frac{1}{2i}\int_{\partial B(0,\epsilon)}\s{\w}{|\H|^2\p{z}\phi}dz-\int_{\C\setminus\bar{B}(0,\epsilon)}\s{\vec{w}}{\p{\z}\left(|\H|^2\p{z}\phi\right)}\frac{d\z\wedge dz}{2i}.
    \end{align*}
    As $\p{z}\phi=\vec{A}_0z^{\theta_0-1}+O(|z|^{\theta})$
    we get
    \begin{align*}
    	|\H|^2\partial\phi&=\frac{1}{4}\s{\vec{C}_0}{\vec{C}_0}\vec{A}_0z^{1-\theta_0}
    	+\frac{1}{4}\bar{\s{\vec{C}_0}{\vec{C}_0}}\vec{A}_0z^{\theta_0-1}\z^{2-2\theta_0}+\frac{1}{2}|\vec{C}_0|^2\vec{A}_0\z^{1-\theta_0}+O(|z|^{\theta_0-2m}),
    \end{align*}
    we have
    \begin{align*}
    	&\int_{\partial B(0,\epsilon)}\s{\w}{|\H|^2\p{z}\phi}dz\\
    	&=\int_{\partial B(0,\epsilon)}\left(\frac{1}{8}\s{\vec{C}_0}{\vec{C}_0}\s{\vec{\gamma}_{\theta_0-2,0}+\bar{\vec{\gamma}_{0,\theta_0-2}}}{\vec{A}_0}+\frac{1}{4}|\vec{C}_0|^2\s{\vec{\gamma}_{0,\theta_0-2}+\bar{\vec{\gamma}_{\theta_0-2,0}}}{\vec{A}_0}\right)\frac{dz}{z}+F_4(\vec{\gamma}_{j,k})_{j+k\leq \theta_0-3}+O(\epsilon^2)\\
    	&=2\pi i\left(\frac{1}{8}\s{\vec{C}_0}{\vec{C}_0}\s{\vec{\gamma}_{\theta_0-2,0}+\bar{\vec{\gamma}_{0,\theta_0-2}}}{\vec{A}_0}+\frac{1}{4}|\vec{C}_0|^2\s{\vec{\gamma}_{0,\theta_0-2}+\bar{\vec{\gamma}_{\theta_0-2,0}}}{\vec{A}_0}\right)+F_4(\vec{\gamma}_{j,k})_{j+k\leq \theta_0-3}+O(\epsilon^2)
    \end{align*}
    and
    \begin{align*}
    	&\Re\left(-\frac{1}{2i}\int_{\partial B(0,\epsilon)}\s{\w}{|\H|^2\p{z}\phi}dz\right)\\
    	&=-\pi\Re\left(\frac{1}{8}\s{\vec{C}_0}{\vec{C}_0}\s{\vec{\gamma}_{\theta_0-2,0}+\bar{\vec{\gamma}_{0,\theta_0-2}}}{\vec{A}_0}+\frac{1}{4}|\vec{C}_0|^2\s{\vec{\gamma}_{0,\theta_0-2}+\bar{\vec{\gamma}_{\theta_0-2,0}}}{\vec{A}_0}\right)+F_4(\vec{\gamma}_{j,k})_{j+k\leq \theta_0-3}+O(\epsilon^2)
    \end{align*}
    Now, we have
    \begin{align*}
    	&\s{\H}{\h_0}=\bs{\frac{1}{2}\frac{\vec{C}_0}{z^{\theta_0-1}}+\frac{1}{2}\frac{\bar{\vec{C}_0}}{\z^{\theta_0-1}}}{-\frac{\theta_0-1}{2\theta_0}\vec{C}_0\frac{\z^{\theta_0}}{z}+\vec{D}_0z^{\theta_0-1}}+O(|z|)\\
    	&=-\frac{\theta_0-1}{4\theta_0}\s{\vec{C}_0}{\vec{C}_0}\left(\frac{\z}{z}\right)^{\theta_0}+\frac{1}{2}\s{\vec{C}_0}{\vec{D}_0}-\frac{\theta_0-1}{4\theta_0}|\vec{C}_0|^2\frac{\z}{z}+\frac{1}{2}\s{\bar{\vec{C}_0}}{\vec{D}_0}\left(\frac{z}{\z}\right)^{\theta_0-1}+O(|z|)
    \end{align*}
    while
    \begin{align*}
    	e^{-2\lambda}\p{\z}\phi=|z|^{2-2\theta_0}\left(1+O(|z|)\right)\times \left(\bar{\vec{A}_0}\z^{\theta_0-1}+O(|z|^{\theta_0})\right)=\bar{\vec{A}_0}\frac{1}{z^{\theta_0-1}}+O(|z|^{2-\theta_0}),
    \end{align*}
    so that
    \begin{align*}
    	g^{-1}\otimes\s{\H}{\h_0}\otimes\bar{\partial}\phi&=-\frac{\theta_0-1}{4\theta_0}\s{\vec{C}_0}{\vec{C}_0}\bar{\vec{A}_0}\frac{\z^{\theta_0}}{z^{2\theta_0-1}}+\frac{1}{2}\s{\vec{C}_0}{\vec{D}_0}\frac{\bar{\vec{A}_0}}{z^{\theta_0-1}}
    	-\frac{\theta_0-1}{4\theta_0}|\vec{C}_0|^2\bar{\vec{A}_0}\frac{\z}{z^{\theta_0}}\\
    	&+\frac{1}{2}\s{\bar{\vec{C}_0}}{\vec{D}_0}\frac{\bar{\vec{A}_0}}{\z^{\theta_0-1}}+O(|z|^{2-\theta_0}).
    \end{align*}
    Therefore, we find
    \begin{align*}
    	\int_{\C\setminus\bar{B}(0,\epsilon)}\s{\p{\z}\w}{\s{\H}{\H_0}\p{\z}\phi}\frac{d\z\wedge dz}{2i}=-\frac{1}{2i}\int_{\partial B(0,\epsilon)}\s{\w}{\s{\H}{\H_0}\p{\z}\phi}dz-\int_{\C\setminus \bar{B}(0,\epsilon)}\s{\w}{\p{\z}\left(\s{\H}{\H_0}\p{\z}\phi\right)}\frac{d\z\wedge dz}{2i}
    \end{align*}
    and
    \begin{align*}
    	&\int_{\partial B(0,\epsilon)}\s{\w}{\s{\H}{\H_0}\p{\z}\phi}dz\\
    	&=\int_{\partial B(0,\epsilon)}\left(\frac{1}{4}\s{\vec{C}_0}{\vec{D}_0}\s{\vec{\gamma}_{\theta_0-2,0}+\bar{\vec{\gamma}_{0,\theta_0-2}}}{\bar{\vec{A}_0}}+\frac{1}{4}\s{\bar{\vec{C}_0}}{\vec{D}_0}\s{\bar{\vec{\gamma}_{\theta_0-2,0}}+\vec{\gamma}_{0,\theta_0-2}}{{\bar{\vec{A}_0}}}\right)\frac{dz}{z}+F_5(\vec{\gamma}_{j,k})_{j+k\leq \theta_0-3}+O(\epsilon^2)\\
    	&=\frac{\pi i}{2}\left(\s{\vec{C}_0}{\vec{D}_0}\s{\vec{\gamma}_{\theta_0-2,0}+\bar{\vec{\gamma}_{0,\theta_0-2}}}{\bar{\vec{A}_0}}+\s{\bar{\vec{C}_0}}{\vec{D}_0}\s{\bar{\vec{\gamma}_{\theta_0-2,0}}+\vec{\gamma}_{0,\theta_0-2}}{{\bar{\vec{A}_0}}}\right)+F_5(\vec{\gamma}_{j,k})_{j+k\leq \theta_0-3}+O(\epsilon^2).
    \end{align*}
    Now, as
    \begin{align*}
    	DW(\phi)(\vec{w})=\int_{\Sigma}\bigg(\s{\Delta_g^N\vec{w}+\mathscr{A}(\vec{w})}{\H}+|\H|^2\s{d\phi}{d\w}_g\bigg)d\vg,
    \end{align*}
    and $|\vec{\I}|_g^2=O(|z|^{2-2\theta_0})$, $g=|z|^{2\theta_0-2}(1+O(|z|))|dz|^2$. Therefore, if $\w$ is a smooth variation of $\phi$ we have in the chart $D^2\subset \C$ near to a branch point $p$ of $\phi$
    and
    \begin{align*}
    \Delta\w=O(|z|^{\theta_0-2}).
    \end{align*}
    we have
    \begin{align*}
    	\sg{\Delta \w}{\H}=\s{\Delta\w}{\Re\left(\frac{\vec{C}_0}{z^{\theta_0-1}}\right)}=O\left(\frac{1}{|z|}\right)\in L^1(D^2)
    \end{align*}
    In particular, if 
    \begin{align*}
    	\w=\w(0)+\Re\left(\vec{\gamma} z^{\theta_0-1}\right)+O(|z|^{\theta_0}),
    \end{align*}
    we have
    \begin{align*}
    	\Delta \w=O(|z|^{\theta_0-2}),
    \end{align*}
    and
    \begin{align*}
    	\D\w=O(|z|^{\theta_0-2}).
    \end{align*}
    Therefore, we have
    \begin{align*}
    	\mathscr{A}(\w)=-\frac{1}{2}\vec{\I}\res_g \left(d\phi\totimes d\w+d\w\totimes d\phi\right)=O(|z|^{3-3\theta_0})\times O(|z|^{\theta_0-2})\times O(|z|^{\theta_0-1})=O(|z|^{-\theta_0})
    \end{align*}
    Furthermore, as $\H=O(|z|^{1-\theta_0})$, $e^{2\lambda}=|z|^{2\theta_0-2}(1+O(|z|))$, we obtain
    \begin{align*}
    	\s{\mathscr{A}(\w)}{\H}d\vg=O(|z|^{-\theta_0})\times O(|z|^{1-\theta_0})\times O(|z|^{2\theta_0-2})=O\left(\frac{1}{|z|}\right)\in L^1(D^2).
    \end{align*}
    Finally, we have $\H e^{\lambda}=O(1)$, and
    \begin{align*}
    	\s{d\phi}{d\w}_g=e^{-2\lambda}\s{\D\phi }{\D\w}=O(|z|^{2-2\theta_0})\times O(|z|^{\theta_0-1})\times O(|z|^{\theta_0-2})=O\left(\frac{1}{|z|}\right)\in L^1(D^2).
    \end{align*}
    Likewise, we have
    \begin{align*}
    	\Re\left(g^{-1}\otimes\left(\bar{\partial}\phi\totimes \bar{\partial}\w\right)\otimes \h_0+\left(\bar{\partial}\phi\totimes \partial\w\right)\H\right)=O(|z|^{\theta_0-2}),
    \end{align*}
    so that
    \begin{align*}
    	\bs{\Re\left(g^{-1}\otimes\left(\bar{\partial}\phi\totimes \bar{\partial}\w\right)\otimes \h_0+\left(\bar{\partial}\phi\totimes \partial\w\right)\H\right)}{\H}=O\left(\frac{1}{|z|}\right)\in L^1(D^2).
    \end{align*}
    As
    \begin{align*}
    	\Delta^{N}\w=\left(\Delta\w\right)^N+4\,\Re\left(g^{-1}\otimes\left(\bar{\partial}\phi\totimes \bar{\partial}\w\right)\otimes \h_0+\left(\bar{\partial}\phi\totimes \partial\w\right)\H\right)
    \end{align*}
    and $\H$ is normal, we deduce that
    \begin{align*}
    	\s{\Delta^N\w}{\H}=O\left(\frac{1}{|z|}\right)\in L^1(D^2).
    \end{align*}
    Finally, the previous computations imply that for all variation $\w$ of $\phi$ such that
    \begin{align*}
    	\w(z)=\w(0)+\Re\left(\vec{\gamma}z^{\theta_0-1}\right)+O(|z|^{\theta_0}),
    \end{align*}
    we have
    \begin{align*}
    	\s{\Delta_g^N\w}{\H}\in L^1(D^2,d\vg),\quad \s{\mathscr{A}(\w)}{\H}\in L^1(D^2,d\vg),\quad |\H|^2\s{d\phi}{d\w}_g\in L^{1}(D^2)
    \end{align*}
    so that the first variation
    \begin{align*}
    	D W(\phi)(\w)=\int_{\Sigma}\left(\s{\Delta_g^{N}\w+\mathscr{A}(\w)}{\H}+|\H|^2\s{d\phi}{d\w}_g\right)d\vg
    \end{align*}
    is well defined as each terms is in $L^1$ (and even $L^p$ for all $p<2$).
    
    Coming back to the previous computations, for all variation $\vec{\w}$ of $\phi$ such that
    \begin{align*}
    	\w(z)=\w(0)+\Re\left(\vec{\gamma}z^{\theta_0-1}\right)+O(|z|^{\theta_0})
    \end{align*}
     we have (as there are no lower order terms except for the first residue $\gamma_0$ of $\phi$)
    \begin{align}\label{dirac}
    	&\int_{\C\setminus\bar{B}(0,\epsilon)}\s{\Delta\w-4\,\Re\left(\s{\p{\z}\w}{\p{z}\phi}\H+2\s{\p{\z}\w}{\p{\z}\phi}\H_0\right)}{\H}d\mathscr{L}^2\nonumber\\
    	&=\int_{\C\setminus\bar{B}(0,\epsilon)}\s{\w}{\Delta\H+4\,\Re\left(|\H|^2\p{z}\phi+2\s{\H}{\H_0}\p{\z}\phi\right)}d\mathscr{L}^2+2\pi(\theta_0-1)\Re\left(\s{\vec{\gamma}}{\vec{C}_0}\right)-2\pi\s{\vec{w}(0)}{\vec{\gamma}_0}\nonumber\\
    	&=2\pi(\theta_0-1)\Re\left(\s{\vec{\gamma}}{\vec{C}_0}\right)-2\pi\s{\vec{w}(0)}{\vec{\gamma}_0},
    \end{align}
    where the expansion involving the residue comes the Taylor expansion (see \cite{beriviere})
    \begin{align*}
    	\H=\Re\left(\frac{\vec{C}_0}{z^{\theta_0-1}}\right)+\cdots-\vec{\gamma}_0\log|z|+O(1)
    \end{align*} 
    Now, taking $\w(0)=-\vec{\gamma}_0\in \R^n$, and $\vec{\gamma}=\bar{\vec{C}_0}$, we get
    \begin{align*}
    	&\int_{\C\setminus\bar{B}(0,\epsilon)}\s{\Delta\w-4\,\Re\left(\s{\p{\z}\w}{\p{z}\phi}\H+2\s{\p{\z}\w}{\p{\z}\phi}\H_0\right)}{\H}d\mathscr{L}^2=2\pi\left((\theta_0-1)|\vec{C}_0|^2+|\vec{\gamma}_0|^2\right)=0
    \end{align*}
    if and only if
    \begin{align*}
    	\Delta \H+4\,\Re\left(\p{\z}\left(|\H|^2\p{z}\phi+2\s{\H}{\H_0}\p{\z}\phi\right)\right)=0\quad \text{in}\;\, \mathscr{D}'_{\theta_0-1}(D^2,\R^n),
    \end{align*}
    where for all $\alpha\geq 0$,  $\mathscr{D}'_{\alpha}(D^2,\R^n)$ is the dual space of the space $\mathscr{D}_{\alpha}(D^2,\R^n)\subset \mathscr{D}(D^2,\R^n)$ of compactly supported functions $\w:D^2\rightarrow \R^n$
    such that
    \begin{align*}
    	\w=\w(0)+\Re\big(\vec{\gamma}\,z^{\alpha}\big)+O(|z|^{\theta_0}).
    \end{align*} 
    In other words, if
    \begin{align*}
    	\H=\Re\left(\frac{\vec{C}_0}{z^{\theta_0-1}}\right)+\cdots-\vec{\gamma}_0\log|z|+O(1)
    \end{align*}
    we have
    \begin{align*}
    	\Delta\H+4\,\Re\left(\p{\z}\left(|\H|^2\p{z}\phi+2\s{\H}{\H_0}\p{\z}\phi\right)\right)=4\pi \,\Re\left(\bs{\frac{\p{z}^{\theta_0-1}\vec{\delta}_0}{(\theta_0-2)!}}{\vec{C}_0}\right)-2\pi\s{\vec{\delta}_0}{\vec{\gamma}_0}\quad \text{in}\;\, \mathscr{D}'_{\theta_0-1}(D^2,\R^n),
    \end{align*}
    where $\vec{\delta}_0=(\delta_0,\cdots,\delta_0)\in \mathscr{D}'(D^2,\R^n)$ is the vectorial Dirac mass at $0$. The general case $m\leq \theta_0-1$ follows the exact same lines.
\end{proof}

    \begin{theorem}\label{distribution}
    	If a branched immersion $\phi:D^2\rightarrow\R^n$ with a unique branch point at $0$ of multiplicity $\theta_0\geq 1$ satisfies the Willmore equation in the distributional sense everywhere on $D^2$ then it is smooth.
    \end{theorem}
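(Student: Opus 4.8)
\textbf{Proof plan for Theorem \ref{distribution}.}

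The plan is to combine the distributional identity established in the preceding Proposition with the Bernard--Rivi\`{e}re characterization of smoothness at branch points (the Theorem of Bernard--Rivi\`{e}re quoted in the introduction), which states that $\phi$ is smooth at the branch point $p=0$ if and only if the first residue $\vec{\gamma}_0$ and the second residue $r$ both vanish. So it suffices to show that if $\phi$ satisfies the Willmore equation in $\mathscr{D}'_{m}(D^2,\R^n)$ everywhere on $D^2$ --- in particular at $0$ --- then $\vec{C}_0=0$ (which forces $r\le \theta_0-2$, hence after iterating the argument forces $m<0$, i.e. $r=0$) and $\vec{\gamma}_0=0$. The key input is the final displayed formula of the Proposition: testing the distributional equation against a variation $\w$ with $\w(z)=\w(0)+\Re(\vec\gamma\,z^{m})+O(|z|^{m+1})$ produces the obstruction
\begin{align*}
	\Delta\H+4\,\Re\left(\p{\z}\left(|\H|^2\p{z}\phi+2\s{\H}{\H_0}\p{\z}\phi\right)\right)=4\pi \,\Re\left(\bs{\frac{\p{z}^{m}\vec{\delta}_0}{(m-1)!}}{\vec{C}_0}\right)-2\pi\s{\vec{\delta}_0}{\vec{\gamma}_0}\quad \text{in}\;\, \mathscr{D}'_{m}(D^2,\R^n).
\end{align*}
If $\phi$ satisfies the Willmore equation distributionally on all of $D^2$, the right-hand side must vanish as an element of $\mathscr{D}'_m(D^2,\R^n)$.

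First I would make precise the meaning of ``satisfies the Willmore equation in the distributional sense everywhere'': following Section \ref{sec7}, this means that for every compactly supported variation $\w\in\mathscr{D}_m(D^2,\R^n)$ the first variation $DW(\phi)(\w)$ vanishes, which by the Proposition is equivalent to the vanishing of the distribution on the right above. Next I would choose the two explicit test variations already singled out in the proof of the Proposition: one with $\w(0)=-\vec\gamma_0$, $\vec\gamma=\bar{\vec C}_0$ (both of which lie in $\mathscr{D}_m$ after multiplying by a fixed cutoff equal to $1$ near $0$, since these local jets are prescribed and can be glued to any compactly supported tail), which yields $2\pi\big((\theta_0-1)|\vec C_0|^2+|\vec\gamma_0|^2\big)=0$ when $m=\theta_0-1$, forcing $\vec C_0=0$ and $\vec\gamma_0=0$ simultaneously. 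For the general case $m\le\theta_0-1$ I would run the same computation: pairing against $\w(0)$ arbitrary in $\R^n$ gives $\vec\gamma_0=0$, and pairing against $\Re(\vec\gamma z^m)$ with $\vec\gamma=\bar{\vec C}_0$ gives $|\vec C_0|^2=0$, hence $\vec C_0=0$; by definition of the second residue this means $r=\max\{m,0\}$ drops, and one reruns Lemma \ref{expl}/the expansion \eqref{theta2} with the next value of $m$, eventually reaching $r=0$.

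Once $\vec\gamma_0=0$ and $r=0$, the Bernard--Rivi\`{e}re removability Theorem gives that $\phi$ is smooth at $0$; away from $0$ it is smooth by the interior regularity theory for Willmore immersions (Rivi\`{e}re \cite{riviere1}), so $\phi$ is smooth on all of $D^2$. The main obstacle I anticipate is purely bookkeeping rather than conceptual: one must check carefully that the admissible test space $\mathscr{D}_m(D^2,\R^n)$ is rich enough to separate the coefficients $\vec C_0$ and $\vec\gamma_0$ --- i.e. that the pairing $\w\mapsto \Re\langle \p_z^{m}\vec\delta_0/(m-1)!,\vec C_0\rangle(\w)-\tfrac12\langle\vec\delta_0,\vec\gamma_0\rangle(\w)$, restricted to $\mathscr{D}_m$, vanishes identically only when $\vec C_0=\vec\gamma_0=0$. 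This is exactly what the choice $\w(0)=-\vec\gamma_0$, $\vec\gamma=\bar{\vec C}_0$ accomplishes, but one should also verify that no lower-order jet terms $F(\vec\gamma_{j,k})_{j+k<m}$ of $\w$ re-enter through the $|\H|^2\p_z\phi$ and $\langle\H,\h_0\rangle\p_{\z}\phi$ contributions --- the Proposition's computation shows these depend only on strictly lower-order data and hence do not interfere with extracting the top coefficient. A secondary subtlety is the inductive step reducing general $m$ to $m=\theta_0-1$: one must ensure that after killing the leading coefficient the next term in the expansion of $\H$ is again of the form covered by \eqref{theta2}, which follows from the analysis of \cite{beriviere} applied to the (still Willmore, still finitely totally curved) immersion $\phi$.
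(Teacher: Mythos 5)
Your proposal follows essentially the same strategy as the paper's proof: test the distributional identity from the preceding Proposition against the explicitly designed variations (first those with $\vec{\gamma}=\bar{\vec{C}_0}$, $\w(0)=-\vec{\gamma}_0$ to kill $\vec{\gamma}_0$ and the leading coefficient $\vec{C}_0$ simultaneously), then iterate over the enlarging test space $\mathscr{D}_{\theta_0-1}\supset\mathscr{D}_{\theta_0-2}\supset\cdots$ as each successive singular coefficient of $\H$ is forced to vanish, and conclude smoothness from the Bernard--Rivi\`{e}re removability theorem once $r=0$ and $\vec{\gamma}_0=0$. The only difference from the paper is one of presentation: the paper states the induction by explicitly re-verifying at each step that the first variation $DW(\phi)(\w)$ is well-defined on $\mathscr{D}_{\theta_0-1-k}$ after $\H$ improves to $O(|z|^{1+k-\theta_0})$, whereas you phrase this as re-running Lemma~\ref{expl} with the next value of $m$ --- both correctly track the same bookkeeping.
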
 
    \begin{proof}
    	We can assume $\theta_0\geq 2$, as the removability for $\theta_0=1$ for established in \cite{beriviere}.
    	Define for all $m\geq 0$
    	\begin{align*}
    		\mathscr{D}_m(D^2,\R^n)=\mathscr{D}(D^2,\R^n)\cap\ens{\w: \w=\w(0)+\Re\left(\vec{\gamma}\,z^{m}\right)+O(|z|^{m+1})\;\, \text{for some}\;\, \vec{\gamma}\in \C^n},
    	\end{align*}
    	and $\mathscr{D}'_{m}(D^2,\R^n)$ its topological dual.
    	By the proof of the previous theorem, we deduce that for all $\w\in \mathscr{D}_{\theta_0-1}(D^2,\R^n)$ the first derivative
    	\begin{align}\label{fd}
    		DW(\phi)(\w)=\int_{\Sigma}\left(\s{\Delta_g^N\w+\mathscr{A}(\w)}{\H}+|\H|^2\s{d\phi}{d\w}_g\right)d\vg
    	\end{align}
    	is well-defined, and vanishes for all $\w\in \mathscr{D}_{\theta_0-1}(D^2,\R^n)$ if and only if the second residue $r(p)\leq \theta_0-1$ satisfies $
    		\vec{\gamma}_0=0\quad \text{and}\;\, r(p)\leq \theta_0-2.
    	$
    	Therefore, this proves the theorem for $\theta_0=2$. Assuming that $\theta_0\geq 3$, we deduce that
    	\begin{align*}
    		\H=\Re\left(\frac{\vec{C}_1}{z^{\theta_0-2}}\right)+O(|z|^{3-\theta_0}).
    	\end{align*}
    	Therefore, \eqref{fd} is well-defined for $\w\in \mathscr{D}_{\theta_0-2}(D^2,\R^n)$, \textit{i.e.} such that
    	\begin{align*}
    		\w=\w(0)+\Re\left(\vec{\gamma}z^{\theta_0-2}\right)+O(|z|^{\theta_0-1})
    	\end{align*}
    	as we have for all $\w\in \mathscr{D}_{\theta_0-2}(D^2,\R^n)$ the estimates $\H=O(|z|^{2-\theta_0})$ and $\Delta^N\w+e^{2\lambda}\mathscr{A}(\w)=O(|z|^{\theta_0-3})$, so that
    	\begin{align*}
    		&\s{\Delta^N\w+e^{2\lambda}\mathscr{A}(\w)}{\H}=O\left(\frac{1}{|z|}\right)\in L^1(D^2)\\
    		&|\H|^2\s{\D\phi}{\D\w}=O(|z|^{4-2\theta_0})\times O(|z|^{\theta_0-1})\times O(|z|^{\theta_0-3})=O(1)\in L^1(D^2).
    	\end{align*}
    	Now, thanks to \eqref{imp}, this implies that 
    	\begin{align*}
    		\Delta\H+4\,\Re\left(\p{\z}\left(|\H|^2\p{z}\phi+2\s{\H}{\H_0}\p{\z}\phi\right)\right)=4\pi \,\Re\left(\bs{\frac{\p{z}^{\theta_0-2}\vec{\delta}_0}{(\theta_0-3)!}}{\vec{C}_1}\right)\quad \text{in}\quad\;\, \mathscr{D}_{\theta_0-2}'(D^2,\R^{n})
    	\end{align*}
    	so we obtain $\vec{C}_1=0$ as $\phi$ must satisfy the Willmore equation in distributional sense. We see that by a direct induction this implies that $\alpha(0)=0$, and the smoothness of the immersion by \cite{beriviere}.
    \end{proof}

\section{Appendix}\label{appendix}

\subsection{Some basic properties of Lorentz spaces}\label{basiclorentz}

Fix a measured space $(X,\mu)$.
Define for all $0<t<\infty$ the measurable function $f_{\ast}$ on $(0,\infty)$ by
\begin{align*}
	f_{\ast}(t)=\inf\ens{\lambda>0: \mu(X\cap\ens{x:|f(x)|>\lambda})\leq t}
\end{align*}
and recall that for all $\lambda>0$
\begin{align*}
	\leb^1\left((0,\infty)\cap\ens{t:f_{\ast}(t)>\lambda}\right)=\mu\left(X\cap\ens{x:|f(x)|>\lambda}\right).
\end{align*}
In particular, using twice the usual slicing formula (valid for an arbitrary measure $\mu$ that need not be $\sigma$-finite), we find
\begin{align*}
	\np{f}{p}{X,\mu}&=p\int_{0}^{\infty}\lambda^{p}\mu\left(X\cap\ens{x:|f(x)|>\lambda}\right)\frac{d\lambda}{\lambda}=p\int_{0}^{\infty}\lambda^p\leb^1\left((0,\infty)\cap\ens{t:f_{\ast}(t)>\lambda}\right)\frac{d\lambda}{\lambda}\\
	&=\int_{0}^{\infty}f_{\ast}^p(t)dt
	=\np{f_{\ast}}{p}{(0,\infty),\leb^1}.
\end{align*}
To simplify notations we will often remove the reference to the measure $\mu$. 
This motivates the introduction of the following quasi-norm for $1<p<\infty$ and $1\leq q<\infty$
\begin{align}\label{norm1}
	\lnp{f}{p,q}{X}=\left(\int_{0}^{\infty}t^{\frac{q}{p}}f_{\ast}^q(t)\frac{dt}{t}\right)^{\frac{1}{q}}.
\end{align}
If we define
$
	f_{\ast\ast}(t)=\dfrac{1}{t}\int_{0}^{t}f_{\ast}(s)ds,
$
then the associated norm to $L^{p,q}$ is
\begin{align}\label{norm2}
	\np{f}{p,q}{X}=\left(\int_{0}^{\infty}t^{\frac{q}{p}}f_{\ast\ast}^q(t)\frac{dt}{t}\right)^{\frac{1}{q}},
\end{align}
and $(L^{p,q}(X,\mu),\np{\,\cdot\,}{p,q}{X})$ is a Banach space for all $1<p<\infty$ and $1\leq q< \infty$. Now, we have by Fubini's theorem for all $f\in L^{p,q}(X,\mu)$
\begin{align*}
	\np{f}{p,1}{X}&=\int_{0}^{\infty}t^{\frac{1}{p}}f_{\ast\ast}(t)\frac{dt}{t}=\int_{0}^{\infty}\int_{0}^{\infty}t^{\frac{1}{p}-2}f_{\ast}(s)\mathbf{1}_{\ens{0<s<t}}dsdt=\int_{0}^{\infty}f_{\ast}(s)\left(\int_{0}^{\infty}t^{\frac{1}{p}-2}\mathbf{1}_{\ens{0<s<t}}dt\right)ds\\
	&=\int_{0}^{\infty}f_{\ast}(s)\left(\int_{s}^{\infty}t^{\frac{1}{p}-2}dt\right)ds=\frac{p}{p-1}\int_{0}^{\infty}s^{\frac{1}{p}-1}f_{\ast}(s)ds=\frac{p}{p-1}\lnp{f}{p,1}{X}.
\end{align*}
Therefore, $\lnp{\,\cdot\,}{p,1}{X}$ is a norm for all $1<p<\infty$. Furthermore, notice that Fubini's theorem also shows (\cite{rivnotes}) that 
\begin{align}\label{norm3}
	\lnp{f}{p,q}{X}=p^{\frac{1}{q}}\left(\int_{0}^{\infty}\lambda^{q}\left(\mu\left(X\cap\ens{x:|f(x)|>\lambda}\right)^{\frac{q}{p}}\frac{d\lambda}{\lambda}\right)\right).
\end{align}
In particular, for $q=1$ each of the quantities \eqref{norm1}, \eqref{norm2} and \eqref{norm3} defines a norm on $L^{p,1}(X,\mu)$.
Finally, for $q=\infty$, we define the quasi-norm
\begin{align*}
	\lnp{f}{p,\infty}{X}=\sup_{\lambda>0}\,t\left(\mu\left(X\cap\ens{x:|f(x)|>\lambda}\right)\right)^{\frac{1}{p}}=\sup_{t>0}\,t^{\frac{1}{p}}f_{\ast}(t)
\end{align*}
and the norm
\begin{align*}
	\np{f}{p,\infty}{X}= \sup_{t>0}\,t^{\frac{1}{p}}f_{\ast\ast}(t)
\end{align*}
makes $(L^{p,\infty}(X),\np{\,\cdot\,}{p,\infty}{X})$ a Banach space (they are the classical Marcinkiewicz weak $L^p$ spaces). Notice however that $L^{1,\infty}$ is \emph{not} a Banach space.
We have the general inequality for all $1<p<\infty$
\begin{align*}
    \lnp{f}{p,\infty}{X}\leq \np{f}{p,\infty}{X}\leq \frac{p}{p-1}\lnp{f}{p,\infty}{X}.
\end{align*}
Finally, recall the inequality
\begin{align*}
	\left|\int_{X}fgd\mu\right|\leq \int_{0}^{\infty}f_{\ast}(t)g_{\ast}(t)dt.
\end{align*}
It implies that for all $1<p<\infty$
\begin{align*}
	\int_{0}^{\infty}f_{\ast}(t)g_{\ast}(t)dt=\int_{0}^{\infty}t^{\frac{1}{p}}f_{\ast}(t)t^{p'}g_{\ast}(t)\frac{dt}{t}\leq \lnp{g}{p',\infty}{X}\int_{0}^{\infty}t^{\frac{1}{p}}f_{\ast}(t)\frac{dt}{t}=\lnp{f}{p,1}{X}\lnp{g}{p',\infty}{X},
\end{align*}
while for all $1<p<\infty$ and $1\leq q< \infty$, we have by H\"{o}lder's inequality (applied to the Haar measure $\nu=\dfrac{dt}{t}$ on $(0,\infty)$)
\begin{align*}
	\int_{0}^{\infty}f_{\ast}(t)g_{\ast}(t)dt=\int_{0}^{\infty}t^{\frac{1}{p}}f_{\ast}(t)t^{\frac{1}{p'}}g_{\ast}(t)\frac{dt}{t}\leq \left(\int_{0}^{\infty}t^{\frac{p}{q}}f_{\ast}^q(t)\frac{dt}{t}\right)^{\frac{1}{q}}\left(\int_{0}^{\infty}t^{\frac{q'}{p'}}g_{\ast}^{q'}(t)dt\right)^{\frac{1}{q'}}=\lnp{f}{p,q}{X}\lnp{g}{p',q'}{X}.
\end{align*}
Therefore, we have for all $1<p<\infty$ and $1\leq q\leq \infty$
\begin{align}\label{id}
	\left|\int_{X}fg\, d\mu\right|\leq \lnp{f}{p,q}{X}\lnp{g}{p',q'}{X}\leq \np{f}{p,q}{X}\np{g}{p',q'}{X}
\end{align}
and one shows that for all $1<p<\infty$ and $1\leq q< \infty$, the dual space of $L^{p,q}(X,\mu)$ is $L^{p',q'}(X,\mu)$. In particular, \eqref{id} implies that  for all $1<p<\infty$ 
\begin{align*}
    \np{f}{p,1}{X}=\frac{p^2}{p-1}\int_{0}^{\infty}\mu\left(X\cap\ens{x:|f(x)|>t}\right)^{\frac{1}{p}}dt
\end{align*}
The main case of interest in this article is the $L^{2,1}$ norm, which now can be defined as
\begin{align*}
	\np{f}{2,1}{X}=4\int_{0}^{\infty}\left(\mu\left(X\cap\ens{x:|f(x)|>t}\right)\right)^{\frac{1}{2}}dt,
\end{align*}
and
\begin{align*}
	\left|\int_{X}fg\,d\mu\right|\leq \frac{1}{2}\np{f}{2,1}{X}\lnp{g}{2,\infty}{X}\leq \np{f}{2,1}{X}\np{g}{2,\infty}{X}.
\end{align*}
As
\begin{align*}
	\frac{1}{n}\np{\frac{1}{|x-y|^{n-1}}}{\frac{n}{n-1},\infty}{\R^n,\leb^n}=\lnp{\frac{1}{|x-y|^{n-1}}}{\frac{n}{n-1},\infty}{\R^n,\leb^n}=\alpha(n)^{\frac{n}{n-1}},
\end{align*}
we have for all open subset $\Omega\subset \R^n$ and $f\in L^{n,1}(\Omega)$, for all $y\in \R^n$
\begin{align}\label{lnweak}
	\int_{\Omega}\frac{|f(x)|}{|x-y|^{n-1}}d\leb^n(x)\leq \frac{n-1}{n^2}\np{f}{2,1}{\Omega}\np{\frac{1}{|x-y|^{n-1}}}{\frac{n}{n-1},\infty}{\R^n}= \frac{n-1}{n}\alpha(n)^{\frac{n-1}{n}}\np{f}{n,1}{\Omega}.
\end{align}
In particular, if $\Omega\subset \R^2$, we have
\begin{align}\label{l2weak}
	\int_{\Omega}\frac{|f(x)|}{|x-y|}d\leb^2(x)\leq \frac{\sqrt{\pi}}{2}\np{f}{2,1}{\Omega}.
\end{align}

\subsection{Extension operators on annuli}

The following result was used in \cite{quanta} and \cite{quantamoduli}.

\begin{theorem}\label{extop}
	Let $n\geq 2$, $\epsilon>0$ and $1+\epsilon<R<\infty$ and $\Omega_{R}=B(0,R)\setminus \bar{B}(0,1)$ be the associated annulus. Then there exists a linear extension operator  
	\begin{align*}
		T:\bigcup_{1\leq p<\infty}W^{1,p}(\Omega_{R})\rightarrow \bigcup_{1\leq p<\infty}W^{1,p}(B(0,R))
	\end{align*}
	such that for all $1\leq p<\infty$, there exists a universal constant $C_1(n,\epsilon)>0$ (independent of $R>1+\epsilon$) such that for all $1\leq p<\infty$
	\begin{align*}
		\wp{T u}{1,p}{B(0,R)}\leq C_1(n,\epsilon)\wp{u}{1,p}{\Omega_{R}}.
	\end{align*}
	Furthermore, for all $1<p<\infty$ and $1\leq q\leq \infty$, $T$ extends as a linear operator $W^{1,(p,q)}(\Omega_R)\rightarrow W^{1,(p,q)}(B(0,R))$ such that  for some universal constant $C_2(n,p,q,\epsilon)$
	\begin{align*}
		\wp{T u}{1,(p,q)}{B(0,R)}\leq C_2(n,p,q,\epsilon)\wp{u}{1,(p,q)}{\Omega_{R}}.
	\end{align*}
\end{theorem}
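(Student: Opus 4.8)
\textbf{Proof plan for Theorem \ref{extop}.}
The plan is to construct the extension operator explicitly on the annulus $\Omega_R = B(0,R)\setminus \bar{B}(0,1)$ using the classical reflection-across-a-sphere trick, and then to check the claimed bounds, paying attention to the uniformity in $R$. Concretely, let $\sigma:\R^n\setminus\ens{0}\rightarrow \R^n\setminus\ens{0}$ be the inversion $\sigma(x)=x/|x|^2$ across the unit sphere $S^{n-1}$. Given $u\in W^{1,p}(\Omega_R)$, first extend it across the inner boundary $S^{n-1}$ by reflection: on the annulus $B(0,1)\setminus \bar{B}(0,1/R)$ set $\tilde u(x) = u(\sigma(x))$ (possibly with a smooth partition-of-unity cutoff so that the reflected function is only used near $S^{n-1}$, say on $B(0,1)\setminus \bar{B}(0,\rho_0)$ for a fixed $\rho_0\in(1/(1+\epsilon),1)$), and then fill in the remaining small ball $B(0,\rho_0)$ by a further standard extension (e.g. multiply by a cutoff vanishing near $0$, which is harmless because $\rho_0$ is a fixed radius independent of $R$). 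This produces a function $Tu$ defined on all of $B(0,R)$ that agrees with $u$ on $\Omega_R$.

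The key steps, in order: (i) fix the geometry — choose $\rho_0\in (1/(1+\epsilon),1)$ depending only on $\epsilon$, and fix a cutoff $\chi\in C^\infty_c(B(0,1))$ with $\chi\equiv 1$ near $S^{n-1}$, $\chi\equiv 0$ on $B(0,\rho_0)$; (ii) verify that $x\mapsto u(\sigma(x))$ lies in $W^{1,p}$ of the reflected annulus with norm controlled by $\wp{u}{1,p}{\Omega_R}$, using that the Jacobian of $\sigma$ and the conformal factor $|D\sigma|$ are bounded above and below by constants depending only on $\epsilon$ on the region $1\le |x|\le 1+\epsilon$ and its image (this is where the hypothesis $R>1+\epsilon$ and the restriction of the reflection to a fixed-width collar are used — the reflection is only applied where $\sigma$ is bi-Lipschitz with constants independent of $R$); (iii) check that $Tu$ is genuinely in $W^{1,p}(B(0,R))$ by confirming that the traces match along $S^{n-1}$ (they do, since $\tilde u = u$ on $S^{n-1}$) so no singular boundary term appears in the distributional gradient; (iv) assemble the estimate $\wp{Tu}{1,p}{B(0,R)} \le \wp{u}{1,p}{\Omega_R} + C(n,\epsilon)\wp{u}{1,p}{B(0,1+\epsilon)\setminus \bar B(0,1)} \le C_1(n,\epsilon)\wp{u}{1,p}{\Omega_R}$, with $C_1$ manifestly independent of $R$.

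For the Lorentz-space statement, the point is that the whole construction is a fixed (i.e. $R$-independent away from the collar) bi-Lipschitz change of variables composed with multiplication by fixed smooth cutoffs, so $T$ is bounded simultaneously on $W^{1,p}(\Omega_R)$ for \emph{every} $1\le p<\infty$ with a single family of constants $C_1(n,\epsilon)$ depending continuously (indeed, locally boundedly) on $p$. One then invokes the Stein–Weiss (Calderón–Marcinkiewicz) interpolation theorem for Lorentz spaces — exactly as done in the proof of the embedding Lemma earlier in Section \ref{sec2}, identifying $W^{1,p}$ with a closed subspace of $L^p(\,\cdot\,)^{n+1}$ via $u\mapsto (u,\nabla u)$ — to conclude that $T$ extends to a bounded operator $W^{1,(p,q)}(\Omega_R)\to W^{1,(p,q)}(B(0,R))$ for all $1<p<\infty$, $1\le q\le\infty$, with constant $C_2(n,p,q,\epsilon)$. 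I expect the main obstacle to be \emph{bookkeeping the uniformity in $R$}: one must be careful that the reflection is only used in a collar of fixed width, that the inner auxiliary extension on $B(0,\rho_0)$ involves only fixed radii, and that no constant secretly depends on $R$ through, say, a Poincaré constant on the full annulus. Once the construction is localized to $R$-independent pieces, all estimates are the standard ones and the interpolation step is routine.
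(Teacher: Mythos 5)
Your proof is correct, and it shares the paper's one essential idea: localize the extension work to a collar of \emph{fixed} width near $S^{n-1}$ (say $B(0,1+\epsilon)\setminus\bar B(0,1)$ on the outside, or its reflection $B(0,1)\setminus\bar B(0,1/(1+\epsilon))$ on the inside) so that all constants come from a fixed domain independent of $R$, then glue with the identity on the remaining, $R$-dependent piece. The difference is in how the collar extension is produced. The paper simply cites the Brezis extension theorem (\cite{brezis}, IX.7) applied to the fixed annulus $B(0,1+\epsilon)\setminus\bar B(0,1)$, giving an operator $\tilde T$ with constant $C(n,\epsilon)$, and then sets $Tu=u$ outside $\bar B(0,1+\epsilon)$ and $Tu=\tilde T u$ inside $B(0,1+\epsilon)$; the $R$-independence is automatic because $\tilde T$ never sees $R$. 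You instead build the collar extension by hand via the Kelvin-type reflection $x\mapsto u(x/|x|^2)$ times a cutoff supported in $\{|x|>1/(1+\epsilon)\}$, and the $R$-independence is again automatic because the reflection is only used on the fixed collar where $\sigma$ is uniformly bi-Lipschitz. Your construction is more explicit and self-contained (no appeal to an abstract extension theorem, and the trace-matching on $S^{n-1}$ is transparent because $\sigma$ fixes the sphere pointwise), whereas the paper's is shorter. One very small clean-up in your write-up: there is no separate "fill in $B(0,\rho_0)$" step needed — once you multiply the reflected function by a cutoff $\chi$ vanishing on $B(0,1/(1+\epsilon))$, the product extends by zero across the origin and is already globally Sobolev; the cutoff \emph{is} the inner extension. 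The Lorentz statement via Stein--Weiss interpolation (identifying $W^{1,p}$ with a closed subspace of $L^p\times (L^p)^n$) is identical to the paper's, and your remark that a single $p$-independent constant $C_1(n,\epsilon)$ controls all the $W^{1,p}$ operator norms is exactly why the interpolation applies cleanly.
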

\begin{proof}
	The second assertion follows directly from the Stein-Weiss interpolation theorem (\cite{helein}, $3.3.3$). For the first part, construct by \cite{brezis}, IX.$7$ a linear extension operator $\tilde{T}$ such that for all $u \in W^{1,p}(B(0,1+\epsilon)\setminus \bar{B}(0,1))$, $\tilde{T} u\in W^{1,p}(B(0,1+\epsilon))$ and such that 
	\begin{align}\label{ext}
		\wp{\tilde{T}u}{1,p}{B(0,1+\epsilon)}\leq C(n,\epsilon)\wp{ u}{1,p}{B(0,1+\epsilon)}.
	\end{align}
	Now, if $u\in W^{1,p}(B(0,R))$, just consider the restriction $\bar{u}|B(0,1+\epsilon)\setminus \bar{B}(0,1)$, and define
	\begin{align*}
		Tu(x)=\left\{\begin{alignedat}{2}
		&u(x)\qquad &&\text{if}\;\, x\in B(0,R)\setminus \bar{B}(0,1+\epsilon)\\
		&\tilde{T}u(x)\qquad &&\text{if}\;\, x\in B(0,1+\epsilon).
		\end{alignedat} \right.
	\end{align*}
	As $\tilde{T}u=u$ on $B(0,1+\epsilon)\setminus \bar{B}(0,1)$, $T$ satisfies the claimed properties by \eqref{ext}. 
\end{proof}
\begin{rem}
	Although the norm of the norm of the operator $T:W^{1,p}(\Omega_R)\rightarrow W^{1,p}(B(0,R))$ does not depend on $1<p<\infty$, the norm of $T:W^{1,(p,q)}(\Omega_R)\rightarrow W^{1,(p,q)}(B(0,R))$ depends \emph{a priori} on $1<p<\infty$ and $1\leq q\leq \infty$, as the constant of the Stein-Weiss interpolation theorem depends on these parameters.
\end{rem}           

\subsection{Proof of Theorem \ref{tb}}\label{proofTB}
	
	Recall that Theorem \ref{tb} permits to restrict the possible bubbles as follows.  
	\begin{theorem}\label{tbprime}
		Let $\phi:S^2\setminus\ens{p_1,\cdots,p_d}\rightarrow \R^3$ be a \emph{non-planar} complete minimal surface with finite total curvature arising as a bubble of a sequence Willmore immersions (or as a conformal image of this bubble). Then  
		\begin{align*}
		\int_{S^2}K_gd\vg\leq -8\pi.
		\end{align*}
		with equality if and only if $\phi$ is either a minimal sphere with exactly one end of multiplicity $5$ whose Gauss map is ramified at its end, or $\phi$ is a minimal sphere with one flat end and one end of multiplicity $3$. 
	\end{theorem}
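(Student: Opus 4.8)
The plan is to combine three ingredients that are already available. First, the quantization of the total curvature: for a non-planar complete minimal surface of $\R^3$ of finite total curvature one has $\int_{S^2}K_g\,d\vg=-4\pi n$ for some integer $n\geq 1$ (namely the degree of the Gauss map on the compactified surface). Second, the vanishing of the first residue of bubbles from \cite{classification}, which, as recalled in the introduction, forces $\mathrm{Flux}(\phi,p_j)=0$ at every end $p_j$. Third, the bound $r(p)\leq\theta_0(p)-2$ at every branch point of the inversion $\iota\circ\phi$ supplied by Theorem \ref{ta}, which through the expansion \eqref{weierstrass} is equivalent to the condition $k_j\geq 2$ (i.e.\ $r(\phi,p_j)=\max\{m_j-k_j,0\}\leq m_j-2$) at each end of multiplicity $m_j=\theta_0(p_j)\geq 2$.

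First I would prove the inequality $\int_{S^2}K_g\,d\vg\leq-8\pi$, which reduces to ruling out $n=1$. By the classification of complete minimal surfaces of $\R^3$ of total curvature $-4\pi$ (contained in \cite{lopez}), a non-planar such surface is congruent to the catenoid or to Enneper's surface. The catenoid carries a non-zero logarithmic period, hence a non-vanishing flux, at each of its two ends, which contradicts \cite{classification}; so it cannot occur. Enneper's surface has vanishing flux, but from its Weierstrass data $g(z)=z$, $\omega=dz$ one computes that its single end of multiplicity $m_1=3$ has an expansion \eqref{weierstrass} with $k_1=1$ and $\vec{A}_0,\vec{A}_1$ linearly independent, so the inversion has second residue $r_1=2=\theta_0-1$, which is forbidden by Theorem \ref{ta}. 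Hence $n\geq 2$.

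For the rigidity statement I would suppose $\int_{S^2}K_g\,d\vg=-8\pi$, so that the total curvature is $>-12\pi$ and L\'{o}pez's classification \cite{lopez} applies: $\phi$ is congruent to the trinoid or to one of the L\'{o}pez surfaces I--IX. For each of the twelve families of Figure \ref{figure} I would write down the Weierstrass pair $(g,\omega)$ and compute, at every end, the flux $\tfrac{1}{4\pi}\Im\int_{\gamma}\partial\phi$ --- equivalently whether the end is planar or of catenoid type --- and the expansion \eqref{weierstrass}, from which $m_j=\theta_0(p_j)$ and $r_j=\max\{m_j-k_j,0\}$ are read off. The trinoid and the L\'{o}pez surfaces III--VII carry a non-vanishing flux at some end and are excluded by \cite{classification}; L\'{o}pez surface I has $r_1=4=m_1-1$ and is excluded by Theorem \ref{ta}. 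The survivors are L\'{o}pez surfaces II, VIII and IX, and one then checks these match the statement: II is a minimal sphere with one end of multiplicity $5$ and $r_1=3=m_1-2$, i.e.\ $k_1=2$ in \eqref{weierstrass}, which is exactly the condition that the Gauss map be ramified at the end; VIII and IX are minimal spheres with two ends of multiplicities $1$ and $3$ and vanishing flux, so the multiplicity-one end is a flat (planar) end.

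The main obstacle is the case-by-case analysis of L\'{o}pez's list in the last step: one must set up the Weierstrass data of each family correctly, translate the geometric conditions ``flat end'' and ``Gauss map ramified at the end'' into the numerical invariants $m_j$ and $r_j$, verify in each family whether the genericity hypothesis of \eqref{weierstrass} (linear independence of $\vec{A}_0$ and $\vec{A}_1$) holds, and treat the degenerate sub-cases where it fails --- for instance the member $c=0$ of the one-parameter L\'{o}pez family VI, for which the second residue at the multiplicity-three end drops from $2$ to $1$. Everything else, namely the reduction to total curvature $-4\pi$ and the two exclusion arguments by flux and by Theorem \ref{ta}, is short once these invariants are tabulated.
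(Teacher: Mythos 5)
Your proposal is correct and follows essentially the same route as the paper: exclude total curvature $-4\pi$ by disqualifying the catenoid via flux and Enneper via the second-residue bound $r(p)\leq\theta_0(p)-2$, then for $-8\pi$ run through L\'opez's list (equivalently, the Jorge--Meeks enumeration of end multiplicities that the paper uses to organise the same case analysis) eliminating each family by either a non-vanishing flux at some end or by a maximal second residue $r=\theta_0-1$, leaving exactly II, VIII and IX. The only presentational difference is that the paper first records the translation \eqref{weierstrass}$\to$\eqref{weierstrass2} (second residue of the inversion equals $m-k$ when $\vec{A}_0,\vec{A}_1$ are independent) as a self-contained computation and then feeds each Weierstrass pair through it, whereas you invoke this dictionary from the introduction; both require exactly the same case-by-case Weierstrass bookkeeping, including the degenerate $c=0$ member of family VI that you correctly flag.
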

	\begin{proof}
		The catenoid had a non-zero flux so \cite{classification} implies that its conformal images cannot be bubbles of Willmore immersions. We first show the property mentioned before the Theorem. Assume that a complete minimal surface $\phi:\Sigma\setminus \ens{p_1,\cdots,p_d}\rightarrow \R^n$ with finite total curvature admits the following expansion (notice that there cannot be any $\log$ in the expansion as the flux must vanish by \cite{classification})
		\begin{align}\label{weierstrass}
		\phi(z)=\Re\left(\frac{\vec{A}_0}{z^{m}}+\frac{\vec{A}_1}{z^{m-k}}\right)+O(|z|^{k-m+1}),
		\end{align}
		for some $\vec{A}_0,\vec{A}_1\in \C^n\setminus\ens{0}$ and $m,k\geq 1$.
		As $\phi$ is conformal, we deduce that $\s{\p{z}\phi}{\p{z}\phi}=0$, and this implies that 
		\begin{align*}
		\s{\vec{A}_0}{\vec{A}_0}=0,\qquad \s{\vec{A}_0}{\vec{A}_1}=0.
		\end{align*}
		Therefore, we have
		\begin{align*}
		|\phi(z)|^2&=\frac{1}{2}\frac{|\vec{A}_0|^2}{|z|^{2m}}+\frac{1}{2}\Re\left(\frac{\s{\bar{\vec{A}_0}}{\vec{A}_1}}{z^{m-k}\z^m}\right)+O(|z|^{k-2m+1})
		=\frac{|\vec{A}_0|^2}{2|z|^{2m}}\left(1+\,\Re\left(\frac{\s{\bar{\vec{A}_0}}{\vec{A}_1}}{|\vec{A}_0|^2}z^k\right)+O(|z|^{k+1})\right)\\
		&=\frac{\beta_0}{|z|^{2m}}\left(1+2\,\Re\left(\alpha_0z^k\right)+O(|z|^{k+1})\right).
		\end{align*}
		Without loss of generality, assume that $\beta_0=1$. Then we get
		\begin{align*}
		\vec{\Psi}(z)&=\frac{\phi(z)}{|\phi(z)|^2}=|z|^{2m}\left(1-2\,\Re\left(\alpha_0z^k\right)+O(|z|^{k+1})\right)\times \Re\left(\frac{\vec{A}_0}{z^m}+\frac{\vec{A}_1}{z^{m-k}}+O(|z|^{k-m+1})\right)\\
		&=\Re\left(\left(1-2\,\Re\left(\alpha_0z^k\right)\right)\vec{A}_0\z^m+\vec{A}_1z^k\z^{m}\right)+O(|z|^{m+k+1})\\
		&=\Re\left(\bar{\vec{A}_0}z^m-\bar{\alpha_0}\bar{\vec{A}_0}z^{m+k}+\left(\bar{\vec{A}_1}-\alpha_0\bar{\vec{A_0}}\right)z^m\z^k\right)+O(|z|^{m+k+1}).
		\end{align*}
		Then the conformal factor of $\vec{\Psi}$ satisfies
		\begin{align*}
		e^{2\lambda}=2|\p{z}\vec{\Psi}(z)|^2=\frac{m}{2}|\vec{A}_0|^2|z|^{2m-2}\left(1+O(|z|)\right)=m|z|^{2m-2}\left(1+O(|z|)\right).
		\end{align*}
		Therefore, we deduce that 
		\begin{align*}
		e^{2\lambda}\H=\frac{1}{2}\Delta\vec{\Psi}(z)=2mk\,\Re\left(\left(\bar{\vec{A}_1}-\alpha_0\bar{\vec{A}_0}\right)z^{m-1}\z^{k-1}\right)+O(|z|^{m+k-1}),
		\end{align*}
		so we finally get
		\begin{align}\label{weierstrass2}
		\H=mk\,\Re\left(\frac{\vec{A}_1-\alpha_0\vec{A}_0}{z^{m-k}}\right)+O(|z|^{k-m+1}),
		\end{align}
		which implies that $r\leq m-k$, with $r=m-k$ if and only if $\vec{A}_1-\alpha_0\vec{A}_0\neq 0$. In other words, we have $r=m-k$ if and only if $\mathrm{Span}_{\C}(\vec{A}_0,\vec{A}_1)\simeq \C^2$.

		Recall that  the Enneper surface is conformally equivalent to a sphere minus a point, and admits a unique end of multiplicity $3$. This minimal surface admits the following Taylor expansion as its end that we take at $z=0$ (see \cite{dierkes})
		\begin{align*}
		\phi(z)=\Re\left(\frac{\vec{A}_0}{z^3}+\frac{\vec{A}_1}{z^2}+\frac{\vec{A}_2}{z}\right),\qquad \left\{\begin{alignedat}{1}
		&\vec{A}_0=\frac{1}{3}\left(-1,i,0\right)\\
		&\vec{A}_1=(0,0,1)\neq 0\\
		&\vec{A}_2=(1,i,0).
		\end{alignedat}\right.
		\end{align*}
		As $\vec{A}_0,\vec{A}_1$ are non-zero and linearly independent, \eqref{weierstrass2} implies that $\theta_0(p)=3$ and $r(p)=2=3-1$, so we are done. As the catenoid and the Enneper surface are the only complete minimal surfaces of total curvature $-4\pi$, we can now assume that the minimal surface has total curvature $-8\pi$.
		
		Now recall the Jorge-Meeks formula for a complete minimal surface $\phi:S^2\setminus\ens{p_1,\cdots,p_d}\rightarrow \R^3$ with multiplicities $m_1,\cdots,m_d\geq 1$ at the ends $p_1,\cdots,p_d$
		\begin{align*}
		\int_{S^2}K_gd\vg=-4\pi\,\deg(\n) =-4\pi\left(0-1+\frac{1}{2}\sum_{j=1}^d(m_j+1)\right)
		\end{align*}
		we see that a total curvature equal to $-8\pi$ yields four possibilities (we denoted here by $\n$ the Gauss map $S^2\rightarrow S^2$).
		As $\n$ must have degree $2$, we deduce that 
		\begin{align*}
		2=\deg(\n)=0-1+\frac{1}{2}\sum_{j=1}^{d}(m_j+1)\geq d-1
		\end{align*}
		so that $d\leq 3$. Now, $d=1$ implies that $m_1=5$, $d=2$ implies that either $m_1=m_2=2$, or $m_1=1$ and $m_2=3$. Finally, $d=3$ implies that $m_1=m_2=m_3=1$, so we see that there are indeed exactly four possibilities.
		
		We denote in the following $(g,\omega)$ the Weierstrass data (where $g$ is the stereographic projection of the Gauss map and $\omega$ is a meromorphic $1$-form), such that 
		\begin{align*}
		\phi(z)=\Re\left(\int_{\ast}^z(1-g^2)\,\omega,\int_{\ast}^zi(1+g^2)\,\omega,\int_{\ast}^z2g\,\omega\right).
		\end{align*}
		As $\phi$ has no flux, the three $1$-forms
		$
		\omega,g\,\omega,g^2\omega
		$
		must have vanishing residue (in the usual sense). 
		
		\textbf{Case $\bm{1}$: the minimal immersion has three ends of multiplicity $\bm{1}$}. As there are no complete minimal surface of genus $0$ in $\R^3$ with three planar ends (\cite{bryant}), this minimal immersion must have at least one catenoid end, which is excluded. Actually, these surfaces are classified as the family of trinoids, which have three catenoid ends (\cite{lopez}).
		
		\textbf{Case $\bm{2}$: the minimal immersion has a unique end of multiplicity $\bm{m=5}$.}
		Then the minimal surface is conformally equivalent to $\C$ and there are two distinct Weierstrass data.
		
		\textbf{Sub-case $\bm{1}$}. The Weierstrass data are given here as 
		$
		g(z)=\lambda\left(z+c+\dfrac{1}{z}\right), \omega=-e^{i\theta}z^2dz,
		$
		where $\lambda\in \R\setminus\ens{0}$, $c\in \C$, $\theta\in \R$. In this parametrisation, the end is at $z=\infty$, so we make a change of variable to get it at $z=0$ so that 
		$
		g(z)=\lambda\left(z+c+\dfrac{1}{z}\right), \omega=e^{i\theta}\dfrac{dz}{z^4}.
		$ 
		Therefore, we deduce that 
		\begin{align*}
		\phi(z)=\Re\left(\frac{\vec{A}_0}{z^5}+\frac{\vec{A}_1}{z^4}+\frac{\vec{A}_2}{z^3}+\frac{\vec{A}_1}{z^2}+\frac{\vec{A}_4}{z}\right),\qquad 		\left\{\begin{alignedat}{1}
		&\vec{A}_0=\frac{1}{5}\lambda^2e^{i\theta}\left(1,-i,0\right)\\
		&\vec{A}_1=\frac{1}{2}\lambda^2 e^{i\theta}\left(c,-ic,\lambda^{-1}\right).
		\end{alignedat}\right.
		\end{align*}
		Therefore, $\vec{A}_0,\vec{A}_1\in \C^3\setminus\ens{0}$ are linearly independent and the compactification of $\phi$ has a unique branch point $p$ of order $\theta_0(p)=5$ and second residue $r(p)=4=5-1$, so we are done. Furthermore, we see that one need only compute the first two term of the Taylor expansion to conclude.
		
		\textbf{Sub-case $\bm{2}$.} The Weierstrass data are
		$
		g(z)=\lambda\left(\dfrac{1}{z^2}+c\right), \omega=e^{i\theta}\dfrac{dz}{z^2},
		$
		where $\lambda\in \R\setminus\ens{0}$, $c\in \C$ and $\theta\in \R$. Here, we see directly that $\omega,g\omega$ and $g^2\omega$ have no residue. Furthermore, we have
		\begin{align*}
		\phi(z)&=\Re\left(\frac{\vec{A}_0}{z^5}+\frac{\vec{A}_1}{z^3}+\frac{\vec{A}_2}{z}\right),\qquad \left\{\begin{alignedat}{1}
		&\vec{A}_0=\frac{1}{5}\lambda^2e^{i\theta}\left(1,-i,0\right)\\
		&\vec{A}_1=\frac{2}{3}\lambda e^{i\theta}\left(\lambda c,-i\lambda c,-1\right)
		\end{alignedat}\right.
		\end{align*}
		As $\vec{A}_0,\vec{A}_1\in \C^3\setminus\ens{0}$ are linearly independent, we deduce that 
		so we deduce that $r(0)=3=5-2$, which is admissible. 
		
		From now on, the minimal surfaces considered will have two ends, or equivalently be conformally equivalent to $\C\setminus\ens{0}$.
		
		\textbf{Case 3: the minimal immersion has a exactly two ends of multiplicities $\bm{m_1=m_2=2}$.} 
		
		One more, there are several possibilities for the Weierstrass data.
		
		\textbf{Sub-case 1.} The Weierstrass data are
		$
		g(z)=\lambda \dfrac{z^2+cz+d}{z+1},  \omega=e^{i\theta}\dfrac{(z+1)^2}{z^3}dz,
		$
		where $\lambda\in \R\setminus\ens{0}$, $-1=\lambda^2e^{2i\theta}(c^2+2d)$, $\theta\in \R$, $(1+c)e^{i\theta}\in \R$, $1-c+d\neq 0$.
		Here, we see directly that 
		$\omega$ has a non-zero residue equal to $e^{i\theta}\neq 0$, so we are done.
		
		\textbf{Sub-case 2.} The Weierstrass data are
		$
		g(z)=\lambda\left(z+c+\dfrac{1}{z}\right), \omega=i\dfrac{dz}{z}
		$
		where $\lambda\in \R\setminus\ens{0}$ and $-1=\lambda^2(c^2+2)$. Trivially, $\omega$ has a non-zero residue, so we are done.
		
		\textbf{Sub-case 3.} The Weierstrass data are $g(z)=\lambda z^2,  \omega=\dfrac{dz}{z^3}$. However, 
		$
		g\,\omega=\lambda\dfrac{dz}{z}
		$
		has a non-zero residue at $z=0$. This finishes the proof of the case $m_1=m_2=2$.
		
		\textbf{Case 4: the minimal immersion  has two ends, one of multiplicity $\mbox{1}$, and the other of multiplicity $\mbox{3}$.} 
		
		There are four different cases here.
		
		\textbf{Sub-case 1.} The Weierstrass data are
		$
		g(z)=\lambda\dfrac{z^2+c}{z+1}, \omega=\dfrac{(z+1)^2}{z^4}dz
		$
		where $\lambda\in \R\setminus\ens{0}$, $c\in \C\setminus\ens{-1}$. As we will be interested in the end of multiplicity $1$, taking $w=1/z$ yields
		$
		g(w)=\dfrac{cw^2+1}{w(w+1)},\omega=-(w+1)^2dw.
		$
		In particular, we have
		$
		g\,\omega=-\dfrac{dw}{w}\left(1+O(|w|)\right)
	    $
		and $g\,\omega$ has a non-zero residue. Notice that this is the minimal surface corresponding to the L\'{o}pez surface VI. 
		
		\textbf{Sub-case 2.} The Weierstrass data are
		$
		g(z)=\lambda\left(z+\dfrac{1}{z}\right), \omega=\dfrac{dz}{z^2}
	    $
		where $\lambda\in \R\setminus\ens{0}$. Then
		$
		g\,\omega=\lambda\left(\dfrac{1}{z^3}+\dfrac{1}{z}\right)dz
		$
		has a non-zero residue.
		
		\textbf{Sub-case 3.} The Weierstrass data are
		$
		g(z)=\lambda\left(z^2+1\right),\omega=\dfrac{dz}{z^4},
		$
		where  $\lambda\in \R\setminus\ens{0}$.
		Therefore, we have
		\begin{align*}
		\phi(z)&=\Re\left(\frac{\vec{A}_0}{z^3}+\frac{\vec{A}_1}{z}+\vec{A}_2z\right),\qquad \left\{\begin{alignedat}{1}
		&\vec{A}_0=-\frac{1}{3}\left((1-\lambda^2),i(1+\lambda^{2}),2\lambda\right)\\
		&\vec{A}_1=2\lambda^2\left(1,-i,-\lambda^{-1}\right)
		\end{alignedat}\right.
		\end{align*}
		As $\vec{A}_0,\vec{A}_1$ are linearly independent, we deduce that $r(0)=1=3-1$ is admissible.

		\textbf{Sub-case 4.} The Weierstrass data are
		$
		g(z)=\lambda\,z^2, \omega=\dfrac{dz}{z^4}
		$
		and we get 
		\begin{align*}
		\phi(z)=\Re\left(\frac{\vec{A}_0}{z^3}+\frac{\vec{A}_1}{z}+\vec{A}_2z\right),\qquad \left\{\begin{alignedat}{1}
		&\vec{A}_0=-\frac{1}{3}\left(1,i,0\right)\\
		&\vec{A}_1=-2\lambda(0,0,1)
		\end{alignedat}\right.
		\end{align*}
		As $\vec{A}_0,\vec{A}_1\in \C^3\setminus\ens{0}$ are linearly independent, we deduce that $r(0)=1=3-2$, and this implies that this surface is admissible.
	\end{proof}

\nocite{}
\bibliographystyle{plain}
\bibliography{biblio}

\begin{thebibliography}{10}

\bibitem{beriviere}
Yann Bernard and Tristan Rivi\`{e}re.
\newblock Singularity removability at branch points for {W}illmore surfaces.
\newblock {\em Pacific {J}. {M}ath., Vol. 265, No. 2}, 2013.

\bibitem{quanta}
Yann Bernard and Tristan Rivi\`{e}re.
\newblock Energy quantization for {W}illmore surfaces and applications.
\newblock {\em Ann. of Math. 180, 87-136}, 2014.

\bibitem{brezis}
Ha\"{i}m Brezis.
\newblock {\em Analyse fonctionnelle. Th\'{e}orie et applications.}
\newblock Collection Mathématiques Appliquées pour la Maîtrise. Masson,
  Paris, xiv+234 pp., 1983.

\bibitem{bryant}
{Robert L.} Bryant.
\newblock A duality theorem for {W}illmore surfaces.
\newblock {\em J. {D}ifferential {G}eom., 20, 23-53}, 1984.

\bibitem{meyer}
Ronald Coifman, Pierre-Louis Lions, Yves Meyer, and Stephen~W. Semmes.
\newblock Compensated compactness and hardy spaces.
\newblock {\em J. Math. Pures Appl. (9) 72 (1993), no. 3, 247–286.}, 1993.

\bibitem{dahan}
Amy Dahan-Dalm\'{e}dico.
\newblock M\'{e}canique et th\'{e}orie des surfaces: les travaux de {S}ophie
  {G}ermain.
\newblock {\em Historia Mathematica 14, 347-365}, 1987.

\bibitem{dierkes}
Ulrich Dierkes, Stefan Hildebrandt, and Friedrich Sauvigny.
\newblock {\em Minimal surfaces}.
\newblock SPringer-Verlag, Grundlehren der mathematischen Wissenschaften, vol.
  339, 2010.

\bibitem{germain}
Sophie Germain.
\newblock Mémoire sur la coubure des surfaces.
\newblock {\em J. Reine Angew. Math. 7, 1–29.}, 1831.

\bibitem{standardargument}
Michael Grüter and Kjell-Ove Widman.
\newblock The {G}reen function for uniformly elliptic equations.
\newblock {\em Manuscripta Math. 37, no. 3, 303–342}, 1982.

\bibitem{hanlin}
Qing Han and Fanghua Lin.
\newblock {\em Elliptic partial differential equations}.
\newblock Courant Lecture Notes in Mathematics, 1. New York University, Courant
  Institute of Mathematical Sciences, New York; American Mathematical Society,
  Providence, RI, x+144 pp., 1997.

\bibitem{2willmore}
Lynn Heller, Sebastian Heller, and {Cheikh B.} Ndiaye.
\newblock Rectangular constrained {W}illmore minimizers and the {W}illmore
  conjecture.
\newblock {\em arXiv:1901.05664}, 2019.

\bibitem{helein}
Frederick Hélein.
\newblock {\em Applications harmoniques, lois de conservation, et repères
  mobiles}.
\newblock Diderot éditeur, Sciences et Arts, 1996.

\bibitem{jorge}
{Luquesio P.} Jorge and {William H.}~Meeks III.
\newblock The topology of complete minimal surfaces of finite total {G}aussian
  curvature.
\newblock {\em Topology, Vol. 22n Ni. 2, pp. 203-221}, 1983.

\bibitem{angular}
Paul Laurain and Tristan Rivi\`{e}re.
\newblock Angular energy quantization for linear elliptic systems with
  antisymmetric potentials and applications.
\newblock {\em Anal. PDE 7, no. 1, 1–41.}, 2014.

\bibitem{lauriv1}
Paul Laurain and Tristan Rivi\`{e}re.
\newblock Optimal estimate for the gradient of {G}reen functions on
  degenerating surfaces and applications.
\newblock {\em Comm. Anal. Geom. Vol. 26, No 4}, 2018.

\bibitem{quantamoduli}
Paul Laurain and Tristan Rivière.
\newblock Energy quantization of {W}illmore surfaces at the boundary of the
  moduli space.
\newblock {\em Duke Math. J. 167, no. 11, 2073–2124}, 2018.

\bibitem{lieyau}
Peter Li and Shing-Tung Yau.
\newblock A {N}ew {C}onformal {I}nvariant and {I}ts {A}pplications to the
  {W}illmore {C}onjecture and the {F}irst {E}igenvalue of {C}ompact {S}urfaces.
\newblock {\em Invent. Math. 69, 269-291}, 1982.

\bibitem{lopez}
Francisco~J. L{\'o}pez.
\newblock The classification of complete minimal surfaces with total curvature
  greater than $-12\pi$.
\newblock {\em Trans. Amer. Math. Soc., Vol. 334, No. 1, November}, 1992.

\bibitem{marque}
Nicolas Marque.
\newblock An $\epsilon$-regularity with mean curvature control for {W}illmore
  immersions and application to minimal bubbling.
\newblock {\em arXiv:1904.05215}, 2019.

\bibitem{marqueswillmore}
{Fernando C.} Marques and André Neves.
\newblock Min-{M}ax theory and the {W}illmore conjecture.
\newblock {\em Ann. of {M}ath., 179, 683-782}, 2014.

\bibitem{classification}
Alexis Michelat and Tristan Rivi{\`e}re.
\newblock The {C}lassification of {B}ranched {W}illmore {S}pheres in the
  $3$-{S}phere and the $4$-{S}phere.
\newblock {\em arXiv:1706.01405}, 2017.

\bibitem{sagepaper}
Alexis Michelat and Tristan Rivière.
\newblock Computer-{A}ssisted {P}roof of the {M}ain {T}heorem of\\ '{T}he
  {C}lassification of {B}ranched {W}illmore {S}pheres in the $3$-{S}phere and
  the $4$-{S}phere'.
\newblock {\em arXiv:1711.10441}, 2017.

\bibitem{blow-up2}
Alexis Michelat and Tristan Rivière.
\newblock Higher {R}egularity of {W}eak {L}imits of {W}illmore {I}mmersions
  {II}.
\newblock {\em arXiv:1904.09957}, 2019.

\bibitem{montiels4}
Sebasti\'{a}n Montiel.
\newblock Willmore two-spheres in the four-sphere.
\newblock {\em Trans. Amer. Math. Soc., vol 352, No. 10, p.4469-4486}, 2000.

\bibitem{muller}
Stefan M\"{u}ller and Vladim\'{i}r \v{S}ver\'{a}k.
\newblock On surfaces of finite total curvature.
\newblock {\em J. Differential Geom. 42, no. 2, 229–258}, 1995.

\bibitem{pinkall}
Ulrich Pinkall.
\newblock Hopf tori in ${S}^3$.
\newblock {\em Invent. Math. 81, no. 2, 379–386.}, 1985.

\bibitem{poisson}
Sim\'{e}on~Denis Poisson.
\newblock M\'{e}moire sur les surfaces \'{e}lastiques.
\newblock {\em M\'{e}moire de l'Institut}, 1814.

\bibitem{riviere1}
Tristan Rivi\`{e}re.
\newblock Analysis aspects of {W}illmore surfaces.
\newblock {\em Invent. Math., 174, 1-45}, 2008.

\bibitem{rivnotes}
Tristan Rivi\`{e}re.
\newblock {\em Conformally Invariant Variational Problems}.
\newblock In preparation, 2019.

\bibitem{rivierecrelle}
Tristan Rivière.
\newblock Variational principles for immersed surfaces with ${L}^2$-bounded
  second fundamental form.
\newblock {\em J. reine angew. Math. 695, 41–98}, 2014.

\bibitem{eversion}
Tristan Rivière.
\newblock Willmore {M}inmax {S}urfaces and the {C}ost of the {S}phere
  {E}version.
\newblock {\em arXiv:1512.08918, to appear in J. Eur. Math. Soc.}, 2015.

\bibitem{toro}
Tatiana Toro.
\newblock Geometric conditions and existence of bi-{L}ipschitz
  parameterizations.
\newblock {\em Duke Math. J. 77, no. 1, 193–227.}, 1995.

\bibitem{willmore1}
{Thomas J.} Willmore.
\newblock Note on embedded surfaces.
\newblock {\em An. st. Univ. Iaso, s.I.a., Mathematica, 11B, 493-496}, 1965.

\end{thebibliography}

\end{document}